%
%
%
%
%
%
\RequirePackage{fix-cm}
\documentclass[smallextended]{svjour3}       
\smartqed  

\usepackage{subfig}

\usepackage{graphicx}
\usepackage{appendix}

\newtheorem{rmk}{Remark}

\usepackage{tikz-cd}
\usepackage{amsfonts}      
\usepackage{amsmath}
\usepackage{amssymb}
\usepackage[margin=.8in]{geometry} 
\usepackage{parskip}
\usepackage{hyperref}
\usepackage{algorithm}
\usepackage{algorithm}
\usepackage[noend]{algorithmic}
\usepackage{framed}
\usepackage{mathtools}
\usepackage{bbm}

\newcommand{\su}{\subseteq}

\newcommand{\ImKer}{\mathrm{ImKer}}
\newcommand{\Coker}{\mathrm{Coker}}
\newcommand{\im}{\mathrm{Im}}
\newcommand{\Ker}{\mathrm{Ker}}
\newcommand{\Z}{\mathbb{Z}}
\newcommand{\Q}{\mathbb{Q}}
\newcommand{\R}{\mathbb{R}}

\newcommand{\C}{\mathbb{C}}
\newcommand{\colspace}{\textsc{Colspace}}

\newcommand{\K}{\mathcal{K}}

\newcommand{\diag}{\mathrm{diag}}
\newcommand{\id}{\mathrm{id}}

\newcommand{\Col}{\mathrm{Col}}
\newcommand{\low}{\mathrm{low}}

\newcommand{\Chains}{C}
\newcommand{\spann}{\mathrm{span}}

\newcommand{\Ps}{\mathcal{P}}

\newcommand{\Ralt}{\widehat{R}}
\newcommand{\Valt}{\widehat{V}}
\newcommand{\row}{\mathrm{Row}}
\newcommand{\lead}{\mathrm{lead}}

\newcommand{\communicated}[1]{\vspace{0.5em}\noindent\textit{Communicated by #1}\vspace{1em}}

\usepackage{cite}   

%
%
%
%
%
\begin{document}

\title{Interval Decomposition of Persistence Modules over a Principal Ideal Domain\thanks{G. H.-P. acknowledges the support of the Centre for Topological Data Analysis of Oxford, the Mathematical Institute of Oxford, EPSRC grant EP/R018472/1, and NSF grant number 1854748. J. L. acknowledges funding from NSF grant number 1829071, NSF grant number 2136090, and NSF grant number 1922952 through the Algorithms for Threat Detection (ATD) program. }
}

\titlerunning{Interval Decomposition over a PID}        

\author{Jiajie Luo$^*$\thanks{$^*$Corresponding Author}         \and
        Gregory Henselman-Petrusek 
}


\institute{Jiajie Luo \at
              Knowledge Lab \\ University of Chicago \\
              Chicago, Illinois, United States of America\\
              \email{jerryluo8@uchicago.edu}      
           \and
           Gregory Henselman-Petrusek \at
            Pacific Northwest National Laboratory \\
            Richland, Washington, United States of America\\
            \email{gregory.roek@pnnl.gov}
}

\date{Received: date / Accepted: date}

\maketitle

\communicated{Peter Bubenik}

\begin{abstract}
The study of persistent homology has contributed new insights and perspectives into a variety of interesting problems in science and engineering.
Work in this domain relies on the result that any finitely-indexed persistence module of finite-dimensional vector spaces
admits an interval decomposition --- that is, a decomposition as a direct sum of simpler components called interval modules.
This result fails if we replace vector spaces with modules over more general coefficient rings.
To address this problem, we introduce an algorithm to determine whether or not a persistence module of pointwise free and finitely-generated modules over a principal ideal domain (PID) splits as a direct sum of interval submodules. 
If one exists, our algorithm outputs an interval decomposition.
When considering persistence modules with coefficients in $\Z$ or $\Q[x]$, our algorithm computes an interval decomposition in polynomial time.
This is the first algorithm with these properties of which we are aware. 
We also show that a persistence module of pointwise free and finitely-generated modules over a PID splits as a direct sum of interval submodules if and only if the cokernel of every structure map is free. 
This result underpins the formulation of our algorithm. 
It also complements prior findings by Obayashi and Yoshiwaki regarding persistent homology, including a criterion for field independence and an algorithm to decompose persistence homology modules.

\keywords{Persistent Homology \and Persistence Module \and Computational Topology \and Lattice Theory}
\subclass{MSC 62R40 \and MSC 55N31 \and MSC 55-08 \and MSC 06D05 \and MSC 57-08}
\end{abstract}

%

%
\section{Introduction}
\label{sec:intro}

A (finitely-indexed) \emph{persistence module} is a functor from a (finite) totally-ordered poset category to a category of modules over a commutative ring $R$.
While abstract in definition, persistence modules underlie \emph{persistent homology} (PH) \cite{edelsbrunner_topological_2002, ghrist_barcodes_2007, dey_computational_2022}, an area of topological data analysis (TDA) that has contributed to applied and theoretical scientific research in domains including medicine \cite{qaiser_fast_2019}, neuroscience \cite{curto_what_2017, sizemore_cliques_2018, stolz_topological_2021}, image processing \cite{diaz-garcia_image-based_2018, perea_klein-bottle-based_2014}, material science \cite{onodera_understanding_2019}, genomics \cite{rabadan_topological_2019}, and signal processing \cite{robinson2014topological}.

PH aims to capture and quantify topological features (e.g., holes) in data. 
In PH, one obtains a persistence module by taking the homology of a nested sequence of topological spaces. 
More generally, one can obtain a persistence module by taking the homology of a sequence of topological spaces with continuous maps between them.

Under certain conditions, a persistence module admits an interval decomposition, in which it decomposes into indecomposable pieces called \emph{interval modules}. 
Interval decompositions are rich in information. 
For example, they provide combinatorial invariants called \emph{persistence diagrams}, which can be used to vectorize topological features, as well as cycle representatives, which aid in interpreting the components of a persistence diagram \cite{robins_computational_2000,edelsbrunner_topological_2002,zomorodian_computing_2005}.

The literature on TDA has focused traditionally on homology groups with coefficients in a field because persistence modules with field coefficients are guaranteed to decompose into interval modules (see Theorem \ref{thm:gabriel}).
However, there is an increasing amount of research on PH with different coefficients (e.g., the ring of integers) \cite{patel_generalized_2018, gulen_galois_2023,patel_mobius_2023}. 
A fundamental question that one can ask is how to determine whether or not a persistence module decomposes into interval modules.

We answer this question for any persistence module $f$ that is \emph{pointwise free and finitely-generated} over a principal ideal domain (PID) $R$. 
Concretely, this condition means that $f:\{0, \ldots, m\} \to R\textrm{-Mod}$ is a functor in which $\{0, \ldots, m\}$ is the poset category with the usual order on the integers $0, \ldots, m$ and $f_a$ is a free and finitely-generated $R$-module for all $a\in \{0, \ldots, m\}$. 
In this setting, we address the following problems.

\begin{problem}\label{prob:splits}
    Determine whether $f$ splits as a direct sum of interval submodules.
\end{problem}

\begin{problem}\label{prob:int_decomp}
    If an interval decomposition does exist, then compute one explicitly.
\end{problem}

For any persistence module $f$ that is pointwise free and finitely-generated, we have the following theorems that address Problems \ref{prob:splits} and \ref{prob:int_decomp}.

\begin{theorem}[Main theoretical result]
\label{thm:problem_statement}
Let $f:\{0, \ldots, m\} \to R\textrm{-Mod}$ be a persistence module that is pointwise free and finitely-generated over $R$.
The persistence module $f$ splits into a direct sum of interval modules if and only if the cokernel of every homomorphism $f(a\leq b):f_a\to f_b$ is free.
\end{theorem}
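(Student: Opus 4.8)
We prove the two implications separately. For the forward direction, suppose $f$ is isomorphic to a direct sum of interval modules. In the bases induced by such a decomposition, each structure map $f(a\leq b)$ is a direct sum of one ``elementary'' map per summand, each being the identity $R\to R$, a zero map $R\to 0$ or $0\to R$, or the zero map $0\to 0$, according to whether the support of that summand contains both of $a,b$, contains $a$ only, contains $b$ only, or neither. Since cokernels commute with direct sums and the cokernel of each elementary map is $0$ or $R$, the cokernel of $f(a\leq b)$ is free. For the converse I would induct on $m$, at each stage splitting off the interval summands whose support ends at $m$; the base case $m=0$ is immediate because $f_0$ is free.

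Write $\phi_{a,b}:=f(a\leq b)$ and $\phi_a:=\phi_{a,a+1}$. The hypothesis makes each $\im\phi_{a,b}$ a direct summand of $f_b$ --- the sequence $0\to\im\phi_{a,b}\to f_b\to\Coker\phi_{a,b}\to 0$ splits, its cokernel being free and hence projective --- and in particular free. Thus $\im\phi_{0,m}\subseteq\im\phi_{1,m}\subseteq\cdots\subseteq\im\phi_{m,m}=f_m$ is a chain of direct summands of $f_m$ in which each term is a direct summand of the next (modularity of the submodule lattice), so I may choose free submodules $D_0,\dots,D_m\subseteq f_m$ with $\im\phi_{c,m}=D_0\oplus\cdots\oplus D_c$ for every $c$. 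Since $\phi_{c,m}$ is a split surjection onto the free module $\im\phi_{c,m}\supseteq D_c$, choose a lift $D_c^{(c)}\subseteq f_c$ on which $\phi_{c,m}$ restricts to an isomorphism onto $D_c$, set $D_c^{(a)}:=\phi_{c,a}\bigl(D_c^{(c)}\bigr)$ for $c\leq a\leq m$, and put $h_a:=\sum_{c\leq a}D_c^{(a)}\subseteq f_a$.

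The crux is to check that $h:=(h_a)_a$ is a sub-persistence-module, a direct summand of $f$, and isomorphic to a direct sum of interval modules each supported on a final segment $\{c,c+1,\dots,m\}$. The key point is that $\phi_{a,m}$ restricts to an isomorphism $h_a\xrightarrow{\sim}\im\phi_{a,m}$: it is injective on each $D_c^{(a)}$ (because $\phi_{c,m}$ is injective on $D_c^{(c)}$ and factors through $D_c^{(a)}$), the directness of $f_m=\bigoplus_cD_c$ promotes this to injectivity on all of $h_a$, and surjectivity onto $\im\phi_{a,m}=D_0\oplus\cdots\oplus D_a$ follows from $\phi_{a,m}\bigl(D_c^{(a)}\bigr)=D_c$. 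This isomorphism retracts the inclusion $h_a\hookrightarrow f_a$ (via $(\phi_{a,m}|_{h_a})^{-1}\circ\phi_{a,m}$), with kernel $\Ker\phi_{a,m}$, so $f_a=h_a\oplus\Ker\phi_{a,m}$. Setting $g_a:=\Ker\phi_{a,m}$, the inclusions $\phi_a(h_a)\subseteq h_{a+1}$ and $\phi_a(\Ker\phi_{a,m})\subseteq\Ker\phi_{a+1,m}$ show $h$ and $g$ are complementary sub-persistence-modules, and transporting bases along the isomorphisms $\phi_a\colon D_c^{(a)}\xrightarrow{\sim}D_c^{(a+1)}$ (for $c\leq a$) identifies $h$ with the asserted direct sum of interval modules. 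Since $g_m=\Ker\phi_{m,m}=0$, the module $g$ is supported on $\{0,\dots,m-1\}$, and as its structure maps are restrictions of those of $f$, each $\Coker\bigl(g(a\leq b)\bigr)$ is a direct summand of the free module $\Coker\bigl(f(a\leq b)\bigr)$ and hence free; the inductive hypothesis then decomposes $g$ into interval modules, and $f=g\oplus h$ is an interval decomposition of $f$.

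The step I expect to be the main obstacle is exactly the claim that each $h_a$ is a direct summand of $f_a$, and it is here that the full hypothesis --- cokernels free for \emph{all} pairs $a\leq b$, not merely the consecutive ones --- is indispensable: one needs each $\im\phi_{c,m}$ to be a direct summand of $f_m$, both in order to form the $D_c$'s and their lifts and in order to conclude that $h_a$, which maps isomorphically onto $\im\phi_{a,m}$, splits off $f_a$. That the consecutive condition alone does not suffice is shown by $\Z\xrightarrow{x\mapsto(2x,x)}\Z^2\xrightarrow{(x,y)\mapsto x}\Z$, where both consecutive cokernels are free but the composite has cokernel $\Z/2\Z$ and no interval decomposition exists. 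Everything else is bookkeeping with direct sums, together with the standard facts that over a PID every finitely generated projective module is free and every submodule of a free module is free.
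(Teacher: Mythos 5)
Your proof is correct, and the sufficiency argument takes a genuinely different route from the paper. The necessity direction is essentially identical to the paper's Lemma~\ref{Lemma:easy_direction} (express the structure map as a direct sum of elementary maps between summands and observe that cokernels split). For sufficiency, the paper (Section~\ref{sec:interval_decomp_hard}) constructs bases $\beta_1,\dots,\beta_{m-1}$ left-to-right, pushing forward $\beta_{i-1}$ under $f(i-1\le i)$ and extending by a basis of a complement of $\bigl(\im[i-1,i]+\Ker[i,j-1]\bigr)\cap\Ker[i,j]$ in $\Ker[i,j]$; the technical heart is Theorem~\ref{thm:complement_special}, which establishes via a chain of isomorphism-theorem manipulations that $\Ker[a,y]+\im[x,a]$ admits a complement in $f_a$. (The paper also gives a second proof in Appendix~\ref{sec:interval_decomp_hard_old} using the saecular lattice.) You instead induct on $m$, peeling off the interval summands that reach index $m$: you split the image filtration $\im\phi_{0,m}\subseteq\cdots\subseteq\im\phi_{m,m}$ into free pieces $D_c$, lift each $D_c$ back to $f_c$ using projectivity, push the lifts forward, and show the resulting family $h_a=\bigoplus_{c\le a}D_c^{(a)}$ complements $\Ker\phi_{a,m}$ in $f_a$, yielding a sub-persistence-module $h$ that is a direct sum of intervals ending at $m$ and a complementary sub-persistence-module $g$ with $g_m=0$ to which induction applies. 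Your route avoids Theorem~\ref{thm:complement_special} entirely --- you only need that $\im\phi_{c,m}$ is a direct summand of $f_m$, which is immediate from the hypothesis, plus the elementary fact (your modularity observation, or equivalently the paper's Lemma~\ref{lemma:complement_shrink}) that a direct summand of $G$ contained in a direct summand $B\subseteq G$ is a direct summand of $B$. That makes your proof conceptually leaner. What the paper's forward-sweep buys in exchange is a directly implementable algorithm (Algorithm~\ref{alg:simple}, elaborated in matrix form in Algorithm~\ref{alg:matrix}) and the lattice-theoretic structure (the summands $A^{ij}_a$ of Appendix~\ref{sec:interval_decomp_hard_old}, the analogue of pair groups) which the paper leans on elsewhere, e.g., in Lemma~\ref{lem:complementsrequire}. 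Your recursion would unwind to a right-to-left construction, which would need a bit more bookkeeping to express as an iterative matrix algorithm, but the underlying decomposition is the same. Two tiny loose ends: you should state (as the paper does) the harmless normalization $f_0=f_m=0$ or else allow interval supports to reach $m$, and you implicitly use that a complement of a free direct summand inside a free module over a PID is again free, which is exactly the submodule-of-free-is-free fact you cite at the end.
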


Theorem \ref{thm:problem_statement} helps inform Theorem \ref{thm:algorithmic_problem}, which is our main computational result.

\begin{theorem}[Main computational result]
\label{thm:algorithmic_problem}
Let $f:\{0, \ldots, m\} \to R\textrm{-Mod}$ be a persistence module that is pointwise free and finitely-generated over $R$.
There is a formal procedure (see Algorithm \ref{alg:matrix}) to determine whether or not $f$ admits an interval decomposition and, if so, to explicitly construct such a decomposition. The procedure is finite (respectively, polynomial) time if matrices over the coefficient ring $R$ can be multiplied and placed in Smith normal form in finite (respectively, polynomial)\footnote{These procedures are known to be polynomial time when $R$ is $\Z$ or $\Q[x]$. See Remark \ref{rmk:polynomial-time_Z}.} time.
\end{theorem}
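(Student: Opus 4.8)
The plan is to exhibit the procedure of Algorithm~\ref{alg:matrix} as two phases --- a decision phase and a construction phase --- and to check that each reduces to a polynomially bounded number of matrix multiplications and Smith--normal--form computations over $R$, which yields the complexity claim (invoking Remark~\ref{rmk:polynomial-time_Z} for $R=\Z$ and $R=\Q[x]$). Throughout I fix bases of the $f_a$ and let $M_a$ be the matrix of $f(a-1\le a)$; every object the procedure handles is a matrix over $R$.

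For the decision phase I would appeal directly to Theorem~\ref{thm:problem_statement}: the module $f$ is interval-decomposable iff $\Coker(f(a\le b))$ is free for all $a\le b$, and the cokernel of a map of free $R$-modules is free exactly when its Smith normal form is a partial identity (every nonzero invariant factor a unit). So the procedure forms each composite $M_{a+1}\cdots M_b$ by repeated multiplication --- $O(m^2)$ multiplications, reusing partial products --- and tests the Smith normal form of each; if any test fails it reports that no interval decomposition exists. This is $O(m^2)$ Smith--normal--form computations and multiplications on matrices of polynomially bounded size and entry-complexity.

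For the construction phase (run when every cokernel is free) I would sweep $a=0,1,\dots,m$, maintaining a basis of $f_a$ in which each vector carries the left endpoint of the interval it currently generates, together with the list of finished intervals and their generating vectors. Start at $a=0$ with any basis of $f_0$, all births $0$. Going from $a$ to $a+1$: write the matrix of $f(a\le a+1)$ with columns the current basis of $f_a$ sorted by birth and rows the standard basis of $f_{a+1}$, then process the birth-blocks in increasing order; for each block, clear the rows already occupied by previously placed columns (using those columns --- these are exactly the column operations realizing automorphisms of the partial decomposition), put the cleared block into Smith normal form among itself, and record its pivot columns as new basis vectors of $f_{a+1}$. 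Each current generator of $f_a$ is thereby sent to $0$ (its interval ends at $a$) or to a fresh basis vector of $f_{a+1}$ (its interval continues, keeping its birth), while the unused basis vectors of $f_{a+1}$ open new intervals with birth $a+1$; output at $a=m$. This is $O(m)$ sweeps of $O(n)$ blocks, $n=\max_a\mathrm{rank}\,f_a$, each block a bounded number of matrix operations; an equivalent formulation reduces a single filtered matrix by the usual persistence-style column reduction, tracking lowest and leading nonzero entries.

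The substance --- and the main obstacle --- is the correctness of the construction phase, the effective form of the reverse implication of Theorem~\ref{thm:problem_statement}. Two points need care. First, the only column operations the reduction needs are the admissible ones; this rests on the computation that a nonzero morphism $I_{[p,q]}\to I_{[p',q']}$ exists only when $p'\le p\le q'\le q$, so that at a fixed birth level the admissible moves are precisely ``add a longer-lived generator to a shorter-lived one,'' the triangular pattern the block-sweep exploits. Second, at each block the cleared columns must span a free direct summand of $f_{a+1}$, so that their Smith normal form has unit pivots and genuinely produces new basis vectors; the span in question is $\im(f(t\le a+1))$ with $t$ the block's birth, so this is exactly the hypothesis of Theorem~\ref{thm:problem_statement} applied to the composites $f(t\le a+1)$ --- and crucially to the \emph{non-consecutive} ones, $t<a$, not just the adjacent maps. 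The same observation shows a failure of this step can occur only when some cokernel is non-free, so the two phases may in fact be fused, the construction doubling as the decision procedure. Carrying out this bookkeeping, and confirming that sorting by birth together with the admissible operations always suffices to clear every column, is the crux.
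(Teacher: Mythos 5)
Your proposal does not reconstruct the paper's Algorithm~\ref{alg:matrix}, and — as you yourself flag at the end — it leaves the decisive correctness argument unproved. Let me be concrete about both points. The decision phase you describe (form all composites $M_{a+1}\cdots M_b$ and test each Smith normal form, $O(m^2)$ tests) is what the paper packages separately as Theorem~\ref{thm:cokernelcomplexity}; it is \emph{not} what Algorithm~\ref{alg:matrix} does. The paper's algorithm interleaves decision and construction in a single left-to-right pass organized by \emph{death}, not birth: at each index $i$ it computes a kernel filtration matrix $X_i$ (a unimodular matrix whose leading column blocks successively span $\Ker[i,i+1]\subseteq\cdots\subseteq\Ker[i,m]=f_i$, obtained via repeated SNF as in Proposition~\ref{prop:kernel_filtration_matrix}), and then matches the pushforward of the previous basis $Y_{i-1}$ to that filtration block by block via the matrices $A_i^j=(X_i^j)^{-1}F_{i-1}Y_{i-1}^j$. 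The break on line~10 is the decision, and Lemma~\ref{lem:complementsrequire} is what promotes that local failure to a global non-decomposability certificate. Your birth-block sweep has no analogue of the kernel filtration matrix.

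The genuine gap is exactly the sentence you close with: you do not prove that "sorting by birth together with the admissible operations always suffices to clear every column," and this is where essentially all of the paper's effort goes (Lemmas~\ref{lem:complementssuffice}, \ref{lemma:lin_independence_A_i^j}, \ref{lem:complementsrequire}; Theorems~\ref{thm:complement_special}, \ref{thm:complement_general}, \ref{thm:comm_diagram_forward}). Moreover, the one-line justification you do offer is not quite right: you claim the cleared block's span "is $\im(f(t\le a+1))$" so freeness of that cokernel (Theorem~\ref{thm:problem_statement}) suffices, but the subspace that actually needs a complement at that step is not a bare image — it is a lattice element built from sums and intersections of images $\im[x,a+1]$ \emph{and} kernels $\Ker[a+1,y]$. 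Freeness of cokernels of the individual composites only gives complements for the images; one must separately establish that sums/intersections still admit complements (the content of Theorem~\ref{thm:complement_general} via Lemmas~\ref{lemma:intersection_closure} and \ref{lemma:complement_shrink}), and that pushforward and pullback restrict to lattice homomorphisms so that the inductively maintained blocks remain in that lattice (Theorems~\ref{thm:comm_diagram_forward}--\ref{thm:comm_diagram_backward}). A block-clearing generalization of the standard algorithm in the spirit you sketch is plausible — the paper itself flags it as future work in Section~\ref{sec:conclusion} — but as it stands the proposal identifies the problem without solving it, so it does not constitute a proof of Theorem~\ref{thm:algorithmic_problem}.
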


On the one hand, these results give theoretical and algorithmic insights into a class of problems --- involving the structural properties of persistence modules --- which is fundamental in character, and about which little is known.
Nevertheless, these problems have diverse connections within mathematics. 
For example, Mischaikow and Weibel \cite{conley_integers} related the Conley index \cite{Mischaikow1999TheCI}, an invariant for studying dynamical systems, to persistence modules\footnote{Mischaikow and Weibel studied the Conley index over integers by reinterpreting it as a $\Z[t]$-module. Similarly, persistence modules with integer coefficients can also be viewed as $\Z[t]$-modules.}; they noted that the Conley index had stronger distinguishing capabilities when using integer coefficients than when using coefficients in a field.
Additionally, the ability to reason about modules with PID coefficients (and in particular, $\Z$-coefficients) is important to several forms of dimension reduction involving circular and projective coordinates (see \cite{perea_multiscale_2018,perea_sparse_2020,de_silva_persistent_2009,scoccola_toroidal_2023}). 
More broadly, the persistence module of a filtration topological spaces (or other data) with PID coefficients can have richer algebraic structure than using field coefficients, and thus yield additional information.\footnote{Integer homology carries maximal information, as formalized by the Universal Coefficient Theorem \cite{hatcher}. Moreover, a majority of approaches to understanding interval decomposition of persistence modules with field coefficients rely on structure which does not generalize to PID coefficients, such as locality of the associated endomorphism ring \cite{botnan2020decomposition}.}
Understanding this richer structure has spurred several branches of active research that generalize persistence theory
\cite{patel_generalized_2018,patel_mobius_2023,botnan_introduction_2023,carlsson_zigzag_2010,carlsson_theory_2009}, which have natural synergy with the methods proposed here. 
For example, in the case where persistence modules (with integer coefficients) do decompose into intervals, the associated generalized persistence diagram \cite{patel_generalized_2018} can be computed with high efficiency. 

On the other hand, our results also address knowledge gaps in areas of TDA that are already well studied.
One example is the so-called field-choice problem, which asks, given a sequence of topological spaces and continuous maps $X_1 \to \cdots \to X_N$, whether the associated  persistence diagram is invariant to the choice of coefficient field in homology (see Section \ref{sec:motivation}). 
A more sophisticated variant of this problem asks whether it is possible to obtain a basis of integer cycle representatives which is universal with respect to coefficients (see Section \ref{sec:universal_cycle_reps}).
That is, for each point in a persistence diagram, we seek a cycle representative $z$ for the corresponding interval module in integer persistent homology, such that $z \otimes K$ is a cycle representative for the corresponding interval module in persistent homology with $K$ coefficients, for any coefficient field $K$.

\noindent\fbox{%
  \parbox{\textwidth}{%
\textbf{Where the main problems in PH have or have not been solved (simplex-wise filtrations)}
\\

Obayashi and Yoshiwaki provided algorithmic solutions to the problems described above, in the setting of simplex-wise (or, more generally, cell-wise) filtrations \cite{obayashi2020field,obayashi_field_2023}. These algorithms reduce to applying the standard matrix reduction algorithm for computing persistent homology to the boundary matrix $D$, yielding a so-called $R=DV$ decomposition (see Appendix \ref{sec:standardalgorithm}). The authors show that, when a filtration is simplex-wise, the associated persistence module splits into interval submodules over $\Z$ if and only if the trailing nonzero entry in each nonzero column of $R$ equals $1$ or $-1$. (Recall that the trailing nonzero entry in a vector $v = (v_1, \ldots, v_m)$ is the last entry $v_i$ which is nonzero.)
\\

However, many filtrations in TDA are not simplex-wise (or cell-wise); in these cases, the method of \cite{obayashi2020field,obayashi_field_2023} does not apply.

\begin{example}[Failure of $R=DV$ test, for non-cell-wise filtrations]
    \label{ex:simplexwisefailure}
    Let $C$ be a chain complex such that $C_0 = \Z$, $C_1 = \Z^2$, and $C_i = 0$ for $i \notin \{0,1\}$. Suppose that the boundary matrix $\partial: C_1 \to C_0$ is represented by the matrix $\begin{bmatrix}
        2 & 3
    \end{bmatrix}$, and let $\mathcal{K} = \{\mathcal{K}_0, \mathcal{K}_1\}$ be the filtration of chain complexes given by $\mathcal{K}_0 = 0$ and $\mathcal{K}_1 = C$. 
    Then the matrix $R$ produced by the standard algorithm will have a non-zero column with trailing coefficient equal to $2$, but the persistence module $H_k(\mathcal{K}; \Z)$ splits as a direct sum of interval modules of free abelian groups for all $k$.
\end{example}

\begin{example}[Failure of simplex-wise refinement]
    The Vietoris--Rips complex --- one of the most common models used to produce a filtered simplicial complex from point-cloud data --- fails the simplex-wise condition because multiple simplices can enter the filtration on the addition of a single edge. 
    This challenge is typically addressed by refining the filtration with a linear order on simplices (e.g., breaking ties for equal-diameter simplices with lexicographic order), but doing so introduces an artificial source of variance which can obscure the underlying data. 

In an extreme case, one might sample a point cloud where all points are equidistant; the corresponding Vietoris--Rips filtration $X: X_1 \subseteq X_2$ would consist of nested spaces, where $X_1$ is a disconnected set of vertices and $X_2$ is the complete simplex on that vertex set. 
By refining this object to a simplex-wise filtration $Y: Y_0 \subseteq \cdots \subseteq Y_N$, we can easily generate intermediate spaces with torsion (provided enough vertices are present), such as copies of the Klein bottle and $\mathbb{R}\mathbf{P}^2$. In this case, any algorithm that determines field-choice independence of persistence diagrams for simplex-wise filtrations will correctly report that $Y$ does depend on the choice of field coefficient. 
However, the persistence diagram of $X$ is clearly field-independent. 
 
 \emph{Aside:} One need not introduce torsion to break the decomposition; Section~\ref{sec:motivation}, for example, provides a simple example of a filtration in which all integer homology groups are free, and the corresponding persistence module fails to decompose nevertheless.

End-users of persistent homology software might reasonably ask how often simplex-wise refinement actually does lead to false negatives, in practice. 
To the best of our knowledge, this question remains unanswered. 
Given the complex nature of the problem, numerical experiments on a wide range of data types would likely be needed to  obtain practical answers for the end-user community as a whole.
Enlarging the foundation of available algorithms to conduct these experiments is one of the main objectives of the present work. 
Moreover, in those cases where refinement does break the decomposability of a filtration, we provide  a (unique, to the best of our knowledge) method (by extending Algorithm \ref{alg:matrix}, see Section \ref{sec:universal_cycle_reps}) to extract a basis of cycle representatives which is compatible with every choice of coefficient field. 
\end{example}
  }%
}

\subsection{Contributions}

We show that a persistence module of pointwise free and finitely-generated modules over a PID splits as a direct sum of interval modules if and only if every structure map has a free cokernel.
We then provide an algorithm that either (a) computes such a decomposition explicitly or (b) verifies that no such decomposition exists.  

\subsection{Organization}

This paper is organized as follows. We provide the necessary background on persistence modules and PH in Section \ref{sec:background}. 
We present motivating applications of our work in Section \ref{sec:applications}.
We present and briefly prove the uniqueness of interval decompositions, when they exist, in Section \ref{sec:uniqueness_of_interval_decompositions}.
We present the necessary and sufficient conditions for the existence of interval decompositions for pointwise free and finitely-generated persistence modules over PIDs in Section \ref{sec:interval_decomp}, and we prove necessity and sufficiency in Sections \ref{sec:interval_decomp} and \ref{sec:interval_decomp_hard}, respectively. 
We present an algorithm (Algorithm \ref{alg:matrix}) to compute an interval decomposition in the language of matrix algebra in Section \ref{sec:matrix_algorithm}; this algorithm is based on Algorithm \ref{alg:simple}, which we formulated to prove sufficiency in Section \ref{sec:interval_decomp_hard}. 
We discuss and summarize the time complexity of our algorithm, along with related algorithms, in Section \ref{sec:complexity_Summary}.
We conclude and discuss our results in Section \ref{sec:conclusion}.

In Appendix \ref{sec:interval_decomp_hard_old}, we present an alternate proof of the sufficiency that is presented in Section \ref{sec:interval_decomp_hard}.
We provide information on Smith normal form (SNF) and other relevant matrix-algebra facts in Appendix \ref{appendix:smith_normal_form}. 
We provide a discussion on the standard algorithm for computing persistent homology in Appendix \ref{sec:standardalgorithm}; this discussion includes some new extensions of known results concerning algorithmic decomposition of persistent homology modules. 

\section{Background}
\label{sec:background}

This section discusses the necessary background for persistence modules. See \cite{oudot_persistence_2015,scheck_2022_book} for a more thorough discussion on persistence modules and its relation to PH.

\subsection{Persistence modules}
\label{sec:persistence_modules}

Let $R$ be a ring. A (finitely-indexed) \emph{persistence module} with $R$-coefficients is a functor $f: \{0,1,\ldots,m\}\to R\textrm{-Mod}$, where $\{0,1,\ldots,m\}$ is a finite totally-ordered poset category and $R\textrm{-Mod}$ is the category of $R$-modules. 
That is, $f$ can be viewed as an algebraic structure that consists of (1) a finite collection $\{f_i\}_{i=0}^m$ of $R$-modules and (2) $R$-homomorphisms $\{f(i\leq j):f_i\to f_j\}_{0\leq i\leq j\leq m}$ (which we call \emph{structure maps}) between the aforementioned $R$-modules such that
$f(i\leq i)=\textrm{id}_{f_i}$ and $f(i\leq j)=f(s\leq j)\circ f(i\leq s)$ for $i\leq s\leq j$.

Given $b,d\in \{0,\ldots,m\}$ such that $0\leq b  < d \leq  m$, let $I_{R}^{b,d}$ denote the persistence module $\{(I_{R}^{b,d})_i\}_{i=0}^m$ with $R$-coefficients such that 

\begin{align*}
    (I_{R}^{b,d})_i &= \begin{cases}
        R\,, & b\leq i<d \\
        0\,, & \text{otherwise}\,,
    \end{cases}\\
    I_{R}^{b,d}(i \le j): (I_{R}^{b,d})_i\to (I_{R}^{b,d})_j &= \begin{cases}
        \textrm{id}_R\,, & b\leq i\leq j<d \\
        0\,, & \text{otherwise}\,.
    \end{cases}
\end{align*}

We refer to the persistence modules $I_{R}^{b,d}$ as \emph{interval modules}.  An \emph{interval decomposition} of $f$ is a decomposition of $f$ into interval modules: 

\[
f \cong \bigoplus_k I_{R}^{b_k,d_k}\,,
\]

where $0\leq b_k<d_k\leq m$. Another way to view an interval decomposition is to choose a basis $\beta_i$ for each $f_i$ in a persistence module such 
that every structure map $f(i\leq j):f_i\to f_j$ maps elements of $\beta_i$ to $0$ or to basis elements of $\beta_j$ in a one-to-one manner. 
That is, given $v\in\beta_i$, precisely one of the following two statements holds:
\begin{enumerate}
    \item $f(i\leq j)(v) = 0$;
    \item $f(i\leq j)(v)\in \beta_j$ and $f(i\leq j)(\tilde{v}) = f(i\leq j)(v)$ implies that $\tilde{v}=v$ for any $\tilde{v}\in\beta_i$.
\end{enumerate} 

Equivalently, an interval decomposition arises if we can find a basis $\beta_i$ for each $f_i$ such that the matrix representation of $f(i \le j)$ with respect to $\beta_i$ and $\beta_j$ is a matching matrix\footnote{A matching matrix is a 0-1 matrix with at most one nonzero entry per row and column.}, for all $i \le j$; in this sense, an interval decomposition can be regarded as a choice of bases which simultaneously ``diagonalize'' all maps $f(i\le j)$.
The structure theorem of Gabriel \cite{gabriel_unzerlegbare_1972} is a general result describing the indecomposable isomorphism classes of quiver representations.  In the context of persistence modules, it implies the following result.

\begin{theorem}[Gabriel, \cite{gabriel_unzerlegbare_1972}]
\label{thm:gabriel}
Suppose that $R$ is a field and that $f$ is a persistence module over $R$.  Then
\begin{itemize}
    \item $f$ admits an interval decomposition; 
    \item two direct sums of nonzero interval modules are isomorphic --- that is, $\bigoplus_{k=1}^M I_{R}^{b_k,d_k} \cong \bigoplus_{\ell=1}^N I_{R}^{b'_\ell,d'_\ell}$ --- if and only if $M=N$ and there exists a permutation $\pi$ such that $(b_1, d_1), \ldots, (b_M,d_M)$ equals $(b'_{\pi(1)}, d'_{\pi(1)}), \ldots, (b'_{\pi(M)},d'_{\pi(M)})$.
\end{itemize}
\end{theorem}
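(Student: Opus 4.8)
The plan is to prove both statements from scratch, treating this as the $A$-type special case of Gabriel's theorem; the bookkeeping involved recurs later in the paper. Throughout I use the standard reduction that an interval decomposition amounts to a choice of bases $\beta_0,\dots,\beta_m$ making every \emph{adjacent} structure map $f(i\le i+1)$ a matching matrix, since a composite of matching matrices is again one, so the non-adjacent maps then become matching matrices automatically. (An alternative route, which I would keep in reserve but not pursue in detail: identify $f$ with a nonnegatively graded module $M=\bigoplus_{i\ge 0}M_i$ over the PID $R[t]$, with $M_i=f_i$ for $i\le m$, $M_i=0$ for $i>m$, and $t$ acting by the structure maps, so that $t^{m+1}M=0$; applying the graded form of the structure theorem for finitely generated modules over a PID gives $M\cong\bigoplus_j R[t](-b_j)/(t^{e_j})$ — there is no free part since $M$ is torsion — and $R[t](-b)/(t^e)$ is exactly the interval module $I_{R}^{b,\,b+e}$, so both existence and uniqueness follow at once. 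I would carry out the elementary induction below instead.)

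\emph{Existence.} I would induct on $m$, the case $m=0$ being the splitting of a finite-dimensional vector space into lines. For the step, put $f':=f|_{\{0,\dots,m-1\}}$ and $g:=f(m-1\le m)$, and fix an interval decomposition of $f'$ from the inductive hypothesis. Its level-$(m-1)$ basis $\beta_{m-1}$ is indexed by the summands alive at $m-1$, each of the form $I_R^{b,m}$; this equips $\beta_{m-1}$ with a birth function, and the filtration $F_j:=\operatorname{span}\{v\in\beta_{m-1}:\operatorname{birth}(v)\le j\}$ coincides with the intrinsic subspace $\operatorname{im} f(j\le m-1)\subseteq f_{m-1}$. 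The heart of the argument is to re-choose the decomposition of $f'$ so that $\ker g$ becomes a sub-sum — equivalently, to build a basis of $f_{m-1}$ adapted to $F_\bullet$ that also contains a spanning set of $\ker g$. This is done greedily over $j=0,\dots,m-1$, at each stage extending a basis of $F_{j-1}$ to a basis of $F_j$ by first adjoining vectors from $\ker g\cap F_j$ (using that subspaces split off over a field); each new vector $w$ lies in $F_j\setminus F_{j-1}$ for a unique $j=:\operatorname{birth}(w)$, lifts to some $\hat w\in f_j$, and — after pushing that lift forward through the structure maps — produces a fresh interval summand born at $j$. A dimension count, together with the observation that $f(j'\le m-1)$ carries any dependence among the pushed-forward lifts back to one among the $w$'s, shows that these chains assemble into a genuine interval decomposition of $f'$ with the same birth--death multiset, in which $\ker g$ is spanned by a subset $N\subseteq\beta_{m-1}$. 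Finishing is then routine: each $v\in N$ extends to a summand $I_R^{b,m}$ of $f$ (zero at level $m$); the images $\{g(v):v\in\beta_{m-1}\setminus N\}$ are linearly independent and span $\operatorname{im} g$, yielding summands $I_R^{b,m+1}$; and a choice of complement of $\operatorname{im} g$ in $f_m$ supplies the remaining summands $I_R^{m,m+1}$. These exhaust $f$.

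\emph{Uniqueness.} This I would deduce from ranks of structure maps. Since $I_{R}^{b,d}(i\le j)$ has rank $1$ precisely when $b\le i\le j<d$ and rank $0$ otherwise, any decomposition $f\cong\bigoplus_k I_{R}^{b_k,d_k}$ satisfies $\operatorname{rank} f(i\le j)=\#\{k:b_k\le i\text{ and }j<d_k\}$ for all $i\le j$. The quantities $\operatorname{rank} f(i\le j)$ are isomorphism invariants of $f$, and the multiplicity of a given pair $(b,d)$ is recovered from them as the alternating combination $\operatorname{rank} f(b\le d-1)-\operatorname{rank} f(b-1\le d-1)-\operatorname{rank} f(b\le d)+\operatorname{rank} f(b-1\le d)$, where any rank with an out-of-range index is read as $0$. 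Hence every interval decomposition of $f$ has the same multiset of birth--death pairs, which is exactly the asserted classification.

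\emph{Main obstacle.} The only step requiring genuine care is the re-choice of the decomposition of $f'$: one must verify that a basis of $f_{m-1}$ adapted simultaneously to the intrinsic filtration $F_\bullet$ and to $\ker g$ lifts coherently down the whole tower $f_0\to\cdots\to f_{m-1}$ to a bona fide interval decomposition with unchanged barcode. Everything else is elementary linear algebra over a field and bookkeeping with matching matrices.
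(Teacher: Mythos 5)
The paper does not prove Theorem~\ref{thm:gabriel}; it is stated as a known consequence of Gabriel's classification of representations of $A_n$ quivers and simply cited to \cite{gabriel_unzerlegbare_1972}. There is therefore no in-paper proof to compare against. Your proposal supplies a self-contained proof, which is welcome, and both of the routes you sketch are standard. The alternative you keep in reserve --- packaging $f$ as a graded module over $R[t]$ and invoking the graded structure theorem over a PID --- is exactly the Zomorodian--Carlsson argument, which the paper cites elsewhere (\cite{zomorodian_computing_2005}). Your main, inductive argument is closer in spirit to the paper's own machinery in Section~\ref{sec:interval_decomp_hard}: Theorem~\ref{thm:algorithm} likewise proceeds by extending a basis adapted to nested kernels and images, the greedy choice playing the same role as your extension of a basis of $F_{j-1}$ to $F_j$ while sweeping in $\ker g \cap F_j$ first. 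Over a field every submodule is a summand, so the complement-existence hypotheses of Theorem~\ref{thm:complement_special} are automatic, which is why your argument works with no further cokernel assumptions. Your uniqueness argument via the rank invariant and the inclusion--exclusion formula $\mu(b,d)=\operatorname{rank}f(b\le d-1)-\operatorname{rank}f(b-1\le d-1)-\operatorname{rank}f(b\le d)+\operatorname{rank}f(b-1\le d)$ is also correct; the paper instead gets uniqueness over a PID by tensoring into the field of fractions and appealing to the field-case uniqueness (Theorem~\ref{thm:unique_interval_decomp}), so a direct field-case proof such as yours is needed somewhere, and the rank-counting route is clean.

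Two places deserve tightening. First, when you say the chains ``assemble into a genuine interval decomposition of $f'$,'' you should say explicitly that they assemble together with the unchanged summands of $f'$ that die before $m-1$: the chains account only for the intervals alive at $m-1$. The combined family is indeed a decomposition --- if you split $f' = K \oplus L$ with $K$ the sub-persistence-module of summands dying before $m-1$ and $L$ the summands alive at $m-1$, then $\ker f(j\le m-1) = K_j$, so adding an arbitrary lift of $w$ perturbs the canonical lift in $L_j$ only by an element of $K_j$, and the resulting vectors together with a basis of $K_i$ remain a basis of $f_i$ at every level --- but as written the claim reads as if the new chains alone exhaust $f'$, which is false whenever some interval dies early. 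Second, a small notational slip: intervals alive at the top index $m$ would be written $I_R^{b,m+1}$ in your conventions, which falls outside the paper's range $0\le b<d\le m$; the paper sidesteps this by assuming $f_0=f_m=0$ throughout, and your inductive step should adopt the same convention (or explicitly allow $d=m+1$) to avoid an off-by-one at the boundary.
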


By Theorem \ref{thm:gabriel}, when $R$ is a field, every persistence module with coefficients in $R$ admits an interval decomposition. 
However, interval decompositions for persistence modules with non-field coefficients are not guaranteed to exist (e.g., see the example in Section \ref{sec:motivation}).

\subsection{Persistent homology}

Interval decompositions of persistence modules provide a critical foundation 
to study
PH. 
In order to compute PH for a data set, one first constructs a \emph{filtration} $\mathcal{K}=\{\mathcal{K}_i\}_{i=0}^m$, which is a nested sequence of topological spaces that represent the data at different scales. 
As we increase the index $i$, which is called the \emph{filtration-parameter value}, new holes form and existing holes fill in. 
This information can be summarized in a \emph{persistence diagram}, which is a multiset $\textrm{PD}_n^F(\mathcal{K})=\{(b_k,d_k)\}_{k}$ of points in $\overline{\R}^2$ that detail the scale of the holes in the data set. Each point $(b_k,d_k)$ in a persistence diagram corresponds to a hole, where $b_k$ and $d_k$ are the filtration-parameter values that the hole is formed (birth filtration-parameter value) and is filled in (death filtration-parameter value), respectively.

In more detail, given a filtration $\mathcal{K}=\{\mathcal{K}\}_{i=0}^m$, one can take the $n$th homology with coefficients in a field $F$ to obtain a \emph{persistent homology module}, which is a persistence module $H_n(\mathcal{K};F)$ consisting of (1) a sequence $\{H_n(\mathcal{K}_i;F)\}_{i=0}^m$ of vector spaces (these are the \emph{homology groups} of $\mathcal{K}$ in dimension $n$) over $F$ that correspond to the topological spaces in $\mathcal{K}$ and (2) $F$-linear maps $\{\phi_i^j:H_n(\mathcal{K}_i;F) \to H_n(\mathcal{K}_j;F)\}_{0\leq i\leq j\leq m}$ (these are the structure maps) between homology groups that arise from the inclusion maps.
Given an interval decomposition 
\[
H_n(\mathcal{K};F) \cong \bigoplus_{k=1}^r I_{F}^{b_k,d_k}\,,
\]
the persistence diagram for $\mathcal{K}$ in dimension $n$ with $F$-coefficients is the multiset $\textrm{PD}_n^F(\mathcal{K})=\{(b_k,d_k)\}_{k=1}^r$.  The module $H_n(\mathcal{K};F)$ always admits an interval decomposition, as the coefficient ring is a field. 
This characterization formalizes the notion of a persistence diagram, which we discussed in Section \ref{sec:intro}.
A persistence module can also arise from a   \emph{persistent topological space}, which is a sequence $X_0 \to \cdots \to X_m$ of topological spaces and continuous maps.
Persistence modules of persistent topological spaces naturally arise when considering simplicial collapse for accelerating PH computations
\cite{boissonnat_strong_2018, boissonnat_edge_2020, glisse_swap_2022}. 

\subsection{Persistence modules beyond vector spaces}\label{sec:persistence_beyond}

The study of interval decompositions belongs to a large body of literature dedicated to understanding the isomorphism invariants of persistence modules \cite{carlsson_theory_2009, edelsbrunner_topological_2002, gabriel_unzerlegbare_1972}.
In \cite{patel_generalized_2018}, Patel generalized the notion of persistence diagrams to persistence modules of the form $f:\{0,\ldots,m\}\to C$, where $C$ is a symmetric monoidal category with images, or, under a slightly different construction,  where $C$ is an abelian category. 
These ideas were extended recently using Galois connections \cite{gulen_galois_2023} and  M\"obius homology \cite{patel_mobius_2023}. 
An important class of examples comes from 2-parameter persistence \cite{botnan_introduction_2023}; specifically, consider functors from the poset category of $\{0, \ldots, m\} \times \{0, \ldots, m\}$ into $C$. These objects are  common in applications and can be modeled as persistence modules of persistence modules, (i.e. functors $\{0, \ldots, m\} \to D$, where $D$ is the category of persistence modules with values in $C$).  

Researchers have also studied persistence beyond the setting of a single field.
In the Dionysus 2 package\footnote{See \url{https://www.mrzv.org/software/dionysus2/} for more information.}, Morozov implemented ``omni-field persistence,'' which computes persistent homology with coefficients across all fields $\mathbb{Z}_p$ where $p$ is a prime. 
(To our knowledge, no article has been written about omni-field persistence.)
Boissonnat and Maria \cite{boissonnat_computing_2019} introduced an algorithm that computes the persistent homology of a filtration with various coefficient fields in a single matrix reduction.
Obayashi and Yoshiwaki \cite{obayashi_field_2023} studied the dependence of persistence diagrams on the choice of field in the setting of filtrations (i.e., nested sequences of topological spaces). 
They introduced conditions on the homology of a filtration that are necessary and sufficient for the associated persistence diagrams to be independent of the choice of field\footnote{We will prove that these conditions are equivalent to the condition that the associated persistence module with $\Z$-coefficients splits as a direct sum of interval submodules (see Theorem \ref{thm:OurFieldIndependence}).} (see Section \ref{sec:motivation}). 
They produced an algorithm to verify field-choice independence by adapting a standard algorithm to compute persistent homology for simplex-wise filtrations\footnote{A \emph{simplex-wise filtration} on a simplicial complex $K = \{\sigma_1, \ldots,\sigma_m\}$ is a nested sequence of sub-simplicial complexes $\emptyset = \mathcal{K}_0 \subseteq \cdots \subseteq \mathcal{K}_m = K$ such that $\mathcal{K}_p = \{\sigma_1, \ldots, \sigma_p\}$ for all $p\in\{0,\ldots, m\}$.}.
They also conducted numerical experiments that demonstrate empirically that persistence diagrams rarely depend on the choice of field. 
Li et al. \cite{li_minimal_2021} reported similar empirical results.

There remain several gaps in the current literature. 
Although persistence diagrams have been defined for general families of functors $\{0,\ldots, m\} \to C$ \cite{patel_generalized_2018}, it is only known that interval decompositions exist when $C$ is a category of finite-dimensional vector spaces (see Theorem \ref{thm:gabriel}).  
While the present work does not explicitly study PH or persistence diagrams, our results concerning the fundamental structure of persistence modules have ramifications in these areas. 
In particular, we complement the results of \cite{obayashi_field_2023} by linking field-independence to the algebraic structure (concretely, the indecomposable factors) of a persistence module, rather than the topology of an underlying simplicial complex.
Such a perspective is important, as it helps one to relate topological ideas involving filtrations with algebraic ideas involving the corresponding persistence modules. 
By better understanding persistence modules, we also gain a better grasp of their relation to PH, as well as generalizations thereof.

\section{Motivating Applications}
\label{sec:applications}

\subsection{Pruning a persistence module}\label{sec:pruning}

Let $f:\{0,\ldots,m\}\to R\textrm{-Mod}$ be a persistence module that is pointwise free and finitely-generated. 
For a subset $P\subseteq \{0,\ldots,m\}$, we define the \emph{pruning} of $f$ at $P$ as the restricted persistence module $f|_{\{0,\ldots,m\}\setminus P}: \{0,\ldots,m\}\setminus P\to R\textrm{-Mod}$, which we denote $f\setminus P$.

Suppose $f:\{0,\ldots,m\}\to R\textrm{-Mod}$ is a persistence module that is pointwise free and finitely-generated that does not admit an interval decomposition. 
A natural question that arises is how can one prune $f$ to admit an interval decomposition. 
That is, how can one find a subset $P\subseteq \{0,\ldots,m\}$ such that $f\setminus P$ admits an interval decomposition?
More generally, how can one characterize all subsets $P\subseteq \{0,\ldots,m\}$ such that $f\setminus P$ admits an interval decomposition?

By Theorem \ref{thm:problem_statement}, we know that a persistence module $f$ admits an interval decomposition if and only if the cokernel of every structure map $f(i\leq j)$ is free. 
This motivates the following.
\begin{proposition}\label{prop:pruning}
    Let $f:\{0,\ldots,m\}\to R\textrm{-Mod}$ be a persistence module that is pointwise free and finitely-generated, and let $P\subseteq \{0,\ldots,m\}$.
    Then the pruned persistence module $f\setminus P$ admits an interval decomposition if and only if the structure map $f(i\leq j)$ has free cokernel for all $i,j\notin P$. 
\end{proposition}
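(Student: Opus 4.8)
The plan is to reduce Proposition~\ref{prop:pruning} to Theorem~\ref{thm:problem_statement} applied to the pruned persistence module $f \setminus P$. The subtlety is that $f \setminus P$ is a persistence module indexed by $\{0,\ldots,m\}\setminus P$, which is a finite totally-ordered poset but not literally of the form $\{0,\ldots,m'\}$; so the first step is to observe that Theorem~\ref{thm:problem_statement} (and the notion of interval module) transports verbatim along the unique order-isomorphism $\{0,\ldots,m\}\setminus P \to \{0,\ldots,m'\}$, where $m' = m - |P|$ (assuming, as is harmless, that $\{0,\ldots,m\}\setminus P$ is nonempty; the empty case is trivial). Under this relabeling $f\setminus P$ becomes an honest pointwise free and finitely-generated persistence module over $R$ in the sense of the theorem, so $f\setminus P$ admits an interval decomposition if and only if the cokernel of every one of \emph{its} structure maps is free.

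Next I would identify the structure maps of $f \setminus P$ explicitly. By definition of the restricted functor, for $i \le j$ with $i,j \notin P$, the structure map $(f\setminus P)(i \le j)$ is exactly $f(i \le j) : f_i \to f_j$ — the restriction of a functor to a sub-poset does not change the morphisms between the surviving objects, precisely because $f$ is a functor on the \emph{totally-ordered} poset and composition $f(i\le j) = f(s \le j)\circ f(i \le s)$ means the map assigned to the pair $(i,j)$ is intrinsic and does not reference any intermediate index. Hence the family of structure maps of $f\setminus P$ is exactly $\{f(i\le j) : i \le j,\ i,j \notin P\}$, and their cokernels are the cokernels $\Coker(f(i\le j))$ for $i,j \notin P$.

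Combining the two steps: $f\setminus P$ admits an interval decomposition $\iff$ every structure map of $f\setminus P$ has free cokernel (Theorem~\ref{thm:problem_statement}, via the relabeling) $\iff$ $f(i\le j)$ has free cokernel for all $i,j \notin P$. This is exactly the claim. I do not expect a serious obstacle here; the only points requiring care are the bookkeeping that Theorem~\ref{thm:problem_statement} is insensitive to the particular labeling of a finite totally-ordered index set, and the remark that interval modules of $\{0,\ldots,m\}\setminus P$ correspond under the order-isomorphism to interval modules in the usual sense — so that "admits an interval decomposition" is a well-defined, relabeling-invariant property. One should also note the degenerate case $|\{0,\ldots,m\}\setminus P| \le 1$, where both sides of the equivalence hold trivially (a persistence module on a one-element or empty poset is automatically a sum of interval modules, and the cokernel condition is vacuous). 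With these remarks in place the proof is a direct appeal to Theorem~\ref{thm:problem_statement}.
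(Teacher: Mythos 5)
Your proof is correct and takes essentially the same approach as the paper, namely a direct appeal to Theorem~\ref{thm:problem_statement} applied to $f\setminus P$; the paper's version is terser and leaves the relabeling of the index set implicit, while you spell it out.
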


\begin{proof}
    Suppose the structure map $f(i\leq j)$ has free cokernel for all $i,j\notin P$. 
    Then by Theorem \ref{thm:problem_statement}, we have that $f\setminus P$ admits an interval decomposition.
    Similarly, if $f\setminus P$ admits an interval decomposition, then  Theorem \ref{thm:problem_statement} tells us that $f(i\leq j)$ has a free cokernel for all $i,j\in \{0,\ldots,m\}\setminus P$. 
\qed \end{proof}

Proposition \ref{prop:pruning} gives a characterization of the sets $P$ such that $f\setminus P$ admits an interval decomposition. 
Namely, whenever the structure map $f(i\leq j)$ does not have a free cokernel, either $i\in P$ or $j\in P$. 

\subsection{Persistent homology and independence of field choice}
\label{sec:motivation}

Theorem \ref{thm:problem_statement} allows us to deduce that the persistence diagram of a filtered topological space is independent of the choice of coefficient field if and only if the associated  persistence module over $\Z$ splits as a direct sum of interval submodules (subject to the condition that homology in the next lowest dimension is torsion-free; see Theorem \ref{thm:OurFieldIndependence}). One direction of this result is unsurprising: if there is no torsion in the next lowest dimension, then  universal coefficients are ``natural,'' so one might reasonably expect field-independence to follow from a splitting of the module over $\Z$ into  interval modules. The converse is more surprising.  Our proof relies on a result of Obayashi and Yoshiwaki \cite[Theorem 1.9]{obayashi_field_2023}, which shows equivalence of conditions 3 and 4 in Theorem \ref{thm:OurFieldIndependence}.

\begin{theorem}
\label{thm:OurFieldIndependence}
    Let $\mathcal{K}=\{\mathcal{K}_i\}_{i=0}^m$ be a filtration of topological spaces.
    Suppose that the homology groups $H_{k-1}(\mathcal{K}_a;\Z)$ and $H_{k}(\mathcal{K}_a;\Z)$ with integer coefficients are free and finitely-generated for every $a$ such that $0\leq a\leq m$. Then the following are equivalent.
    \begin{enumerate}
        \item The persistence module $H_k(\mathcal{K};\Z)$ splits as a direct sum of interval submodules.    
        \item For all $a$ and $b$ such that $0 \le a \le b \le m$, the cokernel of the map $\phi_a^b:H_k(\mathcal{K}_a;\Z) \to H_k(\mathcal{K}_b;\Z)$ induced by the inclusion is free.
        \item (Obayashi and Yoshiwaki) For all $a$ and $b$ such that $0 \le a \le b \le m$, the relative homology group $H_k(\mathcal{K}_b,\mathcal{K}_a;\Z)$ is free. 
        \item (Obayashi and Yoshiwaki) The persistence diagram $\textrm{PD}_k^F(\mathcal{K})$ of dimension $k$ is independent of the choice of coefficient field $F$.   
    \end{enumerate}
\end{theorem}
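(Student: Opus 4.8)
The plan is to establish the theorem as a cycle of implications, leaning on Theorem~\ref{thm:problem_statement} for the equivalence $(1)\Leftrightarrow(2)$ and on \cite[Theorem~1.9]{obayashi_field_2023} for $(3)\Leftrightarrow(4)$, so that the real work is to connect the ``cokernel of $\phi_a^b$ is free'' condition in $(2)$ with the ``relative homology $H_k(\mathcal{K}_b,\mathcal{K}_a;\Z)$ is free'' condition in $(3)$. First I would observe that $(1)\Leftrightarrow(2)$ is immediate: the persistence module $H_k(\mathcal{K};\Z)$ is pointwise free and finitely generated by hypothesis, so Theorem~\ref{thm:problem_statement} applies verbatim, with the structure maps being exactly the $\phi_a^b$. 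Likewise $(3)\Leftrightarrow(4)$ is quoted directly from Obayashi--Yoshiwaki. So the theorem reduces to proving $(2)\Leftrightarrow(3)$, or equivalently, since $(3)$ is already tied to $(4)$, it suffices to prove one of $(2)\Rightarrow(3)$ and $(3)\Rightarrow(2)$ and then close the loop through the already-established equivalences --- though in practice I expect it is cleanest to prove $(2)\Leftrightarrow(3)$ outright.

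The key tool is the long exact sequence of the pair $(\mathcal{K}_b,\mathcal{K}_a)$ with $\Z$-coefficients:
\[
\cdots \to H_k(\mathcal{K}_a;\Z) \xrightarrow{\phi_a^b} H_k(\mathcal{K}_b;\Z) \xrightarrow{j_*} H_k(\mathcal{K}_b,\mathcal{K}_a;\Z) \xrightarrow{\partial} H_{k-1}(\mathcal{K}_a;\Z) \xrightarrow{\phi_a^b} H_{k-1}(\mathcal{K}_b;\Z) \to \cdots.
\]
From exactness at $H_k(\mathcal{K}_b,\mathcal{K}_a;\Z)$ we get a short exact sequence
\[
0 \to \Coker(\phi_a^b) \xrightarrow{\bar{j}_*} H_k(\mathcal{K}_b,\mathcal{K}_a;\Z) \xrightarrow{\partial} \Ker\big(\phi_a^b : H_{k-1}(\mathcal{K}_a;\Z)\to H_{k-1}(\mathcal{K}_b;\Z)\big) \to 0,
\]
where I have used that $\im(j_*) = \Ker(\partial)$ and $\im(j_*) \cong H_k(\mathcal{K}_b;\Z)/\im(\phi_a^b) = \Coker(\phi_a^b)$. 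Now the rightmost term is a submodule of $H_{k-1}(\mathcal{K}_a;\Z)$, which is free and finitely generated over the PID $\Z$ by hypothesis; hence $\Ker(\phi_a^b)$ restricted to dimension $k-1$ is itself free. A short exact sequence of modules over a PID with free right-hand term splits, so $H_k(\mathcal{K}_b,\mathcal{K}_a;\Z) \cong \Coker(\phi_a^b) \oplus (\text{free module})$. This gives both directions at once: if $\Coker(\phi_a^b)$ is free then $H_k(\mathcal{K}_b,\mathcal{K}_a;\Z)$ is a direct sum of free modules, hence free, giving $(2)\Rightarrow(3)$; conversely, if $H_k(\mathcal{K}_b,\mathcal{K}_a;\Z)$ is free, then its direct summand $\Coker(\phi_a^b)$ is free (a summand of a free module over a PID is projective, hence free), giving $(3)\Rightarrow(2)$.

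The main obstacle, and the place I would be most careful, is the torsion-freeness hypothesis on $H_{k-1}(\mathcal{K}_a;\Z)$: it is exactly what forces $\Ker(\phi_a^b)$ in dimension $k-1$ to be free, which in turn lets the short exact sequence split and lets ``$\Coker$ free'' and ``$H_k(\mathcal{K}_b,\mathcal{K}_a;\Z)$ free'' be genuinely equivalent rather than just one-directionally related. Without this hypothesis, $H_k(\mathcal{K}_b,\mathcal{K}_a;\Z)$ could pick up torsion from $\Ker(\phi_a^b)$ even when every cokernel is free, breaking $(2)\Rightarrow(3)$. I would also double-check the finite-generation bookkeeping (all modules in sight are finitely generated over $\Z$, so ``free'' means ``free of finite rank'' and there are no subtleties about infinitely generated projectives) and make sure the naturality of the long exact sequence is invoked correctly so that the connecting maps assemble compatibly across the index set --- though for the statement as written, working with one pair $(a,b)$ at a time suffices. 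Finally, I would note explicitly that $H_k(\mathcal{K};\Z)$ being pointwise free and finitely generated is what licenses the appeal to Theorem~\ref{thm:problem_statement}, and that this is guaranteed by the hypothesis that $H_k(\mathcal{K}_a;\Z)$ is free and finitely generated for every $a$.
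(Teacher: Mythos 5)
Your proposal is correct and follows essentially the same route as the paper: cite Obayashi--Yoshiwaki for $(3)\Leftrightarrow(4)$, cite Theorem~\ref{thm:problem_statement} for $(1)\Leftrightarrow(2)$, and prove $(2)\Leftrightarrow(3)$ via the long exact sequence of the pair and the resulting short exact sequence $0 \to \Coker(\phi_a^b) \to H_k(\K_b,\K_a;\Z) \to \im(\partial) \to 0$. The only negligible difference is that for $(3)\Rightarrow(2)$ the paper argues directly that $\Coker(\phi_a^b)$ injects into the free module $H_k(\K_b,\K_a;\Z)$ and is therefore free, whereas you route through the splitting and ``direct summand of free over a PID is free''; both are valid.
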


\begin{proof}
    Obayashi and Yoshiwaki\cite[Theorem 1.9]{obayashi_field_2023} showed the equivalences between 3 and 4. Theorem \ref{thm:problem_statement} gives the equivalence between 1 and 2. It remains to show the equivalence between 2 and 3. 

    We have the following long exact sequence for relative homology:
    $$\cdots\to H_k(\K_a;\Z)\stackrel{\phi_a^b}\to H_k(\K_b;\Z)\stackrel{j_*}\to H_k(\K_b,\K_a;\Z)\stackrel{\partial}\to H_{k-1}(\K_a;\Z)\to\cdots\,,$$
    from which we can extract the following short exact sequence 
    $$0\to \mathrm{coker}(\phi_a^b)\to H_k(\K_b,\K_a;\Z)\to \im(\partial)\to 0\,.$$

    Suppose that $\mathrm{coker}(\phi_a^b)$ is free.
    Because $\im(\partial)$ is free (it is a subgroup of the free abelian group $H_{k-1}(\K_a;\Z)$), this short exact sequence splits, which implies that $H_k(\K_b,\K_a;\Z)\cong \mathrm{coker}(\phi_a^b)\oplus\im(\partial)$. Because $\mathrm{coker}(\phi_a^b)$ is free, $H_k(\K_b,\K_a;\Z)$ must also be free. 

    Conversely, if $H_k(\K_b,\K_a;\Z)$ is free, then $\mathrm{coker}(\phi_a^b)$ must also be free because it injects into a free abelian group.
\qed \end{proof}

\begin{rmk} 
In a prior version of \cite{obayashi_field_2023} (which is available on arXiv \cite{obayashi2020field}), Obayashi and Yoshiwaki gave a constructive proof of the equivalence of conditions 1 and 3 in the special case in which $\{\mathcal{K}_i\}_{i=0}^m$ is a simplex-wise filtration on a simplicial complex \cite[Lemma 2]{obayashi2020field}.
The proof appears to generalize to any cell-wise filtration on a regular CW complex. 
However it relies on the assumption that $\mathcal{K}_i$ contains exactly $i$ distinct simplices (or more generally, cells), for all $i$.
\end{rmk}

\begin{rmk}
    The omni-field persistence algorithm\footnote{See Section \ref{sec:persistence_beyond}.}, which is a part of the persistent-homology package Dionysus 2, is especially relevant to Theorem \ref{thm:OurFieldIndependence}. In particular, it computes persistent homology over all fields $\Z_p$, where $p$ is a prime, and indicates when the persistent diagram of a filtration depends on one's choice field.
    To the best of our knowledge, this algorithm can be applied only to simplex-wise filtrations.    
\end{rmk}

We illustrate the relation between the field-choice independence of a persistence diagram of a filtration and the existence of an interval decomposition of the corresponding persistence module with $\Z$ coefficients. 
Consider the following persistent topological space $\mathcal{K}$: 
$$\bullet \hookrightarrow S^1 \stackrel{f}\to S^1 \hookrightarrow \C \,,$$
where $\bullet$ is a single point, $S^1 = \{z\in\C|\|x\|=1\}$, and $f(z) = z^2$.
By taking the first homology over a field $F$, we obtain the following persistence module: 
$$0\to F\stackrel{\cdot2}\to F\to 0\,.$$
If $\textrm{char}(F)=2$, the multiplication-by-$2$ map is the zero map, so we can write 
\[
(0\to F\stackrel{\cdot2}\to F\to 0) \cong (0\to F \to 0 \to 0) \oplus (0\to 0\to F\to 0)\,.
\]
Otherwise, the multiplication-by-$2$ map is an isomorphism and the persistence module in question is an interval module. 
Therefore, when indexing from $0$, the corresponding persistence diagram $\textrm{PD}_1^F(\mathcal{K})$  for first homology depends on the characteristic of the coefficient field:
\[
\textrm{PD}_1^F(\mathcal{K}) = \begin{cases}
    \{(1,2),(2,3)\}\,, & \textrm{char}(F)=2\,, \\
    \{(1,3)\}\,, & \text{otherwise}\,.
\end{cases}
\]

When taking the first homology of $\mathcal{K}$ with integer coefficients, the corresponding persistence module is 
\[
0\to \Z\stackrel{\cdot2}\to\Z\to 0\,,
\]
which does not decompose into interval modules.

\begin{rmk}
    Proposition \ref{prop:pruning}, which we formulated in Section \ref{sec:pruning}, discusses how one can prune a persistence module such the resultant persistence module admits an interval decomposition. 
    By Theorem \ref{thm:OurFieldIndependence}, identifying parameter values to prune, such that the resultant persistence module has an interval decomposition, also allows us to pinpoint where field-choice independence fails. 
    In particular, the parameter values to prune a persistence module in order to yield an interval decomposition are precisely the parameter values that field-choice independence fails.
    That is, removing these indices would result in a filtration whose persistence diagram is independent of field choice.    
\end{rmk}

\subsection{Universal Cycle Representatives}\label{sec:universal_cycle_reps}

In Section \ref{sec:intro}, we introduced the idea of a \emph{universal cycle basis} for persistent homology. Here we make that idea precise and explain how such a basis can be obtained with the methods introduced in this paper.

Suppose that $\mathcal{K}=\{\mathcal{K}_i\}_{i=0}^m$ is a nested sequence of finite cell complexes (simplicial, cubical, or otherwise). Let $Z$ be a collection of cycles in $\mathcal{K}_m$ with integer coefficients, and for each $0 \le i \le m$, let $Z_i$ denote the set of cycles $z \in Z$ such that (i) $z$ is a cycle in $\mathcal{K}_i$, and (ii) $z$ is not a boundary in $\mathcal{K}_i$. 
We further require that each $z\in Z$ must belong in some $Z_i$.
Thus $\{ [z] : z \in Z_i\}$ is a well-defined set of integer homology classes in $\mathcal{K}_i$. We call $Z_i$ an integer cycle basis for $\mathcal{K}_i$ if the homology group $H_k(\mathcal{K}_i; \Z)$ is free and $\{ [z] : z \in Z_i\}$ is an integer cycle basis for $H_k(\mathcal{K}_i; \Z)$. We call the larger set $Z$ a \emph{integer cycle basis} for the persistent homology module $H_{k}(\mathcal{K}; \Z)$ as a whole if, for each $i$, the set  $Z_i$ is an integer cycle basis of $\mathcal{K}_i$.

The notion of an integer cycle basis extends naturally to the notion of an \emph{$F$-cycle basis}, for any coefficient ring $F$; we simply replace coefficients in homology with $F$. It's natural to ask how $\Z$-cycle bases and $F$-cycle bases might relate to one another, for the same underlying filtered simplicial complex $\mathcal{K}$. The answer to this question can be complex in general, but there is a special case where the answer is straightforward. Suppose that the map $\phi_i: H_{k}(\mathcal{K}_i;\Z) \otimes F \to H_{k}(\mathcal{K}_i;F)$ is an isomorphism for all $i$ (e.g., if the integer homology for each $\K_i$ is torsion-free). These isomorphisms are then natural, in the sense that the following diagram commutes.

\[\begin{tikzcd}
	{H_{k}(\mathcal{K}_0;\Z)} & \cdots & {H_{k}(\mathcal{K}_m;\Z)} \\
	{H_{k}(\mathcal{K}_0;\Z) \otimes F} & \cdots & {H_{k}(\mathcal{K}_m;\Z) \otimes F} \\
	{H_{k}(\mathcal{K}_0;F) } & \cdots & {H_{k}(\mathcal{K}_m;F) }
	\arrow[from=1-1, to=1-2]  
	\arrow["\otimes F",from=1-1, to=2-1]      
	\arrow[from=1-2, to=1-3]
	\arrow["\otimes F",from=1-3, to=2-3]  
	\arrow[from=2-1, to=2-2]
	\arrow["\cong"', from=2-1, to=3-1]
	\arrow["\phi_0", from=2-1, to=3-1]    
	\arrow[from=2-2, to=2-3]
	\arrow["\cong"', from=2-3, to=3-3]
	\arrow["\phi_m", from=2-3, to=3-3]      
	\arrow[from=3-1, to=3-2]
	\arrow[from=3-2, to=3-3]
\end{tikzcd}\]

The vertical maps in this diagram are basis-preserving, so if $Z$ is an integer cycle basis for $\mathcal{K}$, then $Z \otimes F = \{ z \otimes F : z \in Z\}$ is an $F$-cycle basis for $\mathcal{K}$.  
Note that if $z = \sum_i z_i \sigma_i$ is a linear combination of simplices, then the vector $z \otimes F$ is simply $\sum_i (z_i \otimes F) \sigma_i$.

Under these conditions, we call the cycle basis $Z$ \emph{universal}, in the sense that $\{ z \otimes F : z \in Z\}$ is a cycle basis for the persistent homology module $H_{k}(\mathcal{K};F)$ for every coefficient field $F$. 
Obayashi and Yoshiwaki provided in \cite[Lemma 2]{obayashi2020field} (which is a preprint version of \cite{obayashi_field_2023}) an algorithm to compute such a cycle basis if one exists, under the condition that $\mathcal{K}$ be a simplex-wise filtration.\footnote{The approach in \cite[Lemma 2]{obayashi2020field}  is sketched in the proof of Theorem \ref{thm:yoshiwakiAlgorithmCorrect}.}

The present paper provides a simple alternative which works for any filtration, unconstrained by the number of simplices added at each step. 
The process begins by obtaining a matrix representation of the persistence module $H_{k}(\mathcal{K}_0;\Z) \to \cdots \to H_{k}(\mathcal{K}_m;\Z)$. 
This can be accomplished by applying standard methods --- primarily Smith Normal matrix factorization --- to obtain, first, a basis for each homology group, and second, a matrix representation $M_i$ for each homomorphism $H_{k}(\mathcal{K}_i;\Z) \to H_{k}(\mathcal{K}_{i+1};\Z)$. 

Our next step is to check for the presence of torsion, which precludes the existence of a universal cycle basis in following two cases: (1) when $H_k(\mathcal{K}_{i}; \Z)$ has torsion for some $i$ and (2) when $H_{k-1}(\mathcal{K}_{i}; \Z)$ has torsion for some $i$. 
In the first case, when $H_k(\mathcal{K}_{i}; \Z)$ has torsion for some $i$, then the homology group $H_k(\mathcal{K}_{i}; \Z)$ is not free, and has no basis. 
Therefore no universal cycle basis exists for the persistence module in question. 
In the second case, suppose $H_{k-1}(\mathcal{K}_{i}; \Z)$ has torsion modulo $p$ for some index $i$ and some prime $p$. 
In this case, the map $H_{k}(\mathcal{K}_i;\Z) \xrightarrow{\otimes \mathbb{F}_p} H_{k}(\mathcal{K}_i;\mathbb{F}_p)$ is not an isomorphism, where $\mathbb{F}_p$ denotes the finite field of order $p$. In particular, if $H_{k-1}(\mathcal{K}_{i}; \Z)$ has torsion modulo $p$, then by the Universal Coefficient Theorem \cite{hatcher}, the dimension of $H_{k}(\mathcal{K}_i;\Z)$ as a free module over $\Z$ will be strictly less than the dimension of $H_{k}(\mathcal{K}_i;F_p)$ as a vector space over $F_p$. Thus no universal cycle basis can exist. 

Our final step is to apply the decomposition algorithm developed in this paper. If torsion is not present in dimensions $k$ or $k-1$, then by the Universal Coefficient Theorem \cite{hatcher}, the map $H_k(\mathcal{K}_{i}; \Z) \otimes F\to H_k(\mathcal{K}_{i}; F)$ is an isomorphism for every choice of coefficient field $F$. 
In this case, we can apply 
Algorithm \ref{alg:matrix} to the persistence module represented by the sequence of matrices $\bullet \xrightarrow{M_0} \bullet \xrightarrow{M_1} \cdots \xrightarrow{M_{m-1}} \bullet$ to obtain a sequence of bases corresponding to an interval decomposition (if one exists). 
If this module has an interval decomposition (i.e., if the cokernel of every structure map is free), then the algorithm will return a corresponding collection of bases; each basis vector represents a linear combination of homology classes $\sum_z \alpha_z [z]$, which is represented by a cycle $\sum_z \alpha_z z$.  If, for each interval submodule $ \cdots \to 0 \to V_i \xrightarrow{\cong} \cdots \xrightarrow{\cong} V_j \to 0 \to \cdots $ in the interval decomposition, we denote the cycle representative corresponding to $V_i$ by $z(V)$, then the resulting collection of cycles $\{ z(V): V~\text{is a summand of the interval decomposition}\}$ is an integer cycle basis for $H_{k}(\mathcal{K};\Z)$.

\begin{rmk}
    We have shown that a universal cycle basis exists for a filtered cell complex whenever 1) the integer homology in dimensions $k-1$ and $k$ are free and 2) the structure maps of the persistence module (with integer coefficients) have free cokernel. 
    These conditions also guarantee that the corresponding persistence diagram is independent of field choice (see Theorem \ref{thm:OurFieldIndependence}).
\end{rmk}

\section{Uniqueness of Interval Decompositions}
\label{sec:uniqueness_of_interval_decompositions}

In this section, we prove that interval decompositions of persistence modules that are pointwise free and finitely-generated over a PID are unique when they exist. 

\begin{framed}
We fix a PID $R$ and a persistence module $f$ that is pointwise free and finitely-generated over $R$, which we will use for the rest of this paper.
We additionally assume, without loss of generality, that $f_0=f_m=0$.
\end{framed}

\begin{lemma}\label{lemma:interval_decomp_tensor}
    Let $f:\{0,\ldots,m\}\to R\textrm{-Mod}$ be a persistence module that is pointwise free and finitely-generated over $R$. Suppose that $f$ admits an interval decomposition 
    \[
    f\cong \bigoplus_k I_{R}^{b_k,d_k}\,.
    \]
    Let $F=\mathrm{Frac}(R)$ be the field of fractions of $R$, and consider the persistence module $f\otimes F$, which is the persistence module over $F$ that is given by 
    \begin{align*}
        (f\otimes F)_a &=f_a\otimes F\,, \\
        (f\otimes F)(a\leq b) &= f(a\leq b)\otimes \mathrm{id}_F\,.
    \end{align*}
    Then $\bigoplus_k I_{F}^{b_k,d_k}$ is an interval decomposition of $f\otimes F$. 
\end{lemma}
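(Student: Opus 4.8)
The plan is to show that tensoring with the field of fractions $F = \mathrm{Frac}(R)$ commutes with both taking direct sums and forming interval modules, so that applying $- \otimes F$ to the given interval decomposition of $f$ yields an interval decomposition of $f \otimes F$ directly.

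\smallskip

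\textbf{Step 1: Tensoring is additive.} First I would recall that $- \otimes_R F$ is an additive functor from $R\textrm{-Mod}$ to $F\textrm{-Mod}$, and that it commutes with finite direct sums; since $f$ is finitely-indexed and pointwise finitely-generated, the direct sum $\bigoplus_k I_R^{b_k,d_k}$ appearing in the hypothesis is finite. Hence, applying $- \otimes F$ pointwise and to each structure map, we obtain
\[
(f \otimes F) \cong \Bigl(\bigoplus_k I_R^{b_k,d_k}\Bigr) \otimes F \cong \bigoplus_k \bigl(I_R^{b_k,d_k} \otimes F\bigr)
\]
as persistence modules over $F$. I would note that this isomorphism of persistence modules is exactly the statement that tensoring a natural isomorphism of functors $\{0,\ldots,m\} \to R\textrm{-Mod}$ by $F$ yields a natural isomorphism of functors into $F\textrm{-Mod}$, which is a routine naturality check.

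\smallskip

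\textbf{Step 2: Identify $I_R^{b,d} \otimes F$ with $I_F^{b,d}$.} Next I would verify directly from the definition of the interval module that $I_R^{b,d} \otimes F \cong I_F^{b,d}$. Pointwise, $(I_R^{b,d})_i \otimes F$ equals $R \otimes_R F \cong F$ when $b \le i < d$ and $0 \otimes_R F = 0$ otherwise, matching $(I_F^{b,d})_i$. For the structure maps, $\mathrm{id}_R \otimes \mathrm{id}_F = \mathrm{id}_F$ and $0 \otimes \mathrm{id}_F = 0$, so $I_R^{b,d}(i \le j) \otimes \mathrm{id}_F$ agrees with $I_F^{b,d}(i \le j)$ under these identifications. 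This identification is clearly natural, so $I_R^{b,d} \otimes F \cong I_F^{b,d}$ as persistence modules over $F$.

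\smallskip

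\textbf{Step 3: Conclude.} Combining Steps 1 and 2 gives
\[
f \otimes F \cong \bigoplus_k \bigl(I_R^{b_k,d_k} \otimes F\bigr) \cong \bigoplus_k I_F^{b_k,d_k}\,,
\]
which is the desired interval decomposition of $f \otimes F$. I do not anticipate a serious obstacle here: the result is essentially a bookkeeping statement about the exactness and additivity of $- \otimes_R F$ together with the concrete description of interval modules. The only point requiring minor care is ensuring that all the isomorphisms involved are genuinely isomorphisms of persistence modules (i.e., natural in the poset index), rather than merely pointwise isomorphisms of $F$-modules; this follows because each isomorphism used is induced by applying the functor $- \otimes_R F$ to a morphism or by the canonical identification $R \otimes_R F \cong F$, both of which are natural.
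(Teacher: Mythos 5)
Your proof is correct and follows essentially the same route as the paper: observe that $I_R^{b,d}\otimes F \cong I_F^{b,d}$ and then use the fact that tensor product distributes over (finite) direct sums. You simply spell out the pointwise verification and naturality checks in more detail than the paper's terse two-line argument.
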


\begin{proof}
    Observe that $I_R^{b,d}\otimes F = I_F^{b,d}$. 
    Because tensor product distributes over direct sum, we have
    \begin{align*}
        f\otimes F &\cong \left(\bigoplus_k I_R^{b_k,d_k}\right)\otimes F \\
        &= \bigoplus_k (I_R^{b_k,d_k}\otimes F)\\
        &= \bigoplus_k I_F^{b_k,d_k}\,,
    \end{align*}
    as desired.
\qed \end{proof}

\begin{theorem}\label{thm:unique_interval_decomp}
    Let $f:\{0,\ldots,m\}\to R\textrm{-Mod}$ be a persistence module that is pointwise free and finitely-generated over $R$, and suppose $f$ admits an interval decomposition. Then the associated multiset of intervals is unique.
\end{theorem}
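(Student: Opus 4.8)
The plan is to reduce everything to the field case, where uniqueness has already been recorded in Theorem \ref{thm:gabriel}. Suppose $f$ admits two interval decompositions, say
\[
f \;\cong\; \bigoplus_{k=1}^M I_R^{b_k,d_k} \qquad\text{and}\qquad f \;\cong\; \bigoplus_{\ell=1}^N I_R^{b'_\ell,d'_\ell},
\]
and note that every summand appearing is a \emph{nonzero} interval module, since each of our intervals satisfies $b<d$. The goal is to show $M=N$ and that the two tuples of pairs agree up to a permutation.

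First I would pass to the field of fractions $F=\mathrm{Frac}(R)$ and apply Lemma \ref{lemma:interval_decomp_tensor} to each of the two decompositions. Since tensoring with $F$ is additive and sends $I_R^{b,d}$ to $I_F^{b,d}$, this produces two interval decompositions of one and the same persistence module $f\otimes F$ over the field $F$:
\[
\bigoplus_{k=1}^M I_F^{b_k,d_k} \;\cong\; f\otimes F \;\cong\; \bigoplus_{\ell=1}^N I_F^{b'_\ell,d'_\ell}.
\]
All summands on both sides remain nonzero interval modules.

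Next I would invoke the uniqueness clause of Theorem \ref{thm:gabriel}: because $F$ is a field, two direct sums of nonzero interval modules over $F$ are isomorphic if and only if they have the same number of summands and their interval data coincide up to a permutation. Applied to the display above, this yields $M=N$ and a permutation $\pi$ with $(b_k,d_k)=(b'_{\pi(k)},d'_{\pi(k)})$ for all $k$. Equivalently, the multiset $\{(b_k,d_k)\}_{k=1}^M$ equals the multiset $\{(b'_\ell,d'_\ell)\}_{\ell=1}^N$, which is precisely the asserted uniqueness.

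There is essentially no real obstacle here beyond bookkeeping; the two facts that must be used carefully are already in hand, namely that $-\otimes F$ carries $I_R^{b,d}$ to $I_F^{b,d}$ and commutes with finite direct sums (Lemma \ref{lemma:interval_decomp_tensor}), and that each interval module involved is nonzero, so that the uniqueness statement of Gabriel's theorem applies verbatim. One could alternatively attempt a direct, ring-theoretic proof that reads the interval data off intrinsic invariants of $f$ (for instance ranks of compositions $f(a\le b)$), but the reduction to the field case is shorter and entirely self-contained given the results established above.
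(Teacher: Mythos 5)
Your proposal is correct and follows the paper's proof essentially verbatim: tensor both decompositions with $F=\mathrm{Frac}(R)$ via Lemma \ref{lemma:interval_decomp_tensor} to get two interval decompositions of $f\otimes F$, then invoke the uniqueness clause of Gabriel's theorem (Theorem \ref{thm:gabriel}). No gaps, and no meaningful divergence from the paper's argument.
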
 

\begin{proof}
    Let $F=\mathrm{Frac}(R)$ be the field of fractions of $R$.
    Suppose that $\bigoplus_{k=1}^{r} I_R^{b_k,d_k}$ and $\bigoplus_{\ell=1}^{r'}   I_R^{b_\ell',d_\ell'}$ are both interval decompositions of $f$. By Lemma \ref{lemma:interval_decomp_tensor}, both $\bigoplus_{k=1}^{r} I_F^{b_k,d_k}$ and $\bigoplus_{\ell=1}^{r'} I_F^{b_\ell',d_\ell'}$ are interval decompositions 
    of $f\otimes F$. Theorem \ref{thm:gabriel} guarantees that interval decompositions of persistence modules over $F$ are unique up to permutation, so $r=r'$ and there exists a permutation $\pi$ such that  $(b_k,d_k)=(b_{\pi(k)}',d_{\pi(k)}')$ for every $k$.
\qed \end{proof}

\begin{rmk}
    Theorem \ref{thm:unique_interval_decomp} extends to persistence modules with coefficients in any integral domain.
\end{rmk}

\section{Proof of Theorem \ref{thm:problem_statement}: Necessity}
\label{sec:interval_decomp}

We now prove the main theoretical result, which we repeat here for ease of reference. 

{
\renewcommand{\thetheorem}{\ref{thm:problem_statement}}
\begin{theorem}
Let $f$ be a persistence module that is pointwise free and finitely-generated over $R$. 
Then $f$ splits into a direct sum of interval modules if and only if the cokernel of every structure map $f(a\leq b)$ is free.
\end{theorem}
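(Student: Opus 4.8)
The plan is to treat the two implications separately; this section establishes only necessity (the ``only if'' direction), while sufficiency — the substantive half — is deferred to Section~\ref{sec:interval_decomp_hard}. For necessity, suppose $f \cong \bigoplus_k I_R^{b_k,d_k}$. The first step is to observe that an isomorphism of persistence modules identifies, for each pair $a \le b$, the structure map $f(a \le b)$ with the direct sum of morphisms $\bigoplus_k I_R^{b_k,d_k}(a \le b)$ — up to isomorphism of morphisms, via the commuting squares — and hence identifies their cokernels. Since forming the cokernel commutes with finite direct sums (images add componentwise, so $\mathrm{coker}(\bigoplus_k \phi_k) \cong \bigoplus_k \mathrm{coker}(\phi_k)$), it then suffices to show that $\mathrm{coker}(I_R^{b_k,d_k}(a \le b))$ is free for each $k$.

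The remaining step is a short case analysis according to whether $a$ and $b$ lie in $[b_k,d_k)$. The module $(I_R^{b_k,d_k})_a$ equals $R$ exactly when $b_k \le a < d_k$, and is $0$ otherwise, and likewise for $b$; the corresponding structure map is $\mathrm{id}_R$ when both are $R$, is the zero map $0 \to R$ when only the target is $R$, and is the zero map (into $0$ or between two copies of $0$) in the remaining cases. So in every case $\mathrm{coker}(I_R^{b_k,d_k}(a\le b))$ is either $0$ or $R$, both free. A direct sum of free modules is free, so $\mathrm{coker}(f(a \le b)) \cong \bigoplus_k \mathrm{coker}(I_R^{b_k,d_k}(a\le b))$ is free, which is what we wanted.

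I do not anticipate any real obstacle in this direction: it uses only that cokernels are invariant under isomorphism of morphisms, that they commute with finite direct sums, and that free modules are closed under direct sums — indeed, the PID hypothesis is not even needed here. The genuine difficulty lies entirely in the converse, namely converting the hypothesis ``every structure-map cokernel is free'' into an actual simultaneous ``diagonalization'' of all the maps $f(a \le b)$, i.e., a choice of compatible bases realizing an interval decomposition. That is the hard part, and it is why the sufficiency direction is developed constructively and separately (culminating in Algorithm~\ref{alg:simple} and its matrix form, Algorithm~\ref{alg:matrix}).
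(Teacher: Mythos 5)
Your proof of necessity is correct, and it is essentially the paper's own argument in a slightly different dressing. The paper's Lemma~\ref{Lemma:easy_direction} writes $f = \bigoplus_k h^k$, identifies $\im(f(a\le b)) = \bigoplus_{k\in X} h^k_b$ where $X$ is the set of intervals containing $[a,b]$, and concludes $\Coker(f(a\le b)) \cong \bigoplus_{k\in Y} h^k_b$ where $Y$ is the set of intervals alive at $b$ but not at $a$; your $\bigoplus_k \mathrm{coker}(I_R^{b_k,d_k}(a\le b))$ is literally that same sum, since each summand is $R$ exactly when $k\in Y$ and $0$ otherwise. The one genuine (if minor) difference: the paper closes by citing ``submodules of a free module over a PID are free,'' whereas you compute the cokernel directly as a direct sum of copies of $R$ and $0$, which, as you correctly note, shows the PID hypothesis is not actually used in this direction — it works over any commutative ring. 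Your framing via ``cokernels commute with finite direct sums of morphisms'' makes this observation transparent and is a mild improvement in clarity over the paper's phrasing, though both are elementary and correct. As you anticipate, all the content of Theorem~\ref{thm:problem_statement} lives in the sufficiency direction, which the paper proves in Section~\ref{sec:interval_decomp_hard}.
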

\addtocounter{theorem}{-1}
} 

\begin{framed}
    The condition that $f(a \le b)$ has a free cokernel is equivalent to several other well-known conditions, such as:
    \begin{itemize}
        \item The module $f_b$ splits as a direct sum $I \oplus C$ for some submodule $C\subseteq f_b$, where $I$ is the image of $f(a \le b)$.
        \item In the language of Smith normal form (see Appendix \ref{appendix:smith_normal_form}), the map $f(a \le b)$ has unit elementary divisors.  That is, if $A$ is the matrix representation of $f(a\le b)$ with respect to some pair of bases for $f_a$ and $f_b$, and if $SAT = D$ is the Smith normal form of $A$, then the nonzero diagonal entries of $D$ are units.
    \end{itemize}
\end{framed}

The proof of Theorem \ref{thm:problem_statement} has two halves: necessity and sufficiency.  Necessity is straightforward and is established in Lemma \ref{Lemma:easy_direction}.  Sufficiency requires greater effort and is established in Section \ref{sec:interval_decomp_hard}.

\begin{lemma}[Necessity]
\label{Lemma:easy_direction}
If $f$ splits into a direct sum of interval modules, then the cokernel of every structure map $f(a\leq b)$ is free. 
\end{lemma}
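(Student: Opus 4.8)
The plan is to exploit the fact that an interval decomposition gives, for each index $a$, a basis $\beta_a$ of $f_a$ such that every structure map sends basis elements either to $0$ or bijectively onto basis elements. Concretely, write $f \cong \bigoplus_k I_R^{b_k,d_k}$ and identify $f_a$ with $\bigoplus_{k : b_k \le a < d_k} R$, so that $\beta_a$ is indexed by the set $S_a = \{k : b_k \le a < d_k\}$. For $a \le b$, the structure map $f(a \le b)$ in these coordinates is the matrix that sends the $k$-th basis vector of $f_a$ to the $k$-th basis vector of $f_b$ when $k \in S_a \cap S_b$ (i.e.\ when $b_k \le a$ and $b < d_k$), and to $0$ when $k \in S_a \setminus S_b$. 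Since $S_a \cap S_b \subseteq S_b$, this is precisely a coordinate projection followed by inclusion onto a subset of the basis vectors of $f_b$.

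From this description the image of $f(a \le b)$ is the free submodule of $f_b$ spanned by $\{ v \in \beta_b : v \text{ indexed by } k \in S_a \cap S_b\}$, and the cokernel $f_b / \im f(a \le b)$ is freely generated by the images of the remaining basis vectors $\{ v \in \beta_b : v \text{ indexed by } k \in S_b \setminus S_a\}$. So I would state the short argument: choose bases realizing the interval decomposition; observe that each structure map is a ``matching matrix'' (at most one nonzero entry, equal to $1$, per row and column — exactly the reformulation given in the excerpt); conclude that $f_b = \im f(a \le b) \oplus C$ where $C$ is the span of the unmatched basis vectors; hence $\Coker f(a \le b) \cong C$ is free and finitely generated. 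One should also note this is compatible with the boxed equivalent conditions already listed in the excerpt — it is literally the first of those bullet points.

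Alternatively, and perhaps more cleanly for the write-up, I would argue functorially: since $\Coker(-)$ is an additive functor (cokernels commute with finite direct sums), $\Coker(f(a\le b)) \cong \bigoplus_k \Coker\big(I_R^{b_k,d_k}(a \le b)\big)$. For a single interval module, $I_R^{b_k,d_k}(a\le b)$ is either the zero map between zero modules, the identity on $R$, the zero map $R \to 0$, or the zero map $0 \to R$; in every case the cokernel is either $0$ or $R$, hence free. A finite direct sum of free modules is free, so $\Coker(f(a\le b))$ is free. This reduces the whole lemma to the trivial per-interval computation and a remark that cokernel is additive.

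I do not anticipate a serious obstacle here — this is the ``easy direction'' and both arguments are essentially bookkeeping. The only mild care needed is to make sure the decomposition is used in the form ``simultaneous choice of compatible bases'' (as spelled out in Section~\ref{sec:persistence_modules}) so that the structure maps really are matching matrices, and to invoke that finitely generated free modules over a PID behave well (direct summands and submodules of free modules are free, a fact recalled via Smith normal form in the appendix). I would present the additive-functor version as the main proof and perhaps mention the explicit-basis version in a sentence, since it also makes transparent exactly which summand of $f_b$ realizes the cokernel.
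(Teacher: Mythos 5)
Your proposal is correct, and both versions of your argument are essentially the paper's proof. The paper writes $f = h^1 \oplus \cdots \oplus h^r$, partitions the summands nonzero at $b$ into those nonzero at $a$ (index set $X$) and those zero at $a$ (index set $Y$), observes $\im(f(a \le b)) = \bigoplus_{k \in X} h^k_b$, so $f_b = \im(f(a\le b)) \oplus \bigoplus_{k\in Y} h^k_b$ and the cokernel is the free module $\bigoplus_{k\in Y} h^k_b$; this is exactly your first (basis/matching-matrix) argument, and your additive-functor version --- $\Coker$ commutes with finite direct sums, so it reduces to the trivial per-interval cokernel computation --- is a mildly tidier repackaging of the same bookkeeping rather than a genuinely different route.
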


\begin{proof}
Suppose that we can write $f=h^1\oplus h^2\oplus \cdots\oplus h^r$, where each $h^k$ is an interval module. 
Fix $a$ and $b$ such that $0\leq a\leq b\leq m$, and let $N=\{k:h^k_b\neq0\}$ be the indices of the interval modules that are nonzero at $b$. 
Let $X=\{k\in N:h^k_a\neq 0\}$ (i.e., the set of indices of intervals including $[a,b]$) and $Y=\{k\in N:h^k_a= 0\}$ (i.e., the set of indices of intervals that are $0$ at $a$). In particular, $\bigoplus_{k\in X} h^k_b=\im(f(a\leq b))$, because $X$ is precisely the set of indices for the intervals that contain $[a,b]$.

We note the following: 
\begin{align*}
    f_b &= \bigoplus_{k=1}^r h^k_b \\
    &= \bigoplus_{k\in N} h^k_b \\
    &=\bigoplus_{k\in X} h^k_b \oplus \bigoplus_{k\in Y} h^k_b\\
    &=\im(f(a\leq b)) \oplus \bigoplus_{k\in Y} h^k_b\,.
\end{align*}

Because $f_b$ is free and submodules of free modules are free, it follows that $\Coker(f(a\leq b))=f_b/\im(f(a\leq b))\cong \bigoplus_{k\in Y} h^k_b$ is free. 
\qed \end{proof}

\section{Proof of Theorem \ref{thm:problem_statement}: Sufficiency}
\label{sec:interval_decomp_hard}

We now build the framework to prove sufficiency of Theorem \ref{thm:problem_statement}. That is, we prove the following theorem.

\begin{theorem}[Sufficiency]
\label{thm:hard_direction_result}
    Let $f$ be a persistence module that is pointwise free and finitely-generated over $R$, and suppose that the cokernel of every structure map $f(a\leq b)$ is free. Then $f$ splits into a direct sum of interval modules.
\end{theorem}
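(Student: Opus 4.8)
The plan is to prove sufficiency by induction on $m$, peeling off one interval summand at a time. Since $f_0 = f_m = 0$ by the standing assumption, the content lies in the "middle" of the module. The key observation is that the hypothesis "every structure map has free cokernel" is very strong: in particular, taking $b$ maximal over which a given generator survives, we should be able to locate, for each position $a$, a set of basis elements that we can track forward along the structure maps in a diagonalizable way. I would first recast the free-cokernel hypothesis using the equivalent formulations highlighted in the framed box: for each pair $a \le b$, the image $I = \im f(a\le b)$ is a direct summand of $f_b$, equivalently $f(a \le b)$ has unit elementary divisors. This means that in suitable bases every structure map is represented by a matrix whose nonzero part is an identity block — but the difficulty is to choose such bases \emph{simultaneously and compatibly} across all of $\{0,\dots,m\}$.

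The concrete strategy I would pursue: work left to right and build the bases $\beta_0, \beta_1, \dots, \beta_m$ inductively, maintaining the invariant that $f(i \le i+1)$ sends each $v \in \beta_i$ either to $0$ or to an element of $\beta_{i+1}$, and moreover (this is the subtle part) that the "new" basis vectors at step $i+1$ — those of $\beta_{i+1}$ not hit from $\beta_i$ — span a complement of $\im f(i \le i+1)$ that is compatible with all the \emph{earlier} images as well, so that intervals that die do so cleanly. Concretely, suppose $\beta_i$ has been chosen; let $K_i \subseteq \beta_i$ be the elements killed by $f(i \le i+1)$ and $S_i = \beta_i \setminus K_i$ the survivors. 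The survivors map to linearly independent elements $f(i\le i+1)(S_i) \subseteq f_{i+1}$ spanning $\im f(i \le i+1)$; since this image is a direct summand (free cokernel!), we may extend $f(i\le i+1)(S_i)$ to a basis of $f_{i+1}$ by adjoining new vectors $N_{i+1}$, and set $\beta_{i+1} = f(i\le i+1)(S_i) \sqcup N_{i+1}$. By construction $f(i \le i+1)$ is a matching matrix in these bases. The real work is to check that this local choice also diagonalizes \emph{all} composite maps $f(i \le j)$ for $j > i+1$, i.e. that the intervals are genuinely well-defined — one needs each composite to again be a matching matrix, which follows because composites of matching matrices are matching matrices, but one must also verify that no basis element "revives": an element killed at some step stays zero thereafter, which is automatic from functoriality once $f(i \le i+1)$ kills it.

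I expect the main obstacle to be the freedom in choosing the complements $N_{i+1}$: an arbitrary choice of complement to $\im f(i \le i+1)$ in $f_{i+1}$ need not be compatible with the complement one is forced to maintain for the structure maps $f(i' \le i+1)$ with $i' < i$. In other words, one must choose the complements coherently across the whole poset, not just one map at a time. I would handle this by strengthening the induction hypothesis: rather than tracking a single basis $\beta_{i+1}$, I would track the filtration of $f_{i+1}$ by the subspaces $\im f(i' \le i+1)$ for $i' = i, i-1, \dots, 0$, show each is a direct summand of the next (again using free cokernels — this should follow from the "$f_b = I \oplus C$" formulation applied repeatedly, together with the fact that a direct summand of a direct summand is a direct summand), and choose $\beta_{i+1}$ adapted to this entire flag at once. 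Since the paper indicates this is proved via Algorithm~\ref{alg:simple}, I would then package the above inductive construction as that algorithm and argue its correctness, deferring the matrix-level bookkeeping (Smith normal form computations realizing "extend to a basis of a free module through a direct summand") to the appendix material on SNF. The uniqueness already established in Theorem~\ref{thm:unique_interval_decomp} is a useful sanity check but is not needed for the existence argument itself.
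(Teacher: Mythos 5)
Your high-level outline --- construct bases $\beta_0,\dots,\beta_m$ from left to right, pushing the surviving vectors of $\beta_i$ forward along $f(i\le i+1)$ and then extending to a basis of $f_{i+1}$ by adjoining a complement --- is exactly the skeleton of the paper's Algorithm~\ref{alg:simple}, and your remark that composites of matching matrices are matching matrices is the right reason the single-step invariant suffices. But you have misidentified the compatibility constraint that the new vectors $N_{i+1}$ must satisfy. You propose to choose $N_{i+1}$ adapted to the flag of \emph{images} $\im f(i'\le i+1)$ with $i'\le i$; that constraint is vacuous, because all of those images are contained in $\im f(i\le i+1)$, which the pushforward of $\beta_i$ already spans, so \emph{every} complement $N_{i+1}$ is automatically compatible with them. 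The constraint that actually matters is forward-looking: $\beta_{i+1}$ must be adapted to the \emph{kernel} filtration $0 \subseteq \Ker[i+1,i+2] \subseteq \cdots \subseteq \Ker[i+1,m]=f_{i+1}$, and your proposal never mentions it.

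This gap is not cosmetic; without it the induction does not close. You assert that the survivors map to linearly independent elements of $f_{i+1}$, but $S_i = \beta_i \setminus \Ker[i,i+1]$ consists only of the basis vectors not \emph{individually} killed, and their images are linearly independent precisely when $\spann(S_i)\cap\Ker[i,i+1]=0$, i.e., when $\beta_i\cap\Ker[i,i+1]$ is already a basis of $\Ker[i,i+1]$ --- an invariant your construction does not maintain. Concretely, if at some stage $\Ker[i+1,j]$ is spanned by $e_1-e_2$ where $e_1,e_2\in N_{i+1}$ and neither $e_1$ nor $e_2$ lies in $\Ker[i+1,j]$, then $f(i+1\le j)(e_1)=f(i+1\le j)(e_2)\neq 0$, so $f(i+1\le j)$ is not a matching matrix in your bases, and the composition argument never gets started. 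The paper's fix is to carry the invariant that $\beta_i$ contains a basis of every $\Ker[i,j]$ (Theorem~\ref{thm:algorithm}), choosing the new vectors at position $i$ as bases $\gamma^i_j$ of complements $K^i_j$ of $\big(\im[i-1,i]+\Ker[i,j-1]\big)\cap\Ker[i,j]$ inside $\Ker[i,j]$. The nontrivial use of the free-cokernel hypothesis is exactly here: Theorem~\ref{thm:complement_special} shows that $\Ker[a,y]+\im[x,a]$ has a complement in $f_a$, and Lemmas~\ref{lemma:intersection_closure} and~\ref{lemma:complement_shrink} propagate this to the intersections and sub-summands needed, guaranteeing the $K^i_j$ exist. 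Your ``direct summand of a direct summand'' idea is in the right spirit, but the relevant sums and intersections mix kernels with images, not iterated images alone.
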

 
To prove Theorem \ref{thm:hard_direction_result}, we introduce Algorithm \ref{alg:simple}, which produces an interval decomposition for a persistence module $f$ whenever the cokernel of every structure map is free. 
That is, we inductively construct a basis $\beta_i$ for each $f_i$ that collectively yield an interval decomposition.  
Throughout this section, we fix a (finitely-indexed) pointwise free and finitely-generated persistence module $f:\{0,\ldots,m\}\to R$ such that every structure map $f(a\leq b)$ has free cokernel. 
Recall our assumption that, without loss of generality, $f_0=f_m=0$ (see Section \ref{sec:uniqueness_of_interval_decompositions}). 

For a non-negative integer $n$, let $\mathbf{n}=\{1,\ldots,n\}$. 
This definition holds even when $n = 0$; in this case, $\textbf{0} = \emptyset$.  
We note that $f$ is indexed by $\{0,1,\ldots,m\}$, which can also be written as $\mathbf{m} \cup \{0\}$.
Because $f_0 = f_m = 0$, every map of form $f(0 \le p)$ or $f(p \le m)$ is a zero map.

\begin{definition}\label{def:Im_and_Ker}
    Fix $a\in\mathbf{m}$, and let $x,y\in\mathbf{m}\cup\{0\}$. Define $$\mathrm{Ker}[a,y]=\begin{cases}
        \ker(f(a\leq y))\,, & a\leq y \\ 
        0\,, & \text{otherwise}
    \end{cases}$$ and $$\mathrm{Im}[x,a]=\begin{cases}
        \im(f(x\leq a))\,, & x\leq a \\
        f_a\,, & \text{otherwise}\,.
    \end{cases}$$
\end{definition}

Note that $\Ker[a,m]=f_a$ because $f_m = 0$ and $\Ker[a,0]=0$ because $0 \le a$.  Thus, for each $a$, we have a nested sequence of submodules $0 = \Ker[a,0] \su \cdots \su \Ker[a,m]= f_a$ beginning with $0$ and ending with $f_a$ that are the kernels of the maps $f(a\leq x)$. Similarly, the submodules $\im[x,a]$ form a nested sequence $0 = \im[0,a] \su \cdots \su \im[m,a]= f_a$ of submodules beginning with $0$ and ending with $f_a$ that are the images of the maps $f(x\leq a)$.

\subsection{Complements}
\label{sec:complements}

We now discuss the existence of complements for modules that are relevant for Algorithm \ref{alg:simple}.

\begin{definition}
    We say that $C$ \emph{complements} $A \su B$ (or that $C$ is a \emph{complement} of $A$ in $B$) whenever $A \oplus C = B$.
\end{definition}

For convenience, we adopt the notation 
$$
\mathrm{Coker}[x,a]=f_a/\mathrm{Im}[x,a]\,.
$$

For the remainder of this section, we assume that the cokernel of every structure map $f(a \le b)$ is free.  Thus,
$$
f_b\cong \mathrm{Im}[a,b]\oplus \Coker[a,b]\,.
$$

\begin{rmk}
    It is readily checked that every $\Ker[a,y]$ and $\im[x,a]$ has a complement in $f_a$. 
\end{rmk}

\begin{rmk}
    In our proof of Lemma \ref{lemma:complement_equivalence}, we will use the fact that a finitely-generated module over a PID is free if and only if it is torsion-free.
\end{rmk}

\begin{lemma}\label{lemma:complement_equivalence}
Let $G$ be a free and finitely-generated module over $R$, and let $A\subseteq G$. Then the following are equivalent: 
\begin{enumerate}
    \item[(1)] $A$ has a complement in $G$;
    \item[(2)] $G/A$ is torsion-free;
    \item[(3)] if $mz\in A$ for $z\in G$ and nonzero $m\in R$, then $z\in A$.
\end{enumerate}
\end{lemma}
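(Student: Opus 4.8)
The plan is to establish the cycle of implications $(1)\Rightarrow(2)\Rightarrow(3)\Rightarrow(1)$. In fact $(2)$ and $(3)$ are just two phrasings of a single condition, so the genuine content lies in connecting either of them to $(1)$.

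First I would dispatch the bookkeeping. For $(2)\Leftrightarrow(3)$: a nonzero torsion element of $G/A$ is, by definition, a coset $z+A$ with $z\notin A$ such that $m(z+A)=A$ for some nonzero $m\in R$, i.e.\ an element $z\in G\setminus A$ with $mz\in A$; condition $(3)$ says that no such $z$ exists, which is exactly the assertion that $G/A$ is torsion-free. For $(1)\Rightarrow(2)$: if $G=A\oplus C$ then $G/A\cong C$, and $C$ is a submodule of the free (hence torsion-free) module $G$, so $C$, and therefore $G/A$, is torsion-free.

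The substantive step is $(3)\Rightarrow(1)$ (equivalently $(2)\Rightarrow(1)$). Here I would use that $G$ is finitely generated over the PID $R$, so its quotient $G/A$ is finitely generated as well; being torsion-free by $(2)$, it is therefore free, by the structure theorem for finitely-generated modules over a PID recalled in the preceding remark, and in particular projective. Consequently the short exact sequence $0\to A\hookrightarrow G\xrightarrow{\pi}G/A\to 0$ splits: lifting $\id_{G/A}$ through the surjection $\pi$ gives a section $s\colon G/A\to G$ with $\pi\circ s=\id_{G/A}$, and then $G=\ker\pi\oplus\im s=A\oplus s(G/A)$, so $s(G/A)$ is a complement of $A$ in $G$.

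The only real obstacle is this last implication, and it rests entirely on the cited structure theorem: finite generation is essential, since $\Q$ is a torsion-free but non-free (indeed non-projective) $\Z$-module, and the splitting argument would fail for it. Everything else is definition-chasing.
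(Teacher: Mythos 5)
Your proposal is correct and follows essentially the same route as the paper: both establish $(1)\Rightarrow(2)$ by observing that a complement is free (being a submodule of a free module over a PID), both treat $(2)\Leftrightarrow(3)$ as definitional unwinding, and both obtain $(2)\Rightarrow(1)$ from the structure theorem (torsion-free plus finitely generated over a PID implies free). The only cosmetic difference is that you phrase $(2)\Rightarrow(1)$ via projectivity of free modules and the splitting lemma, whereas the paper unwinds that same argument by hand, lifting a basis of $G/A$ through the projection and verifying directly that the lifted elements span a complement.
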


\begin{proof}
This equivalence (1)--(3) follows from standard properties of finitely-generated modules over a PID.

To see that (1) implies (2), we note that if $G=A\oplus B$, then $G/A\cong B$, which is free (and therefore torsion-free), as it is a submodule of a free module over a PID. 

We now show that (2) implies (1). 
Let $\phi:G\to G/A$ denote the projection map. Let $\{\tilde{g}_i\}_{i\in I}$ be a basis of $G/A$ (we can choose such a basis because $G/A$ is torsion-free and therefore free), and choose $g_i\in \phi^{-1}(\tilde{g_i})$. 
Let $B\subseteq G$ be the submodule generated by $\{g_i\}_{i\in I}$. 
We claim that $A\oplus B = G$. Any nontrivial linear combination of the elements $g_i$ yields a nontrivial linear combination of elements $\tilde{g}_i$ under $\phi$; this linear combination is nonzero because the $\tilde{g}_i$ are linearly independent.  
Therefore, no nontrivial linear combination of elements $g_i$ lies in $\ker(\phi)=A$, so $A\cap B=0$. 
We now show that $A+B=G$. 
Take $g\in G$, and write $\phi(g)=\sum_{i\in I} c_i\tilde{g}_i=\sum_{i\in I} c_i\phi(g_i)$ for some $c_i\in R$. 
From this, we see that $g=\sum_{i\in I} c_ig_i+a$ for $a\in\ker(\phi)=A$, as desired. 

We now show that (2) implies (3). 
Suppose that $mz\in A$ with nonzero $m\in R$ and $z\in G$. 
If $z\in G\setminus A$, then $z+A\in G/A$ is nonzero, but $mz+A=0$.
This implies that $z+A$ is in the torsion of $G/A$, which is a contradiction. 

We now show that (3) implies (2). 
If $G/A$ has torsion, then there is a nonzero $z+A$ such that $mz+A=0$ for a nonzero $m\in R$. 
This implies that $z\in G\setminus A$ and that $mz\in A$.
This shows the desired result by contrapositive, thus completing the proof.
\qed \end{proof}

\begin{theorem}\label{thm:complement_special}
Fix $a,b\in\mathbf{m}$ such that $a\leq b$. For any $x,y\in\mathbf{m}\cup\{0\}$, the submodule $\Ker[a,y]+\mathrm{Im}[x,a] \subseteq f_a$ has a complement in $f_a$. 
\end{theorem}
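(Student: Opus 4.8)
The plan is to use the criterion from Lemma \ref{lemma:complement_equivalence}, which says a submodule $A \subseteq f_a$ has a complement in $f_a$ precisely when it is \emph{saturated}, in the sense that $mz \in A$ with $m \neq 0$ and $z \in f_a$ forces $z \in A$. So the goal reduces to showing that $\Ker[a,y] + \mathrm{Im}[x,a]$ is saturated in $f_a$. We already know (see the Remark after the definition of $\Coker[x,a]$, together with Lemma \ref{lemma:complement_equivalence}) that each of $\Ker[a,y]$ and $\mathrm{Im}[x,a]$ is individually saturated; the content of the theorem is that their \emph{sum} is again saturated, which is not a formal consequence of saturation of the summands in general. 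We will exploit the specific structure of these two submodules as a kernel and an image of structure maps in $f$.

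First I would dispose of degenerate cases: if $a \not\le y$ then $\Ker[a,y] = 0$ and the statement reduces to saturation of $\mathrm{Im}[x,a]$, which is already known; if $x \not\le a$ then $\mathrm{Im}[x,a] = f_a$ and the statement is trivial. So assume $x \le a \le y$. Now take $z \in f_a$ and nonzero $m \in R$ with $mz \in \Ker[a,y] + \mathrm{Im}[x,a]$; write $mz = k + i$ with $k \in \Ker[a,y]$, $i \in \mathrm{Im}[x,a]$. The key step is to apply the structure map $g := f(a \le y)$ to this equation: since $g(k) = 0$, we get $g(mz) = m \cdot g(z) = g(i)$, so $m \cdot g(z) \in g(\mathrm{Im}[x,a]) = \mathrm{Im}[x,y]$. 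Because $\mathrm{Im}[x,y]$ is a saturated submodule of $f_y$ (it is an image of a structure map, with free cokernel), we conclude $g(z) \in \mathrm{Im}[x,y]$, i.e. $g(z) = g(i')$ for some $i' \in \mathrm{Im}[x,a]$ — here I use that $g$ maps $\mathrm{Im}[x,a]$ onto $\mathrm{Im}[x,y]$. Then $z - i' \in \ker(g) = \Ker[a,y]$, so $z = (z - i') + i' \in \Ker[a,y] + \mathrm{Im}[x,a]$, which is exactly saturation. Finally, invoke Lemma \ref{lemma:complement_equivalence} (equivalence of (3) and (1)) to conclude the submodule has a complement in $f_a$.

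The main obstacle — and the place requiring care — is the step asserting that $g(z) \in \mathrm{Im}[x,y]$ can be \emph{lifted} to $g(z) = g(i')$ with $i' \in \mathrm{Im}[x,a]$; this is immediate once one observes that $g$ restricted to $f_a$ sends $\mathrm{Im}[x,a] = \im(f(x \le a))$ onto $\im(f(x \le y)) = \mathrm{Im}[x,y]$ by functoriality ($f(x \le y) = f(a \le y) \circ f(x \le a)$), so any element of $\mathrm{Im}[x,y]$ is already of the form $g(i')$ with $i'$ in the image of $f(x\le a)$. One should also double-check the edge case $y = m$, where $g$ is the zero map and $\Ker[a,m] = f_a$, in which case the claim is trivial; and $x = 0$, where $\mathrm{Im}[0,a] = 0$ and the statement reduces to saturation of $\Ker[a,y]$. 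With these checks in place the argument is complete.
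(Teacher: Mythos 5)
Your proof is correct, but it takes a genuinely different route from the paper's. The paper works at the level of quotient modules: it chains the first and third isomorphism theorems to exhibit $\frac{f_a}{\Ker[a,y]+\mathrm{Im}[x,a]}$ as a direct summand of $\Coker[x,y]$ (via the intermediate isomorphism $\frac{\mathrm{Im}[a,y]}{\mathrm{Im}[x,y]} \cong \frac{f_a}{\Ker[a,y]+\mathrm{Im}[x,a]}$), concludes that the quotient is free because it embeds into a free module, and then invokes Lemma \ref{lemma:complement_equivalence}. You instead verify condition (3) of Lemma \ref{lemma:complement_equivalence} (saturation) directly by an element chase: push $mz = k + i$ forward under $g = f(a \le y)$, use saturation of $\mathrm{Im}[x,y]$ in $f_y$ (which is where the hypothesis that $\Coker[x,y]$ is free enters), then lift back along $g$ using the surjection $\mathrm{Im}[x,a] \twoheadrightarrow \mathrm{Im}[x,y]$ given by functoriality, and land $z$ in $\Ker[a,y]+\mathrm{Im}[x,a]$. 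Both arguments hinge on the same hypothesis (freeness of $\Coker[x,y]$) and the same Lemma \ref{lemma:complement_equivalence}, but yours avoids the isomorphism theorems entirely and is arguably more elementary and more transparent about where the freeness hypothesis is used; the paper's version is more structural and makes visible the isomorphism $\frac{\mathrm{Im}[a,y]}{\mathrm{Im}[x,y]} \cong \frac{f_a}{\Ker[a,y]+\mathrm{Im}[x,a]}$, which has independent interest (it is precisely the "pair group" phenomenon that reappears in the saecular-lattice appendix). Your treatment of the degenerate cases ($y \le a$, $a \le x$, and the boundary values $x=0$, $y=m$) matches the paper's and is complete.
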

\begin{proof}
We first note that if $y\leq a$, then $\Ker[a,y]+\mathrm{Im}[x,a]=\mathrm{Im}[x,a]$, which we know has a complement in $f_a$ that is isomorphic to $\Coker[x,a]$. 

If $a\leq x$, then $\Ker[a,y]+\mathrm{Im}[x,a] = f_a$ has a trivial complement $C = 0$ in $f_a$.

We now need to prove the result for $x\leq a \leq y$. Because $f_a$ is free, it is sufficient to show that the quotient $\frac{f_a}{\Ker[a,y]+\mathrm{Im}[x,a]}$ is free. 

Let $C$ be a complement of $\mathrm{Im}[a,y]$ in $f_y$.  Then we have a chain of isomorphisms.

\begin{align}
    \Coker[x,y]&\cong \frac{f_y}{\mathrm{Im}[x,y]} 
    \notag
    \\
    &= \frac{\mathrm{Im}[a,y]\oplus C } 
    {\mathrm{Im}[x,y]}
    \notag
    \\
    &\cong \frac{\mathrm{Im}[a,y]}{\mathrm{Im}[x,y]}\oplus C 
    \label{eq_greg_tag_a}
    \\
    &\cong \frac{f_a}{\Ker[a,y]+\mathrm{Im}[x,a]} \oplus C\,. 
    \label{eq_greg_tag_b}
\end{align}

The last two isomorphisms (i.e., \eqref{eq_greg_tag_a} and \eqref{eq_greg_tag_b}) must be proved.

The isomorphism \eqref{eq_greg_tag_a} holds because $\mathrm{Im}[x,y] \subseteq \mathrm{Im}[a,y]$.  

For the isomorphism \eqref{eq_greg_tag_b}, recall that $x\leq a \leq y$ and that the following hold by definition:
\begin{itemize}
    \item $\Ker[a,y] = \ker(f(a\leq y))$; 
    \item $\mathrm{Im}[x,a]=\mathrm{Im}(f(x\leq a))=f(x\leq a)(f_a)$;
    \item $\mathrm{Im}[x,y]=\mathrm{Im}(f(x\leq y))=f(x\leq y)(f_x) = f(a\leq y)( f(x\leq a)(f_x))$;
    \item $\mathrm{Im}[a,y]=\mathrm{Im}(f(a\leq y))=f(a\leq y)(f_a)$. 
\end{itemize}

We now show the following chain of isomorphisms:
\begin{align}
\frac{\mathrm{Im}[a,y]}{\mathrm{Im}[x,y]} &\cong \frac{f(a\leq y)(f_a)}{f(a\leq y)( f(x\leq a)(f_x))} \label{eq:complement_special_1} \\
&\cong \frac{f(a\leq y)(f_a)}{f(a\leq y) \Big ( f(x\leq a)(f_x)+\ker(f(a\leq y)) \Big)} \label{eq:complement_special_2}\\
&\cong \frac{f_a/\ker(f(a\leq y))}{\Big(f(x\leq a)(f_x)+\ker(f(a\leq y))\Big)/\ker(f(a\leq y))} \label{eq:complement_special_3}\\
&\cong \frac{f_a}{f(x\leq a)(f_x)+\ker(f(a\leq y))} \label{eq:complement_special_4}\\
&\cong \frac{f_a}{\Ker[a,y]+\mathrm{Im}[x,a]}\,.\label{eq:complement_special_5}
\end{align}
The isomorphism \eqref{eq:complement_special_2} follows from the inequality
$$f(a\leq y)( f(x\leq a)(f_x))=f(a\leq y) \Big( f(x\leq a)(f_x)+\ker(f(a\leq y)) \Big )\,.$$
The isomorphism \eqref{eq:complement_special_3} is a result of the first isomorphism theorem (See \cite[Section 10.2, Theorem 4.1]{dummit_abstract_2004}).
The isomorphism \eqref{eq:complement_special_4} is a result of the third isomorphism theorem (See \cite[Section 10.3, Theorem 4.1]{dummit_abstract_2004}). 
This establishes  \eqref{eq_greg_tag_b}.  
The isomorphism \eqref{eq_greg_tag_b} shows that $\frac{f_a}{\Ker[a,y]+\mathrm{Im}[x,a]}$ can be viewed as a submodule of $\Coker[x,y]$, which is free by hypothesis.  
Because submodules of free modules (over PIDs) are free, $\frac{f_a}{\Ker[a,y]+\mathrm{Im}[x,a]}$ is free.
Therefore, by Lemma \ref{lemma:complement_equivalence}, the inclusion ${\Ker[a,y]+\mathrm{Im}[x,a]} \su f_a$ has a free cokernel, so the submodule $\Ker[a,y]+\mathrm{Im}[x,a]$ has a complement in $f_a$, as desired. 
\qed \end{proof}

\begin{lemma}\label{lemma:intersection_closure}
If the submodules $H, K \su f_a$ each admit complements, then so does $H \cap K$.
\end{lemma}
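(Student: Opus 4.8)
The plan is to use the characterization of complemented submodules provided by Lemma \ref{lemma:complement_equivalence}: a submodule $A \su f_a$ has a complement if and only if $f_a/A$ is torsion-free, equivalently, if and only if $mz \in A$ with $z \in f_a$ and $m \neq 0$ forces $z \in A$. So it suffices to verify this divisibility-closure property for $H \cap K$, assuming it holds for $H$ and for $K$ separately.

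First I would take $z \in f_a$ and a nonzero $m \in R$ with $mz \in H \cap K$. Then $mz \in H$, so by property (3) applied to $H$ (which has a complement) we get $z \in H$; similarly $mz \in K$ gives $z \in K$. Hence $z \in H \cap K$. By Lemma \ref{lemma:complement_equivalence}, condition (3) holding for $H \cap K$ implies $H \cap K$ has a complement in $f_a$, completing the proof. This is essentially a one-line argument once the right equivalent condition is in hand.

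There is no real obstacle here; the only subtlety is recognizing that the "intersection of free submodules" statement is not literally what is being used — every submodule of $f_a$ is already free (submodule of a free module over a PID) — rather it is the \emph{complemented} (equivalently, saturated / pure) condition that must be propagated, and purity is trivially closed under intersection via criterion (3). An alternative phrasing, if one prefers to avoid invoking criterion (3) directly, is to observe that $f_a/(H\cap K)$ embeds into $f_a/H \oplus f_a/K$ via $z + (H\cap K) \mapsto (z+H, z+K)$; since each factor is torsion-free by hypothesis (and Lemma \ref{lemma:complement_equivalence}), so is the direct sum, hence so is the submodule $f_a/(H\cap K)$, and one more application of Lemma \ref{lemma:complement_equivalence} gives the complement. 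I would likely present the criterion-(3) version as the main argument since it is the most economical, and perhaps remark on the embedding argument as an aside.
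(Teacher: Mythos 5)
Your proof is correct and follows essentially the same route as the paper: take $mz \in H \cap K$ with $m \neq 0$, invoke criterion (3) of Lemma~\ref{lemma:complement_equivalence} on $H$ and on $K$ separately to get $z \in H \cap K$, and conclude by one more application of Lemma~\ref{lemma:complement_equivalence}. The embedding $f_a/(H\cap K) \hookrightarrow f_a/H \oplus f_a/K$ you mention as an aside is a fine alternative, but your primary argument matches the paper's verbatim.
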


\begin{proof}
Suppose that $mz\in H\cap K$ for nonzero $m\in R$ and $z\in f_a$. 
Because $mz\in H$ and $H$ admits a complement, we see by Lemma \ref{lemma:complement_equivalence} that $z\in H$. Similarly, $z\in K$, which implies that $z\in H\cap K$. 
Therefore, by Lemma \ref{lemma:complement_equivalence}, $H\cap K$ has a complement in $f_a$.
\qed \end{proof} 

\begin{lemma}\label{lemma:complement_shrink}
If $L\subseteq H\subseteq f_a$ and $L$ admits a complement in $f_a$, then $L$ admits a complement in $H$.
\end{lemma}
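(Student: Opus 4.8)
The plan is to use the characterization of complemented submodules given in Lemma \ref{lemma:complement_equivalence}, specifically the equivalence of (1) ``$L$ has a complement'' with (3) ``$mz \in L$ and $m \neq 0$ imply $z \in L$'' (the divisibility-closure condition). First I would observe that $H$, being a submodule of the free and finitely-generated module $f_a$ over a PID, is itself free and finitely-generated, so Lemma \ref{lemma:complement_equivalence} applies verbatim with $H$ in place of $G$. Then the task reduces to checking that $L \su H$ satisfies condition (3) relative to $H$: given $z \in H$ and nonzero $m \in R$ with $mz \in L$, we must show $z \in L$.

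The key step is almost immediate: since $L$ admits a complement in $f_a$, condition (3) of Lemma \ref{lemma:complement_equivalence} holds for $L \su f_a$. Now take any $z \in H$ and nonzero $m \in R$ with $mz \in L$. Because $H \su f_a$, we have $z \in f_a$, so applying condition (3) for the pair $L \su f_a$ yields $z \in L$ directly. Hence $L \su H$ satisfies condition (3), and by Lemma \ref{lemma:complement_equivalence} (the implication (3) $\Rightarrow$ (1)) applied to the free finitely-generated module $H$, the submodule $L$ has a complement in $H$.

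I do not anticipate a genuine obstacle here; the only point requiring a moment's care is the preliminary remark that $H$ inherits freeness and finite generation from $f_a$ (standard for submodules of finitely-generated free modules over a PID), which is needed so that Lemma \ref{lemma:complement_equivalence} is legitimately invoked for $H$. One could alternatively phrase the whole argument through condition (2) — noting that $H/L$ embeds as a submodule of $f_a/L$, which is torsion-free by hypothesis, hence $H/L$ is torsion-free — but the divisibility-closure route via condition (3) is the most direct and avoids even naming the quotient. Either way the proof is a two-line consequence of Lemma \ref{lemma:complement_equivalence}.
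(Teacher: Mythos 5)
Your proof is correct and is essentially the same approach as the paper's: both arguments consist of a single invocation of Lemma \ref{lemma:complement_equivalence}, with the paper checking condition (2) (observing $H/L \subseteq f_a/L$ is torsion-free) and you checking the equivalent condition (3) (divisibility closure passes from $L\subseteq f_a$ to $L\subseteq H$). These are interchangeable verifications within the same lemma, and you correctly flag the paper's route as the alternative.
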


\begin{proof}
By Lemma \ref{lemma:complement_equivalence}, it is enough to show that $H/L$ is free. 
Because $H/L\subseteq f_a/L$ and $f_a/L$ is free, we must have that $H/L$ is free as well.  
\qed \end{proof}

\subsection{Algorithm for Computing an Interval Decomposition}
\label{sec:algorithm}

We now provide a simple, high-level presentation of an algorithm to compute interval decompositions for persistence modules that are pointwise free and finitely-generated over $R$, which completes the proof of Theorem \ref{thm:hard_direction_result}.
We provide a detailed description in the language of matrix algebra in Section \ref{sec:matrix_algorithm}. 

Let $f:\{0\}\cup\mathbf{m}\to R\textrm{-Mod}$ be a persistence module that is pointwise free and finitely-generated over $R$. 
Recall that $f_0=f_m=0$ by convention. Our task will be to define a sequence of bases $\beta_i \su f_i$ such that $f(i \le j)$ maps $\beta_i \setminus \Ker[i,j]$ injectively into $\beta_j$ for all $i \le j$. 
This is equivalent to decomposing $f$ as a direct sum of interval submodules (for reference see Section \ref{sec:persistence_modules}).

Suppose that the cokernel of $f(i\leq j)$ is free for each $i,j\in\{0,\ldots,m\}$ such that $i\leq j$. For each $i\leq j$, let $K_j^i$ be a complement of 
\[
\Big (\im[i-1,i] + \Ker[i, j-1] \Big)\cap\Ker[i,j]
\]
in $\Ker[i,j]$. 
That is, 
\[
\Big ( \Big (\im[i-1,i]+ \Ker[i, j-1] \Big)\cap\Ker[i,j] \Big) \oplus K_j^i=\Ker[i,j]\,.
\]
Such complements exist as a result of Theorem \ref{thm:complement_special}, Lemma \ref{lemma:intersection_closure}, and Lemma \ref{lemma:complement_shrink}.
Let $\gamma_j^i$ be a basis of $K_j^i$. 

We now present Algorithm \ref{alg:simple}, which computes an interval decomposition.

\begin{algorithm}[H]
 \caption{ Computation of an interval decomposition. }
\label{alg:simple}
\begin{algorithmic}[1]
\REQUIRE A persistence module $f:\{0\}\cup\mathbf{m}\to R\textrm{-Mod}$ that is pointwise free and finitely-generated over $R$. For each $i,j\in\{0,\ldots,m\}$ such that $i\leq j$, we require the map $f(i\leq j)$ to have a free cokernel.
\ENSURE Returns an interval decomposition
\STATE $\beta_0\leftarrow \emptyset$
\FOR{$i = 1,\ldots,m-1$}\label{loop:outer_simple}
\STATE $\beta_i\leftarrow f(i-1\leq i)(\beta_{i-1})$
\STATE $\beta_i\leftarrow \beta_i \setminus\{0\}$
\FOR{$j=i+1,\ldots,m$}\label{loop:inner_simple}
\STATE $\beta_i\leftarrow \beta_i\cup \gamma_j^i$
\ENDFOR
\ENDFOR
\RETURN $\beta_1,\ldots,\beta_{m-1}$
\end{algorithmic}
\end{algorithm}

\begin{theorem}
\label{thm:algorithm}
    Let $\beta_1,\ldots,\beta_{m-1}$ be bases that are returned by Algorithm \ref{alg:simple}. 
    Each $\beta_i$ is a basis of $f_i$ that contains a basis for each $\Ker[i,j]$ and $\im[k,i]$. Moreover, the bases $\beta_i\subseteq f_i$ yield an interval decomposition of $f$. 
\end{theorem}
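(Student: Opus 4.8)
The plan is to establish the claims of Theorem~\ref{thm:algorithm} by induction on the outer loop index $i$, tracking at each stage three properties of $\beta_i$: (P1) $\beta_i$ is a basis of $f_i$; (P2) $\beta_i$ contains a basis of $\Ker[i,j]$ for every $j \ge i$ and a basis of $\im[k,i]$ for every $k \le i$; and (P3) the partition of $\beta_i$ induced by ``which $\Ker[i,j]$ a vector first falls into'' is compatible with the structure maps, i.e.\ $f(i \le i+1)$ sends $\beta_i \setminus \Ker[i,i+1]$ injectively into $\beta_{i+1}$ and annihilates $\beta_i \cap \Ker[i,i+1]$. Granting (P1)--(P3) for all $i$, the interval decomposition follows: composing structure maps one step at a time, each basis vector $v \in \beta_i$ either dies at the first index $j$ with $v \in \Ker[i,j]$, or survives to map to a basis vector of $f_j$, and distinct survivors stay distinct by the injectivity in (P3); this is precisely the basis-wise characterization of an interval decomposition recalled in Section~\ref{sec:persistence_modules}.

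\textbf{Key steps.} First I would verify the base case $i = 1$: here $\beta_1 = \bigcup_{j=2}^m \gamma_j^1$ (the line $\beta_i \leftarrow f(i-1 \le i)(\beta_{i-1})$ contributes nothing since $\beta_0 = \emptyset$), and I must check this is a basis of $f_1$. The point is that the submodules $\Ker[1,1] \subseteq \Ker[1,2] \subseteq \cdots \subseteq \Ker[1,m] = f_1$ form an increasing chain, and since $\im[0,1] = 0$, the complement $K_j^1$ is a complement of $\Ker[1,j-1]$ in $\Ker[1,j]$ (using $\im[0,1]+\Ker[1,j-1] = \Ker[1,j-1]$); telescoping these complements through the chain shows $\bigoplus_j K_j^1 = f_1$, so the union of bases $\gamma_j^1$ is a basis of $f_1$, and it visibly contains a basis of each $\Ker[1,j]$. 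For the inductive step, assume $\beta_{i-1}$ satisfies (P1)--(P3). Then $f(i-1 \le i)(\beta_{i-1} \setminus \Ker[i-1,i])$ is, by (P3) and (P2) applied at stage $i-1$, a linearly independent set that is a basis of $\im[i-1,i]$ and more generally (pushing forward the bases of $\im[k,i-1]$) contains a basis of each $\im[k,i]$ for $k \le i-1$; also $\im[i,i] = f_i$ trivially has its basis once we finish. Now I would show that adjoining the $\gamma_j^i$ one $j$ at a time, for $j = i+1, \ldots, m$, builds up a basis of $f_i$: the relevant chain is $\im[i-1,i] \subseteq \im[i-1,i]+\Ker[i,i+1] \subseteq \im[i-1,i]+\Ker[i,i+2] \subseteq \cdots \subseteq \im[i-1,i]+\Ker[i,m] = f_i$ (equality at the top since $\Ker[i,m] = f_i$), and at the $j$-th stage $K_j^i$ is by construction a complement of $\big(\im[i-1,i]+\Ker[i,j-1]\big)\cap\Ker[i,j]$ in $\Ker[i,j]$, which one checks is exactly what is needed to extend the current partial basis across the inclusion $\im[i-1,i]+\Ker[i,j-1] \subseteq \im[i-1,i]+\Ker[i,j]$ by a modular-law argument (the modular law gives $\big(\im[i-1,i]+\Ker[i,j-1]\big) + K_j^i = \im[i-1,i]+\Ker[i,j]$ and independence from the complement property). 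Telescoping to $j = m$ yields (P1), and the intermediate stages yield (P2) for the kernels; the images were handled above. Finally (P3) at stage $i$: a vector of $\beta_i$ lies in $\Ker[i,i+1]$ iff it was contributed as $\gamma_{i+1}^i$ together with those pushed-forward vectors of $\beta_{i-1}$ that already lay in $\Ker[i,i+1]$ — and the pushed-forward vectors not in $\Ker[i,i+1]$ map under $f(i \le i+1) = f(i+1-1 \le i+1)$ precisely to the vectors that get pushed forward at stage $i+1$, hence into $\beta_{i+1}$ and injectively, because at stage $i+1$ the algorithm's first line does exactly this push-forward and then removes zeros.

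\textbf{Main obstacle.} I expect the crux to be the bookkeeping in the inductive step showing that adjoining the $\gamma_j^i$ in order genuinely produces a basis of $f_i$ and not merely a generating set or merely a linearly independent set — specifically, verifying the identity $\big(\im[i-1,i]+\Ker[i,j-1]\big) \oplus K_j^i = \im[i-1,i]+\Ker[i,j]$ at each $j$. This needs the modular law $\Ker[i,j] \cap \big(\im[i-1,i]+\Ker[i,j-1]\big) = \big(\im[i-1,i]\cap\Ker[i,j]\big) + \Ker[i,j-1]$ (valid since $\Ker[i,j-1] \subseteq \Ker[i,j]$) combined with the defining complement property of $K_j^i$, and one must be careful that the ``new'' part $K_j^i$ meets the ``old'' accumulated span trivially. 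The existence of all these complements is already guaranteed by Theorem~\ref{thm:complement_special} together with Lemmas~\ref{lemma:intersection_closure} and~\ref{lemma:complement_shrink}, so the obstacle is purely the combinatorial/linear-algebra verification that the telescoping works, not any further module-theoretic input. A secondary, lesser obstacle is making precise the claim in (P3) that the pushed-forward part of $\beta_i$ is ``compatible across one step'' so that iterating it gives honest interval modules; this is essentially unwinding the definition of $\beta_{i+1}$'s first construction line, but it should be stated carefully to close the argument that the resulting decomposition is into interval submodules.
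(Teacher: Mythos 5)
Your proposal is correct and follows essentially the same approach as the paper's proof: induction on $i$, with the same base case (noting $\im[0,1]=0$ so $K_j^1$ complements $\Ker[1,j-1]$ in $\Ker[1,j]$), the same inductive step (push forward $\beta_{i-1}$, drop zeros, telescope the complements $K_j^i$ across the chain $\im[i-1,i]+\Ker[i,j-1] \subseteq \im[i-1,i]+\Ker[i,j]$), and the same closing argument that the resulting bases give an interval decomposition via the basis-wise characterization. The only cosmetic difference is your explicit invocation of the modular law, which the paper leaves implicit in the statement "the union of linearly independent sets of complementary spaces is linearly independent."
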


\begin{proof}
We first show that each basis $\beta_i$ that Algorithm \ref{alg:simple} returns contains a basis for each $\Ker[i,j]$ and $\im[k,i]$. We proceed by induction. 

Let $i=1$. 
Because $f_0=0$, we have that $f(0\leq 1)(\beta_0)=\emptyset$. For each $j=1,\ldots,m$ in the inner loop (see line \ref{loop:inner_simple} of Algorithm \ref{alg:simple}), we append to $\beta_1$ a basis $\gamma_j^1$ of a complement $K_j^1$ of $\Ker[1,j-1]$ in $\Ker[1,j]$. 
This process gives a basis of $\Ker[1,m]=f_1$ (because $f_m=0$). 
We additionally note that $\im[0,1]=0$ and $\im[k,1]=f_1$ for $k\geq 1$.
Whether we have $\im[k,1]=0$ (when $k=0$) or $\im[k,1]=f_1$ (when $k\geq 1$), we can a find subset of $S \su \beta_1$ (either $\emptyset$ or $\beta_1$) such that $S$ spans $\im[k,1]$. 
Thus, $\beta_1$ is a basis of $f_1$ that contains a basis of every $\im[i,j]$ and $\im[k,i]$. 
This completes the base case.

Now suppose that $\beta_1,\ldots,\beta_{i-1}$ are bases of $f_1,\ldots,f_{i-1}$, respectively, that contain bases of the relevant images and kernels. We now show that $\beta_i$ is a basis of $f_i$ that contains a basis of each kernel and image. 

We first show that $f(i-1\leq i)(\beta_{i-1})\setminus\{0\}$ is linearly independent (as a multiset). 
To see this, we first note that $\beta_{i-1}$ is a basis of $f_{i-1}$ and contains a basis of each $\Ker[i-1,j]$. Consider $\kappa_{i-1}=\beta_{i-1}\cap\Ker[i-1,i]$ and $\kappa_{i-1}^\perp=\beta_{i-1}\setminus\kappa_{i-1}$. We note that $\Ker(f(i-1\leq i))=\mathrm{span}(\kappa_{i-1})$, which implies that $f(i-1\leq i)(\kappa_{i-1}^\perp)$ (which is precisely $f(i-1\leq i)(\beta_{i-1})\setminus\{0\}$) is linearly independent. 
Moreover, $f(i-1\leq i)(\beta_{i-1})\setminus\{0\}$ spans $\im[i-1,i]$, so it forms a basis of $\im[i-1,i]$

For each $j=i+1,\ldots,m$ in the inner loop (see line \ref{loop:inner_simple} of Algorithm \ref{alg:simple}), adding the elements of $\gamma_j^i$ to $\beta_i$ preserves linear independence. 
This is the case because the union of linearly independent sets of complementary spaces is linearly independent.
In fact, on the $j$th iteration, we are extending $\beta_i$ from a basis of $\im[i-1,i]+ \Ker[i, j-1]$ to a basis of $\im[i-1,i]+\Ker[i,j]$. After the $j=m$ iteration of the inner loop (line \ref{loop:inner_simple}), we have a basis of $\im[i-1,i]+\Ker[i,m]=\Ker[i,m]=f_i$ because $f_m=0$. By construction, $\beta_i$ contains a basis of every $\Ker[i,j]$.

We now show that $\beta_i$ contains a basis of every $\im[k,i]$ by induction on $i$. The base case $i=0$ is trivial. We only need to consider $k<i$ because  $\im[k,i]=f_i$ for $k\geq i$. 
The inductive hypothesis implies that $\beta_{i-1}$ contains a basis of $\im[k,i-1]$ for each $k<i$. Let $\iota_k^{i-1}\subseteq \beta_{i-1}$ be a basis of $\im[k,i-1]$. Then $f(i-1\leq i)(\iota_k^{i-1})$ is a spanning set of $f(i-1\leq i)(\im[k,i-1])=\im[k,i]$. 
Because $f(i-1\leq i)(\iota_k^{i-1})\setminus\{0\}\subseteq \beta_i$, we see that $\beta_i$ contains a spanning set (in fact, a basis, because $\beta_i$ is a basis of $f_i$) of $\im[k,i]$. Thus, $\beta_i$ contains a basis of each $\im[k,i]$, as desired. 
This concludes the argument that $\beta_i$ contains a basis of each submodule of the form $\Ker[i,j]$ and $\im[k,i]$.

It remains to show that the bases $\beta_1,\ldots,\beta_{m-1}$ decompose $f$ into a direct sum of interval submodules.
That is, we wish to show that for $i,j\in\{0,\ldots,m\}$ such that $0 < i < j < m$, the structure map $f(i \le j)$ (1) sends $\beta_i \cap \Ker[i,j]$ to 0 and (2) maps the $\beta_i \setminus \Ker[i,j]$ injectively into $\beta_j$.  
The first claim (1) holds by Definition \ref{def:Im_and_Ker}.
The second claim (2) holds because  $f(i\leq j)(\beta_i)\setminus\{0\}$ is linearly independent (as a multiset) and because $f(i\leq j)(\beta_i) \setminus \{ 0\}\subseteq \beta_j$, by construction.
\qed \end{proof}

\begin{rmk}
    Our original proof of Theorem \ref{thm:hard_direction_result} relied on the construction of ``saecular lattices,'' which are lattice submodules generated by kernels and images. 
    While this construction provided a much more general framework than which was needed to proof Theorem \ref{thm:hard_direction_result}, we have left it as an appendix section (see Appendix \ref{sec:interval_decomp_hard_old}).
    We use results from it in Section \ref{sec:matrix_algorithm}.
\end{rmk}

\section{Matrix Algorithm}
\label{sec:matrix_algorithm}

In this section, we translate Algorithm \ref{alg:simple} into the language of matrix algebra.
We describe the algorithm in Subsection \ref{sec:matrix_algorithm_presented}, prove that this algorithm is correct in Subsection \ref{sec:matrix_correct}, and provide a complexity bound in Subsection \ref{sec:matrixalgorithmcomplexity}.
We give a review of relevant facts on Smith-normal-form factorization in Appendix \ref{appendix:smith_normal_form}.

For simplicity, assume that each module $f_a$ is a copy of $R^d$, for some $d$, where $R$ is the ring of coefficients. 
We identify the elements of $R^d$ with length-$d$ column vectors, and we let $F_a$ denote the matrix such that $f(a \le a+1)(v) = F_a v$ for all $v \in f_a$.

Recall that $f_0=f_m=0$ by convention (see Section \ref{sec:uniqueness_of_interval_decompositions}). As in Section \ref{sec:algorithm}, our task is to define a sequence of bases $\beta_i \su f_i$ such that $f(i \le j)$ maps $\beta_i \setminus \Ker[i,j]$ injectively into $\beta_j$ for all $i \le j$.

We say that a matrix $S$ with entries in a PID $R$ is \emph{unimodular} (or \emph{invertible}) if there exists a matrix $S^{-1}$ with entries in $R$ such that $SS^{-1} = S^{-1}S$ is the identity.

\begin{definition}
    Let $A\in M_{r,s}(R)$ be given. We say that the \emph{span} of $A$ is the span of its columns. If $r\geq s$ and $A$ has unit elementary divisors, we say that $B\in M_{r,r-s}(R)$ is a \emph{complement} of $A$ if $[A|B]$ is unimodular.
\end{definition}

\subsection{The matrix algorithm}
\label{sec:matrix_algorithm_presented}

Here we provide a self-contained description of the matrix algorithm (see Algorithm \ref{alg:matrix}). 
The formulation of Algorithm \ref{alg:matrix} is based on that of Algorithm \ref{alg:simple}.
Additionally, the formulation of Algorithm \ref{alg:matrix} uses linear-algebra results that we discuss in Appendix \ref{appendix:smith_normal_form}, and its proof of correctness (i.e., Theorem \ref{thm:alg_matrix}) uses results about the ``saecular lattice'' that we discuss in Appendix \ref{sec:interval_decomp_hard_old}.

A \emph{kernel filtration matrix} at $f_a$ is an invertible matrix 
$$
X_a=[X_a^{a+1}| \cdots | X_a^n]
$$ 
with column submatrices $X_a^{a+1}, \ldots , X_a^n$  such that
$$
\colspace([X_a^{a+1}| \cdots | X_a^j]) = \Ker[a,j]
$$     
for all $j$.  We write $(X_a^{j})^{-1}$ for the row-submatrix of $X_a^{-1}$ such that $(X_a^{j})^{-1} X_a^j = I$ is identity (that is, 
the row indices of $(X_a^{j})^{-1}$ in $X_a^{-1}$ equal the column indices of $X_a^j$ in $X_a$).

\begin{algorithm}[H]
	\caption{Interval decomposition via matrix factorization. }
	\label{alg:matrix}
	\begin{algorithmic}[1]
		\REQUIRE A persistence module $f:\{0, \ldots, m\}\to R\textrm{-Mod}$ such that each $f_i$ equals $R^k$ for some $k$. Concretely, this module is encoded by a sequence of matrices $F_i$ representing the maps $f(i \le i+1)$ for each integer $i$ such that $1 \le i < m-1$. We do not require $f$ to admit an interval decomposition.
		\ENSURE Returns an interval decomposition of $f$, if such a decomposition exists. Otherwise returns a certificate that no such decomposition exists.
		\STATE Let $\beta_i = \emptyset$ for $i = 1, \ldots, m$
		\STATE Let $Y_0$ be the (unique and trivial) kernel filtration matrix at $f_0$
		\FOR{$i = 1,\ldots,m-2$}\label{loop:outer_matrix}
		\STATE Compute a kernel filtration matrix $X_i = [X_i^{i+1} | \cdots | X_i^m]$ at $f_i$ via repeated Smith-normal-form factorization (see Proposition \ref{prop:kernel_filtration_matrix})
		\FOR{$j = i+1,\ldots,m-1$}\label{loop:inner_matrix}
		\STATE Obtain a Smith-normal-form factorization of $A_i^j: = (X^j_i)^{-1} F_{i-1} Y_{i-1}^j$
		\IF{this factorization yields a complement $B_i^j$ of $A_i^j$ (see Proposition \ref{prop:snf_and_cokernels})}
		\STATE  Define  $Y_i^j = [X_i^j B_i^j | F_{i-1} Y^j_{i-1}]$
		\ELSE
		\STATE STOP; the persistence module $f$ does not split into interval submodules\label{stop:halt}
		\ENDIF
		\ENDFOR
		\STATE Let $\beta_i$ be the set of columns of $Y_i = [Y_i^{i+1} | \cdots | Y_i^m]$
		\ENDFOR
		\RETURN $\beta_1, \ldots, \beta_{m}$\label{stop:finish}
	\end{algorithmic}
\end{algorithm}

When matrix multiplication and the computation of a Smith normal form are polynomial-time procedures with respect to matrix dimensions, it is readily checked that Algorithm \ref{alg:matrix} is polynomial time\footnote{These procedures are known to be polynomial time when $R$ is $\Z$ or $\Q[x]$. See Remark \ref{rmk:polynomial-time_Z}.} with respect to matrix dimension and persistence-module length. 
We compute the complexity of Algorithm \ref{alg:matrix} (see Proposition \ref{prop:complexity}) relative to the complexity of matrix multiplication, matrix inversion, and computing a Smith normal form.  

\begin{rmk}
     Algorithm \ref{alg:matrix} terminates either at line 10 or line 12.
	On one hand, if Algorithm \ref{alg:matrix} terminates at line 10, then by Lemma \ref{lem:complementsrequire}, $f$ cannot decompose into a direct sum of interval modules. 
	On the other hand, if Algorithm \ref{alg:matrix} does not terminate at line 10, then it terminates at line 12.
	By Lemma \ref{lem:complementssuffice}, Algorithm \ref{alg:matrix} yields a set of bases $\beta_1, \ldots, \beta_m$ that decompose $f$ into interval modules.
\end{rmk}

\begin{rmk}
We can perform the computations of the inner-loop iterations (see line \ref{loop:inner_matrix}) of Algorithm \ref{alg:matrix} in parallel. That is, for any fixed $i\in\{1,\ldots,m-2\}$, we can compute each $A_i^j$ and its Smith-normal-form factorization for $j\in\{i+1,\ldots,m-1\}$ in parallel. 
\end{rmk}

\subsection{Correctness of Algorithm \ref{alg:matrix}}
\label{sec:matrix_correct}

We now prove that Algorithm \ref{alg:matrix} is correct.

We say that the matrices $Y_1, \dots, Y_i$ \emph{cohere} if 
$Y_k = [Y_k^{k+1}  | \cdots | Y_k^m]$ is a kernel filtration matrix at $f_k$ for all $k < i$.  By construction of $Y_k$, this immediately implies that for $k < i-1$, the following conditions hold:
\begin{itemize}
    \item the set of columns of $Y_k^{k+1}$ maps to $0$ under $F_k$;
    \item the set of columns of $Y_{k}^j$ maps injectively into the set of columns of $Y_{k+1}^{j}$ under $F_k$ for $j>k+1$.
\end{itemize}

Therefore, the bases $\beta_1, \ldots, \beta_m$, which are given by the columns of the matrices $Y_1,\ldots,Y_m$, respectively, decompose $f$ into interval modules if $Y_1, \ldots, Y_m$ cohere.

\begin{lemma}
\label{lem:complementssuffice}
Suppose that $i$ satisfies $1\leq i\leq m-1$. 
If $A_p^q$ admits a complement for every $p$ and $q$ such that $q > p$ and $p \le i$,  then $Y_1, \ldots, Y_i$ cohere.
\end{lemma}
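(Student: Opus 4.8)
The plan is to prove the statement by induction on $i$, where the inductive step is essentially an unwinding of the definitions made in Algorithm~\ref{alg:matrix} together with the structural facts about kernel filtration matrices and complements recorded in Appendix~\ref{appendix:smith_normal_form}. The goal is to show that $Y_k = [Y_k^{k+1} \mid \cdots \mid Y_k^m]$ is a kernel filtration matrix at $f_k$ for every $k < i$; that is, each $Y_k$ is invertible (unimodular) and $\colspace([Y_k^{k+1} \mid \cdots \mid Y_k^j]) = \Ker[k,j]$ for all $j$.

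First I would dispatch the base cases $k=0$ and $k=1$. For $k=0$, $Y_0$ is the trivial kernel filtration matrix at $f_0 = 0$ by construction. For $k=1$, note that $F_0 Y_0^j = 0$ since $f_0 = 0$, so $A_1^j = (X_1^j)^{-1} F_0 Y_0^j$ is an empty (zero-column) matrix, which trivially admits as complement the full matrix $B_1^j$ with $[A_1^j \mid B_1^j]$ the identity on the appropriate index set; then $Y_1^j = [X_1^j B_1^j \mid F_0 Y_0^j] = X_1^j$ (up to reindexing), so $Y_1 = X_1$ is a kernel filtration matrix at $f_1$ because $X_1$ was computed to be one. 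For the inductive step, assume $Y_1,\dots,Y_{i-1}$ cohere; I must show $Y_i$ is a kernel filtration matrix at $f_i$ (which is what upgrades coherence from $i-1$ to $i$), using that $A_i^j$ admits a complement $B_i^j$ for all $j > i$. Here I would argue column space by column space: by the inductive hypothesis, $\colspace([Y_{i-1}^{i} \mid \cdots \mid Y_{i-1}^j]) = \Ker[i-1,j]$, so $\colspace(F_{i-1}Y_{i-1}^j) = F_{i-1}(\Ker[i-1,j])$, which corresponds to the image piece $\im[i-1,i] \cap (\text{something})$; meanwhile $X_i^j B_i^j$ picks out a complement of $\colspace(A_i^j)$ inside $\Ker[i,j]$ pulled back through $X_i^j$. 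The key identity to verify is that $\colspace(Y_i^j) = \colspace([X_i^j B_i^j \mid F_{i-1}Y_{i-1}^j])$ together with the already-established lower pieces gives exactly $\Ker[i,j]$; this is the matrix-algebra incarnation of the complement construction $K_j^i$ from Section~\ref{sec:algorithm}, where $K_j^i$ complements $(\im[i-1,i] + \Ker[i,j-1]) \cap \Ker[i,j]$ in $\Ker[i,j]$. Unimodularity of $Y_i$ then follows because the columns of $[X_i^j B_i^j \mid F_{i-1}Y_{i-1}^j]$, assembled over all $j$, form a basis of $f_i$ — this is where the "complement" hypothesis on $A_i^j$ (equivalently, unit elementary divisors, via Proposition~\ref{prop:snf_and_cokernels}) is essential, since it guarantees $[A_i^j \mid B_i^j]$ is invertible and hence that the pieces fit together without $R$-torsion obstructions.

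I would also need the observation (stated just before the lemma) that a kernel filtration matrix at each $f_k$ for $k<i$ automatically yields the two bullet conditions — columns of $Y_k^{k+1}$ die under $F_k$, columns of $Y_k^j$ inject into columns of $Y_{k+1}^j$ under $F_k$ for $j > k+1$ — but since coherence is \emph{defined} as the kernel-filtration-matrix property, I only need to re-derive those consequences if the later correctness argument invokes them; for this lemma it suffices to establish the kernel-filtration property itself. To keep the argument clean I would invoke the results of Appendix~\ref{sec:interval_decomp_hard_old} on the saecular lattice to identify $\Ker[i,j]$, $\im[i-1,i]$ and their intersections/sums with the column spaces produced by the Smith-normal-form factorizations, exactly as the paper signals it will.

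The main obstacle I anticipate is \textbf{bookkeeping the index sets and the pullbacks through $X_i^j$ and $X_i^{-1}$ correctly} — i.e., making precise that $(X_i^j)^{-1} F_{i-1} Y_{i-1}^j$ really does have its column space landing in the "coordinates" cut out by $X_i^j$, so that $X_i^j B_i^j$ is a legitimate complement of $\im(F_{i-1}) \cap \Ker[i,j]$-type submodule \emph{inside} $\Ker[i,j]$ and not merely inside $f_i$. This requires carefully checking that $\colspace(F_{i-1}Y_{i-1}^j) \subseteq \Ker[i,j] = \colspace([X_i^{i+1}\mid\cdots\mid X_i^j])$, which in turn follows from the inductive hypothesis plus functoriality ($f(i-1 \le j) = f(i \le j)\circ f(i-1\le i)$), and then that applying $(X_i^j)^{-1}$ is injective on that submodule. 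Everything else is a routine verification that direct sums of complementary free submodules assemble to a basis, which is precisely what Lemmas~\ref{lemma:intersection_closure} and~\ref{lemma:complement_shrink} and Theorem~\ref{thm:complement_special} were set up to handle in the abstract setting; here their role is played by the unit-elementary-divisor condition guaranteed by the hypothesis that each $A_p^q$ ($q > p$, $p \le i$) admits a complement.
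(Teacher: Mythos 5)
Your plan mirrors the paper's proof: induction on $i$, the same base case (the $A_1^j$ degenerate because $f_0 = 0$), and the same key inductive ingredient, namely that $\colspace(F_{i-1}Y_{i-1}^j)$ lands inside $\Ker[i,j]$ so that the complement hypothesis can be brought to bear. The step you leave at the level of a hope---``the pieces fit together without $R$-torsion obstructions''---is exactly where the paper's argument actually does its work: it observes that $X_i^{-1}F_{i-1}Y_{i-1}$ is block upper triangular with the blocks $A_i^j$ sitting on the diagonal (Equation~\eqref{eq_block_upper_triangular}), then inserts the complement columns $B_i^j$ to form a single block-upper-triangular matrix $M$ whose diagonal blocks $[A_i^j \,|\, B_i^j]$ are each unimodular; hence $M$ is unimodular, and $Y_i = X_i M$ is a kernel filtration matrix because $X_i$ is one and right-multiplication by a block-upper-triangular unimodular matrix preserves the nested column spaces. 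This global assembly delivers the unimodularity of $Y_i$ and the correct column-space structure in one stroke, whereas your proposed block-by-block verification would still need a separate argument that the assembled columns form a basis of $f_i$ rather than merely a spanning set with the right filtration. One minor slip worth flagging: $\colspace(F_{i-1}Y_{i-1}^j)$ is \emph{not} $F_{i-1}(\Ker[i-1,j])$; the latter is the image of the whole concatenation $[Y_{i-1}^i \,|\, \cdots \,|\, Y_{i-1}^j]$, while $Y_{i-1}^j$ is only its last block.
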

\begin{proof}
We will show that $Y_1, \ldots, Y_i$ cohere for all $i$. We proceed by induction on $i$.  

First, suppose that $i = 1$. 
Because $Y_0$ is degenerate, each matrix $A_1^j$ is also degenerate so we can view $A_1^j$ as a matrix of size $d \times 0$ for some $d$.
It follows that any $d \times d$ unimodular matrix $B_1^j$ is a complement of $A_1^j$.
Because $X_1 = [X_1^2 | \cdots | X_1^m]$ is a kernel filtration matrix by hypothesis and the column space of $X_1^j$ equals that of $X_1^j B_1^j$, the matrix $Y_1 = [X_1^2 B_1^2 | \cdots | X_1^m B_1^m]$ is also a kernel filtration matrix.  This establishes the base case $i=1$.  

Now, suppose that the desired conclusion holds for $i < k$; consider $i = k$. 
We have that $Y_{i-1}$ is a kernel filtration matrix.  
Therefore, given a kernel filtration matrix $X_i$ at $f_i$, for all $j$ such that $i+1\leq j\leq m-1$, the columns of $F_{i-1} Y_{i-1}^j$ lie in the column space of $[X_i^{i+1} | \cdots | X_i^j]$.  
Therefore, the matrix $X_i^{-1}F_{i-1} Y_{i-1}$ has the following block structure:
\begin{align}
X_i^{-1} F_{i-1} Y_{i-1} = \begin{bmatrix}
0 & A_i^{i+1} & * & \cdots & * 
\\
0 & 0 & A_i^{i+2} & \cdots & * 
\\ \vdots & \vdots & \ddots & \ddots & \vdots \\
0 & 0 & 0 & \cdots & A_i^m\
\end{bmatrix}\,.
\label{eq_block_upper_triangular}
\end{align}
Consider the matrix
\[ {M} = 
\left[
\begin{array}{@{}ccccccccccc@{}}
A_i^{i+1} & B_i^{i+1} & * & 0 & \cdots & * & 0  \\
0 & 0 & A_i^{i+2} & B_i^{i+2} & \cdots & * & 0  \\
\vdots & \vdots & \vdots & \vdots & \ddots & \vdots & \vdots &\\
0 & 0 & 0 & 0 & \cdots & A_i^{m} & B_i^{m}\\
\end{array}
\right]\,,
\]
where $B_i^j$ is a complement of $A_i^j$. In effect, we obtain $M$ by inserting some columns into $X_i^{-1} F_{i-1} Y_{i-1}$. 
The matrix $M$ is unimodular because each matrix $[A_i^k | B_i^k]$ is unimodular.  Moreover, the product $X_iM = Y_i$ is a kernel filtration matrix because $X_i$ is a kernel filtration matrix and $M$ is block-upper-triangular with unimodular diagonal blocks.  The desired conclusion follows.
\qed \end{proof}

\begin{lemma}\label{lemma:lin_independence_A_i^j}
    Suppose $i$ satisfies $1\leq i\leq m-1$.  
    If $Y_{i-1}$ is a kernel filtration matrix, then for $j>i$, the columns of $A_i^j$ in Equation \ref{eq_block_upper_triangular} are linearly independent.
\end{lemma}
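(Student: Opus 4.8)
The plan is to show that the columns of $A_i^j$ are linearly independent by interpreting $A_i^j$ as (a matrix representation of) the composite of $f(i-1 \le i)$ restricted to the relevant summand, followed by the quotient projection induced by passing to $\Ker[i,j]/\Ker[i,j-1]$-type coordinates. Concretely, recall from the block structure \eqref{eq_block_upper_triangular} that $A_i^j = (X_i^j)^{-1} F_{i-1} Y_{i-1}^j$, where $Y_{i-1}^j$ is the column submatrix of the kernel filtration matrix $Y_{i-1}$ whose columns form a basis for a complement of $\Ker[i-1,j-1]$ in $\Ker[i-1,j]$. The key point is that $F_{i-1}$ maps these columns into $\colspace([X_i^{i+1}|\cdots|X_i^j]) = \Ker[i,j]$, and the row-submatrix $(X_i^j)^{-1}$ extracts precisely the $X_i^j$-coordinates, i.e. it is the composition of inclusion-into-$\Ker[i,j]$ with projection onto the last block $\Ker[i,j]/\Ker[i,j-1]$ (using that $Y_{i-1}$ cohering forces the earlier blocks $A_i^{i+1},\dots,A_i^{j-1}$ to occupy the other rows, as the display \eqref{eq_block_upper_triangular} shows the off-diagonal entries above $A_i^j$ need not vanish).

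First I would make precise the claim that a column $v$ of $Y_{i-1}^j$ satisfies $F_{i-1} v \in \Ker[i,j]$ but the image of $F_{i-1}v$ in $\Ker[i,j]/\Ker[i,j-1]$ is nonzero whenever $v \ne 0$. Indeed, $v \in \Ker[i-1,j]$ means $f(i-1 \le j)(v) = 0$, i.e. $f(i \le j)(F_{i-1}v) = 0$, so $F_{i-1}v \in \Ker[i,j]$; and $v \notin \Ker[i-1,j-1]$ (since $v$ lies in a complement of $\Ker[i-1,j-1]$ in $\Ker[i-1,j]$ and $v \ne 0$) means $f(i-1 \le j-1)(v) \ne 0$, i.e. $f(i \le j-1)(F_{i-1}v) \ne 0$, so $F_{i-1}v \notin \Ker[i,j-1]$. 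More generally a nonzero linear combination $\sum c_k v_k$ of the columns of $Y_{i-1}^j$ is again a nonzero element of the complement, hence again lies outside $\Ker[i-1,j-1]$, so $F_{i-1}(\sum c_k v_k) \notin \Ker[i,j-1]$.

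Next I would translate this into a statement about $A_i^j$: if $c = (c_k)$ is a nonzero vector, then $Y_{i-1}^j c \ne 0$ (the columns of a kernel filtration submatrix are part of a basis, hence independent), so by the previous paragraph $F_{i-1}Y_{i-1}^j c \notin \Ker[i,j-1] = \colspace([X_i^{i+1}|\cdots|X_i^{j-1}])$. Writing $F_{i-1}Y_{i-1}^j c$ in the basis given by the columns of $X_i$ (which is legitimate since $X_i$ is unimodular), the coordinates lying in the $X_i^{i+1},\dots,X_i^{j-1}$ blocks do not determine the vector; since it is not in $\Ker[i,j-1]$, its coordinate vector in the $X_i^j$-block — which is exactly $(X_i^j)^{-1} F_{i-1} Y_{i-1}^j c = A_i^j c$ — must be nonzero. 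Hence $A_i^j c \ne 0$ for all $c \ne 0$, which is the claimed linear independence of the columns of $A_i^j$.

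The main obstacle is bookkeeping rather than mathematics: one has to be careful that $(X_i^j)^{-1}$ is the correct row-submatrix of $X_i^{-1}$ (the one whose rows are indexed by the columns of $X_i^j$ inside $X_i$), and that "lying in $\Ker[i,j-1]$" is equivalent to "having zero $X_i^j$-coordinate block" — this uses the defining filtration property $\colspace([X_i^{i+1}|\cdots|X_i^{j-1}]) = \Ker[i,j-1]$ together with the fact that $\colspace([X_i^{i+1}|\cdots|X_i^j]) = \Ker[i,j] \supseteq F_{i-1}Y_{i-1}^j$, so that the expansion of $F_{i-1}Y_{i-1}^j c$ in the $X_i$-basis has no contributions beyond the $j$-th block. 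Once these identifications are in place the argument is the short chain above; I do not expect to need the hypothesis that $A_i^k$ admits a complement for $k<j$, only that $Y_{i-1}$ is a genuine kernel filtration matrix.
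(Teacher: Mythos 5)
Your argument is correct and mathematically identical to the paper's: both rest on the observation that a nonzero combination $w$ of the columns of $Y_{i-1}^j$ lies outside $\Ker[i-1,j-1]$ (by the kernel-filtration property of $Y_{i-1}$), hence $F_{i-1}w \notin \Ker[i,j-1]$, so its $X_i^j$-coordinate block $A_i^j c$ is nonzero. The paper packages this as a proof by contradiction (padding a dependence vector $\hat v$ and deriving a dependence among the columns of $Y_{i-1}$), whereas you state the contrapositive directly; the content is the same, and you are also right that no complement hypotheses on $A_i^k$ are needed.
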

\begin{proof}
	Suppose that $j>i$, and let  
	$C = \{k+1, k+2, \ldots, k+p\}$ denote the set of column indices
	in the matrix \eqref{eq_block_upper_triangular} that contain the submatrix $A_i^j$. 
	
	Seeking a contradiction, suppose that the columns of $A_i^j$ are linearly dependent.  Then there exists a nonzero vector $\hat{v}$ such that $A_i^j \hat{v} = 0$. Define a vector $v$ by padding $\hat{v}$ with $0$ entries to align the nonzero entries with the columns that contain $A_i^j$ in the matrix \eqref{eq_block_upper_triangular}. That is, we define $v$ such that
	\[
	v_{q}=\begin{cases}
	\hat{v}_\ell\,, & {q\in C ~\textrm{and}~q=k+\ell}\\
	0\,, & \textrm{otherwise}\,.
	\end{cases}
	\]
	
	By inspection of the block structure of \eqref{eq_block_upper_triangular} and recalling that $A_i^j\hat{v}=0$, it follows that  $F_{i-1} Y_{i-1} v$  lies in the column space of $[X_i^{i+1} | \cdots | X_i^{j-1}]$, which equals $\Ker[i,j-1]$.  Consequently, $Y_{i-1}v$ lies in $\Ker[i-1, j-1]$.  However, because $Y_{i-1}$ is a kernel filtration matrix at $f_{i-1}$, the space $\Ker[i-1, j-1]$ equals the column space of $[Y_{i-1}^i | \cdots | Y_{i-1}^{j-1}]$. Therefore, the columns of $Y_{i-1}$ are linearly dependent.  This contradicts the hypothesis that the columns of $Y_{i-1}$ form a basis of $f_{i-1}$.  The desired conclusion follows. 
\qed \end{proof}

\begin{lemma}
\label{lem:complementsrequire}
    If Algorithm \ref{alg:matrix}  terminates because some $A_i^j$ lacks a complement, then $f$ does not split as a direct sum of interval modules.
\end{lemma}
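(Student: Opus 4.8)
The plan is to prove the contrapositive: assuming $f$ does split as a direct sum of interval modules, I will show that every $A_i^j$ encountered during the execution of Algorithm \ref{alg:matrix} admits a complement, so the algorithm never halts at line \ref{stop:halt}. By Theorem \ref{thm:problem_statement} (necessity, Lemma \ref{Lemma:easy_direction}), the hypothesis that $f$ decomposes into interval modules guarantees that the cokernel of every structure map $f(a\le b)$ is free. The task is then to translate ``$A_i^j$ has a complement'' — which by the definition preceding Section \ref{sec:matrix_algorithm_presented} means $A_i^j$ has unit elementary divisors, equivalently its column space is a direct summand (see the framed remark after Theorem \ref{thm:problem_statement} and Proposition \ref{prop:snf_and_cokernels}) — into a statement about complements of the submodules $\Ker$ and $\im$ of $f_i$, where we can invoke Theorem \ref{thm:complement_special}, Lemma \ref{lemma:intersection_closure}, and Lemma \ref{lemma:complement_shrink}.

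The key steps, in order, are as follows. First, I would argue by induction on $i$ that at the point where $A_i^j$ is computed, the matrices $Y_1,\ldots,Y_{i-1}$ cohere; this is exactly Lemma \ref{lem:complementssuffice} applied to the smaller indices (the induction is well-founded because the algorithm reached the $i$th outer iteration without halting, so every earlier $A_p^q$ had a complement). Thus $Y_{i-1}$ is a kernel filtration matrix at $f_{i-1}$, and $X_i$ is a kernel filtration matrix at $f_i$ by construction. Second, I would unwind the definition $A_i^j = (X_i^j)^{-1} F_{i-1} Y_{i-1}^j$: its column space, pushed forward into $f_i$ via $X_i^j$, lands in $\Ker[i,j]$, and by the block structure \eqref{eq_block_upper_triangular} the column space of $X_i^j A_i^j$ modulo $\Ker[i,j-1]$ is precisely the image of $\im[i-1,i]$ inside $f_i/\Ker[i,j-1]$ (since $F_{i-1}Y_{i-1}^j$ spans a complementing piece of $\Ker[i-1,j]$ inside $f_{i-1}$ that maps onto a spanning set of the relevant chunk of $\im[i-1,i]$). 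Concretely, $A_i^j$ having a complement is equivalent to the submodule $\bigl(\im[i-1,i]+\Ker[i,j-1]\bigr)\cap\Ker[i,j]$ being a direct summand of $\Ker[i,j]$ — which is the very module $K_j^i$ in Algorithm \ref{alg:simple} was defined to complement. Third, I would invoke Theorem \ref{thm:complement_special} (which gives that $\Ker[i,j-1]+\im[i-1,i]$ has a complement in $f_i$), Lemma \ref{lemma:intersection_closure} (intersection with $\Ker[i,j]$, which also has a complement, still has a complement), and Lemma \ref{lemma:complement_shrink} (shrink the ambient module from $f_i$ to $\Ker[i,j]$) to conclude that this submodule is indeed a direct summand of $\Ker[i,j]$, hence $A_i^j$ has a complement. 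Running this for all $j$ in the inner loop and all $i$ in the outer loop shows the algorithm never halts at line \ref{stop:halt}.

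The main obstacle I expect is the second step: making the identification of the column space of $A_i^j$ with the submodule $\bigl(\im[i-1,i]+\Ker[i,j-1]\bigr)\cap\Ker[i,j]$ of $\Ker[i,j]$ fully rigorous at the level of matrices. This requires carefully tracking how $(X_i^j)^{-1}$ acts as the coordinate projection onto the $j$th block of the kernel filtration, and checking that ``$A_i^j$ has unit elementary divisors'' corresponds on the nose to the relevant quotient being torsion-free — which is where Proposition \ref{prop:snf_and_cokernels} and Lemma \ref{lemma:complement_equivalence} enter. Once this dictionary is established, the lattice-theoretic input (Theorem \ref{thm:complement_special} and the two lemmas) does the real work, and the conclusion is immediate. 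Some bookkeeping will also be needed to handle the degenerate edge cases ($i=1$, where $Y_0$ is trivial, and the top index $j=m$), but these are already dispatched in Lemma \ref{lem:complementssuffice} and require no new ideas.
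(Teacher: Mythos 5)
Your proposal is correct and is essentially the contrapositive of the paper's own argument; the core identification is the same. The paper (proceeding forward rather than by contrapositive) picks the first failing pair $(i,j)$, constructs a block matrix $N$, and shows that the span $S$ of $X_iN$ equals $\Ker[i,j-1] + (\im[i-1,i]\cap\Ker[i,j])$, then observes that a non-unit elementary divisor of $A_i^j$ (linear dependence being ruled out by Lemma \ref{lemma:lin_independence_A_i^j}) forces $f_i/S$ to have torsion, so $S$ has no complement and hence $f$ cannot split. By the modular law, $\Ker[i,j-1] + (\im[i-1,i]\cap\Ker[i,j]) = (\im[i-1,i]+\Ker[i,j-1])\cap\Ker[i,j]$, which is precisely the submodule you identify. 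Where the two routes differ is in their supporting machinery: the paper invokes Theorem \ref{thm:comm_diagram_forward} to locate $S$ in the saecular lattice and Theorem \ref{thm:complement_general} to conclude some cokernel fails to be free, both of which live in Appendix \ref{sec:interval_decomp_hard_old}; your proposal instead derives everything from Theorem \ref{thm:complement_special}, Lemma \ref{lemma:intersection_closure}, and Lemma \ref{lemma:complement_shrink}, the elementary complement results from Section \ref{sec:complements} that are already used in the proof of Theorem \ref{thm:algorithm}. This is a mild but real self-containment gain, avoiding any reliance on the saecular-lattice homomorphism theorems. One small caution: the claim that ``$A_i^j$ has a complement if and only if $(\im[i-1,i]+\Ker[i,j-1])\cap\Ker[i,j]$ is a direct summand of $\Ker[i,j]$'' packages together two facts that need to be separated in a full write-up --- linear independence of the columns of $A_i^j$ (which follows from the induction hypothesis via Lemma \ref{lemma:lin_independence_A_i^j}, not from freeness of the quotient), and the isomorphism $R^{d_j}/\colspace(A_i^j) \cong \Ker[i,j]/\bigl((\im[i-1,i]+\Ker[i,j-1])\cap\Ker[i,j]\bigr)$, which requires unwinding $(X_i^j)^{-1}$ as the coordinate projection and invoking the modular law --- but you flag this as the expected obstacle and the details do go through.
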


\begin{proof}
	Let $i$ and $j$ be the first indices such that $A_i^j$ lacks a complement. 
	For each $p<i$, the matrix $A_p^q$ has a complement. For each $q<j$, the matrix $A_i^q$ has a complement; let $B_i^q$ be a complement of $A_i^q$.
	
	Consider the matrix 
	
	\[ {N} = 
	\left[
	\begin{array}{@{}ccccccccccc@{}}
	A_i^{i+1} & B_i^{i+1} & * & 0 & \cdots & *  \\
	0 & 0 & A_i^{i+2} & B_i^{i+2} & \cdots & *   \\
	\vdots & \vdots & \vdots & \vdots & \ddots & \vdots &  &\\
	0 & 0 & 0 & 0 & \cdots & A_i^{j}\\
	0 & 0 & 0 & 0 & \cdots & 0
	\end{array}
	\right]\,.
	\]
	
	Because each submatrix of the form $[A_i^k | B_i^k]$ is unimodular, the span of the product $X_iN$ contains the span of $[X_i^{i+1} | \cdots | X_i^{j-1}]$, which equals  $\Ker[i,j-1]$ because $X_i$ is a kernel filtration matrix. 
	
	Because every preceding matrix $A_p^q$ admits a complement, by Lemma \ref{lem:complementssuffice}, the matrix $Y_{i-1}$ is a kernel filtration matrix at $f_{i-1}$.  The columns of the matrix $[Y_{i-1}^{i} | \cdots | Y_{i-1}^j]$ therefore span the subspace $\Ker[i-1, j]$ of $f_{i-1}$.  Consequently, the columns of the product matrix $F_{i-1}[Y_{i-1}^{i} | \cdots | Y_{i-1}^j]$ span a subspace belonging to the saecular sublattice of $f_i$, because the direct image operator restricts to a homomorphism of saecular lattices (see Theorem \ref{thm:comm_diagram_forward}).  This image subspace equals the span of  $F_{i-1}[Y_{i-1}^{i+1} | \cdots | Y_{i-1}^j]$, because $F_{i-1} Y_{i-1}^i = 0$.
	Every column of $X_iN$ is a column of $F_{i-1}[Y_{i-1}^{i+1} | \cdots | Y_{i-1}^j]$ or a column in the span of $[X_i^{i+1} | \cdots | X_i^{j-1}]$, so $X_i N$ spans a subspace $S$ in the saecular lattice at $f_i$.  
	
	Because $A_i^j$ lacks a complement, at least one of two conditions must hold; either (i) $A_i^j$ has a non-unit elementary divisors or (ii) the columns of $A_i^j$ are linearly dependent.  Because $Y_{i-1}$ is a kernel filtration matrix, the columns of $A_i^j$ are linearly independent, by Lemma \ref{lemma:lin_independence_A_i^j}.  Therefore, $A_i^j$ has a non-unit elementary divisor. It follows that the quotient $f_i / S$ has torsion, which, by Lemma \ref{lemma:complement_equivalence}, implies that $S$ does not admit a complement in $f_i$. This implies that $f$ does not split as a direct sum of interval modules, by Theorems \ref{thm:problem_statement} and \ref{thm:complement_general}.
\qed \end{proof}

\begin{theorem}\label{thm:alg_matrix}
    Let $f:\{0,\ldots,m\}\to R\mathrm{-Mod}$ be a persistence module that is pointwise free and finitely-generated. 
    If $f$ has an interval decomposition, then Algorithm \ref{alg:matrix} returns an interval decomposition. 
    If $f$ has no interval decomposition, then Algorithm \ref{alg:matrix} certifies that no interval decomposition exists.
\end{theorem}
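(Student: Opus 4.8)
The plan is to prove Theorem \ref{thm:alg_matrix} by a short case analysis built directly on the two lemmas just established. First I would note that Algorithm \ref{alg:matrix} always terminates: it is a finite double loop (the outer loop on line \ref{loop:outer_matrix} over $i$, the inner loop on line \ref{loop:inner_matrix} over $j$), each pass of which performs only finitely many Smith-normal-form factorizations, matrix products, and inversions over $R$. Consequently the procedure exits in exactly one of two ways --- it hits the STOP on line \ref{stop:halt}, or it runs to completion and returns on line \ref{stop:finish} --- and the argument splits accordingly.

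Consider first the case where the algorithm reaches line \ref{stop:finish}. Then for every pair $(i,j)$ examined by the inner loop, the factorization of $A_i^j$ yielded a complement $B_i^j$. Applying Lemma \ref{lem:complementssuffice}, the matrices $Y_1, \ldots, Y_{m-1}$ cohere; that is, each $Y_k$ is a kernel filtration matrix at $f_k$. By the observations recorded immediately before Lemma \ref{lem:complementssuffice} --- the columns of $Y_k^{k+1}$ map to $0$ under $F_k$, and the columns of $Y_k^j$ map injectively into those of $Y_{k+1}^j$ for $j > k+1$ --- together with the standing convention $f_0 = f_m = 0$ (so the endpoint maps are zero and the endpoint bases are empty), the bases $\beta_i$ consisting of the columns of $Y_i$ have the property that $f(i \le i+1)$ kills $\beta_i \cap \Ker[i,i+1]$ and maps $\beta_i \setminus \Ker[i,i+1]$ injectively into $\beta_{i+1}$. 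Since composites of matching matrices are matching matrices, the same holds for every $f(i \le j)$, which is precisely the condition from Section \ref{sec:persistence_modules} guaranteeing that $\{\beta_i\}$ induces an interval decomposition. Hence in this case $f$ admits an interval decomposition and the algorithm has in fact returned one.

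Consider instead the case where the algorithm halts at line \ref{stop:halt}. Then some $A_i^j$ admits no complement, and Lemma \ref{lem:complementsrequire} applies directly to conclude that $f$ does not split as a direct sum of interval modules; the certificate returned is therefore correct.

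It remains to combine the two cases with the dichotomy on $f$. If $f$ admits an interval decomposition, it cannot halt at line \ref{stop:halt} --- that would contradict Lemma \ref{lem:complementsrequire} --- so it reaches line \ref{stop:finish}, and by the first case the returned bases constitute an interval decomposition (whose multiset of intervals, by Theorem \ref{thm:unique_interval_decomp}, is the unique one). If $f$ admits no interval decomposition, it cannot reach line \ref{stop:finish}, since the first case would then produce a decomposition; so it halts at line \ref{stop:halt} and correctly certifies non-existence. This completes the proof. The genuine mathematical content lives entirely in Lemmas \ref{lem:complementssuffice} and \ref{lem:complementsrequire}; the only place the present argument requires care is the bookkeeping with the index ranges of the loops and the $f_0 = f_m = 0$ convention --- specifically, verifying that coherence of the $Y_k$ upgrades to a bona fide interval decomposition in the sense of Section \ref{sec:persistence_modules}, rather than a merely ``local'' compatibility between consecutive bases.
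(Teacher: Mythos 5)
Your proof is correct and takes essentially the same approach as the paper: the entire argument rests on Lemmas \ref{lem:complementssuffice} and \ref{lem:complementsrequire}, with a case analysis determined by whether the algorithm halts at line \ref{stop:halt} or reaches line \ref{stop:finish}. The only difference is presentational --- you make the termination argument and the bridge from coherence of the $Y_k$ to a genuine interval decomposition explicit, whereas the paper records the latter in the remarks preceding Lemma \ref{lem:complementssuffice} and invokes it implicitly.
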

\begin{proof}
    Algorithm \ref{alg:matrix} terminates either at line 10 or at line 12.
    If $f$ has an interval decomposition, then by Lemma $\ref{lem:complementsrequire}$, Algorithm \ref{alg:matrix} does not terminate at line 10.
    This implies that Algorithm \ref{alg:matrix} terminates at line 12. 
    By Lemma \ref{lem:complementssuffice}, Algorithm \ref{alg:matrix} yields a set of bases $\beta_1, \ldots, \beta_m$ that decompose $f$ into interval modules.
    If $f$ does not have an interval decomposition, then by Lemma \ref{lem:complementssuffice}, Algorithm \ref{alg:matrix} does not terminate at line 12. So, Algorithm \ref{alg:matrix} must terminate at line 10, which certifies that $f$ does not have an interval decomposition. 
\qed \end{proof}

\begin{rmk}
	Proposition \ref{prop:complexity} provides a worst-case complexity of Algorithm \ref{alg:matrix}, but we believe that a typical computation is much faster. 
	We did not calculate the typical number (e.g., average-case complexity) of required arithmetic operations that is required by Algorithm \ref{alg:matrix}. 
\end{rmk}

\subsection{Complexity of Algorithm \ref{alg:matrix}}
\label{sec:matrixalgorithmcomplexity}

\begin{proposition}\label{prop:complexity}
    Let $f:\{0,\ldots,m\}\to R\textrm{-Mod}$ be a persistence module that is pointwise free and finitely-generated, and let $d=\max_i(\mathrm{rank}(f_i))$.
	Let $M_d$ be the complexity of matrix multiplication of two $d\times d$ matrices, $R_d$ be the complexity of inverting a $d\times d$ matrix, and $S_d$ be the complexity of computing the Smith normal form of a $d\times d$ matrix. 
	Algorithm \ref{alg:matrix} has worst-case complexity $O(m K_{m,d})$, where $K_{m,d} = \max(mM_d,mS_d,R_d)$.
\end{proposition}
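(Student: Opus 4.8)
The plan is to bound the number of calls that Algorithm \ref{alg:matrix} makes to its three basic subroutines — matrix multiplication, matrix inversion, and Smith-normal-form factorization — and then to invoke the hypothesis that each such call, on a matrix whose dimensions are at most $d = \max_i \mathrm{rank}(f_i)$, costs $O(M_d)$, $O(R_d)$, or $O(S_d)$ respectively. Every other operation performed by the algorithm — extracting row- and column-submatrices, concatenating blocks, reading off elementary divisors, initializing the $\beta_i$ and $Y_0$, assembling $Y_i = [Y_i^{i+1}|\cdots|Y_i^m]$ — is linear in the matrix size, hence $O(md^2)$ in total, and is dominated by the cost of the subroutine calls.

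First I would analyze a single iteration of the outer loop (line \ref{loop:outer_matrix}) for a fixed $i$, which splits into three pieces. (i) Computing the kernel filtration matrix $X_i = [X_i^{i+1}|\cdots|X_i^m]$ at $f_i$: by Proposition \ref{prop:kernel_filtration_matrix} this reduces to obtaining the matrices of $f(i\le j)$ for $i<j\le m$ via $O(m)$ successive multiplications, together with $O(m)$ Smith-normal-form factorizations to peel off the successive kernels $\Ker[i,i+1]\su\cdots\su\Ker[i,m]$, for a cost of $O(mM_d + mS_d)$. (ii) Inverting $X_i$ to obtain $X_i^{-1}$, whose row-submatrices supply the maps $(X_i^j)^{-1}$: cost $O(R_d)$. (iii) The inner loop (line \ref{loop:inner_matrix}) over $j = i+1,\ldots,m-1$; each iteration forms $A_i^j = (X_i^j)^{-1}F_{i-1}Y_{i-1}^j$ with two matrix multiplications, performs one Smith-normal-form factorization of $A_i^j$ (from which the complement $B_i^j$, when it exists, is read off via Proposition \ref{prop:snf_and_cokernels}), and forms $Y_i^j = [X_i^j B_i^j \mid F_{i-1}Y_{i-1}^j]$ with one further multiplication, so the inner loop contributes $O(mM_d + mS_d)$. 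Summing (i)--(iii), a single outer iteration costs $O(mM_d + mS_d + R_d) = O(K_{m,d})$, where $K_{m,d} = \max(mM_d,mS_d,R_d)$. Since the outer loop runs $m-2 = O(m)$ times, the total cost is $O(mK_{m,d})$, as claimed.

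The step I expect to require the most care is the uniform dimension bound that makes $M_d$, $R_d$, $S_d$ legitimate per-operation costs: I would verify that $Y_{i-1}$ and $X_i$ are square of size at most $d$ (each is a kernel filtration matrix at a module of rank $\le d$, hence invertible), and that the derived matrices $Y_{i-1}^j$, $X_i^j$, $A_i^j$, $B_i^j$, $X_i^j B_i^j$, and $F_{i-1}Y_{i-1}^j$ all have both dimensions at most $d$, so that every subroutine call acts on a matrix of size $\le d$. A minor subtlety is that several of these matrices are rectangular; this is handled by reading $M_d$, $S_d$, $R_d$ as upper bounds valid for any matrix whose two dimensions are each at most $d$ (padding with zeros where a square formulation is preferred). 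The only genuinely external ingredient is the operation count for Proposition \ref{prop:kernel_filtration_matrix}; granting that, the argument is a direct tally of matrix operations against the three cost parameters, together with the observation that $O(mM_d) + O(mS_d) + O(R_d) = O(K_{m,d})$.
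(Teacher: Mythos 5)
Your proposal is correct and follows essentially the same strategy as the paper's proof: analyze the cost of one outer-loop iteration by decomposing it into the kernel-filtration-matrix computation (via Proposition \ref{prop:kernel_filtration_matrix}), the inversion of $X_i$, and the inner loop, tally multiplications, inversions, and Smith-normal-form factorizations against $M_d$, $R_d$, $S_d$, and then multiply by the $O(m)$ outer iterations. The only difference is that the paper observes that all the $A_i^j$ arise as submatrices of the single product $X_i^{-1}F_{i-1}Y_{i-1}$, saving a constant factor in the multiplication count, while you recompute $(X_i^j)^{-1}F_{i-1}Y_{i-1}^j$ per $j$; this does not affect the asymptotic bound.
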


\begin{proof}
	Consider the $i$th iteration of the outer loop (see line \ref{loop:outer_matrix}) of Algorithm \ref{alg:matrix}.  In this iteration, we must compute one kernel filtration matrix at $f_i$ and run once through the inner loop (see line \ref{loop:inner_matrix}). 
	
	We first account for the computation of the kernel filtration matrix at $f_i$.
	To do this, we follow the procedure in Proposition \ref{prop:kernel_filtration_matrix}. 
	For each step $k=i,\ldots, m-1$, we take $F_{k-1}\cdots F_iT_i^+\cdots T_{k-2}^+$, multiply by $F_k$ on the left, and multiply by $T_{k-1}^-$ on the right to obtain $F_k\cdots F_i T_i^+\cdots T_{k-2}^+T_{k-1}^-$. We then compute the Smith normal form. 
    Each step thus requires two matrix multiplications and one Smith-normal-form factorization. 
	Because the dimension of each $F_i$ is at most $d\times d$, the dimensions of matrices $F_k$, $T_{k-1}^+$,  $T_{k-1}^-$, and $F_{k-1}\cdots F_iT_i^+\cdots T_{k-2}^+$ is at most $d\times d$. 
	Lastly, we need to compute $F_k\cdots F_i T_i^+\cdots T_{k-2}^+T_{k-1}^+$, which is needed for step $k+1$. 
	This requires an additional matrix multiplication, where we multiply by $T_{k-1}^+$ instead of by $T_{k-1}^-$ on the right. 
    In total, we require the computation of three matrix multiplications and one Smith normal form.
    The total computational cost is thus at most $3M_d + S_d$. 
	We iteratively perform these operations $m-i$ times, so in total the computation of a kernel filtration matrix requires at most $(m-i)(3M_d + S_d)$ arithmetic operations.

	We now account for the computations of the inner loop (see line \ref{loop:inner_matrix} of Algorithm \ref{alg:matrix}).  
	By construction, each matrix $A_i^j$ is a submatrix of $X_i^{-1} F_{i-1} Y_{i-1}$. 
	Thus, the combined cost of computing every matrix $A_i^j$ is at most the cost of one matrix inversion and two matrix multiplications (of matrices with sizes of at most  $d \times d$). 
	In addition, for each $j=i+1, \ldots, m-1$, we must compute (1) a complement $B_i^j$ of $A_i^j$ (if one exists), which  requires the computation of one Smith normal form, and (2) a matrix $Y_i^j$, which requires two matrix multiplications (see line 8 of Algorithm \ref{alg:matrix}). Thus, in total, we require one matrix inversion, $2 + 2(m-i-1) = 2(m-i)$ matrix multiplications, and $m-i-1$ Smith-normal-form factorizations.
    In total, the inner loop computations requires at most $2(m-i)M_d+(m-i-1)S_d+R_d$ arithmetic operations.
	
	Overall, each iteration $i=1,\ldots,m-2$ of the outer loop (see line \ref{loop:outer_matrix}) of Algorithm \ref{alg:matrix} requires at most of at most $5(m-i)M_d+(2m-2i-1)S_d+R_d$ arithmetic operations.
	The worst-case complexity of Algorithm \ref{alg:matrix} therefore is $O(m K_{m,d})$, where $K_{m,d} = \max(mM_d,mS_d,R_d)$.
\qed \end{proof}

\section{Summary of complexity results}
\label{sec:complexity_Summary}

In this work we have considered Problems \ref{prob:splits} and \ref{prob:int_decomp} (which we restate for convenience) concerning a pointwise free and finitely-generated persistence module $f:\{0,\ldots,m\}\to R\textrm{-Mod}$ over a PID $R$. Recall that $d=\max_i(\textrm{rank}(f_i))$ denotes the maximum of the ranks of the $R$-modules in  our persistence module, $S_d$ denotes the complexity of computing Smith normal form of a $d\times d$ matrix, $M_d$ denotes the complexity of computing matrix multiplication of two $d\times d$ matrices, and $R_d$ denotes the complexity of inverting a unimodular $d\times d$ matrix over $R$. 

{
\renewcommand{\theproblem}{\ref{prob:splits}} 
\begin{problem}
    Determine whether $f$ splits as a direct sum of interval submodules.
\end{problem}
\addtocounter{problem}{-1} 
}

{
\renewcommand{\theproblem}{\ref{prob:int_decomp}} 
\begin{problem}
    If an interval decomposition does exist, then compute one explicitly.
\end{problem}
\addtocounter{problem}{-1} 
}

Our work addresses Problem \ref{prob:splits} by relating the existence of an interval decomposition to the cokernels of the structure maps of $f$.
Algorithmically, this translates to repeatedly performing Smith normal forms of the matrix representations of the structure maps. Theorem \ref{thm:cokernelcomplexity} gives the complexity of this procedure. To the best of our knowledge, this is the first finite-time (respectively, polynomial-time)\footnote{Concretely, the procedure is finite-time (respectively, polynomial-time) if the problem of computing Smith normal form of a matrix over $R$ is finite-time (respectively, polynomial-time).} solution to Problem \ref{prob:splits}  which is compatible with every pointwise free and finitely-generated persistence module over a PID.

\begin{rmk}\label{rmk:polynomial-time_Z}
    When the coefficient ring $R$ is $\Z$ or $\Q[x]$, matrix multiplication and computing Smith normal form are polynomial-time operations (see \cite{storjohann_near_1996,storjohann_fast_1997}). 
    This implies that when considering these coefficient rings, Algorithm \ref{alg:matrix} is also polynomial time.
\end{rmk}

\begin{theorem}
\label{thm:cokernelcomplexity}
Suppose the matrix representations for all of the structure maps are already computed. One can solve Problem \ref{prob:splits} in $O(m^2S_d)$. 
\end{theorem}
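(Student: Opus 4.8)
The plan is to observe that Problem \ref{prob:splits} collapses, after a single invocation of Theorem \ref{thm:problem_statement}, to a batch of Smith-normal-form tests. Theorem \ref{thm:problem_statement} says that $f$ splits as a direct sum of interval submodules if and only if $\Coker(f(a\le b))$ is free for every pair $a\le b$, and the boxed equivalences following that theorem say that $\Coker(f(a\le b))$ is free precisely when $f(a\le b)$ has unit elementary divisors --- that is, when the nonzero diagonal entries of the Smith normal form of any matrix representing $f(a\le b)$ are units of $R$. So the decision problem reduces to checking this elementary-divisor condition for every structure map.

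Concretely, first I would fix, for each pair $(a,b)$ with $0\le a\le b\le m$, the matrix representation of $f(a\le b)$ supplied by hypothesis, compute a Smith normal form of it, and record whether every nonzero diagonal entry of the resulting diagonal matrix is a unit of $R$. The procedure returns ``$f$ splits into interval submodules'' if and only if this holds for all pairs. Correctness is then immediate from the chain of equivalences above: passing the test for every pair is equivalent to every structure map having free cokernel, which by Theorem \ref{thm:problem_statement} is equivalent to the existence of an interval decomposition.

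For the complexity count, there are $O(m^2)$ pairs $(a,b)$ with $a\le b$; each matrix representing $f(a\le b)$ has at most $d\times d$ entries, where $d=\max_i(\mathrm{rank}(f_i))$; and one Smith-normal-form computation costs $S_d$. Inspecting whether the (at most $d$) nonzero diagonal entries are units takes $O(d)$ ring operations --- and in fact, since the elementary divisors form a divisibility chain, it suffices to test only the last nonzero one --- which is dominated by $S_d$. Summing over all pairs gives worst-case complexity $O(m^2S_d)$, as claimed. It is precisely the hypothesis that matrix representations for \emph{all} structure maps are already available that lets us avoid the cost of forming the $\Theta(m^2)$ composite maps out of the $m$ adjacent ones.

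I do not expect a genuine obstacle here: once Theorem \ref{thm:problem_statement} is in hand the argument is essentially bookkeeping. The one subtlety worth emphasizing is that one cannot shortcut by testing only the $m$ adjacent maps $f(i\le i+1)$, so the $\Theta(m^2)$ count is intrinsic to the criterion rather than an artifact of a lazy algorithm. For instance, the module $0\to\Z\xrightarrow{(2,3)^{\mathsf{T}}}\Z^2\xrightarrow{(1,1)}\Z\to 0$ has free cokernel at every adjacent step, yet $\Coker(f(1\le 3))=\Z/5\Z$ is not free, so by Theorem \ref{thm:problem_statement} the module does not split; only by examining the non-adjacent map $f(1\le 3)$ does the algorithm detect this.
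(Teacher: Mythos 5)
Your proposal is correct and follows the same route as the paper: invoke Theorem \ref{thm:problem_statement} to reduce the splitting question to checking that every structure map has unit elementary divisors, then perform one Smith-normal-form computation per pair $(a,b)$, giving $O(m^2 S_d)$. Your added example with the composite map $\Z\to\Z^2\to\Z$ is a nice illustration of why the $\Theta(m^2)$ pairs cannot be reduced to the $m$ adjacent maps, but the core argument is the same as the paper's.
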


\begin{proof}
By Theorem \ref{thm:problem_statement}, to check whether a persistence module $f$ splits, it is enough to check that each $f(i\leq j)$ has free cokernel. 

Let $M_i^j$ be the matrix representation of the structure map $f(i\leq j)$. It is enough to check that each $M_i^j$ only consists of unit elementary divisors. One can do this by computing the Smith normal for of $M_i^j$. Therefore, because there are $m(m-1)$ structure maps to check (recall that $f_0=f_m=0$), this procedure requires $m(m-1)$ Smith normal form computations, and therefore has computational complexity $O(m^2S_d)$.
\qed \end{proof}

To the best of our knowledge, Algorithm \ref{alg:matrix} is the first finite-time solution to Problem \ref{prob:int_decomp} (as well as Problem \ref{prob:splits}) which is compatible with every pointwise free and finitely-generated persistence module over a PID.

\begin{theorem}
    Algorithm \ref{alg:matrix} solves  Problem \ref{prob:int_decomp} (in addition to Problem \ref{prob:splits}) for pointwise free and finitely-generated persistence modules of abelian groups,  with complexity $O(m\cdot\max(mM_d,mS_d,R_d))$.
\end{theorem}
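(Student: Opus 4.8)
The plan is to obtain this statement as a direct corollary of the correctness theorem (Theorem \ref{thm:alg_matrix}) and the complexity bound (Proposition \ref{prop:complexity}), once we observe that a persistence module of abelian groups is a persistence module over the PID $\Z$. First I would record that $\Z$ is a principal ideal domain, so a pointwise free and finitely-generated persistence module $f$ of abelian groups satisfies the standing hypotheses of both Theorem \ref{thm:alg_matrix} and Proposition \ref{prop:complexity}; in particular each $f_a$ is isomorphic to $\Z^{d_a}$ for some $d_a$, so after a choice of bases $f$ is encoded by a sequence of integer matrices $F_i$, which is precisely the input required by Algorithm \ref{alg:matrix}.

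\textbf{Correctness (Problems \ref{prob:splits} and \ref{prob:int_decomp}).} For these I would invoke Theorem \ref{thm:alg_matrix} directly. Algorithm \ref{alg:matrix} terminates either at line \ref{stop:halt} or at line \ref{stop:finish}. If $f$ admits an interval decomposition, then by Lemma \ref{lem:complementsrequire} no $A_i^j$ can lack a complement, so the algorithm does not halt at line \ref{stop:halt}; it therefore reaches line \ref{stop:finish}, and by Lemma \ref{lem:complementssuffice} the returned bases $\beta_1,\ldots,\beta_m$ (the columns of the cohering kernel filtration matrices $Y_1,\ldots,Y_m$) simultaneously diagonalize every structure map, hence exhibit an interval decomposition. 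If $f$ admits no interval decomposition, then by Lemma \ref{lem:complementssuffice} the algorithm cannot reach line \ref{stop:finish}, so it halts at line \ref{stop:halt} and outputs the offending $A_i^j$ as a certificate of non-existence — this records a structure map with non-free cokernel, in view of Theorem \ref{thm:problem_statement} and Lemma \ref{lemma:complement_equivalence}. Thus Algorithm \ref{alg:matrix} solves both problems.

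\textbf{Complexity.} For the running time I would appeal to Proposition \ref{prop:complexity}, which gives worst-case complexity $O(m K_{m,d})$ with $K_{m,d}=\max(mM_d,mS_d,R_d)$ and $d=\max_i \mathrm{rank}(f_i)$; this is exactly the asserted bound $O\big(m\cdot\max(mM_d,mS_d,R_d)\big)$. I would close by remarking, via Remark \ref{rmk:polynomial-time_Z}, that over $\Z$ the quantities $M_d$, $S_d$, and $R_d$ are all polynomial in $d$ and in the bit-length of the matrix entries, so Algorithm \ref{alg:matrix} runs in polynomial time on abelian-group inputs.

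\textbf{Main obstacle.} Because the statement is a corollary, there is no genuine obstacle beyond invoking the two results; the only thing to check is that specializing from a general PID to $\Z$ introduces no new subtleties, and it does not. The real difficulty lies in the cited results: the hard step in Theorem \ref{thm:alg_matrix} is Lemma \ref{lem:complementsrequire}, whose proof shows that the column space of the auxiliary matrix $X_iN$ lies in the saecular lattice at $f_i$ and then uses the torsion-free criterion of Lemma \ref{lemma:complement_equivalence} together with Theorem \ref{thm:problem_statement} to rule out decomposability; and the hard step in Proposition \ref{prop:complexity} is the careful per-iteration accounting of the matrix multiplications, inversions, and Smith-normal-form factorizations needed to build the kernel filtration matrices and the blocks $A_i^j$ and $B_i^j$.
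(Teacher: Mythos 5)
Your proposal is correct and follows exactly the paper's route: both cite Theorem \ref{thm:alg_matrix} for correctness and Proposition \ref{prop:complexity} for the running time, noting that abelian groups are $\Z$-modules and $\Z$ is a PID. The paper's proof is a two-line invocation of those results; your extra commentary on Lemmas \ref{lem:complementsrequire} and \ref{lem:complementssuffice} and on Remark \ref{rmk:polynomial-time_Z} is accurate but not strictly necessary for this corollary.
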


\begin{proof}
By Theorem \ref{thm:alg_matrix}, Algorithm \ref{alg:matrix} solves Problems \ref{prob:splits} and \ref{prob:int_decomp}. 
By Proposition \ref{prop:complexity}, Algorithm \ref{alg:matrix} has time complexity $O(m\cdot\max(mM_d,mS_d,R_d))$.
\qed \end{proof}

Obayashi and Yoshiwaki \cite{obayashi_field_2023} provide additional structural insight into Problem \ref{prob:splits}.  
Their work provides a homological condition (namely, that relative homology groups with $\Z$-coefficients are free) under which, given a filtration, the corresponding persistence diagram is independent of the choice of field. 
The authors do not directly relate this condition to a splitting of the corresponding persistence module.
However, by Theorem \ref{thm:OurFieldIndependence}, their condition on the freeness of the relative homology groups is equivalent to the condition that all structure maps of the corresponding persistence module have free cokernels.
Therefore, Obayashi and Yoshikawa's condition on the relative homology groups of a filtration is equivalent to the corresponding persistence module admits an interval decomposition.

Obayashi and Yoshiwaki additionally provide\footnote{This result was not formally refereed, however their proof is correct and we provide an independent argument in Appendix \ref{sec:standardalgorithm}.} a solution to Problems \ref{prob:splits} and \ref{prob:int_decomp}, in the special case of persistence modules realized via integer persistent homology of simplicial complexes with simplex-wise filtrations\footnote{Recall that a \emph{simplex-wise filtration} on a simplicial complex $K = \{\sigma_1, \ldots,\sigma_m\}$ is a nested sequence of sub-simplicial complexes $\emptyset = \mathcal{K}_0 \subseteq \cdots \subseteq \mathcal{K}_m = K$ such that $\mathcal{K}_p = \{\sigma_1, \ldots, \sigma_p\}$ for all $0 \le p \le m\}$.}. The procedure consists of performing a standard algorithm to compute persistent homology for simplex-wise filtrations, with an additional break condition.  
This method does not obviously adapt to arbitrary persistence modules. 
However, when their algorithm produces an interval decomposition, the complexity bound is much better. See Appendix \ref{sec:standardalgorithm} for further discussion.

\begin{theorem}\label{thm:obayashi_complexity}
    Suppose that $f = H_k(\mathcal{K};\Z)$ is the  persistence module of a simplex-wise filtration on a finite simplicial complex $\mathcal{K}=\{\mathcal{K}_i\}_{i=0}^{m-1}$ (by convention, we suppose that
    $H_k(\mathcal{K}_m;\Z)=0$; note that we can assume this without loss of generality). Then we can solve Problems \ref{prob:splits} and \ref{prob:int_decomp} by applying the standard algorithm (Algorithm \ref{alg:standard} in Appendix \ref{sec:standardalgorithm}) to the associated boundary matrix, with rows and columns arranged in birth order.  This procedure has complexity $O(m^3)$.
\end{theorem}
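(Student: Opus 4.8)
The plan is to reduce the statement to two ingredients: (i) the correctness of the standard persistent-homology reduction, adapted to $\Z$, together with the Obayashi--Yoshiwaki trailing-coefficient criterion (both treated in Appendix~\ref{sec:standardalgorithm}; see Theorem~\ref{thm:yoshiwakiAlgorithmCorrect}), and (ii) a complexity accounting. Write $D$ for the boundary matrix of $\mathcal{K}$ with rows and columns ordered by birth; since the filtration is simplex-wise, the total number of simplices $N$ satisfies $N = O(m)$, and $D$ is an $N \times N$ matrix with entries in $\{0, \pm 1\}$. First I would run the standard reduction algorithm (Algorithm~\ref{alg:standard}) over $\Z$ to produce a decomposition $R = DV$ with $V$ upper-triangular and unimodular and $R$ in the reduced form described in Appendix~\ref{sec:standardalgorithm}; this records the persistence pairing (each pair $(\sigma_i, \sigma_j)$ with $\mathrm{low}(R_j) = i$) and the unpaired positive $k$-simplices. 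The reduction follows exactly the same column-by-column bookkeeping as in the field case (with field divisions replaced by the integer column operations of Appendix~\ref{sec:standardalgorithm}), so it costs $O(N^3) = O(m^3)$ ring operations.

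For Problem~\ref{prob:splits}, I would invoke the criterion of Obayashi and Yoshiwaki, reproved in Appendix~\ref{sec:standardalgorithm}: for a simplex-wise filtration, $f = H_k(\mathcal{K};\Z)$ splits as a direct sum of interval submodules if and only if the trailing nonzero entry of every nonzero column of $R$ equals $1$ or $-1$. Inspecting all these entries is a single pass over $R$ and costs $O(m^2)$, dominated by the reduction. (Example~\ref{ex:simplexwisefailure} is exactly the place where this test would fail without the simplex-wise hypothesis, so that hypothesis is used here.) Conceptually one may also phrase the criterion through Theorem~\ref{thm:problem_statement}: in the simplex-wise case, freeness of the cokernel of every structure map of $H_k(\mathcal{K};\Z)$ is equivalent to the trailing-coefficient condition, and this equivalence is what Appendix~\ref{sec:standardalgorithm} establishes.

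For Problem~\ref{prob:int_decomp}, assume the test passes. I would extract the decomposition directly from $R$ and $V$: for each persistence pair $(\sigma_i, \sigma_j)$ the column $R_j$ is a $k$-cycle (it equals $\partial V_j$, hence has zero boundary) whose trailing coefficient at row $i$ is $\pm1$, and it generates an interval summand $I_{\Z}^{i,j}$; for each unpaired positive $k$-simplex $\sigma_i$ the column $V_i$ is a $k$-cycle (because $R_i = D V_i = 0$) generating a summand $I_{\Z}^{i,m}$. Equivalently, for each index $p$ one obtains a basis $\beta_p$ of $H_k(\mathcal{K}_p;\Z)$ by collecting the classes $[R_j]$ over pairs with $i \le p < j$ together with the classes $[V_i]$ over unpaired positive $\sigma_i$ with $i \le p$; the $\pm1$ trailing-coefficient condition is precisely what forces these to be $\Z$-bases (not merely $\Q$-bases) and what makes every inclusion-induced map send $\beta_p$ to $\beta_{p+1}$ or to $0$ in the matching manner of Section~\ref{sec:persistence_modules}. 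Assembling and verifying these bases is $O(m)$ operations on vectors of length $O(m)$, i.e.\ $O(m^2)$, so the overall cost is $O(m^3)$, dominated by the reduction.

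The main obstacle is the first ingredient, and specifically making it precise rather than the arithmetic count: the standard algorithm is classically formulated over a field, so one must check that its integer analogue is well-defined, still runs in $O(m^3)$ ring operations (no blow-up in the number of column operations), and---crucially---that the output $R = DV$ genuinely encodes an interval decomposition in the sense used in this paper, namely a coherent choice of bases $\beta_p$ of the groups $H_k(\mathcal{K}_p;\Z)$, and not merely a barcode. Establishing this, along with the trailing-coefficient criterion and its equivalence to the free-cokernel condition of Theorem~\ref{thm:problem_statement}, is exactly the content of Appendix~\ref{sec:standardalgorithm}; granting those results, the present theorem is the complexity bookkeeping carried out above.
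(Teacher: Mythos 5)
Your proposal is correct and follows the paper's route: the paper's proof of this theorem is simply a citation to Theorems~\ref{thm:yoshiwakiAlgorithmCorrect} and~\ref{thm:standardalgorithmcomplexity} in Appendix~\ref{sec:standardalgorithm}, and you have unpacked exactly that reduction plus the $O(m^3)$ bookkeeping. One small imprecision worth flagging: in the paper's Appendix~\ref{sec:standardalgorithm} the standard algorithm is run with $\Q$ (field) coefficients, and integrality of $R$, $V$, and the change-of-basis matrix $J$ is deduced \emph{a posteriori} (Lemma~\ref{lem:umatch}(4)) precisely when the trailing-coefficient test passes; there is no separate ``integer column operations'' variant in the appendix, so the algorithm should not be described as running over $\Z$ from the outset (the two viewpoints coincide only after the criterion is known to hold).
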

\begin{proof}
    See Theorems \ref{thm:yoshiwakiAlgorithmCorrect} and \ref{thm:standardalgorithmcomplexity} in Appendix \ref{sec:standardalgorithm}.
\qed \end{proof}

\begin{rmk}
In practice, the standard algorithm has often been observed to run in linear time \cite{bauer2017phat}.     
\end{rmk}

\begin{rmk}
In fact, we can do better. The result of Obayashi and Yoshiwaki depends only on the pair of matrices $R,V$ produced by the standard algorithm. These matrices can be computed in $O(m^\theta)$ time \cite{morozov2024persistentcohomologymatrixmultiplication}, where $m^\theta$ is the asymptotic complexity of multiplying two matrices of size $m \times m$, with real coefficients.
\end{rmk}

\begin{rmk}
In Appendix \ref{sec:standardalgorithm} we generalize Theorem \ref{thm:obayashi_complexity} in two directions. First, we can solve Problems \ref{prob:splits} and \ref{prob:int_decomp} by applying the standard algorithm to the \emph{anti-transpose} of the boundary matrix. This has the same worst-case complexity; however the decomposition of anti-transposed matrices has been observed to run significantly faster on many types of scientific data \cite{de2011dualities, bauer2017phat}. Second, we show that the same procedure can be applied to any filtered chain complex which meets certain requirements; for example, the filtered chain complex of a cubical or CW complex in which we add only one cell to the filtration at a time.
\end{rmk}

In closing, we compare the asymptotic complexity bounds of determining field independence of a non-simplex-wise filtration using the criterion of Obayashi-Yoshiwaki versus the cokernel condition presented in this paper.
Suppose that $f = H_k(\mathcal{K};\Z)$ is the  persistence module of a filtration on a finite simplicial complex $\mathcal{K}=\{\mathcal{K}_i\}_{i=0}^{m-1}$ (by convention, we define $H_k(\mathcal{K}_m;\Z)=0$), but remove the condition that this filtration be simplex-wise.  Let $e$ denote the number of $(k-1)$-, $k$-, or $(k+1)$-simplices, whichever is greatest.

\begin{theorem}\label{thm:comparative_freeness_complexity}
     Under these conditions, we can solve Problem \ref{prob:splits} either by computing and checking the freeness of $H_k(\mathcal{K}_b,\mathcal{K}_a;\mathbb{Z})$ for all $0 \le a \le b \le m$, with worst-case time complexity $O(m^2 S_e)$, or by first constructing matrix representations of the module $H_k(\mathcal{K};\Z)$ with complexity $O(m S_e)$, then checking that the cokernel of every structure map $f_a \to f_b$ is free, with time complexity $O(m^2 S_d)$.
\end{theorem}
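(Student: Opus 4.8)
The plan is to treat the two claimed procedures separately, and for each to first certify that it is a correct decision procedure for Problem~\ref{prob:splits}, then account for its arithmetic cost in terms of the subroutines $S_e$, $S_d$, and $M_d$. Correctness is inherited from results already established. For the second procedure, Theorem~\ref{thm:problem_statement} says that $f = H_k(\mathcal{K};\Z)$ splits into interval modules if and only if every structure map $\phi_a^b\colon H_k(\mathcal{K}_a;\Z)\to H_k(\mathcal{K}_b;\Z)$ has free cokernel, and this is precisely what the procedure tests. For the first procedure, the equivalence of conditions~1 and~3 in Theorem~\ref{thm:OurFieldIndependence} shows that, under the standing freeness hypotheses (in particular, $H_{k-1}(\mathcal{K}_a;\Z)$ free for all $a$), $f$ splits if and only if the relative homology $H_k(\mathcal{K}_b,\mathcal{K}_a;\Z)$ is free for all $0\le a\le b\le m$, which is what that procedure tests. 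Thus the substance of the theorem lies entirely in the complexity accounting.

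For the first procedure I would argue as follows. Fix $a\le b$. The group $H_k(\mathcal{K}_b,\mathcal{K}_a;\Z)$ is the homology at the middle term of the three-term complex $C_{k+1}(\mathcal{K}_b)/C_{k+1}(\mathcal{K}_a)\to C_k(\mathcal{K}_b)/C_k(\mathcal{K}_a)\to C_{k-1}(\mathcal{K}_b)/C_{k-1}(\mathcal{K}_a)$, in which each term is free on the simplices of the indicated dimension that lie in $\mathcal{K}_b$ but not in $\mathcal{K}_a$. Each of these three free modules has rank at most $e$, so the two relative boundary matrices have size at most $e\times e$; computing the relevant kernel and image and reading off elementary divisors to test freeness is a bounded number of $e\times e$ Smith-normal-form computations, hence $O(S_e)$ for the pair $(a,b)$. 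Summing over the $O(m^2)$ pairs gives total cost $O(m^2 S_e)$.

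For the second procedure the work splits into constructing matrix representations of $f$ and then testing cokernels. For the construction step, for each index $i$ a basis of $H_k(\mathcal{K}_i;\Z)=\Ker(\partial_k^{(i)})/\im(\partial_{k+1}^{(i)})$ together with a matrix $M_i$ representing $\phi_i^{i+1}$ in such bases can be read off from the unimodular change-of-basis data of Smith-normal-form factorizations of the boundary matrices $\partial_k^{(i)}$ and $\partial_{k+1}^{(i)}$; these have size at most $e\times e$, so the step costs $O(S_e)$ per index and $O(m S_e)$ overall, the auxiliary products being absorbed since $\mathrm{rank}(H_k(\mathcal{K}_i;\Z))\le e$. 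For the checking step, for each pair $a<b$ I would form $M_a^b = M_{b-1}\cdots M_a$ by iterated multiplication --- $O(m)$ products of matrices of size at most $d\times d$ for each fixed $a$, hence $O(m^2 M_d)$ overall --- and then compute the Smith normal form of $M_a^b$ (size at most $d\times d$) to check that its nonzero elementary divisors are units, which is $O(m^2 S_d)$ over all pairs. Since $M_d\le S_d$, the checking step runs in $O(m^2 S_d)$, giving the stated bounds.

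The main obstacle is bookkeeping rather than anything conceptual: one must check carefully that every homology and relative-homology computation can be confined to matrices of size bounded by $e$ (respectively $d$), and --- the slightly delicate point --- that the induced maps $\phi_i^{i+1}$ on homology can genuinely be expressed as matrices extracted from the change-of-basis data returned by Smith normal form, at a cost absorbed into $O(m S_e)$. This last manipulation is standard but mildly involved, and the facts recorded in Appendix~\ref{appendix:smith_normal_form} are exactly what is needed.
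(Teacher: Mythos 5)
Your proposal is correct and follows essentially the same approach as the paper: correctness of both procedures is inherited from Theorems~\ref{thm:problem_statement} and~\ref{thm:OurFieldIndependence}, and the complexity bounds come from counting Smith-normal-form computations over the $O(m^2)$ index pairs (at cost $S_e$ for relative boundary matrices, $S_d$ for homology-level matrices) plus $O(m S_e)$ to construct the matrix representations. If anything your accounting is slightly more careful than the paper's: you explicitly charge for forming the products $M_a^b = M_{b-1}\cdots M_a$ and absorb them into $O(m^2 S_d)$ via $M_d \le S_d$, a step the paper leaves implicit, and you correctly state the final bound as $O(m^2 S_d)$ where the paper's proof text writes $S_e$ in its last line, which appears to be a typo inconsistent with the theorem statement.
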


\begin{proof}
    The relative homology group $H_k(\K_b,K_a;\Z)$  is free if and only if the elementary divisors of the relative boundary matrix $\tilde \partial_{k+1}$ (which is obtained by deleting rows and columns of the ordinary boundary matrix) are units. This condition can be checked by computing the Smith Normal form of $\tilde \partial_{k+1}$ with $S_e$ operations. 
    There are $\binom{m}{2}$ pairs $(a,b)$ of indices values to check, leading to a complexity bound of $O(m^2 S_e)$ for the solution method which computes freeness of each relative homology group.

    To compute bases for the homology groups $H_k(\mathcal{K}_a;\Z)$, and matrix representations for the corresponding induced maps $H_k(\mathcal{K}_a;\Z) \to H_k(\mathcal{K}_{a+1};\Z)$, one can employ a standard sequence of Smith Normal factorizations. These operate on the matrices of $\partial_{k+1}$, $\partial_k$, or variants of these matrices of equal or lesser size obtained by deletions and basis changes. The complexity of each factorization is $O(S_e)$, and we must compute the homology groups of $m+1$ spaces and $m$ maps. Thus the operations required are $O(m S_e)$.  To check that the cokernel of each map $f_a \to f_b$ is free, one need only check the elementary divisors by placing the corresponding matrix in Smith Normal form; this requires at most $\binom{m}{2} S_e = O(m^2 S_e)$ operations.
\qed \end{proof}

In real-world homology calculations, the maximum value $d$, of $\mathrm{rank}(H_k(\mathcal{K}_i; \Z))$ over all $i$, is much smaller than $e$.
For example, the value of $d$ (with respect to first homology) for the Vietoris--Rips complex of a point cloud of 1,000 points uniformly sampled from the unit cube in $\mathbb{R}^3$ is a few hundred\footnote{In analyzing 1,000 point clouds --- each with 1,000 points generated uniformly at random on the unit cube --- we observed values of $d$ ranging from $158$ to $235$, with a median value of $193$.} while $e$ is on the order of $\binom{1000}{3}$. 
Under these conditions, the asymptotic complexity bound $O(m S_e + m^2 S_d)$ will be much tighter than $O(m^2 S_e)$.

\section{Conclusion}
\label{sec:conclusion}

We showed that a finitely-indexed persistence module that is pointwise free and finitely-generated over a PID splits as a direct sum of interval submodules if and only if the cokernel of every structure map is free (see Theorem \ref{thm:problem_statement}).  
The necessity direction of this equivalence is simple to prove (see Section \ref{sec:interval_decomp}), 
while the sufficiency direction (see Section \ref{sec:interval_decomp_hard}) required more work.
In particular, we showed the sufficiency direction by formulating an algorithm (see Algorithm \ref{alg:simple}) that takes in a persistence module and inductively constructs a sequence of bases that together comprise an interval decomposition.

We gave a concise algorithm that returns either an interval decomposition or a certificate that no such decomposition exists. This algorithm has two variants. 
The first, Algorithm \ref{alg:simple} (which we used to prove the sufficiency direction of Theorem \ref{thm:problem_statement}), is primarily combinatorial; it exposes the underlying algebra in a simple format. It works by iteratively 
constructing bases by extending the image of bases at the previous points. That is, we construct a basis $\beta_i$ of $f_i$ by extending the image $f(i-1\leq i)(\beta_{i-1})$ of a basis $\beta_{i-1}$ of $f_{i-1}$.
The second, Algorithm \ref{alg:matrix},  translates this procedure into the language of matrix algebra; it requires no special machinery beyond Smith normal form, and it offers natural possibilities for parallelization.

\subsection{Future directions}

The algorithms presented in this work are coarse in nature, and we are confident that Algorithm \ref{alg:matrix}, in particular, can be improved. 
The complexity bounds placed on Algorithm \ref{alg:matrix} can also be refined, such as by accounting for the distribution of the sizes of the matrices involved (rather than taking the max), as well as sparsity and other factors. 
Similar refinements could also be applied to the analysis of several other procedures described in this work, such as the procedure to check field-independence by computing the Smith Normal Form of a quadratic number of relative boundary matrices (see Section \ref{sec:complexity_Summary}). In practice, all such estimates should be complemented by numerical experiments on real and simulated data.
While implementation and numerical simulations fall outside the scope of this paper, they are natural and necessary next steps. 
The work presented here primarily serves to motivate and frame such work.

Another direction to explore is to improve upon the standard algorithm (Algorithm \ref{alg:standard}, see Appendix \ref{sec:standardalgorithm}). Currently, the standard algorithm only works for persistence modules that can be realized by the persistent homology of a simplex-wise filtration (or the natural analog of a simplex-wise filtration for more general families of chain complexes, see Appendix \ref{sec:standardalgorithm}). 
It would be interesting to extend it to the setting of arbitrary filtered chain complexes, while maintaining a comparable computational complexity. This direction seems tractable, in principle, because the left-to-right clearing operations of the standard algorithm generalize naturally to block-clearing operations, where blocks correspond to level sets of the filter function. 
It is also computationally attractive, as it amounts to a Smith Normal factorization of a single boundary matrix; by comparison, the only current alternative of which we are aware is the method described in Section \ref{sec:universal_cycle_reps}, whose first step alone requires $O(m)$ Smith Normal factorizations to construct matrix representations for the structure maps in the associated integer persistent homology module.

An additional theoretical direction is to extend our work to the setting of persistence modules of general $R$-modules, rather than restricting ourselves to free modules. 
This requires us to generalize the notion of ``interval decomposition'' to the setting of persistence modules valued over modules with torsion.
One can then determine the conditions on persistence modules for the existence of interval decompositions. One can perhaps take inspiration from \cite{patel_generalized_2018} for insights into generalized persistence.

It also would be interesting to
generalize our work from the setting of finitely-indexed persistence modules (for which the domain category is a finite totally-ordered poset category) to continuously-indexed persistence modules (for which the domain category is a continuous-valued totally-ordered poset category, such as $[0,1]$ or $\R$). We expect that many of the ideas from the present paper can be adapted to this more general setting, although the treatment may require extra care with respect to limits (specifically, upper-continuity of the saecular lattice; see \cite{ghrist_saecular_2021}). 

It is natural to ask whether the methods explored in this work can also be applied in the setting of zig-zag persistence modules. A persistence module can be described as consisting of a sequence $\{M_i\}_{i=1}^r$ of $R$-modules with maps $\phi_{i}:M_i\to M_{i+1}$. Zig-zag persistence modules \cite{carlsson_zigzag_2010} are generalizations of persistence modules in that they consist of a  sequence $\{M_i\}_{i=0}^m$ of $R$-modules, but the maps $\phi_i$ can go in either direction (i.e., either $\phi_i:M_i\to M_{i+1}$ or $\phi_i:M_{i+1}\to M_i$). When the ring of coefficients is a field, Gabriel's theorem \cite{gabriel_unzerlegbare_1972} guarantees that any zig-zag persistence module admits an interval decomposition. It would be interesting to consider the conditions under which zig-zag persistence modules of free and finitely-generated modules over a PID admit interval decompositions.

\appendix

\section{An Alternate Proof of Sufficiency for Theorem \ref{thm:problem_statement}}
\label{sec:interval_decomp_hard_old}

We now provide an alternate framework to prove sufficiency of Theorem \ref{thm:problem_statement}, which we repeat here for ease of reference.
We will make use of some notation and results from Section \ref{sec:interval_decomp_hard}.

{
\renewcommand{\thetheorem}{\ref{thm:hard_direction_result}}
\begin{theorem}[Sufficiency]
    Let $f$ be a persistence module that is pointwise free and finitely-generated over $R$, and suppose that the cokernel of every structure map $f(a\leq b)$ is free. Then $f$ splits into a direct sum of interval modules.
\end{theorem}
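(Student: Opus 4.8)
The plan is to prove sufficiency by producing, at each index $a$, a basis of $f_a$ that is simultaneously adapted to every kernel submodule $\Ker[a,y]$ and every image submodule $\im[x,a]$, and then to choose these bases \emph{coherently} across indices so that each structure map sends each basis vector to a basis vector or to $0$. The organizing device is the \emph{saecular lattice} $\mathcal{S}_a$ at $f_a$: the sublattice of the (modular) lattice of submodules of $f_a$ generated under $+$ and $\cap$ by the two chains $\{\im[x,a]\}_{x}$ and $\{\Ker[a,y]\}_{y}$. An interval decomposition is exactly the data of such bases, so the whole theorem is the assertion that adapted bases can be chosen coherently.

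First I would analyze $\mathcal{S}_a$ itself. It is a finite distributive lattice --- this is the classical fact that the sublattice generated by two chains in a modular lattice is distributive, and concretely every member of $\mathcal{S}_a$ is a finite sum of ``rectangles'' $\im[x,a]\cap\Ker[a,y]$. Next, every member of $\mathcal{S}_a$ is a direct summand of $f_a$: the generators $\Ker[a,y]+\im[x,a]$, and hence the rectangles (Lemma~\ref{lemma:intersection_closure}), are summands by Theorem~\ref{thm:complement_special}, and joins of summands lying inside a distributive lattice of summands are again summands, via the torsion-free criterion of Lemma~\ref{lemma:complement_equivalence}. I would record this as a general complementation statement, Theorem~\ref{thm:complement_general}. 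From distributivity together with the summand property, a standard simultaneous-splitting argument yields a basis $\beta_a$ of $f_a$ such that every member of $\mathcal{S}_a$ is the $R$-span of a subset of $\beta_a$; one obtains this by refining a basis rectangle by rectangle, using Lemma~\ref{lemma:complement_shrink} to transport complements into successively smaller ambient summands.

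The second ingredient is functoriality of $\mathcal{S}$: for $a\le b$, the direct image $f(a\le b)(-)$ maps $\mathcal{S}_a$ into $\mathcal{S}_b$ and the preimage $f(a\le b)^{-1}(-)$ maps $\mathcal{S}_b$ into $\mathcal{S}_a$, and both restrict to lattice homomorphisms on the relevant sublattices (this is Theorem~\ref{thm:comm_diagram_forward}). The proof reduces to computing with the rectangle generators, where $f(a\le b)$ sends $\im[x,a]$ to $\im[x,b]$ for $x\le a$ and sends $\Ker[a,y]$ to a rectangle of $f_b$, and where direct image and preimage commute with $\cap$ and $+$ on these special submodules.

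Finally I would build the bases $\beta_i$ inductively in $i$. Given $\beta_{i-1}$ adapted to $\mathcal{S}_{i-1}$, the kernel $\Ker[i-1,i]$ is the span of a subset $\kappa\su\beta_{i-1}$, so $f(i-1\le i)(\beta_{i-1}\setminus\kappa)$ is a linearly independent subset of $f_i$ which, by the functoriality step, is adapted to the sublattice of $\mathcal{S}_i$ generated by the images $\{\im[x,i]\}_{x\le i}$; using the adapted-basis construction together with Lemma~\ref{lemma:complement_shrink} I would extend it to a full basis $\beta_i$ of $f_i$ adapted to all of $\mathcal{S}_i$. By construction $f(i\le i+1)$ carries $\beta_i$ into $\beta_{i+1}\cup\{0\}$ injectively off its kernel, and since every $\Ker[i,j]$ is spanned by a subset of $\beta_i$, the analogous statement holds for all $f(i\le j)$ --- which is precisely an interval decomposition. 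The main obstacle I anticipate is coordinating this last step: the partial basis pushed forward from $\beta_{i-1}$ must be extendable to a basis adapted to \emph{all} of $\mathcal{S}_i$, not merely to its image-generated part, and this forces Theorem~\ref{thm:complement_general} and the simultaneous-splitting lemma to be proved in enough generality to cover arbitrary members of $\mathcal{S}_i$. That is exactly where the free-cokernel hypothesis enters, through Theorem~\ref{thm:complement_special}.
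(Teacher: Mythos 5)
Your overall architecture matches the paper's Appendix~\ref{sec:interval_decomp_hard_old} (the saecular lattice $\ImKer(f_a)$, its distributivity via Birkhoff, the complementation Theorem~\ref{thm:complement_general}, and the functoriality Theorem~\ref{thm:comm_diagram_forward}), and your final inductive basis extension is in the spirit of the paper's Algorithm~\ref{alg:simple} from Section~\ref{sec:interval_decomp_hard}. However, there is a genuine gap in your proposed proof of the complementation theorem. You argue that arbitrary members of $\mathcal{S}_a$ are direct summands because (a) the rectangles $\im[x,a]\cap\Ker[a,y]$ are summands (correct, by Theorem~\ref{thm:complement_special} plus Lemma~\ref{lemma:intersection_closure}), and (b) ``joins of summands lying inside a distributive lattice of summands are again summands.'' Step (b) is not a valid principle. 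Consider $G=\Z^2$, $A=\langle(1,0)\rangle$, $B=\langle(1,2)\rangle$: each of $A$, $B$, and $A\cap B=0$ is a direct summand of $G$, the lattice $\{0,A,B,A+B,G\}$ they generate under $\cap$ and $+$ is distributive, yet $A+B=\langle(1,0),(0,2)\rangle$ has $G/(A+B)\cong\Z/2$, so $A+B$ is not a summand. The torsion-free criterion of Lemma~\ref{lemma:complement_equivalence} gives no purchase on joins in general, and the phrase ``distributive lattice of summands'' is circular if read as requiring all members to already be summands (which is exactly what you are trying to establish).

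The paper closes this gap by going through \emph{meets} rather than joins: Lemma~\ref{lemma:interection_generator} shows that every member of the saecular lattice can be written as an intersection of submodules of the special form $\im[c,a]+\Ker[a,d]$ (corresponding to ``L-shaped'' down-sets $\mathbf{cm}\cup\mathbf{md}$ rather than ``rectangular'' ones), each of which is a summand by Theorem~\ref{thm:complement_special}, and then Lemma~\ref{lemma:intersection_closure} gives that intersections of summands are summands. The distinction matters precisely because, over a PID, the summand condition is inherited by intersections (if $mz$ lies in both $H$ and $K$ then $z$ does too) but not by sums, as the $\Z^2$ example shows. You should replace your step (b) with the meet decomposition of Lemma~\ref{lemma:interection_generator}; the remainder of your outline --- adapted bases from the $A^{ij}_a$ summand decomposition, functoriality of direct and inverse image, and the inductive extension of pushed-forward bases --- is sound and tracks the paper's argument.
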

\addtocounter{theorem}{-1}
}

Our approach will be to build an interval decomposition, constructively. 
To do so, we begin by studying several submodules which can be obtained from the images and kernels of the maps $f(i \le j)$ via sum and intersection. 
We will consider a certain family of nested pairs of these submodules, $ A \subseteq B $, and show that if cokernels are free, then every pair of this form admits a complement (i.e. $B = A \oplus C$). The family of complements $C$ can then be stitched together to form an interval decomposition.
There are two challenges in this approach which must be overcome. The first is establishing existence of the complements $C$, and the second is showing that the complements present in each $f_a$ actually decompose $f_a$ as a direct sum. To overcome these challenges, we enlarge the family of submodules to a larger set, which forms a sublattice of the submodule lattice of each $f_a$. This sublattice has structural properties which render both challenges feasible.

Before building the necessary machinery for the proof of Theorem \ref{thm:hard_direction_result}, we first provide intuition by illustrating the key ideas of our proof. See Figures \ref{fig:image_kernel} and \ref{fig:map} for illustrations. 

For each $f_a$, we will construct a family $\{A^{ij}_a\}_{1\leq i,j\leq m}$ of submodules such that 
\begin{align*}
    \bigoplus_{j\leq y}A^{ij}_a &= \Ker[a,y]\,, \\
    \bigoplus_{i\leq x}A^{ij}_a &= \im[x,a]\,.
\end{align*}

See Figure \ref{fig:image_kernel} for an illustration.

\begin{figure}
    \centering
    \subfloat{\includegraphics[width=0.45\textwidth, trim= 0.62in 1.4in 6.75in 1in,clip]{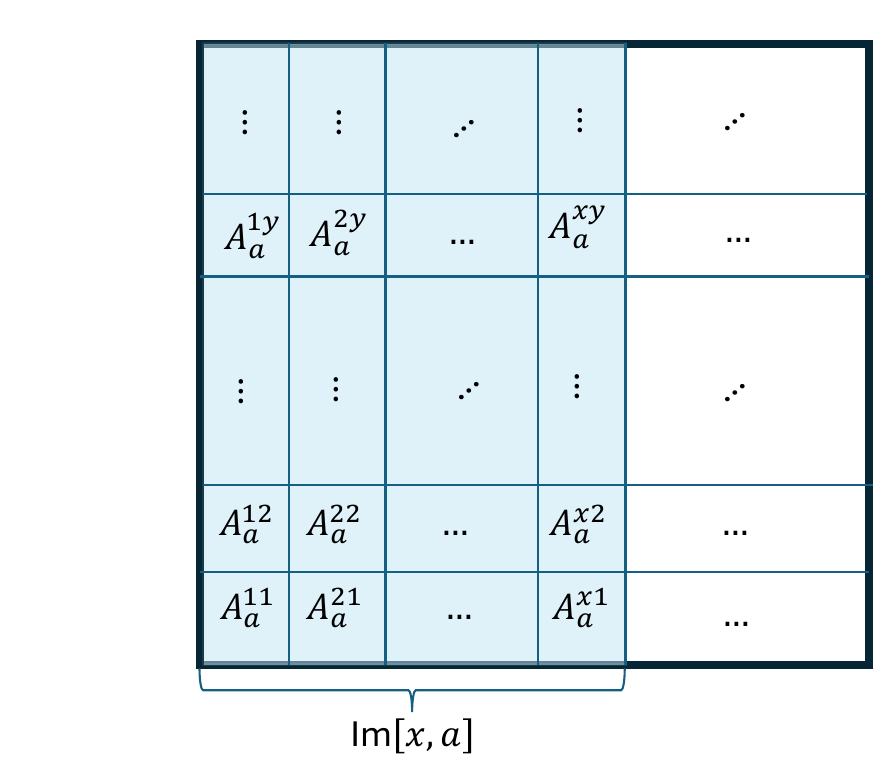}}
    \hspace{1cm}
    \subfloat{\includegraphics[width=0.45\columnwidth, trim= 0.62in 1.4in 6.75in 1in,clip]{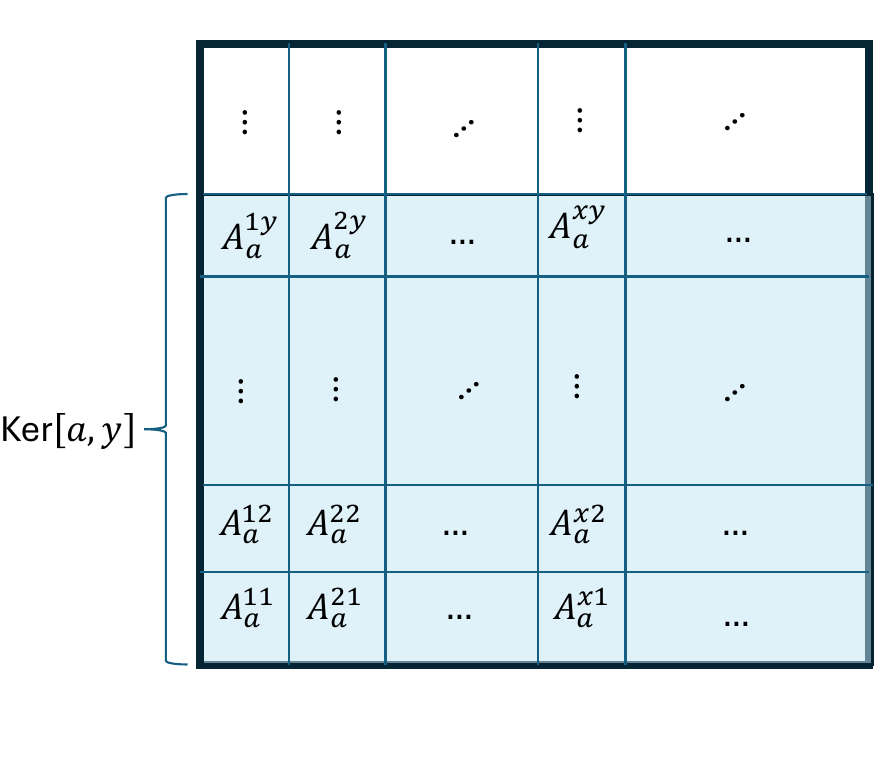}}
    \caption{Given $f_a$, we illustrate how the summands given by $\{A^{ij}_a\}_{1\leq i,j\leq m}$ form (left) $\im[x,a]$ and (right) $\Ker[a,y]$.}
    \label{fig:image_kernel}
\end{figure}

In the context of persistent homology, one can view $A^{ij}_a$ as a submodule of $f_a$ consisting of  homology classes  that are born at $i$ and die at $j$. Note that $A^{ij}_a=0$ when $a\notin[i,j)$.
These can be thought of as the ``pair groups'' discussed in \cite{cohen-steiner_extending_2009}.
We construct our family of submodules $\{A^{ij}_a\}_{1\leq i,j\leq m}$ for each $f_a$ such that when fixing $a,b\in\{0,\ldots,m\}$ with $a\leq b$, we have \footnote{See Figure \ref{fig:map} for an illustration}
\[
f(a\leq b)(A^{ij}_a)\subseteq A^{ij}_b\,.
\]
In this setting, the restricted map $f(a\leq b)|_{A^{ij}_a}:A^{ij}_a \to  A^{ij}_b$ is an isomorphism when $a,b\in[i,j)$.  
If $a$ or $b$ lies outside $[i,j)$, then  $A^{ij}_a$ or $A^{ij}_b$ is $0$, so the map between them is a zero map.

\begin{figure}
    \centering
    
    \includegraphics[width = \textwidth, trim = 1in 1in 0.7in 1in,clip]{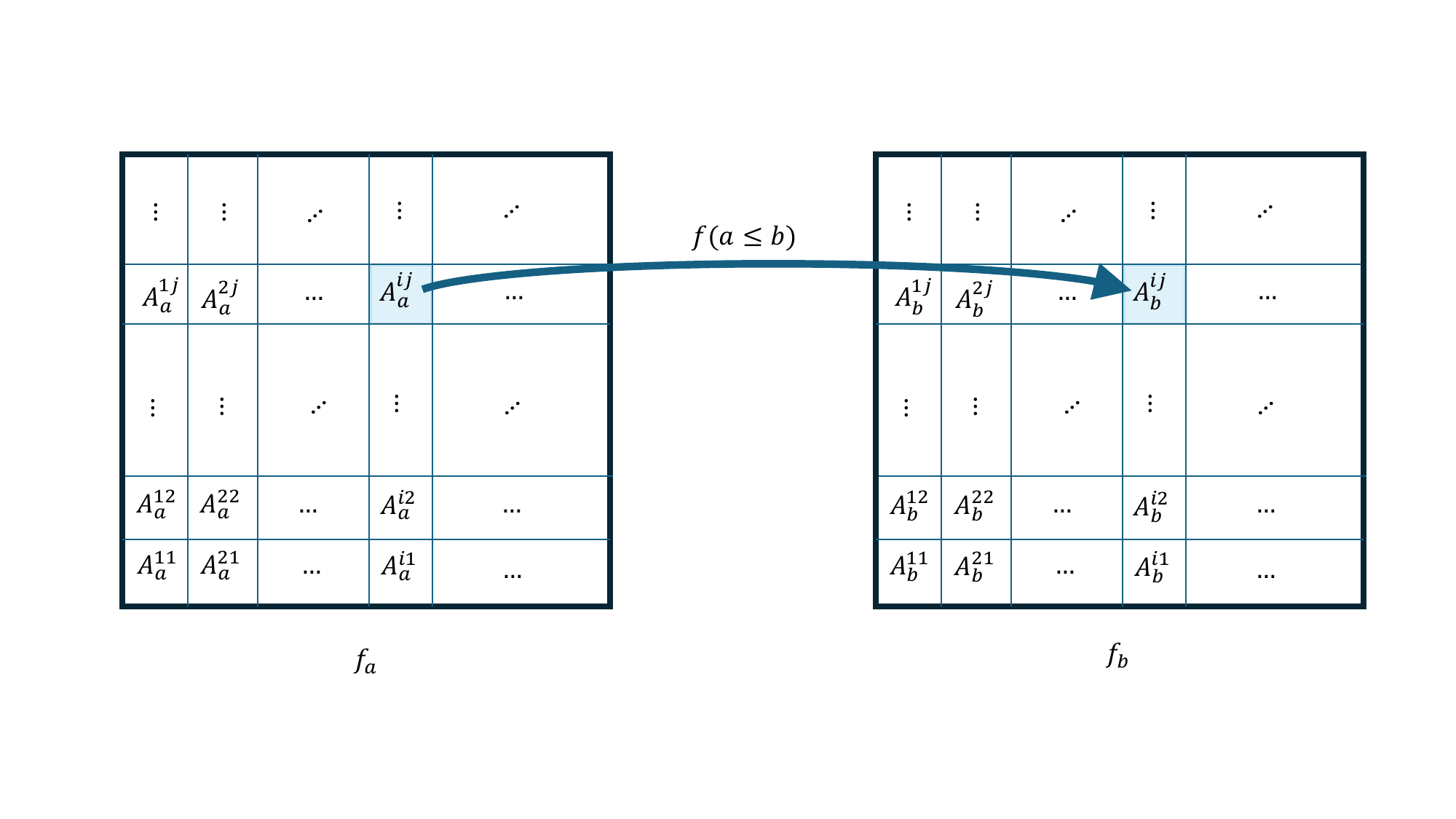}
    \caption{Given $a$ and $b$ with $a\leq b$, we illustrate that $f(a\leq b)(A^{ij}_a)\subseteq A^{ij}_b$.}
    \label{fig:map}
\end{figure}

This construction allows us to identify $A^{ij}_a$ with $A^{ij}_b$ whenever both of these modules are nonzero. 
Because a basis of $A^{ij}_a$ yields a basis of $A^{ij}_b$ (and vice versa), we can start with an appropriate basis on the left end of our persistence module and work our way right. 
That is, we can use a basis of $f_a$ to construct a suitable basis of $f_b$.
This yields a collection of bases (one for each $f_a$) that forms an interval decomposition.

\subsection{Saecular submodule lattices and homomorphisms} 

We write $\mathrm{Sub}(f_a)$ for the order lattice of submodules of $f_a$, where the meet and join operations are intersection and sum, respectively.

\begin{definition}\label{def:saecular_lattice}
  The \emph{saecular lattice} of $f_a$ is the sublattice of $\mathrm{Sub}(f_a)$ generated by all submodules of the form $\im[x,a]$ and $\Ker[a,y]$.  We denote this sublattice by $\ImKer(f_a)$.
\end{definition}

Given $a\leq b$ we can relate submodules in $\ImKer(f_a)$ and $\ImKer(f_b)$ by pushing forward (i.e., applying $f(a\leq b)$ to submodules in $\ImKer(f_a)$) and pulling back (i.e., applying $f(a\leq b)^{-1}$ to submodules in $\ImKer(f_b)$). Lemmas \ref{lemma:basic_annoying_identities}, \ref{lemma:second_annoying_lemma}, and \ref{lemma:annoying_lemma_backwards} precisely express these relationship. 

\begin{lemma}\label{lemma:basic_annoying_identities} Fix $a,b\in\mathbf{m}$ such that $a\leq b$. The following hold for any $x,y \in \mathbf{m}\cup\{0\}$: 
\begin{align}
f(a \leq b)(\im[x,a])&= \im[x,b]\cap \im[a,b]\,, 
\label{eq_pushim} \\
f(a \leq b)(\Ker[a,y])&= \Ker[b,y]\cap \im[a,b]\,.
\label{eq_pushker}
\end{align}

\end{lemma}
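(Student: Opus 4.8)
The plan is to verify each identity by a direct case analysis, organized according to the position of $x$ (respectively $y$) relative to $a$ and $b$. The only ingredients needed are the definitions in Definition \ref{def:Im_and_Ker} together with functoriality of $f$, i.e. $f(a\leq b)=f(s\leq b)\circ f(a\leq s)$ whenever $a\leq s\leq b$. A useful preliminary observation, obtained immediately from functoriality, is the pair of monotonicity facts: if $x\leq a\leq b$ then $\im[x,b]\su \im[a,b]$, and if $a\leq x\leq b$ then $\im[a,b]\su \im[x,b]$.

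For the image identity \eqref{eq_pushim}, I would split on whether $x\leq a$ or $x>a$. If $x\leq a$, then $\im[x,a]=f(x\leq a)(f_x)$, so functoriality gives $f(a\leq b)(\im[x,a])=f(x\leq b)(f_x)=\im[x,b]$; since $x\leq a\leq b$ forces $\im[x,b]\su\im[a,b]$, the right-hand side $\im[x,b]\cap\im[a,b]$ collapses to $\im[x,b]$, and the two sides agree. If instead $x>a$, then $\im[x,a]=f_a$, so the left-hand side equals $f(a\leq b)(f_a)=\im[a,b]$; and in either subcase $x\leq b$ (where $\im[a,b]\su\im[x,b]$) or $x>b$ (where $\im[x,b]=f_b$), the right-hand side $\im[x,b]\cap\im[a,b]$ collapses to $\im[a,b]$.

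For the kernel identity \eqref{eq_pushker}, I would first dispose of the degenerate case $y<a$: then $\Ker[a,y]=0$, and since $a\leq b$ we also have $\Ker[b,y]=0$, so both sides vanish. Assuming $a\leq y$, split on whether $b>y$ or $b\leq y$. If $b>y$, then $\Ker[b,y]=0$, so the right-hand side is $0$; and since $a\leq y<b$, functoriality gives $f(a\leq b)=f(y\leq b)\circ f(a\leq y)$, which annihilates $\Ker[a,y]=\ker(f(a\leq y))$, so the left-hand side is $0$ as well. The substantive case is $a\leq b\leq y$. Here $\su$ is immediate: if $f(a\leq y)(v)=0$ then $f(b\leq y)(f(a\leq b)(v))=f(a\leq y)(v)=0$, so $f(a\leq b)(v)\in\Ker[b,y]\cap\im[a,b]$. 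For $\supseteq$, write an element of $\Ker[b,y]\cap\im[a,b]$ as $w=f(a\leq b)(v)$ with $f(b\leq y)(w)=0$; then $f(a\leq y)(v)=f(b\leq y)(f(a\leq b)(v))=0$, so $v\in\Ker[a,y]$ and $w\in f(a\leq b)(\Ker[a,y])$.

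I do not expect any genuinely hard step here: the argument is bookkeeping around functoriality. The only mild subtlety is that Definition \ref{def:Im_and_Ker} extends $\im[x,a]$ and $\Ker[a,y]$ to out-of-range indices (as $f_a$ and $0$ respectively), so one must confirm that the stated identities survive in those boundary regimes; the case split above is arranged precisely so that this check is transparent.
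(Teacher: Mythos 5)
Your proof is correct and follows essentially the same approach as the paper's: a direct case analysis on the position of $x$ (respectively $y$) relative to $a$ and $b$, using functoriality and the definitional conventions of $\im[\cdot,\cdot]$ and $\Ker[\cdot,\cdot]$, with the substantive containment argument reserved for the case $a\leq b\leq y$. The only differences are cosmetic (slightly different boundaries in the case split and a preliminary statement of the monotonicity facts rather than invoking them inline).
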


\begin{proof}
We first show identity \eqref{eq_pushim}.
Let $L$ and $R$, respectively, denote the left-hand and right-hand sides of the identity \eqref{eq_pushim}.  

Suppose, first, that $x \le a$.  Then $\im[x,b] \subseteq \im[a,b]$, so 
\begin{align*}
    R 
    &= \im[x,b] \\
    &= f(x \le b)(f_x) \\
    &= (f(a \leq b) \circ f(x \le a))( f_x) \\
    & = f(a \leq b)(\im[x,a]) \\
    &=L\,.
\end{align*}

Now suppose that $a \leq x \leq b$. Then $\im[x,a]=f_a$, so  $L=f(a \leq b)(f_a)=\im[a,b]$.  Because $\im[a,b]\subseteq \im[x,b]$,  the right-hand side is also $\im[a,b]$. 
	
Finally, suppose that $a \leq b \leq x$. Then the left-hand side is again $f(a \leq b)(f_a)=\im[a,b]$. Because $\im[x,b]=f_b$, the right-hand side is also $\im[a,b]$.

We now show identity \eqref{eq_pushker}. Let $L$ and $R$, respectively, denote the left-hand and right-hand sides of identity \eqref{eq_pushker}. 

Suppose first that $y \leq a \leq b$. Then  $\Ker[a,y]$ and $\Ker[b,y]$ are $0$, by definition, and $0=L=R$.

Now suppose that $a \leq y \leq b$.  Then $\Ker[a,y] \subseteq \Ker[a,b]$, so $0=f(a \leq b)(\Ker[a,y]) = L$.  The right-hand side of \eqref{eq_pushker} also vanishes, because $\Ker[b,y] = 0$, by definition.

Finally, suppose $a  \leq b \leq y$.  Then $L \subseteq \Ker[b,y]$, because $f(b \le y)( f(a \le b)(\Ker[a,y])) = f(a \le y)( \Ker[a,y]) = 0$.  Because $L \subseteq \im[a,b]$ as well, it follows that $L \subseteq R$.  
To prove the opposite containment, suppose that $z'\in L$. Then $z'=f(a \leq b)(z)$ for some $z\in f_a$, which implies  $0 = f(b \leq y)\circ f(a \leq b)(z)=f(a \leq y)(z)$.  Thus, $z\in \Ker[a , y]$, so $z' \in f(a\leq b)( \Ker[a,y]) = L$.  Because $z'$ is arbitrary, it follows that $R \su L$.  The desired conclusion follows.
\qed \end{proof}

\begin{lemma}
\label{lemma:second_annoying_lemma}
Fix $a,b\in\mathbf{m}$ such that $a\leq b$. The following holds for any $x,y \in \mathbf{m}\cup\{0\}$: 
\[
f(a\leq b) \left(\mathrm{Im}[x,a]\cap \Ker[a,y] \right)=\mathrm{Im}[x,b]\cap \Ker[b,y]\cap \mathrm{Im}[a,b]\,.
\]
\end{lemma}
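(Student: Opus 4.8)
The plan is to reduce the identity to Lemma~\ref{lemma:basic_annoying_identities} together with a purely formal fact about direct and inverse images under a module homomorphism, so that essentially no new computation is needed. Writing $g = f(a\le b)$, the formal fact I would invoke is that for any homomorphism $g\colon V\to W$ and any submodules $M\subseteq V$ and $N\subseteq W$, one has $g(M)\cap N = g(M\cap g^{-1}(N))$; this follows immediately from the definitions, with no hypotheses. Combining this (applied with $M=\im[x,a]$ and $N=\Ker[b,y]$) with identity \eqref{eq_pushim} of Lemma~\ref{lemma:basic_annoying_identities}, which gives $g(\im[x,a])=\im[x,b]\cap\im[a,b]$, the right-hand side of the lemma becomes
\[
\im[x,b]\cap\Ker[b,y]\cap\im[a,b]
= g(\im[x,a])\cap\Ker[b,y]
= g\left(\im[x,a]\cap g^{-1}(\Ker[b,y])\right)\,.
\]
So it remains only to understand $g^{-1}(\Ker[b,y])$, intersected with $\im[x,a]$ and pushed forward by $g$, and to compare the result with $\Ker[a,y]$.

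I would finish with a short case split on the position of $y$ relative to $b$. If $b\le y$, then $\Ker[b,y]=\ker(f(b\le y))$ and, by functoriality, $f(b\le y)\circ f(a\le b)=f(a\le y)$ with $a\le y$, so $g^{-1}(\Ker[b,y])=\ker(f(a\le y))=\Ker[a,y]$ exactly; substituting into the display yields the left-hand side $f(a\le b)(\im[x,a]\cap\Ker[a,y])$, as desired. If instead $y<b$, then $\Ker[b,y]=0$ by definition, so the right-hand side is $0$; and on the left-hand side $\Ker[a,y]\subseteq\Ker[a,b]=\ker g$ (trivially if $y<a$, since then $\Ker[a,y]=0$, and by functoriality if $a\le y\le b$), so $f(a\le b)(\im[x,a]\cap\Ker[a,y])\subseteq g(\ker g)=0$ as well. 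In both cases the two sides coincide, which proves the lemma.

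I do not anticipate a genuine obstacle here; the argument is bookkeeping resting on Lemma~\ref{lemma:basic_annoying_identities}. The one subtlety worth flagging is that the clean identity $g^{-1}(\Ker[b,y])=\Ker[a,y]$ holds only when $b\le y$: in the complementary range $y<b$ one has $\Ker[b,y]=0$ and both sides of the lemma degenerate to $0$, which is exactly why the short case split suffices. As an alternative that avoids the case split, one could observe that the inclusion $\subseteq$ is immediate from $g(S\cap T)\subseteq g(S)\cap g(T)$ and the two identities of Lemma~\ref{lemma:basic_annoying_identities}, and then prove $\supseteq$ by an element chase: given $w$ in the right-hand side, use \eqref{eq_pushim} to lift it to some $z\in\im[x,a]$ with $g(z)=w$, and, when $b\le y$, verify $f(a\le y)(z)=f(b\le y)(w)=0$ so that $z\in\Ker[a,y]$.
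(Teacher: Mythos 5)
Your proof is correct and takes a genuinely different route from the paper. The paper proceeds by a three-way case split ($a \le x$, $y \le b$, and $x < a \le b < y$), invoking the kernel push-forward identity \eqref{eq_pushker} in the first case and a direct two-inclusion element chase in the last. You instead lead with the purely formal identity $g(M)\cap N = g\bigl(M\cap g^{-1}(N)\bigr)$, which together with the image push-forward identity \eqref{eq_pushim} rewrites the right-hand side as $g\bigl(\im[x,a]\cap g^{-1}(\Ker[b,y])\bigr)$ before any case analysis; then a two-way split on whether $b\le y$ or $y<b$ finishes, the substantive half coming down to the single preimage computation $g^{-1}(\Ker[b,y])=\Ker[a,y]$, which is just functoriality. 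What your approach buys is economy and transparency: only one of the two identities from Lemma \ref{lemma:basic_annoying_identities} is needed, the number of cases drops from three to two, and the mechanism driving the equality — that the correction term $\im[a,b]$ on the right is exactly what makes the set a direct image — is made explicit rather than emerging case by case. The paper's version, by contrast, avoids introducing the auxiliary formal identity and keeps the argument entirely inside the lattice of $\Ker$/$\im$ submodules, which is closer in spirit to the saecular-lattice machinery developed in the surrounding section. Your observation that $g^{-1}(\Ker[b,y])=\Ker[a,y]$ only holds cleanly when $b\le y$ is exactly right and is the only point where care is needed; your handling of the degenerate range $y<b$ is sound.
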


\begin{proof}
First, consider $a\leq x$. In this case, $\mathrm{Im}[x,a]=f_a$ and $\mathrm{Im}[a,b]\subseteq\mathrm{Im}[x,b]$. Our task therefore reduces to showing that $f(a\leq b)(\Ker[a,y]) = \Ker[b,y]\cap \mathrm{Im}[a,b]$, which we proved in Lemma \ref{lemma:basic_annoying_identities}. 

Now suppose that $y\leq b$. Then $f(a\leq b)(\Ker[a,y])=0$, so  $f(a\leq b)(\mathrm{Im}[x,a]\cap \Ker[a,y])=0$.  Therefore, the left-hand side vanishes. Because $\Ker[b,y]=0$, by definition, the right-hand side also vanishes. This proves the desired result for $y\leq b$.

This leaves $x<a\leq b< y$. 
Because $x<a$, we have $\mathrm{Im}[x,b]\subseteq\mathrm{Im}[a,b]$, so this reduces to showing that
$$
f(a\leq b)(\mathrm{Im}[x,a]\cap \Ker[a,y])=\mathrm{Im}[x,b]\cap \Ker[b,y]\,.
$$
Applying $f(a\leq b)$ to each term in the intersection on the left-hand side yields a subset of the corresponding term in the intersection on the right-hand side, so the left-hand side is a subset of the right-hand side. 
For the other inclusion, fix an arbitrary element of $\im[x,b]\cap \Ker[b,y]$, which can be expressed in the form $f(x\leq b)(\alpha)$ for some $\alpha$. 
We note that $f(x\leq b)(\alpha)=f(a\leq b)\circ f(x\leq a)(\alpha)$, so it is sufficient to show that $f(x\leq a)(\alpha)\in \Ker[a,y]$. This does indeed hold, because $f(a\leq y)\circ f(x\leq a)(\alpha)=f(b\leq y)\circ f(a\leq b)\circ f(x\leq a)(\alpha)=0$. Thus, we see that $f(x\leq a)(\alpha)\in \mathrm{Im}[x,a]\cap\Ker[a,y]$, so $f(x\leq b)(\alpha)\in f(x\leq b)(\mathrm{Im}[x,a]\cap\Ker[a,y])$. 
\qed \end{proof}

\begin{lemma}\label{lemma:annoying_lemma_backwards}
Fix $a,b\in\mathbf{m}$ such that $a\leq b$. The following holds for any $x,y \in \mathbf{m}\cup\{0\}$: 
\begin{equation}\label{eq:annoying_lemma_background}
    f(a\leq b)^{-1}(\mathrm{Im}[x,b]+\Ker[b,y])=\mathrm{Im}[x,a]+\Ker[a,y]+\Ker[a,b]\,.
\end{equation}

\end{lemma}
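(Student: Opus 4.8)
The plan is to prove the identity of Lemma~\ref{lemma:annoying_lemma_backwards} by a direct double-inclusion argument, treating $f(a\le b)^{-1}$ as the full preimage operator. Write $\phi=f(a\le b)$, and recall that $\im\phi=\im[a,b]$ and $\ker\phi=\Ker[a,b]$ (the latter because $a\le b$). In spirit this is a pull-back counterpart to the push-forward identity in Lemma~\ref{lemma:second_annoying_lemma} (image of an intersection vs.\ preimage of a sum). The inclusion $\supseteq$ is immediate from Lemma~\ref{lemma:basic_annoying_identities}: one has $\phi(\im[x,a])=\im[x,b]\cap\im[a,b]\su\im[x,b]$, $\phi(\Ker[a,y])=\Ker[b,y]\cap\im[a,b]\su\Ker[b,y]$, and $\phi(\Ker[a,b])=0$, so applying $\phi$ to $\im[x,a]+\Ker[a,y]+\Ker[a,b]$ lands in $\im[x,b]+\Ker[b,y]$, which is exactly what $\supseteq$ asserts.

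For the reverse inclusion the key tool is an elementary fact about preimages: if $\psi\colon A\to B$ is a module homomorphism and $S,T\su B$ with $S\su\im\psi$, then $\psi^{-1}(S+T)=\psi^{-1}(S)+\psi^{-1}(T)$. (The containment $\supseteq$ always holds; for $\su$, given $z$ with $\psi(z)=s+t$ write $s=\psi(z_s)$, so $\psi(z-z_s)=t$ and $z=z_s+(z-z_s)\in\psi^{-1}(S)+\psi^{-1}(T)$.) I would apply this with $\psi=\phi$, $S=\im[x,b]$, $T=\Ker[b,y]$. When $x\le a$ we have $\im[x,b]=f(x\le b)(f_x)=\phi\bigl(f(x\le a)(f_x)\bigr)\su\im[a,b]=\im\phi$, so the hypothesis holds and $\phi^{-1}(\im[x,b]+\Ker[b,y])=\phi^{-1}(\im[x,b])+\phi^{-1}(\Ker[b,y])$. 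When $x\not\le a$, Definition~\ref{def:Im_and_Ker} gives $\im[x,a]=f_a$, so the right-hand side of the lemma is all of $f_a$; the inclusion $\su$ is then trivial, and for the matching $\supseteq$ one checks $\im[x,b]+\Ker[b,y]\supseteq\im[a,b]$ (using $\im[a,b]\su\im[x,b]$ when $a<x\le b$, and $\im[x,b]=f_b$ when $x>b$), so both sides equal $f_a$.

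It then remains to evaluate the two preimages, for which I claim $\phi^{-1}(\im[x,b])=\im[x,a]+\Ker[a,b]$ (in the case $x\le a$, the only one needed) and $\phi^{-1}(\Ker[b,y])=\Ker[a,y]+\Ker[a,b]$; adding these recovers $\im[x,a]+\Ker[a,y]+\Ker[a,b]$. Each is a short double inclusion. The $\supseteq$ halves again follow from Lemma~\ref{lemma:basic_annoying_identities} together with $\phi(\Ker[a,b])=0$. For $\su$ of the first claim: if $\phi(z)\in\im[x,b]$ then, since $x\le a$, $\phi(z)=\phi\bigl(f(x\le a)(w)\bigr)$ for some $w$, so $z-f(x\le a)(w)\in\ker\phi=\Ker[a,b]$. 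For $\su$ of the second: if $b<y$, the composition law $f(a\le y)=f(b\le y)\circ\phi$ shows $\phi^{-1}(\ker f(b\le y))\su\ker f(a\le y)$, i.e.\ $\phi^{-1}(\Ker[b,y])\su\Ker[a,y]$; if $y\le b$ then $\Ker[b,y]=0$ and $\phi^{-1}(\Ker[b,y])=\ker\phi=\Ker[a,b]$.

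The one genuine obstacle is that taking preimages does not commute with sums of submodules in general; the argument works only because $\im[x,b]$ is contained in $\im\phi$ precisely in the relevant case $x\le a$, which is what lets us split the preimage of the sum. Everything else is bookkeeping over the degenerate positions of $x$ and $y$ relative to $a$ and $b$ (including the endpoints $x=0$ or $y=0$), handled by Definition~\ref{def:Im_and_Ker} and the composition law for structure maps, exactly as in the proofs of Lemmas~\ref{lemma:basic_annoying_identities} and~\ref{lemma:second_annoying_lemma}.
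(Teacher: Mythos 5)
Your proof is correct, and it takes a genuinely different route from the paper's. The paper argues by a three-way case split on the positions of $x$ and $y$ relative to $a$ and $b$ ($a\le x$; $y\le b$; $x\le a\le b\le y$), verifying the identity by direct element chases in each case. You instead factor the argument through a clean general lemma --- if $\psi$ is a homomorphism and $S\subseteq\im\psi$, then $\psi^{-1}(S+T)=\psi^{-1}(S)+\psi^{-1}(T)$ --- and then compute the two preimages $\phi^{-1}(\im[x,b])$ and $\phi^{-1}(\Ker[b,y])$ separately, with only a two-way case split ($x\le a$ versus $x>a$). This localizes the one subtle point (preimages do not commute with sums in general) into a reusable module-theoretic fact whose hypothesis $\im[x,b]\subseteq\im[a,b]$ holds exactly when $x\le a$, and it reduces the remaining work to two short, independently verifiable preimage identities. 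The paper's approach avoids the need for the splitting lemma at the cost of more ad hoc casework; yours is somewhat more modular and makes the mechanism behind the identity more visible. I checked the edge cases: your case $x\le a$ absorbs $x=a$ correctly (the paper puts that in its case (i)), your evaluation $\phi^{-1}(\Ker[b,y])=\Ker[a,y]+\Ker[a,b]$ is right in both subcases $y\le b$ and $y>b$ because $\Ker[a,y]$ and $\Ker[a,b]$ are nested whichever way the inequality goes, and your $x>a$ case correctly reduces to showing $\im[a,b]\subseteq\im[x,b]+\Ker[b,y]$ so that the preimage is all of $f_a$.
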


\begin{proof}
Throughout this proof, let $L$ and $R$ denote the left-hand side and right-hand side of Equation \eqref{eq:annoying_lemma_background}, respectively.

We consider three cases: (i) $a\leq x$, (ii) $y\leq b$, and (iii) $x\leq a\leq b\leq y$. 

We first consider (i). In this case, we see that the right-hand side is simply $f_a$, because $\im[x,a]=f_a$. Now, if $b\leq x$, then $\im[x,b]=f_b$, which implies that the left-hand side is $f(a\leq b)^{-1}(\mathrm{Im}[x,b]+\Ker[b,y])=f(a\leq b)^{-1}(f_b)=f_a$, as desired. 
Otherwise, with $a\leq x\leq b$, we see that 
\begin{align*}
f(a\leq b)^{-1}(\mathrm{Im}[x,b]+\Ker[b,y]) &\supseteq f(a\leq b)^{-1}(\mathrm{Im}[x,b]) \\ 
&\supseteq f(a\leq b)^{-1}(\mathrm{Im}[a,b]) \\
&= f_a
\end{align*}
so the left-hand side is $f_a$ as well.

Now consider (ii). In this case, $\Ker[b,y]=0$ and $\Ker[a,y]=0$. 
Therefore, we only need to show that
\[
f(a\leq b)^{-1}(\mathrm{Im}[x,b]) = \mathrm{Im}[x,a]+\Ker[a,b]\,.
\]
We subdivide (ii) into three subcases: $b\leq x$, $a\leq x\leq b$, and $x\leq a$.

If $b\leq x$, we see that $\mathrm{Im}[x,b]=f_b$ and $\mathrm{Im}[x,a]=f_a$; because $f(a\leq b)^{-1}(f_b)=f_a$, the desired expression holds. 

If $a\leq x\leq b$, then the right-hand side is $f_a$ because $\mathrm{Im}[x,a]=f_a$. We note that 
\begin{align*}
f(a\leq b)^{-1}(\mathrm{Im}[x,b]) &\supseteq f(a\leq b)^{-1}(\mathrm{Im}[a,b]) \\
&=f_a\,,
\end{align*}
so the left-hand side is $f_a$, and the desired expression again holds.

Consider $x\leq a$. First, we show that $L \supseteq R$. If we take $f(x\leq a)(w)\in \mathrm{Im}[x,a]$, then $f(a\leq b)\circ f(x\leq a)(w)=f(x\leq b)(w)\in \mathrm{Im}[x,b]$. We see that $\Ker[a,b]\subseteq f(a\leq b)^{-1}(\mathrm{Im}[x,b])$, so $L\supseteq R$. To see that $L \subseteq R$, we note that if $z\in f(a\leq b)^{-1}(\mathrm{Im}[x,b])$, then $f(a\leq b)(z)=f(x\leq b)(\alpha)$ for some $\alpha\in f_x$. Because $f(x\leq b)=f(a\leq b)\circ f(x\leq a)$, we see that $f(a\leq b)(z)=f(a\leq b)\circ f(x\leq a)(\alpha)$, which implies that $z = f(x\leq a)(\alpha)+\zeta$ for $\zeta\in\Ker[a,b]$, as desired. 

Finally, we consider (iii). In this situation, $\Ker[a,b]\subseteq\Ker[a,y]$, so our desired expression reduces to 
\[
f(a\leq b)^{-1}(\mathrm{Im}[x,b]+\Ker[b,y])=\mathrm{Im}[x,a]+\Ker[a,y]\,.
\]
We first show that $L \supseteq R$. Indeed, if we take $f(x\leq a)(w)\in \mathrm{Im}[x,a]$, then $f(a\leq b)\circ f(x\leq a)(w) = f(x\leq b)(w)\in \mathrm{Im}[x,b]$. Additionally, if we take $z\in\Ker[a,y]$, then $f(a\leq b)(z)\in\Ker[b,y]$, as $f(b\leq y)\circ f(a\leq b)(z)=f(a\leq y)(z)=0$. Thus, $L \supseteq R$. 

To see that $L \subseteq R$, take $z\in f(a\leq b)^{-1}(\mathrm{Im}[x,b]+\Ker[b,y])$. We note that $f(a\leq b)(z) = f(x\leq b)(\alpha)+\zeta$ for $\alpha\in f_x$ and $\zeta\in\Ker[b,y]$. As $f(x\leq b) = f(a\leq b)\circ f(x\leq a)$, the right-hand side is equal to $f(a\leq b)\circ f(x\leq a)(\alpha)+\zeta$. Applying $f(b\leq y)$ to both sides then gives $f(a\leq y)(z) = f(a\leq y)\circ f(x\leq a)(\alpha)$. This implies that $z=f(x\leq a)(\alpha)+\xi$ for $\xi\in \Ker[a,d]$, as desired.

As the above cases are now all settled, we are done.
\qed \end{proof}

Having now established some basic properties of the saecular lattice, we introduce a second lattice.

\begin{definition}
\label{def:downsets}
Let $n_1$ and $n_2$ be non-negative integers.
Define $\mathbf{n_1n_2}:=\mathbf{n_1}\times\mathbf{n_2}$ as the product poset of $\mathbf{n_1}$ and $\mathbf{n_2}$. That is, given $(p,q), (p',q')\in \mathbf{n_1n_2}$, we have that $p'\leq p$ and $ q'\leq q$ if and only if $(p',q') \le (p,q)$. A subset of $S\subseteq \mathbf{n_1n_2}$ is \emph{down-closed}  if $(p,q)\in S$ implies that $(p',q') \in S$ for all $(p',q') \le (p,q)$. We write $\mathrm{DownSets}(\mathbf{n_1n_2})$ for the poset of \emph{down-closed subsets} of $\mathbf{n_1n_2}$, ordered under inclusion. The poset $\mathrm{DownSets}(\mathbf{n_1n_2})$ forms a lattice, where the meet and join operations are intersection and union, respectively. 
\end{definition}

The $\mathrm{DownSet}$ and saecular lattices can be related with the following theorem, which is a consequence of a result in Birkhoff \cite{birkhoff_lattice_1995}.

\begin{theorem}
For each $a \in \mathbf{m}$, there exists a (unique) lattice homomorphism $\Lambda_a:\mathrm{DownSet}(\mathbf{mm})\to \mathrm{ImKer}(f_a)$ such that
\begin{align*}
    \mathbf{xm}\mapsto \im[x,a]
    &&
    \mathbf{my}\mapsto \Ker[a,y]
\end{align*}. 
\end{theorem}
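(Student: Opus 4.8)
The plan is to show that $\Lambda_a$ is forced on a generating set and then to write down the resulting formula and verify it is a homomorphism. Put $\phi(i,j) := \im[i,a]\cap\Ker[a,j]$, which makes sense for all $i,j\in\{0,\dots,m\}$ (with $\phi(0,\cdot)=\phi(\cdot,0)=0$, since $f_0 = 0$). The principal down-set $\mathbf{ij} = \downarrow(i,j)$ satisfies $\mathbf{ij} = \mathbf{im}\cap\mathbf{mj}$, and every down-set $S$ is the finite union $\bigcup_{(i,j)\in S}\mathbf{ij}$; hence any lattice homomorphism $\Lambda$ with $\Lambda(\mathbf{xm}) = \im[x,a]$ and $\Lambda(\mathbf{my}) = \Ker[a,y]$ must satisfy $\Lambda(\mathbf{ij}) = \im[i,a]\cap\Ker[a,j] = \phi(i,j)$ and therefore $\Lambda(S) = \sum_{(i,j)\in S}\phi(i,j)$. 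This gives uniqueness (reading the hypothesis with $x = 0$ also forces $\Lambda(\emptyset) = 0$, so no separate convention about the bottom element is needed). For existence I would simply define $\Lambda_a(S) := \sum_{(i,j)\in S}\phi(i,j)$, with empty sum $0$. Its image lies in $\ImKer(f_a)$ because each $\phi(i,j)$ is a meet of two generators of the saecular lattice and the saecular lattice is closed under finite joins; it preserves joins since unions become sums; and it attains the prescribed values, because $\phi$ is monotone, so $\Lambda_a(\mathbf{xm}) = \phi(x,m) = \im[x,a]\cap f_a = \im[x,a]$ (using $\Ker[a,m] = f_a$) and likewise $\Lambda_a(\mathbf{my}) = \phi(m,y) = f_a\cap\Ker[a,y] = \Ker[a,y]$ (using $\im[m,a] = f_a$).

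The one substantial step is preservation of meets, $\Lambda_a(S\cap T) = \Lambda_a(S)\cap\Lambda_a(T)$. The inclusion $\subseteq$ is immediate. For $\supseteq$ I would combine two facts. First, $\phi$ is a meet-homomorphism from $\mathbf{mm}$: the families $\{\im[x,a]\}_x$ and $\{\Ker[a,y]\}_y$ are each totally ordered by inclusion (the nested sequences recorded just after Definition~\ref{def:Im_and_Ker}), so $\im[\min(i,i'),a] = \im[i,a]\cap\im[i',a]$ and $\Ker[a,\min(j,j')] = \Ker[a,j]\cap\Ker[a,j']$, whence $\phi\big((i,j)\wedge(i',j')\big) = \phi(i,j)\cap\phi(i',j')$. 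Second, $\ImKer(f_a)$ is a \emph{distributive} lattice: it is the sublattice of the modular lattice $\mathrm{Sub}(f_a)$ generated by the two chains $\{\im[x,a]\}_x$ and $\{\Ker[a,y]\}_y$, and a modular lattice generated by two chains is distributive --- this is the result of Birkhoff \cite{birkhoff_lattice_1995} that underlies the statement. With these in hand one computes
\[
\Lambda_a(S)\cap\Lambda_a(T) = \Big(\sum_{p\in S}\phi(p)\Big)\cap\Big(\sum_{q\in T}\phi(q)\Big) = \sum_{p\in S,\ q\in T}\big(\phi(p)\cap\phi(q)\big) = \sum_{p\in S,\ q\in T}\phi(p\wedge q),
\]
using distributivity of $\ImKer(f_a)$ for the middle equality and the meet-homomorphism property for the last. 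Since $S$ and $T$ are down-closed, $p\wedge q\in S\cap T$ whenever $p\in S$ and $q\in T$, so this final sum is contained in $\sum_{r\in S\cap T}\phi(r) = \Lambda_a(S\cap T)$, which gives the reverse inclusion and completes the proof that $\Lambda_a$ is a lattice homomorphism (it is bounded, sending $\emptyset\mapsto 0$ and $\mathbf{mm}\mapsto f_a$).

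I expect the only genuine obstacle to be the distributivity of $\ImKer(f_a)$; granting that, everything else is bookkeeping with down-sets and nested submodules. If one wished to avoid citing ``modular plus two chains implies distributive'' as a black box, I would prove the needed instance by hand: write $\Lambda_a(S)$ and $\Lambda_a(T)$ in terms of the staircase antichains of maximal elements of $S$ and $T$, and push the intersection through one summand at a time using the modular law together with the comparability of all the $\im[x,a]$ with one another (and of all the $\Ker[a,y]$ with one another). That computation is the technical core; uniqueness, join preservation, and the boundary values are routine.
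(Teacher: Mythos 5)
Your proof is correct, but it takes a more constructive route than the paper's. The paper simply invokes the universal property of the free modular lattice on two finite chains: citing Birkhoff (Section III.7, Theorem 9), it identifies $\mathrm{DownSet}(\mathbf{mm})$ as that free object, feeds in the two chain maps $\lambda_p, \lambda_q$ into the modular lattice $\mathrm{Sub}(f_a)$, and obtains a unique extension $\Lambda_a$ whose image lands in $\ImKer(f_a)$. You instead unpack the content of that same Birkhoff result: you write down the forced formula $\Lambda_a(S)=\sum_{(i,j)\in S}\bigl(\im[i,a]\cap\Ker[a,j]\bigr)$, observe that $\phi(i,j)=\im[i,a]\cap\Ker[a,j]$ is a meet-homomorphism on $\mathbf{mm}$ (because each family of images and of kernels is a chain), and then verify meet-preservation directly. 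The one nontrivial ingredient you need --- that the sublattice $\ImKer(f_a)$ of the modular lattice $\mathrm{Sub}(f_a)$ generated by two chains is distributive --- is exactly the ``hard part'' of the same Birkhoff theorem the paper cites, so both proofs lean on the same fact, just used differently: the paper uses freeness, you use distributivity. Your computation $\Lambda_a(S)\cap\Lambda_a(T)=\sum_{p\in S,\,q\in T}\phi(p\wedge q)\subseteq\Lambda_a(S\cap T)$ is sound (distributivity applied twice, then the meet-hom property, then down-closedness of $S$ and $T$), and your handling of the boundary values (using $\im[0,a]=0=\Ker[a,0]$ and $\im[m,a]=f_a=\Ker[a,m]$) matches the paper's setup. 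The explicit formula is a small bonus: the paper derives it later (as a consequence), whereas you produce it up front.
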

\begin{proof}

	This proof uses the modularity\footnote{A lattice $\mathcal{L}$ is \emph{modular} if, for any $\ell_1,\ell_2,\ell_3\in \mathcal{L}$ such that  $\ell_1 \le \ell_3$, we have  $\ell_1 \vee (\ell_2 \wedge  \ell_3) = (\ell_1 \vee \ell_2) \wedge  \ell_3$.} of submodule lattices (see \cite[Theorem 2.2.6(a)]{grandis_homological_2012}). 
	
	Consider the lattice $\mathrm{Sub}(f_a)$ of submodules of $f_a$.
	Let $p$ and $q$ be the chains $p_0\leq \ldots \leq  p_m$ and $q_0 \leq \ldots \leq q_m$, respectively. Define lattice homomorphisms $\lambda_p:p\to \mathrm{Sub}(f_a)$ and $\lambda_q:q\to \mathrm{Sub}(f_a)$ by $\lambda_p(p_x)=\im[x,a]$ and $\lambda_q(q_y)=\Ker[a,y]$, respectively. 
	By \cite[Section III.7, Theorem 9]{birkhoff_lattice_1995}, there exists a unique lattice homomorphism $\Lambda_a:\textrm{DownSet}(\mathbf{mm}) \to \mathrm{Sub}(f_a)$ such that the following diagram commutes:
	\[\begin{tikzcd}
	&& {\mathrm{DownSet}(\mathbf{m}\mathbf{m})} \\
	p && \mathrm{Sub}(f_a) && q
	\arrow["\psi_p",from=2-1, to=1-3]
	\arrow["\psi_q"',from=2-5, to=1-3]
	\arrow["\Lambda_a"', dashed, from=1-3, to=2-3]
	\arrow["{\lambda_p}", from=2-1, to=2-3]
	\arrow["{\lambda_q}"', from=2-5, to=2-3]
	\end{tikzcd}\] 
	In this diagram, $\psi_p:p\to \textrm{DownSet}(\mathbf{mm})$ and $\psi_q:q\to \textrm{DownSet}(\mathbf{mm})$ are defined by $\psi_p(p_x)=\mathbf{xm}$ and $\psi_q(q_y)=\mathbf{my}$, respectively.
	Moreover, the codomain of $\Lambda_a$ can be restricted to $\ImKer{f_a}$ because the images of $\lambda_p$ and $\lambda_q$ are both in $\ImKer{f_a}$. We therefore have a unique lattice homomorphism $\Lambda_a:\mathrm{DownSet}(\mathbf{mm})\to \mathrm{ImKer}(f_a)$, as desired.
\qed \end{proof}

\begin{definition}
We call $\Lambda_a:\mathrm{DownSet}(\mathbf{mm})\to\mathrm{ImKer}(f_a)$  the \emph{saecular homomorphism} correstricted to $a$. 
\end{definition}

The saecular homomorphisms allow us to define a pair of commutative diagrams, which we introduce in Theorems \ref{thm:comm_diagram_forward} and \ref{thm:comm_diagram_backward}.  These diagrams have great utility; for example,  we will use them to show that the direct image operator $H \mapsto f(a \le b)(H)$ and the inverse image operator $H \mapsto f(a \le b)^{-1}(H)$ induce  homomorphisms between saecular lattices.

\begin{theorem}
\label{thm:comm_diagram_forward}
Fix $a,b\in\mathbf{m}$ such that $a\leq b$.  The following diagram commutes:
\[
\begin{tikzcd}
	&& S & {S\cap \mathbf{am}} \\
	{\mathbf{xm}} & {\mathbf{my}} & {\mathrm{DownSets}(\mathbf{mm})} & {\mathrm{DownSets}(\mathbf{mm})} & {\mathbf{xm}} & {\mathbf{my}} \\
	{\mathrm{Im}[x,a]} & {\Ker[a,y]} & {\mathrm{ImKer}(f_a)} & {\mathrm{ImKer}(f_b)} & {\mathrm{Im}[x,b]} & {\Ker[b,y]} \\
	&& H & {f(a\leq b)(H)}
	\arrow["g", from=2-3, to=2-4]
	\arrow["\Lambda_b", from=2-4, to=3-4]
	\arrow["\Lambda_a", from=2-3, to=3-3]
	\arrow["h", from=3-3, to=3-4]
	\arrow[maps to, from=4-3, to=4-4]
	\arrow[maps to, from=2-2, to=3-2]
	\arrow[maps to, from=1-3, to=1-4]
	\arrow[maps to, from=2-1, to=3-1]
	\arrow[maps to, from=2-5, to=3-5]
	\arrow[maps to, from=2-6, to=3-6]
\end{tikzcd}
\]
\end{theorem}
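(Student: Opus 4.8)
The plan is to reduce the commutativity of the diagram to a check on \emph{principal} down-sets and then dispatch that case directly with Lemma \ref{lemma:second_annoying_lemma}. Commutativity of the displayed square is exactly the identity $h\circ\Lambda_a=\Lambda_b\circ g$ of maps $\mathrm{DownSets}(\mathbf{mm})\to\ImKer(f_b)$, where $g(S)=S\cap\mathbf{am}$ and $h(H)=f(a\le b)(H)$ is the direct-image operator; equivalently, $f(a\le b)\bigl(\Lambda_a(S)\bigr)=\Lambda_b(S\cap\mathbf{am})$ for every down-set $S$.

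First I would note why the most naive approach is blocked. One would like to verify the identity on the lattice generators $\mathbf{xm},\mathbf{my}$ of $\mathrm{DownSets}(\mathbf{mm})$ and then extend by homomorphism, but $h\circ\Lambda_a$ is \emph{not} a lattice homomorphism: the direct-image operator preserves joins (sums of submodules) but destroys meets (intersections). The fix is to use a join-only spanning family. Every down-set is the finite join $S=\bigcup_{(x,y)\in S}\downarrow(x,y)$ of its principal down-sets, and each principal down-set is a single meet of generators, $\downarrow(x,y)=\mathbf{xm}\cap\mathbf{my}$. All four maps in play preserve finite joins: $\Lambda_a$ and $\Lambda_b$ because they are lattice homomorphisms; $g$ because intersection with the fixed set $\mathbf{am}$ distributes over union; and $h$ because $f(a\le b)(H_1+H_2)=f(a\le b)(H_1)+f(a\le b)(H_2)$. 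Hence both $h\circ\Lambda_a$ and $\Lambda_b\circ g$ preserve finite joins, so it suffices to prove $h\bigl(\Lambda_a(\downarrow(x,y))\bigr)=\Lambda_b\bigl(g(\downarrow(x,y))\bigr)$ for a single pair $(x,y)$.

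To do this I would compute both sides. Since $\Lambda_a$ is a lattice homomorphism with $\mathbf{xm}\mapsto\mathrm{Im}[x,a]$ and $\mathbf{my}\mapsto\Ker[a,y]$, we get $\Lambda_a(\downarrow(x,y))=\mathrm{Im}[x,a]\cap\Ker[a,y]$, so
\[
h\bigl(\Lambda_a(\downarrow(x,y))\bigr)=f(a\le b)\bigl(\mathrm{Im}[x,a]\cap\Ker[a,y]\bigr)=\mathrm{Im}[x,b]\cap\Ker[b,y]\cap\mathrm{Im}[a,b]
\]
by Lemma \ref{lemma:second_annoying_lemma}. On the other side, $g(\downarrow(x,y))=\downarrow(x,y)\cap\mathbf{am}=\downarrow(\min(x,a),y)$, so $\Lambda_b\bigl(g(\downarrow(x,y))\bigr)=\mathrm{Im}[\min(x,a),b]\cap\Ker[b,y]$. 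To see the two agree, I would record the elementary fact that $p\mapsto\mathrm{Im}[p,b]$ is non-decreasing on $\mathbf{m}\cup\{0\}$, a three-case verification straight from Definition \ref{def:Im_and_Ker} (the cases $p\le p'\le b$; $p\le b<p'$; and $b<p\le p'$); it follows that $\mathrm{Im}[x,b]\cap\mathrm{Im}[a,b]=\mathrm{Im}[\min(x,a),b]$, and the two expressions coincide. Taking joins over $(x,y)\in S$ then gives the identity for arbitrary $S$. As a byproduct, because $\Lambda_a$ is surjective onto $\ImKer(f_a)$ while each $\Lambda_b(g(S))$ lies in $\ImKer(f_b)$, this identity also certifies that $h$ restricts to a genuine map $\ImKer(f_a)\to\ImKer(f_b)$, so the diagram is well-posed.

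The main obstacle, such as it is, is the one flagged above: one cannot argue on the meet-and-join generators of $\mathrm{DownSets}(\mathbf{mm})$ because direct image does not commute with intersection, and the correct substitute is to decompose down-sets as joins of principal down-sets. Everything downstream --- the monotonicity of $\mathrm{Im}[\cdot,b]$ and the join-distributivity of the four maps --- is routine, and the genuine algebraic content is packaged entirely in Lemma \ref{lemma:second_annoying_lemma}.
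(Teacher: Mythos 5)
Your proof is correct and follows essentially the same route as the paper: reduce to principal down-sets $\downarrow(x,y)=\mathbf{xm}\cap\mathbf{my}$, apply Lemma \ref{lemma:second_annoying_lemma}, and extend to arbitrary down-sets because all four maps preserve finite joins (with the crucial observation, which the paper leaves implicit, that $h$ preserves joins but not meets, so one must use a join-only spanning family rather than the meet-and-join generators). The only cosmetic difference is that the paper applies $\Lambda_b$ directly to the three-fold intersection $\mathbf{cm}\cap\mathbf{md}\cap\mathbf{am}$ and obtains $\mathrm{Im}[c,b]\cap\Ker[b,d]\cap\mathrm{Im}[a,b]$ outright, whereas you first collapse $g(\downarrow(x,y))$ to the single principal down-set $\downarrow(\min(x,a),y)$ and then invoke monotonicity of $\mathrm{Im}[\cdot,b]$ to reconcile the two expressions --- a small detour that costs an extra (easy) lemma but changes nothing substantive.
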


\begin{proof}
We first consider sets of the form $\mathbf{cm}\cap \mathbf{md}$. On one hand, Lemma \ref{lemma:second_annoying_lemma} implies that $$
\mathbf{cm}\cap \mathbf{md}\xmapsto{\quad\Lambda_a\quad} \mathrm{Im}[c,a]\cap \Ker[a,d]\xmapsto{\quad h \quad } \mathrm{Im}[c,b]\cap\Ker[b,d]\cap \mathrm{Im}[a,b]\,.
$$
On the other hand,
$$
\mathbf{cm}\cap \mathbf{md}\xmapsto{\quad g \quad} \mathbf{cm}\cap \mathbf{md}\cap\mathbf{am}\xmapsto{ \quad \Lambda_b \quad } \mathrm{Im}[c,a]\cap\Ker[b,d]\cap \mathrm{Im}[a,b]\,.
$$
Therefore, $h\circ\Lambda_a(\mathbf{cm}\cap \mathbf{md})=\Lambda_b\circ g(\mathbf{cm}\cap \mathbf{md})$. 
All sets in $\mathrm{DownSets}[\mathbf{mm}]$ can be formed as unions of sets of the form $\mathbf{cm}\cap \mathbf{md}$. 
Because $g$, $\Lambda_a$, and $\Lambda_b$ are lattice homomorphisms, and $h$ preserves join (because the image of unions of sets is equal to the union of images of sets), it follows that $h\circ \Lambda_a$ and $\Lambda_b\circ g$ agree on every set in $\mathrm{Downsets}(\mathbf{mm})$, so the diagram commutes.
\qed \end{proof}

We have a similar commutative diagram for $f(a\leq b)^{-1}$.
\begin{theorem}
\label{thm:comm_diagram_backward}
Fix $a,b\in\mathbf{m}$ such that $a\leq b$. The following diagram commutes: 
\[\begin{tikzcd}
	&& {S\cup\mathbf{ma}} & S \\
	{\mathbf{xm}} & {\mathbf{my}} & {\mathrm{DownSets}(\mathbf{mm})} & {\mathrm{DownSets}(\mathbf{mm})} & {\mathbf{xm}} & {\mathbf{my}} \\
	{\mathrm{Im}[x,a]} & {\Ker[a,y]} & {\mathrm{ImKer}(f_a)} & {\mathrm{ImKer}(f_b)} & {\mathrm{Im}[x,b]} & {\Ker[b,y]} \\
	&& {f(a\leq b)^{-1}(H)} & H
	\arrow["g"', from=2-4, to=2-3]
	\arrow["\Lambda_b", from=2-4, to=3-4]
	\arrow["\Lambda_a", from=2-3, to=3-3]
	\arrow["h"', from=3-4, to=3-3]
	\arrow[maps to, from=4-4, to=4-3]
	\arrow[maps to, from=2-2, to=3-2]
	\arrow[from=1-4, to=1-3]
	\arrow[maps to, from=2-1, to=3-1]
	\arrow[maps to, from=2-5, to=3-5]
	\arrow[maps to, from=2-6, to=3-6]
\end{tikzcd}\]
\end{theorem}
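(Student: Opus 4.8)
The plan is to establish commutativity of the central square, namely that $h\circ\Lambda_b=\Lambda_a\circ g$ as maps $\mathrm{DownSets}(\mathbf{mm})\to\ImKer(f_a)$, where $h$ is the inverse-image operator $H\mapsto f(a\leq b)^{-1}(H)$ and $g$ is the map $S\mapsto S\cup\mathbf{mb}$, which enlarges a down-set by the down-set $\mathbf{mb}$ (the one that $\Lambda_a$ sends to $\Ker[a,b]=\ker(f(a\leq b))$); the outer parts of the diagram then commute by the defining equalities $\Lambda_a(\mathbf{xm})=\mathrm{Im}[x,a]$, $\Lambda_a(\mathbf{my})=\Ker[a,y]$, and the analogues at $b$. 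The overall shape of the argument mirrors the proof of Theorem \ref{thm:comm_diagram_forward}, with one essential change: there $h$ was the \emph{direct} image operator, which preserves joins, so it sufficed to check the square on join-generators of $\mathrm{DownSets}(\mathbf{mm})$; here $h$ is the \emph{inverse} image, which in general fails to preserve joins, so I would instead reduce to a check on meet-generators.

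First I would record that both composites $h\circ\Lambda_b$ and $\Lambda_a\circ g$ preserve finite meets. The saecular homomorphisms $\Lambda_a,\Lambda_b$ are lattice homomorphisms, hence preserve meets; the inverse-image operator $h$ preserves meets because $f(a\leq b)^{-1}(H\cap K)=f(a\leq b)^{-1}(H)\cap f(a\leq b)^{-1}(K)$ for submodules $H,K\subseteq f_b$; and $g$ preserves meets because $(S\cap S')\cup\mathbf{mb}=(S\cup\mathbf{mb})\cap(S'\cup\mathbf{mb})$ by distributivity of the lattice of subsets of $\mathbf{mm}$. Next I would check that $\mathrm{DownSets}(\mathbf{mm})$ is meet-generated by the sets of the form $\mathbf{cm}\cup\mathbf{md}$ with $c,d\in\{0,\dots,m\}$ (using $\mathbf{0m}=\mathbf{m0}=\emptyset$, so that every $\mathbf{cm}$ and every $\mathbf{md}$ is of this form, and $c=d=m$ recovers $\mathbf{mm}$). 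This is a short duality argument: the complement in $\mathbf{mm}$ of a down-set is an up-set, hence a union of principal up-sets $\{(p,q):p\geq c+1,\ q\geq d+1\}$; complementing term by term writes the original down-set as the corresponding intersection of the sets $\mathbf{mm}\setminus\{(p,q):p\geq c+1,\ q\geq d+1\}=\mathbf{cm}\cup\mathbf{md}$.

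It then remains to verify the central square on a single generator $S=\mathbf{cm}\cup\mathbf{md}$. On the $h\circ\Lambda_b$ side, $\Lambda_b(S)=\mathrm{Im}[c,b]+\Ker[b,d]$ since $\Lambda_b$ carries unions to sums, and Lemma \ref{lemma:annoying_lemma_backwards} gives $f(a\leq b)^{-1}(\mathrm{Im}[c,b]+\Ker[b,d])=\mathrm{Im}[c,a]+\Ker[a,d]+\Ker[a,b]$. On the $\Lambda_a\circ g$ side, $g(S)=\mathbf{cm}\cup\mathbf{md}\cup\mathbf{mb}$, so $\Lambda_a(g(S))=\mathrm{Im}[c,a]+\Ker[a,d]+\Ker[a,b]$. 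The two sides coincide; combined with meet-preservation of both composites and the meet-generation just established, this yields $h\circ\Lambda_b=\Lambda_a\circ g$ on all of $\mathrm{DownSets}(\mathbf{mm})$. As a byproduct this shows $f(a\leq b)^{-1}$ carries $\ImKer(f_b)$ into $\ImKer(f_a)$ (so the lower-left vertex of the diagram is well-defined), since the image of $h\circ\Lambda_b$ equals that of $\Lambda_a\circ g$, which lies in $\ImKer(f_a)$ by construction.

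The main point requiring care is exactly this meet/join asymmetry: one must resist reusing the forward proof verbatim, since inverse image is not join-preserving, and instead organize the argument around meet-generators — which in turn rests on the elementary but easy-to-overlook observation that $\mathrm{DownSets}(\mathbf{mm})$ is generated under finite intersections by the sets $\mathbf{cm}\cup\mathbf{md}$. Once this is in place, the computation itself is immediate from Lemma \ref{lemma:annoying_lemma_backwards}.
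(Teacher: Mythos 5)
Your proof is correct and follows essentially the same strategy as the paper: verify the central square on the sets $\mathbf{cm}\cup\mathbf{md}$ using Lemma \ref{lemma:annoying_lemma_backwards}, then extend to all of $\mathrm{DownSets}(\mathbf{mm})$ via meet-preservation of both composites and meet-generation by these sets. You also supply two small details the paper leaves implicit, namely that $g$ preserves meets by distributivity of the power-set lattice, and that every down-set in $\mathbf{mm}$ is an intersection of sets of the form $\mathbf{cm}\cup\mathbf{md}$.

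One point worth flagging: you read $g$ as $S\mapsto S\cup\mathbf{mb}$, whereas the diagram in the theorem statement (and the first display in the paper's own proof) writes $S\cup\mathbf{ma}$. Your reading is the correct one. Since $\Lambda_a(\mathbf{ma})=\mathrm{Ker}[a,a]=0$, using $\mathbf{ma}$ would yield $\mathrm{Im}[c,a]+\mathrm{Ker}[a,d]$ on the $\Lambda_a\circ g$ side rather than $\mathrm{Im}[c,a]+\mathrm{Ker}[a,d]+\mathrm{Ker}[a,b]$, and the square would fail to commute whenever $\mathrm{Ker}[a,b]\neq 0$; by contrast $\Lambda_a(\mathbf{mb})=\mathrm{Ker}[a,b]=\ker f(a\le b)$, which is precisely the extra term Lemma \ref{lemma:annoying_lemma_backwards} introduces. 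The paper's proof nevertheless writes the correct right-hand side $\mathrm{Im}[c,a]+\mathrm{Ker}[a,d]+\mathrm{Ker}[a,b]$, so the $\mathbf{ma}$ appearing in the statement and in the first display of the proof is a typo, which you have silently and correctly repaired.
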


\begin{proof}
First, we consider sets of the form $\mathbf{cm}\cup\mathbf{md}$.  On one hand, 
$$
\mathbf{cm}\cup\mathbf{md}
\xmapsto{\quad g \quad } 
\mathbf{cm}\cup\mathbf{md} \cup \mathbf{ma}
\xmapsto{\quad \Lambda_a \quad }
\mathrm{Im}[c,a]+\Ker[a,d]+\Ker[a,b]\,.
$$
On the other hand, by Lemma \ref{lemma:annoying_lemma_backwards}, applying $\Lambda_b$ followed by $h$ yields 
$$
\mathbf{cm}\cup\mathbf{md}
\xmapsto{\quad \Lambda_b\quad} 
\mathrm{Im}[c,b]+\Ker[b,d]
\xmapsto{\quad h \quad }
\mathrm{Im}[c,a]+\Ker[a,d]+\Ker[a,b]. 
$$
Thus, for sets of the form $\mathbf{cm}\cup\mathbf{md}$, we see that $\Lambda_a\circ g=h\circ\Lambda_b$. 

Any set in $\mathrm{DownSets}(\mathbf{mm})$ can be written as an intersection of sets of the form $\mathbf{cm}\cup\mathbf{md}$. Because $g$, $\Lambda_a$, and $\Lambda_b$ are lattice homomorphism and $h$ preserves meet (because the preimage of an intersection is equal to the intersection of preimages), we see that $\Lambda_a\circ g = h\circ \Lambda_b$ for every set in $\mathrm{DownSets}(\mathbf{mm})$.
\qed \end{proof}

\begin{rmk}
Theorems \ref{thm:comm_diagram_forward} and \ref{thm:comm_diagram_backward} imply that the function $h$ (which is given by applying $f(a\leq b)$ and $f(a\leq b)^{-1}$, respectively) in both diagrams are lattice homomorphisms. This is because, in each diagram, the functions $g$, $\Lambda_a$, and $\Lambda_b$ lattice homomorphisms and $\Lambda_a$ and $\Lambda_b$ are surjective.  
\end{rmk}

\subsection{Complements in the saecular submodule lattices}
\label{sec:complements_subgroup_lattice}

In this section, we show that if the cokernel of $f(a \le b)$ is free for all $a$ and $b$ such that $0\le a \le b\leq m$, then for any $f_c$ and $A,B\in \ImKer(f_c)$ such that $A\subseteq B$, the cokernel of the inclusion $A \subseteq B$ (i.e., the quotient $B/A$) is free.
This implies that whenever we have an inclusion $A\subseteq B$ between elements of a saecular lattice, there exists a complementary submodule $C$  such that $A \oplus C = B$. 
This result builds upon and generalizes the complement results in Section \ref{sec:complements}.

\begin{definition}
    We say that $C$ \emph{complements} $A \su B$ (or that $C$ is a \emph{complement} of $A$ in $B$) whenever $A \oplus C = B$.
\end{definition}

In Section \ref{sec:existence_interval_decomp}, we will use these complements to explicitly construct interval decompositions.

As in previous sections, we assume that the cokernel of every structure map $f(a \le b)$ is free.  Thus,
$$
f_b\cong \mathrm{Im}[a,b]\oplus \Coker[a,b]\,.
$$

Using Theorem \ref{thm:complement_special}, as well as some lemmas that we state and prove shortly, we have the following general result.

\begin{theorem}\label{thm:complement_general}
Let $a\in\mathbf{m}$ and $L,H\in\ImKer(f_a)$ such that $L\subseteq H$. Then $L$ has a complement in $H$. 
\end{theorem}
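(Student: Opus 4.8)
The plan is to reduce the statement to the special case $H = f_a$, which has already been prepared in Section~\ref{sec:complements}. Indeed, by Lemma~\ref{lemma:complement_shrink}, if $L$ admits a complement in $f_a$, then $L$ admits a complement in \emph{every} submodule $H$ with $L \subseteq H \subseteq f_a$. Since $L \in \ImKer(f_a)$ is automatically a submodule of $f_a$ and $L \subseteq H$ by hypothesis, it therefore suffices to prove the following: every $L \in \ImKer(f_a)$ admits a complement in $f_a$.

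To do this I would invoke the saecular homomorphism $\Lambda_a \colon \mathrm{DownSet}(\mathbf{mm}) \to \ImKer(f_a)$, which is a \emph{surjective} lattice homomorphism with $\mathbf{xm} \mapsto \im[x,a]$ and $\mathbf{my} \mapsto \Ker[a,y]$. Write $L = \Lambda_a(S)$ for some down-closed $S \subseteq \mathbf{mm}$. The key combinatorial input — already used in the proof of Theorem~\ref{thm:comm_diagram_backward} — is that any down-closed subset of $\mathbf{mm}$ is a finite intersection of sets of the form $\mathbf{xm} \cup \mathbf{my}$ (concretely, $S$ equals the intersection, over the minimal elements $(p,q)$ of the up-set $\mathbf{mm}\setminus S$, of the complements of the principal up-sets they generate, and such a complement is exactly $\mathbf{(p-1)m} \cup \mathbf{m(q-1)}$). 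Since a lattice homomorphism preserves finite meets, and meets in both $\mathrm{DownSet}(\mathbf{mm})$ and $\ImKer(f_a)$ are intersections, this yields a presentation
\[
L \;=\; \bigcap_{k=1}^{N} \bigl( \im[x_k,a] + \Ker[a,y_k] \bigr)
\]
for suitable $x_k, y_k \in \mathbf{m} \cup \{0\}$.

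Now each factor $\im[x_k,a] + \Ker[a,y_k]$ admits a complement in $f_a$ by Theorem~\ref{thm:complement_special}, and applying Lemma~\ref{lemma:intersection_closure} inductively on $N$ shows that the intersection $L$ admits a complement in $f_a$. Combined with the reduction via Lemma~\ref{lemma:complement_shrink} described above, this completes the argument.

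I do not expect a genuine obstacle here: the proof is essentially an assembly of Theorem~\ref{thm:complement_special}, Lemma~\ref{lemma:intersection_closure}, and Lemma~\ref{lemma:complement_shrink} via the lattice structure of $\ImKer(f_a)$. The one point requiring a little care is the combinatorial claim that every down-set of the finite poset $\mathbf{mm}$ is a finite intersection of the sets $\mathbf{xm} \cup \mathbf{my}$, together with the bookkeeping for the boundary values $x,y \in \{0\}$ (where $\mathbf{0m} = \mathbf{m0} = \emptyset$); this is elementary, and has already appeared earlier in the paper. One could equivalently present $L$ dually as a finite union of sets $\im[x_k,a] \cap \Ker[a,y_k]$ and argue on the join side, but the intersection route is the most direct given the lemmas already in hand.
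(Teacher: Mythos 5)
Your proof is correct and follows essentially the same route as the paper's: the paper also reduces to the case $H=f_a$ via Lemma~\ref{lemma:complement_shrink}, then combines Theorem~\ref{thm:complement_special} and Lemma~\ref{lemma:intersection_closure} after expressing $L$ as an intersection of submodules of the form $\im[c,a]+\Ker[a,d]$ (this last step is isolated as Lemma~\ref{lemma:interection_generator}, whose short proof you have effectively inlined with a bit more combinatorial detail). No gap; the argument is a faithful reconstruction.
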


We prove Theorem \ref{thm:complement_general} using the results of Lemmas
\ref{lemma:complement_equivalence}, \ref{lemma:intersection_closure},
\ref{lemma:complement_shrink}, and \ref{lemma:interection_generator}.

\begin{lemma}\label{lemma:interection_generator}
Every element of the saecular lattice $\ImKer(f_a)$ can be expressed as an intersection of submodules of the form $\mathrm{Im}[c,a]+\Ker[a,d]$ for some choice of $c,d\in\mathbf{m}\cup\{0\}$. 
\end{lemma}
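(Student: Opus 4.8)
The plan is to pull the statement back along the saecular homomorphism $\Lambda_a:\mathrm{DownSet}(\mathbf{mm})\to\ImKer(f_a)$ and reduce it to an elementary fact about the finite distributive lattice $\mathrm{DownSet}(\mathbf{mm})$. First I would note that $\Lambda_a$ is surjective: its image is a sublattice of $\ImKer(f_a)$ containing every $\mathrm{Im}[x,a]=\Lambda_a(\mathbf{xm})$ and every $\Ker[a,y]=\Lambda_a(\mathbf{my})$, and by Definition \ref{def:saecular_lattice} these submodules generate $\ImKer(f_a)$. Hence any element of $\ImKer(f_a)$ equals $\Lambda_a(S)$ for some down-closed $S\subseteq\mathbf{mm}$. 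Since $\Lambda_a$ is a lattice homomorphism of finite lattices, it preserves finite joins and nonempty finite meets; in particular $\Lambda_a(\mathbf{cm}\cup\mathbf{md})=\mathrm{Im}[c,a]+\Ker[a,d]$ for all $c,d\in\mathbf{m}\cup\{0\}$, and $\Lambda_a$ carries a nonempty finite intersection of down-sets to the corresponding intersection of submodules. So it suffices to prove the combinatorial claim that every down-closed $S\subseteq\mathbf{mm}$ is a finite intersection of down-sets of the form $\mathbf{cm}\cup\mathbf{md}$ with $c,d\in\mathbf{m}\cup\{0\}$.

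For this claim I would argue directly. If $S=\mathbf{mm}$, then $\Lambda_a(S)=f_a=\mathrm{Im}[m,a]+\Ker[a,0]$ is already of the desired form, so assume $S\neq\mathbf{mm}$ and set $U:=\mathbf{mm}\setminus S$, a nonempty up-closed subset. For $(p,q)\in\mathbf{mm}$, write $U_{(p,q)}=\{(p',q')\in\mathbf{mm}:p\le p',\ q\le q'\}$ for the principal up-set at $(p,q)$; since $U$ is up-closed, $U=\bigcup_{(p,q)\in U}U_{(p,q)}$, a finite union. Taking complements in $\mathbf{mm}$ turns this union into an intersection, and a direct check gives $\mathbf{mm}\setminus U_{(p,q)}=\mathbf{(p-1)m}\cup\mathbf{m(q-1)}$ (here $p,q\in\{1,\dots,m\}$, so $p-1,q-1\in\mathbf{m}\cup\{0\}$, and the conventions $\mathbf{0}=\emptyset$ and $\mathbf{0m}=\mathbf{m0}=\emptyset$ absorb the cases $p=1$ or $q=1$). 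Therefore
\[
S=\bigcap_{(p,q)\in U}\big(\mathbf{(p-1)m}\cup\mathbf{m(q-1)}\big),
\]
a nonempty finite intersection of the required type, and applying $\Lambda_a$ gives
\[
\Lambda_a(S)=\bigcap_{(p,q)\in U}\big(\mathrm{Im}[p-1,a]+\Ker[a,q-1]\big),
\]
which proves the lemma.

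I do not expect a genuine obstacle. The two points that deserve care are the boundary bookkeeping at the index $0$ --- complements of principal up-sets meeting the bottom row or column must still lie in the allowed family, which is precisely why the second index is taken in $\mathbf{m}\cup\{0\}$ --- and the fact that a lattice homomorphism need not preserve the top element, which is why the case $S=\mathbf{mm}$ is treated separately rather than through an empty intersection. Both are routine, so the real content is just the (standard) up-set/down-set duality in $\mathrm{DownSet}(\mathbf{mm})$ combined with surjectivity of $\Lambda_a$.
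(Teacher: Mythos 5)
Your proposal follows the same route as the paper: push the problem through the saecular homomorphism $\Lambda_a$ and reduce to the combinatorial fact that every down-closed $S\subseteq\mathbf{mm}$ is an intersection of sets of the form $\mathbf{cm}\cup\mathbf{md}$. The only difference is that the paper takes that combinatorial fact for granted, whereas you spell out the (correct) argument via the complement $U$ written as a union of principal up-sets and the index-$0$ bookkeeping.
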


\begin{proof}
Submodules of the form $\mathrm{Im}[c,a]+\Ker[a,d]$ are precisely the image of subsets of the form $\mathbf{cm}\cup\mathbf{md}$ under $\Lambda_a$. 
Because any set in $\mathrm{DownSets}(\mathbf{mm})$ can be written as an intersection of sets of the form $\mathbf{cm}\cup\mathbf{md}$ (noting that $c$ or $d$ can be $0$) and $\Lambda_a$ is a lattice homomorphism, it follows that any submodule of $f_a$ can be written as an intersection of submodules of the form $\mathrm{Im}[c,a]+\Ker[a,m]$. 
\qed \end{proof}

 Lemmas
\ref{lemma:complement_equivalence}, \ref{lemma:intersection_closure},
\ref{lemma:complement_shrink}, and \ref{lemma:interection_generator}, along with Theorem \ref{thm:complement_special}, allow us to prove Theorem \ref{thm:complement_general}.

\begin{proof}[Theorem \ref{thm:complement_general}.]
By Lemma \ref{lemma:interection_generator}, every $L\in\ImKer(f_a)$ can be represented as an intersection of submodules of the form $\mathrm{Im}[c,a]+\Ker[a,d]$. By Theorem \ref{thm:complement_special} and Lemma \ref{lemma:intersection_closure}, $L$ admits a complement in $f_a$. By Lemma \ref{lemma:complement_shrink}, $L$ has a complement in $H$, as desired. 
\qed \end{proof}

\subsection{Existence of interval decompositions}
\label{sec:existence_interval_decomp}

We now use the saecular lattice (see Definition \ref{def:saecular_lattice}) of each $f_a$ and the existence of complements in the saecular lattice (see Theorem \ref{thm:complement_general}) to prove the existence of interval decompositions. 
We once again assume that all structure maps $f(a\leq b)$ have free cokernel. 

Our argument proceeds as follows.  First, we use the results of Subsection \ref{sec:complements_subgroup_lattice} to obtain a direct-sum decomposition $f_a \cong \bigoplus_{i,j}A^{ij}_a$ for all $a$, where each $A^{ij}_a$ is defined as a complement in the saecular lattice (see Theorem \ref{thm:decomposition_ij}).  
We then modify the summands in each decomposition so that 
$f(a \le b)(A^{ij}_a) \in \{ A^{ij}_b, 0 \}$ whenever $a \le b$ (see Lemma \ref{lemma:in_bounds_isomorphism}).  
The resulting family of summands $A^{ij}_a$ then naturally combines to form an interval decomposition of $f$.

\begin{lemma}\label{lemma:singleton_complement}
Suppose that we have a diagram of inclusions of down-closed sets in $\mathbf{mm}$
\[\begin{tikzcd}
	Y & T \\
	X & S\,,
	\arrow[hook, from=2-1, to=1-1]
	\arrow[hook, from=2-1, to=2-2]
	\arrow[hook, from=2-2, to=1-2]
	\arrow[hook, from=1-1, to=1-2]
\end{tikzcd}\]
where $Y = X \sqcup \{(i,j)\}$ and $T = S \sqcup \{(i,j)\}$.  Let $A^{ij}_a$ be a complement of $\Lambda_a(X)$ in $\Lambda_a(Y)$.  Then $A^{ij}_a$ is also a complement of $\Lambda_a(S)$ in $\Lambda_a(T)$. 
\end{lemma}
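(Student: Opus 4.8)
The plan is to reduce the claim to two elementary identities among down-closed subsets of $\mathbf{mm}$ and then transport them through the lattice homomorphism $\Lambda_a$. First I would record that, since $X \subseteq S$ (from the given diagram) and $(i,j) \notin S$ (because $T = S \sqcup \{(i,j)\}$), one has $Y \cap S = X$ and $Y \cup S = T$. Both are immediate: $Y \cap S = (X \cup \{(i,j)\}) \cap S = X$, and $Y \cup S = (X \cup \{(i,j)\}) \cup S = S \cup \{(i,j)\} = T$. Because $\Lambda_a$ is a lattice homomorphism — so it preserves intersections and submodule sums — applying it yields
\[
\Lambda_a(Y) \cap \Lambda_a(S) = \Lambda_a(X) \qquad \text{and} \qquad \Lambda_a(Y) + \Lambda_a(S) = \Lambda_a(T).
\]

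Next I would unpack the hypothesis that $A^{ij}_a$ complements $\Lambda_a(X)$ in $\Lambda_a(Y)$: this means $\Lambda_a(Y) = \Lambda_a(X) + A^{ij}_a$ and $\Lambda_a(X) \cap A^{ij}_a = 0$, so in particular $A^{ij}_a \subseteq \Lambda_a(Y)$; also $\Lambda_a(X) \subseteq \Lambda_a(S)$ by monotonicity of $\Lambda_a$. Then for the sum,
\[
\Lambda_a(T) = \Lambda_a(Y) + \Lambda_a(S) = \bigl(\Lambda_a(X) + A^{ij}_a\bigr) + \Lambda_a(S) = A^{ij}_a + \Lambda_a(S),
\]
the last equality because $\Lambda_a(X) \subseteq \Lambda_a(S)$, while the reverse inclusion $\Lambda_a(S) + A^{ij}_a \subseteq \Lambda_a(T)$ is clear since $A^{ij}_a \subseteq \Lambda_a(Y) \subseteq \Lambda_a(T)$. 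For the intersection, using $A^{ij}_a \subseteq \Lambda_a(Y)$,
\[
\Lambda_a(S) \cap A^{ij}_a = \bigl(\Lambda_a(S) \cap \Lambda_a(Y)\bigr) \cap A^{ij}_a = \Lambda_a(X) \cap A^{ij}_a = 0.
\]
Hence $\Lambda_a(S) \oplus A^{ij}_a = \Lambda_a(T)$, which is exactly the assertion that $A^{ij}_a$ complements $\Lambda_a(S)$ in $\Lambda_a(T)$.

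I do not expect a genuine obstacle here; the only point worth emphasizing is that no appeal to modularity of the submodule lattice is needed — everything follows from the facts that $\Lambda_a$ preserves both meets and joins together with the two set identities $Y \cap S = X$ and $Y \cup S = T$. (One could instead argue via the modular law applied to $\Lambda_a(X) \subseteq \Lambda_a(S)$ and $\Lambda_a$ of the principal down-set generated by $(i,j)$, but the route above sidesteps it.)
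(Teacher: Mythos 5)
Your proof is correct and follows essentially the same strategy as the paper's: observe that $Y \cap S = X$ and $Y \cup S = T$, push these identities through the lattice homomorphism $\Lambda_a$, and then check the two complementation conditions. The sum computation $\Lambda_a(S) + A^{ij}_a = \Lambda_a(X) + A^{ij}_a + \Lambda_a(S) = \Lambda_a(Y) + \Lambda_a(S) = \Lambda_a(T)$ is identical in both. The only place you diverge is the intersection condition: you verify $\Lambda_a(S) \cap A^{ij}_a = \bigl(\Lambda_a(S) \cap \Lambda_a(Y)\bigr) \cap A^{ij}_a = \Lambda_a(X) \cap A^{ij}_a = 0$ by a direct lattice calculation (using $A^{ij}_a \subseteq \Lambda_a(Y)$), whereas the paper first establishes the isomorphism $\Lambda_a(Y)/\Lambda_a(X) \cong \Lambda_a(T)/\Lambda_a(S)$ via the second isomorphism theorem and then argues that $A^{ij}_a \hookrightarrow \Lambda_a(T)/\Lambda_a(S)$ is injective. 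Your route is slightly more elementary — it avoids quotients and the isomorphism theorem entirely — and is arguably the cleaner way to present this step; the content and prerequisites are the same, since the second isomorphism theorem for modules is itself a close relative of the modular law that underlies the construction of $\Lambda_a$.
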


\begin{proof}
Note that $A^{ij}_a$ is a submodule of $\Lambda_a(T)$. Moreover,   $Y\cap S = X$ and $Y \cup S = T$. From this, we have
\begin{align}
    \frac{\Lambda_a(Y)}{\Lambda_a(X)} &=\frac{\Lambda_a(Y)}{\Lambda_a(Y\cap S)} \notag \\
    &=\frac{\Lambda_a(Y)}{\Lambda_a(Y)\cap \Lambda_a(S)} \notag \\
    &\cong \frac{\Lambda_a(Y)+\Lambda_a(S)}{\Lambda_a(S)} \label{eq_needs_number}\\
    &= \frac{\Lambda_a(Y\cup S)}{\Lambda_a(S)} \notag\\
    &= \frac{\Lambda_a(T)}{\Lambda_a(S)}\,. \notag 
\end{align}
The isomorphism in \eqref{eq_needs_number} is from the second isomorphism theorem (See \cite[Section 10.2, Theorem 4.2]{dummit_abstract_2004}). We therefore have the commutative diagram 
\[\begin{tikzcd}
	{} & {\Lambda_a(Y)/\Lambda_a(X)} \\
	{A^{ij}_a} & {\Lambda_a(T)/\Lambda_a(S)\,,}
	\arrow[hook, from=2-1, to=1-2]
	\arrow[from=2-1, to=2-2]
	\arrow["\cong"', from=1-2, to=2-2]
\end{tikzcd}\]
so the map $A^{ij}_a\to \Lambda_a(T)/\Lambda_a(S)$ is injective. Therefore, $\Lambda_a(S)\cap A^{ij}_a=0$. 
Moreover, $\Lambda_a(S)+ A^{ij}_a=\Lambda_a(T)$, because
\begin{align*}
    \Lambda_a(S)+A^{ij}_a&=\Lambda_a(X)+ A^{ij}_a+\Lambda_a(S) \\
    &=\Lambda_a(Y)+\Lambda_a(S)\\
    &=\Lambda_a(Y \cup S) \\
    &=\Lambda_a(T)\,.
\end{align*}
As a result, $A^{ij}_a$ is a complement of $\Lambda_a(S)$ in $\Lambda_a(T)$, as desired. 
\qed \end{proof}

For each $f_a$, the submodule $A^{ij}_a$ in Lemma \ref{lemma:singleton_complement} can identified with $(i,j)\in \mathbf{mm}$. 
In fact, the set of $A^{ij}$ are summands of $f_a$. One can also use these summands of $f_a$ to construct any submodule in $\ImKer(f_a)$.

\begin{framed}
Lemma \ref{lemma:singleton_complement} tells us that  every complement of $\Lambda_a(X) \su \Lambda_a(Y)$ is also a complement of $\Lambda(S) \su \Lambda(T)$ when $X$ and $Y$ are small enough with respect to inclusion (i.e., satisfy the relation in Lemma \ref{lemma:singleton_complement}).

For the remainder of this section, we fix a family of complements, with $X$ and $Y$ as small as possible.  
That is, for each $a,i,j \in \mathbf{m}$, choose a direct sum decomposition 
\begin{align}
\Lambda_a(Y) = \Lambda_a(X) \oplus A^{ij}_a
\label{eq:aij_constraints}
\end{align}
where $Y = \mathbf{ij} = \mathbf{im}\cap\mathbf{mj}$ and $X = Y \setminus \{(i,j)\}$.
\end{framed}

Our next technical result is Lemma \ref{lem:emptycap}. Viewed at a high level, this result states that when a space of the form $\mathrm{Im}[x,a]$ and a space of the form $\mathrm{Ker}[a,y]$ are each large enough to contain $A^{ij}_a$, then so is their intersection. However, decrementing $x$ and $y$ by a single unit can make the image and kernel spaces small enough so that they do not include any nonzero element of $A^{ij}_a$, even when we take the sum of the two spaces. 

\begin{lemma}
\label{lem:emptycap}
For each $(i,j) \in \mathbf{mm}$, we have
\begin{align}
	0 & = A^{ij}_a \cap \big ( \mathrm{Im}[i-1,a] + \Ker[a,j-1] \big )\,, \label{eq:emptycap1} \\
	A^{ij}_a & =  A^{ij}_a \cap \big(   \mathrm{Im}[i,a] \cap \Ker[a,j] \big ) \hspace{1cm}\textrm{(equivalently, $A^{ij}_a \su    \mathrm{Im}[i,a] \cap \Ker[a,j]\,.$)} \label{eq:emptycap2}
\end{align}
\end{lemma}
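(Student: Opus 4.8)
The plan is to push the whole statement through the saecular homomorphism $\Lambda_a \colon \mathrm{DownSets}(\mathbf{mm}) \to \ImKer(f_a)$, using that it is a lattice homomorphism (so it carries $\cap$ to $\cap$ and $\cup$ to $+$) together with the fact that $A^{ij}_a$ was defined in \eqref{eq:aij_constraints} precisely as a complement of $\Lambda_a(X)$ in $\Lambda_a(Y)$, where $Y = \mathbf{ij} = \mathbf{im}\cap\mathbf{mj}$ and $X = Y\setminus\{(i,j)\}$. In particular the relations $A^{ij}_a \su \Lambda_a(Y)$ and $\Lambda_a(X)\cap A^{ij}_a = 0$ are available for free.

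For \eqref{eq:emptycap2}, I would first note $\mathrm{Im}[i,a]\cap\Ker[a,j] = \Lambda_a(\mathbf{im})\cap\Lambda_a(\mathbf{mj}) = \Lambda_a(\mathbf{im}\cap\mathbf{mj}) = \Lambda_a(\mathbf{ij}) = \Lambda_a(Y)$, using that $\Lambda_a$ preserves meets. Since $A^{ij}_a \su \Lambda_a(Y)$ by construction, $A^{ij}_a \cap (\mathrm{Im}[i,a]\cap\Ker[a,j]) = A^{ij}_a$, which is exactly \eqref{eq:emptycap2} (equivalently, $A^{ij}_a \su \mathrm{Im}[i,a]\cap\Ker[a,j]$). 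This half is essentially a restatement of the definition of $A^{ij}_a$.

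For \eqref{eq:emptycap1}, set $Z = \mathbf{(i-1)m}\cup\mathbf{m(j-1)}$, so that $\Lambda_a(Z) = \mathrm{Im}[i-1,a] + \Ker[a,j-1]$ since $\Lambda_a$ carries $\cup$ to $+$. The combinatorial heart of the argument is the identity $Y\cap Z = X$: a pair $(p,q)\le(i,j)$ lies in $Z$ if and only if $p\le i-1$ or $q\le j-1$, i.e.\ if and only if $(p,q)\neq(i,j)$, so $Y\cap Z = \mathbf{ij}\setminus\{(i,j)\} = X$. Then, using $A^{ij}_a \su \Lambda_a(Y)$ and meet-preservation of $\Lambda_a$,
\[
A^{ij}_a \cap \big(\mathrm{Im}[i-1,a] + \Ker[a,j-1]\big) = A^{ij}_a \cap \Lambda_a(Y\cap Z) = A^{ij}_a \cap \Lambda_a(X) = 0 ,
\]
where the last equality is the direct-sum relation $\Lambda_a(Y) = \Lambda_a(X)\oplus A^{ij}_a$ from \eqref{eq:aij_constraints}.

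I do not expect any serious obstacle; the only point requiring care is the bookkeeping at the boundary indices $i=1$ or $j=1$, where $\mathbf{(i-1)m}$ or $\mathbf{m(j-1)}$ is the empty down-set and the corresponding term $\mathrm{Im}[0,a]$ or $\Ker[a,0]$ is already $0$ by Definition \ref{def:Im_and_Ker} (recall $f_0 = 0$). In those cases $Z$ reduces to a single generator, but both the combinatorial identity $Y\cap Z = X$ and the displayed chain of equalities go through verbatim.
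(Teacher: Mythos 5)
Your proof is correct, and for \eqref{eq:emptycap2} it coincides with the paper's: both observe that $\Lambda_a(\mathbf{ij}) = \mathrm{Im}[i,a]\cap\Ker[a,j]$ by meet-preservation and that $A^{ij}_a \subseteq \Lambda_a(\mathbf{ij})$ by the defining equation \eqref{eq:aij_constraints}. For \eqref{eq:emptycap1}, however, you take a genuinely more direct route. The paper sets $S = \mathbf{(i-1)m}\cup\mathbf{m(j-1)}$ and $T = S\cup\{(i,j)\}$ and then invokes Lemma \ref{lemma:singleton_complement} to conclude that $A^{ij}_a$ is a \emph{full} complement of $\Lambda_a(S)$ in $\Lambda_a(T)$; the trivial-intersection claim \eqref{eq:emptycap1} is then one half of that conclusion. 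Lemma \ref{lemma:singleton_complement} in turn rests on the second isomorphism theorem. You instead note that only the trivial-intersection half is needed, and obtain it by an elementary lattice computation: since $A^{ij}_a\subseteq\Lambda_a(Y)$ and $Y\cap S = X$, meet-preservation of $\Lambda_a$ gives $A^{ij}_a\cap\Lambda_a(S) = A^{ij}_a\cap\Lambda_a(Y)\cap\Lambda_a(S) = A^{ij}_a\cap\Lambda_a(X) = 0$. Your version buys economy of means (it uses nothing beyond the lattice-homomorphism property of $\Lambda_a$ and the defining direct sum \eqref{eq:aij_constraints}, and it makes the role of the combinatorial identity $Y\cap S = X$ explicit); the paper's version buys uniformity (\eqref{eq:emptycap1} and \eqref{eq:emptycap2} are both read off as instances of Lemma \ref{lemma:singleton_complement}, which is reused again in Theorem \ref{thm:decomposition_ij}).
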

\begin{proof}
Define down-closed sets $X\su Y$ and $S \su T$ such that
\begin{align*}
X & = Y \setminus \{(i,j)\} = Y \cap S\,, \\
Y & = \mathbf{ij}\,, \\
S  & = \{(p,q) \in \mathbf{mm} : p < i \text{ or } q < j\} =  \mathbf{(i-1)}\mathbf{m}\cup\mathbf{m}\mathbf{(j-1)}\,, \\
T &  = S \cup \{(i,j)\} = S \cup Y\,.
\end{align*}
Then $A^{ij}_a$ complements $\Lambda_a(S) \su \Lambda_a(T)$, where $\Lambda_a(S) = \mathrm{Im}[i-1,a] + \Ker[a,j-1]$, so Equation
\eqref{eq:emptycap1} follows. 
Likewise, $A^{ij}_a$ complements $\Lambda_a(X) \su \Lambda_a(Y)$, where $\Lambda_a(Y) = \mathrm{Im}[i,a] \cap  \Ker[a,j]$. 
Therefore, $A^{ij}_a\subseteq \mathrm{Im}[i,a] \cap  \Ker[a,j]$, so Equation \eqref{eq:emptycap2} follows.
\qed \end{proof}

\begin{theorem}\label{thm:decomposition_ij}
For every down-closed subset $S \su \mathbf{mm}$,  we have
\[
\Lambda_a(S)=\bigoplus_{(i,j)\in S}A^{ij}_a \, .
\] 
\end{theorem}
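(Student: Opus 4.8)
The plan is to prove the identity by induction on the cardinality of the down-closed set $S$. The base case is $S = \emptyset$: since $\mathbf{0} = \emptyset$ we have $S = \mathbf{0m}$, so $\Lambda_a(S) = \im[0,a] = 0$ (using the convention $f_0 = 0$), which is the empty direct sum.

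For the inductive step, assume the identity for all down-closed sets of size smaller than $n$, and let $S$ be down-closed with $|S| = n \ge 1$. First I would pick an element $(i,j) \in S$ that is maximal in $S$ under the product order on $\mathbf{mm}$, and set $S' = S \setminus \{(i,j)\}$. Because $(i,j)$ is maximal, $S'$ is still down-closed, and $|S'| = n-1$; the inductive hypothesis then gives $\Lambda_a(S') = \bigoplus_{(p,q)\in S'} A^{pq}_a$. So it suffices to show $\Lambda_a(S) = \Lambda_a(S') \oplus A^{ij}_a$, since adjoining to an internal direct sum a submodule that meets it trivially again yields an internal direct sum, and the resulting indexing set is exactly $S$.

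To prove $\Lambda_a(S) = \Lambda_a(S') \oplus A^{ij}_a$, I would invoke Lemma \ref{lemma:singleton_complement} with $Y = \mathbf{ij}$, $X = \mathbf{ij} \setminus \{(i,j)\}$, and with the roles of the lemma's ``$S$'' and ``$T$'' played by $S'$ and $S$. The decompositions $Y = X \sqcup \{(i,j)\}$ and $S = S' \sqcup \{(i,j)\}$ are immediate, and the hypothesis that $A^{ij}_a$ complements $\Lambda_a(X)$ in $\Lambda_a(Y)$ is the defining property \eqref{eq:aij_constraints} of $A^{ij}_a$. The four inclusions forming the lemma's square reduce to $X \su Y$, $S' \su S$, $X \su S'$, and $Y \su S$; the first two are trivial, and for the last two one uses that down-closedness of $S$ together with $(i,j) \in S$ forces the principal down-set $\mathbf{ij}$ to lie inside $S$, whence $Y = \mathbf{ij} \su S$ and $X = \mathbf{ij}\setminus\{(i,j)\} \su S\setminus\{(i,j)\} = S'$. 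Lemma \ref{lemma:singleton_complement} then tells us that $A^{ij}_a$ complements $\Lambda_a(S')$ in $\Lambda_a(S)$, closing the induction.

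The only delicate point is the combinatorial bookkeeping in the inductive step: verifying that a maximal element of a down-closed set can be removed while preserving down-closedness, and that the principal down-set generated by that element sits inside $S$. All of the module-theoretic substance is already packaged in Lemma \ref{lemma:singleton_complement}, so I expect no further obstacle.
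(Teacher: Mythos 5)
Your proof is correct and takes essentially the same approach as the paper's, which chooses a chain of down-closed sets $\emptyset = S_0 \subseteq \cdots \subseteq S_n = S$ differing by singletons and applies Lemma \ref{lemma:singleton_complement} at each step; your induction removing a maximal element is just that chain run in reverse. You make explicit the combinatorial bookkeeping --- that removing a maximal element preserves down-closedness and that the principal down-set $\mathbf{ij}$ of $(i,j)\in S$ lies in $S$ --- which the paper leaves implicit.
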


\begin{proof}
Choose a sequence of down-closed sets $\emptyset=S_0\subseteq S_1\subseteq \cdots\subseteq S_n=S$ such that each set difference $S_p \setminus S_{p-1}$ is a singleton $\{(i_p,j_p)\}$. The submodule $A^{i_p,j_p}_a$ complements $\Lambda_a(S_{p-1})\subseteq\Lambda_a(S_p)$ for all $p$, by Lemma \ref{lemma:singleton_complement}. Therefore, $$\Lambda_a(S)= \bigoplus_pA^{i_p,j_p}_a=\bigoplus_{(i,j)\in S} A^{ij}_a.$$ 
\qed \end{proof}

In the special case where $f$ is a persistent homology module,  one can view each $A_a^{ij}$ as homology classes that are born at $i$ and die at $j$. 
This interpretation is valid by Lemma \ref{lem:emptycap}, because $A_a^{ij}\subseteq \Ker[a,j]$, $A_a^{ij}\cap\Ker[a,j-1]=0$, $A_a^{ij}\subseteq \im[i,a]$, and $\im[i-1,a]\cap A_a^{ij}=0$. 
Lemmas \ref{lemma:out_of_bounds} 
and \ref{lemma:in_bounds_isomorphism} provide additional context for this interpretation. They also provide the necessary ingredients to prove Theorem \ref{thm:hard_direction_result}.
In addition, in the case where $f$ represents the persistent homology module of a filtered simplicial complex, the submodules $A_a^{ij}$ are canonically isomorphic\footnote{Note, however, that they are not precisely the same; in particular, the pair groups always exist, while existence of the summands $A_a^{ij}$ requires cokernels of structure maps to be free. Moreover, the pair groups are quotients of subgroups of $f_a$, while the summands $A_a^{ij}$ are simply subgroups.} to the ``pair groups'' discussed in \cite{cohen-steiner_extending_2009}. 

\begin{lemma}\label{lemma:out_of_bounds}
If $a \notin [i,j)$, then $A_a^{ij}=0$.
\end{lemma}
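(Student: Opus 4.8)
The plan is to recall the defining property \eqref{eq:aij_constraints} of $A_a^{ij}$, namely that $\Lambda_a(Y) = \Lambda_a(X) \oplus A_a^{ij}$ where $Y = \mathbf{ij} = \mathbf{im} \cap \mathbf{mj}$ and $X = Y \setminus \{(i,j)\}$, and to show that under the hypothesis $a \notin [i,j)$ we have $\Lambda_a(Y) = \Lambda_a(X)$, which forces $A_a^{ij} = 0$. Equivalently --- and this is perhaps the cleanest route --- I would invoke the containment \eqref{eq:emptycap2} from Lemma \ref{lem:emptycap}, which gives $A_a^{ij} \su \mathrm{Im}[i,a] \cap \Ker[a,j]$, and then argue that this intersection is trivial whenever $a \notin [i,j)$.

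The argument splits into the two ways $a$ can fail to lie in $[i,j)$. First, suppose $a < i$. Then by Definition \ref{def:Im_and_Ker}, since it is not the case that $i \le a$, we have $\mathrm{Im}[i,a] = 0$ (one must check the convention here: $\mathrm{Im}[x,a] = f_a$ when $x \not\le a$ only in the relevant range; more carefully, $\mathrm{Im}[i,a] = \im(f(i \le a))$ requires $i \le a$, and when $i > a$ the relevant image submodule in the saecular framework is $0$ since no class born at $i$ can already be present at $a < i$). In this case $A_a^{ij} \su \mathrm{Im}[i,a] = 0$. Second, suppose $a \ge j$. Then $j \le a$, so by Definition \ref{def:Im_and_Ker} we have $\Ker[a,j] = \ker(f(a \le j)) $; but more to the point, when $j \le a$ the submodule $\Ker[a,j]$ is $0$ by the stated convention ($\Ker[a,y] = 0$ when it is not the case that $a \le y$, i.e. when $y < a$; and the boundary case $y = a$ gives $\ker(\mathrm{id}) = 0$ as well). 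Hence $A_a^{ij} \su \Ker[a,j] = 0$. In either case $A_a^{ij} = 0$, as claimed.

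I expect the only subtlety to be bookkeeping with the conventions in Definition \ref{def:Im_and_Ker} and the way $\Lambda_a$ sends the boundary sets $\mathbf{im}$ and $\mathbf{mj}$ to $\mathrm{Im}[i,a]$ and $\Ker[a,j]$ --- in particular, making sure the degenerate cases ($i > a$ or $j \le a$) produce the zero submodule rather than all of $f_a$. Once the correct reading of the conventions is pinned down, the proof is a one-line appeal to Lemma \ref{lem:emptycap}, so there is no real obstacle; the work is entirely in citing the right normalization. If one prefers to avoid relying on the conventions for $\mathrm{Im}$ and $\Ker$ at out-of-range indices, an alternative is to show directly that $\mathbf{ij} = \mathbf{im} \cap \mathbf{mj}$ contains no element $(i,j)$ with $(i,j)$ ``visible'' at $a$, i.e. that $\Lambda_a$ collapses the singleton difference $Y \setminus X$, but the intersection argument above is shorter.
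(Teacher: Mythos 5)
Your treatment of the case $a \ge j$ is correct and matches the paper: $\Ker[a,j]=0$ by Definition~\ref{def:Im_and_Ker} (both when $j<a$ and when $j=a$, since $\ker(\id)=0$), and \eqref{eq:emptycap2} then gives $A_a^{ij}\subseteq\Ker[a,j]=0$.

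The case $a<i$, however, contains a genuine error. You assert that $\mathrm{Im}[i,a]=0$ when $i>a$, hedging that ``one must check the convention here.'' In fact the convention in Definition~\ref{def:Im_and_Ker} is explicit and goes the \emph{other} way: $\mathrm{Im}[x,a]=f_a$ whenever $x\not\le a$. (This choice is forced if the family $\mathrm{Im}[0,a]\subseteq\cdots\subseteq\mathrm{Im}[m,a]$ is to be nested and end at $f_a$; it is also used explicitly, e.g., in the case analysis of Lemma~\ref{lemma:second_annoying_lemma}.) With the correct convention, $\mathrm{Im}[i,a]=f_a$ when $a<i$, so the containment $A_a^{ij}\subseteq\mathrm{Im}[i,a]\cap\Ker[a,j]$ coming from \eqref{eq:emptycap2} is vacuous and proves nothing. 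Your heuristic justification --- ``no class born at $i$ can already be present at $a<i$'' --- is the right intuition, but it is not captured by \eqref{eq:emptycap2}; it is captured by \eqref{eq:emptycap1}. The paper's argument uses exactly this: when $a<i$ we have $a\le i-1$, hence $\mathrm{Im}[i-1,a]=f_a$ (either $i-1=a$, giving $\im(f(a\le a))=f_a$, or $i-1>a$, giving $f_a$ by the convention), and \eqref{eq:emptycap1} yields $A_a^{ij}\cap\mathrm{Im}[i-1,a]=A_a^{ij}\cap f_a=0$, i.e.\ $A_a^{ij}=0$. So the fix is not a matter of bookkeeping: you must switch from \eqref{eq:emptycap2} to \eqref{eq:emptycap1} for this half of the argument.
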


\begin{proof}
On one hand, if $a<i$, then $\im[i,a]=f_a = \im[i-1,a]$. 
Because we also have $\im[i-1,a]\cap A_a^{ij}=0$, it follows that $A_a^{ij}=0$. 

On the other hand, $a\geq j$ implies that $\Ker[a,j]=0$. 
Because $A_a^{ij}\subseteq\Ker[a,j]$, it follows that $A_a^{ij}=0$. 
\qed \end{proof}

\begin{lemma}\label{lemma:in_bounds_isomorphism}
Suppose that $a,b \in [i,j)$, where $a\leq b$. Define $Y = \mathbf{ij}$ and $X = Y \setminus \{(i,j)\}$.
Then
$$
\Lambda_b(Y) = \Lambda_b(X) \oplus f(a\le b)(A^{ij}_a)\,.
$$
That is, the direct image $f(a\le b)(A^{ij}_a)$ satisfies the same condition (namely, Equation \eqref{eq:aij_constraints}) that we impose on $A^{ij}_b$ with $b$ in place of $a$.
Furthermore, $f(a \le b)$ restricts to an isomorphism $A^{ij}_a \to f(a\le b)(A^{ij}_a)$.
\end{lemma}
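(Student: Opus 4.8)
The plan is to transport the defining decomposition $\Lambda_a(Y)=\Lambda_a(X)\oplus A^{ij}_a$ of Equation~\eqref{eq:aij_constraints} forward along $f(a\le b)$, using the commutative diagram of Theorem~\ref{thm:comm_diagram_forward}, its backward counterpart (equivalently, Lemma~\ref{lemma:annoying_lemma_backwards}), and the ``smallness'' identity~\eqref{eq:emptycap1}. Write $h$ for the direct-image lattice homomorphism $H\mapsto f(a\le b)(H)$ on saecular lattices, and note that $a,b\in[i,j)$ forces $i\le a\le b\le j-1$. First I would compute the images of $\Lambda_a(X)$ and $\Lambda_a(Y)$ under $h$. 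Since $X\su Y=\mathbf{ij}\su\mathbf{im}\su\mathbf{am}$ (using $j\le m$ and $i\le a$), Theorem~\ref{thm:comm_diagram_forward} --- whose content is $h\circ\Lambda_a=\Lambda_b\circ g$ with $g(S)=S\cap\mathbf{am}$ --- gives $h(\Lambda_a(Y))=\Lambda_b(Y)$ and $h(\Lambda_a(X))=\Lambda_b(X)$. Applying $h$ to $\Lambda_a(Y)=\Lambda_a(X)+A^{ij}_a$ and using that direct images preserve sums yields
\[
\Lambda_b(Y)=\Lambda_b(X)+f(a\le b)(A^{ij}_a)\,,
\]
with both summands submodules of $\Lambda_b(Y)$. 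Two things remain: that this sum is direct, and that $f(a\le b)$ is injective on $A^{ij}_a$.

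Both reduce to the observation that anything pulled back from a submodule of the form $\im[i-1,\cdot]+\Ker[\cdot,j-1]$ meets $A^{ij}_a$ trivially. For injectivity: the kernel of $f(a\le b)|_{A^{ij}_a}$ is $A^{ij}_a\cap\Ker[a,b]$, and since $b\le j-1$ we have $\Ker[a,b]\su\Ker[a,j-1]\su\im[i-1,a]+\Ker[a,j-1]$, which meets $A^{ij}_a$ in $0$ by~\eqref{eq:emptycap1}; hence $f(a\le b)\colon A^{ij}_a\to f(a\le b)(A^{ij}_a)$ is an isomorphism onto its image. For directness: suppose $w\in A^{ij}_a$ with $f(a\le b)(w)\in\Lambda_b(X)$. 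Because $X\su\mathbf{(i-1)m}\cup\mathbf{m(j-1)}$ and $\Lambda_b$ is order-preserving, $\Lambda_b(X)\su\im[i-1,b]+\Ker[b,j-1]$; therefore $w\in f(a\le b)^{-1}\big(\im[i-1,b]+\Ker[b,j-1]\big)=\im[i-1,a]+\Ker[a,j-1]+\Ker[a,b]$ by Lemma~\ref{lemma:annoying_lemma_backwards}, and since $b\le j-1$ this equals $\im[i-1,a]+\Ker[a,j-1]$. Hence $w\in A^{ij}_a\cap\big(\im[i-1,a]+\Ker[a,j-1]\big)=0$ by~\eqref{eq:emptycap1}, so $\Lambda_b(X)\cap f(a\le b)(A^{ij}_a)=0$. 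Combining with the display, $\Lambda_b(Y)=\Lambda_b(X)\oplus f(a\le b)(A^{ij}_a)$, which is exactly the constraint~\eqref{eq:aij_constraints} with $b$ in place of $a$.

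I expect most of the work to be organizational: keeping straight which down-closed subsets of $\mathbf{mm}$ are ``small'' (contained in $\mathbf{(i-1)m}\cup\mathbf{m(j-1)}$) versus ``large'', and reading the images and preimages of $\Lambda_a(X)$, $\Lambda_a(Y)$ off the two diagrams correctly; the genuine algebraic content is already supplied by~\eqref{eq:aij_constraints},~\eqref{eq:emptycap1}, Theorem~\ref{thm:comm_diagram_forward}, and Lemma~\ref{lemma:annoying_lemma_backwards}. The one step that is an idea rather than bookkeeping is that directness must be verified by \emph{pulling} $\Lambda_b(X)$ \emph{back} along $f(a\le b)^{-1}$ --- a forward image of a direct sum need not remain a direct sum --- after which it lands inside a submodule small enough that~\eqref{eq:emptycap1} applies.
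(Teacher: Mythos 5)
Your proof is correct and takes a route that differs from the paper's in the last two steps. The opening is the same: use the forward diagram (Theorem~\ref{thm:comm_diagram_forward}) with $g(X)=X$, $g(Y)=Y$ to identify $\Lambda_b(X)=h(\Lambda_a(X))$ and $\Lambda_b(Y)=h(\Lambda_a(Y))$, then apply $h$ to $\Lambda_a(Y)=\Lambda_a(X)\oplus A^{ij}_a$ to get $\Lambda_b(Y)=\Lambda_b(X)+h(A^{ij}_a)$. From there the paper makes a single observation that yields both remaining claims at once: the kernel of $f(a\le b)|_{\Lambda_a(Y)}$ equals $\Lambda_a(\mathbf{mb}\cap Y)$ (since $\Lambda_a$ preserves meets), and $b<j$ forces $(i,j)\notin\mathbf{mb}$, hence $\mathbf{mb}\cap Y\su X$, so this kernel lies inside $\Lambda_a(X)$; directness of the image sum and injectivity on $A^{ij}_a$ both follow from that one containment by elementary module arithmetic, with no appeal to the backward diagram or to~\eqref{eq:emptycap1}. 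You instead prove the two conclusions independently: injectivity via $A^{ij}_a\cap\Ker[a,b]\su A^{ij}_a\cap(\im[i-1,a]+\Ker[a,j-1])=0$ by~\eqref{eq:emptycap1}, and directness by pulling $\Lambda_b(X)$ back with Lemma~\ref{lemma:annoying_lemma_backwards} and then invoking~\eqref{eq:emptycap1} again. Both routes are valid. Yours is more modular, reusing more of the developed machinery (both commutative diagrams plus Lemma~\ref{lem:emptycap}); the paper's is tighter --- only the forward diagram and one combinatorial fact about down-sets --- and makes transparent that directness and injectivity are both consequences of a kernel being nested inside a direct summand.
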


\begin{proof}
    Define 
    \begin{align*}
        X' &= \Lambda_a(X)\,,
        &
        Y' &= \Lambda_a(Y)\,,       
        \\
        X'' &= \Lambda_b(X)\,,        
        &
        Y'' &= \Lambda_b(Y) \, .                
    \end{align*}
Let $g$ be the map $S \mapsto S \cap \mathbf{am}$, and let $h$ be the direct image operator $H \mapsto f(a \le b)(H)$.    
We have $Y' = X' \oplus A^{ij}_a$ by hypothesis.
First, we wish to show that $Y'' = X'' \oplus h(A^{ij}_a)$.
\\    
Because $g(X) = X$ and $g(Y) = Y$, it follows from Theorem \ref{thm:comm_diagram_forward} that $h(X') = (h \circ \Lambda_a)(X) = (\Lambda_b \circ g)(X) =  X''$ and $h(Y')  = (h \circ \Lambda_a)(Y) = (\Lambda_b \circ g)(Y) = Y''$.  
Additionally, because the kernel of $f(a \le b)|_{Y'}$ equals $\Ker[a,b] \cap Y' = \Lambda_a(\mathbf{mb}) \cap \Lambda_a(Y) = \Lambda_a(\mathbf{mb} \cap Y)$, which is a submodule of $\Lambda_a(X) = X'$, we have 
$$
\textstyle
Y'' 
= 
f(a\le b)( Y') = f(a\le b)( X' \oplus A^{ij}_a) = h(X') \oplus h(A^{ij}_a)
=
X'' \oplus h(A^{ij})\,,
$$
as desired.  In addition, because the kernel of $f(a \le b)|_{Y'}$ is a submodule of $X'$, it follows that $f(a \le b)$ restricts to an isomorphism $A^{ij}_a \to h(A^{ij}_a)$.  The desired conclusion follows.
\qed \end{proof}

To prove Theorem \ref{thm:hard_direction_result}, we first introduce the notion of generalized interval modules, which we will show decompose as a direct sum of interval modules. 

\begin{definition}
    Let $R$ be a commutative ring. A \emph{generalized interval module} over $R$ is a persistence module $J^{ij}:\{0,1,\ldots,m\}\to R\textrm{-Mod}$ such that  $J^{ij}_a = 0$ for $a \notin [i,j)$ and $J^{ij}(a \le b):J^{ij}_a\to J^{ij}_b$ is an isomorphism whenever $a,b \in [i,j)$. 
\end{definition}

\begin{proposition}
\label{prop:generalized_interval_module}
    Let $J^{ij}$ be a generalized interval module. Then $J^{ij}$ decomposes as a direct sum of interval modules.  
\end{proposition}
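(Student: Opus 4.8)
The plan is to transport a single basis along the chain of isomorphisms that makes up $J^{ij}$ on the interval $[i,j)$. We may assume that $J^{ij}$ is pointwise free and finitely generated over the PID $R$; this is automatic in our application, where each $J^{ij}_a$ is a submodule $A^{ij}_a$ of the free, finitely generated $R$-module $f_a$, hence free and finitely generated itself. First I would dispose of the degenerate case: if $[i,j)$ is empty, then $J^{ij}$ is the zero module and equals the empty direct sum of interval modules, so assume $i<j$. On $[i,j)$ all the modules $J^{ij}_a$ are mutually isomorphic via the structure maps; let $n$ be their common rank.

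Next I would fix a basis $\beta_i=\{v_1,\dots,v_n\}$ of $J^{ij}_i$ and push it forward along the structure maps: for $i<a<j$ set $\beta_a:=J^{ij}(i\le a)(\beta_i)$, and for $a\notin[i,j)$ set $\beta_a:=\emptyset$. Since $J^{ij}(i\le a)$ is an isomorphism for each $a\in[i,j)$, the set $\beta_a$ is a basis of $J^{ij}_a$ (and $\emptyset$ is a basis of the zero module in the remaining cases).

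Then I would check that the family $\{\beta_a\}$ realizes an interval decomposition in the sense of Section~\ref{sec:persistence_modules}: for every $a\le b$, the map $J^{ij}(a\le b)$ must send each element of $\beta_a$ either to $0$ or injectively to an element of $\beta_b$. This splits into three cases. If $a\notin[i,j)$, then $\beta_a=\emptyset$ and there is nothing to verify. If $a\in[i,j)$ but $b\notin[i,j)$, then $b\ge j$, so $J^{ij}_b=0$ and every element of $\beta_a$ maps to $0$. If $a,b\in[i,j)$, then functoriality gives $J^{ij}(a\le b)\bigl(J^{ij}(i\le a)(v_k)\bigr)=J^{ij}(i\le b)(v_k)\in\beta_b$, and distinct $v_k$ have distinct images because $J^{ij}(i\le b)$ is injective; thus $J^{ij}(a\le b)$ carries $\beta_a$ bijectively onto $\beta_b$. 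Consequently the matrix of every structure map in these bases is an identity block or a zero block, hence a matching matrix, and reading off which basis vectors are live shows $J^{ij}\cong\bigoplus_{k=1}^{n}I_R^{i,j}$, the $k$-th summand being spanned at each $a\in[i,j)$ by $J^{ij}(i\le a)(v_k)$.

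There is no deep obstacle here; the argument is essentially bookkeeping with functoriality. The one point that genuinely matters is freeness of the $J^{ij}_a$ — without it an interval module, which has value $R$ at every live index, is not the correct target — but in our setting this holds automatically, since $J^{ij}_a$ embeds in the free, finitely generated module $f_a$ over a PID, and submodules of such modules are again free and finitely generated.
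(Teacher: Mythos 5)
Your proof is correct and follows essentially the same route as the paper's: fix a basis at index $i$, push it forward via the isomorphisms $J^{ij}(i\le a)$ to obtain $\beta_a$ for $a\in[i,j)$ and take $\beta_a=\emptyset$ otherwise, then verify by cases (using functoriality) that every structure map carries $\beta_a\setminus\ker$ bijectively into $\beta_b$. The one thing you add that the paper leaves implicit is the explicit remark that the $J^{ij}_a$ must be free and finitely generated for a basis to exist — a genuine gap in the stated definition, which the paper silently covers because in its application the $J^{ij}_a$ are the summands $A^{ij}_a\subseteq f_a$; your flagging of this is a minor improvement, not a different argument.
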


\begin{proof}
    Choose any basis $\beta_i$ of $J^{ij}_i$. For $a\notin[i,j)$, let $\beta_a = \emptyset$; for $a\in (i,j)$, let $\beta_a = J^{ij}(i\leq a)(\beta_i)$.  
	
	It suffices to show that for all $a$, we have that $\beta_a$ is a basis of $J_a^{ij}$ and that $J^{ij}(a\leq b)$ maps $\beta_a\setminus\ker(J^{ij}(a\leq b))$ injectively into $\beta_b$ for all $a \le b$.
	
	When $a$ or $b$ are outside $[i,j)$, this is trivial, as one of $\beta_a$ or $\beta_b$ must be empty. When $i\leq a\leq b<j$, we note that $J^{ij}(a\leq b)$ is an isomorphism.
	Therefore, each $\beta_a$ for $a\in[i,j)$ is a basis and $J^{ij}(a\leq b)$ is a bijection between $\beta_a$ and $\beta_b$, which satisfies our desired claim. 
\qed \end{proof}

We conclude this section by proving Theorem \ref{thm:hard_direction_result}, thus completing the proof of Theorem \ref{thm:problem_statement}.

\begin{proof}[Theorem \ref{thm:hard_direction_result}]
    It suffices to show that $f$ decomposes as a direct sum of generalized interval modules, which decompose into interval modules by Proposition \ref{prop:generalized_interval_module}.
	By Lemmas \ref{lemma:out_of_bounds} and \ref{lemma:in_bounds_isomorphism}, we can assume without loss of generality that
	\begin{align*}
	A^{ij}_a
	=
	\begin{cases}
	f(i \le a)(A^{ij}_i)\,, & a\in [i,j)\,, \\
	0\,, & \text{ otherwise}\,.
	\end{cases}
	\end{align*}
	Additionally, Lemma \ref{lemma:in_bounds_isomorphism} implies that $f(a \le b)|_{A^{ij}_a}: A^{ij}_a \to A^{ij}_b$ is an isomorphism whenever $a, b \in [i,j)$ and is the zero map otherwise.     
	It therefore follows that, for each $i$ and $j$, we have a persistence submodule $A^{ij}$ such that $(A^{ij})_{a}=A^{ij}_a$ and $A^{ij}(a\leq b) = f(a\leq b)|_{A^{ij}_a}$. 
	Thus, $A^{ij}$ is a generalized interval module.
	For each $a$, we have 
	\begin{align*}
	f_a &= \bigoplus_{ij}A^{ij}_a \hspace{1cm} \textrm{(by Theorem \ref{thm:decomposition_ij})}\\ 
	&=\bigoplus_{ij}(A^{ij})_a\,. 
	\end{align*}
	Additionally, for each $a$ and $b$ such that $a\leq b$, we have
	\begin{align*}
	f(a\leq b) &=  \bigoplus_{i,j}f(a\leq b)|_{A^{ij}_a}\\
	&= \bigoplus_{i,j}A^{ij}(a\leq b)\,.
	\end{align*}
	Therefore, $f$ is a direct sum of the generalized interval modules $A^{ij}$, as desired.
\qed \end{proof}

\section{Smith Normal Form}
\label{appendix:smith_normal_form}

The decomposition procedure in Algorithm \ref{alg:matrix} requires no special machinery except the Smith normal form.  In this section, we provide a cursory review of the relevant facts about the Smith normal form. See \cite{dummit_abstract_2004} for a detailed introduction

We say that a matrix $S$ with entries in a PID $R$ is \emph{unimodular} if there exists a matrix $S^{-1}$ with entries in $R$ such that $SS^{-1} = S^{-1}S$ is the identity.

A Smith-normal-form factorization of a matrix $A \in M_{m,n}(R)$ is an equation
$$
SAT = D\,,
$$
where $S$ and $T$ are unimodular and $D$ is a diagonal matrix of form $\mathrm{diag}(\alpha_1, \ldots, \alpha_r, 0 , \ldots 0)$ in which $\alpha_i$ divides $\alpha_{i+1}$ for all $i < r$. 
We refer to $\alpha_1,\ldots,\alpha_r$ as the \emph{elementary divisors} of $A$. 
We say that $A$ has \emph{unit elementary divisors} if its elementary divisors are all units. 

We refer to the column submatrix of $T$ that consists of the first $r$ columns as the \emph{positive} part of $T$; we denote it by $T^+$.  The submatrix that consists of the remaining $m-r$ columns is the \emph{negative} part of $T$; we denote it by $T^-$. These matrices are complementary in the sense of Definition \ref{def:matrixcomplements}.

\begin{definition}
\label{def:matrixcomplements}
    Let $A\in M_{p,q}(R)$, where $p\geq q$ and $A$ has unit elementary divisors. We say that $B\in M_{p,p-q}(R)$ is a \emph{complement} of $A$ if $[A|B]$ is unimodular.
\end{definition}

\begin{rmk}
    Let $A\in M_{m,n}(R)$, and let $SAT=D$ be a Smith-normal-form factorization. Then $AT^+$ has linearly-independent columns and $AT^-=0$. 
\end{rmk}

We similarly define the positive part of $S^{-1}$ to be the submatrix $(S^{-1})^+$ composed of the first $r$ columns and the negative part of $S^{-1}$ to be the submatrix $(S^{-1})^-$ composed of the last $m-r$ columns. 
These submatrices bear a special  relationship to the cokernel of $A$, as formalized in Proposition \ref{prop:snf_and_cokernels}.  
The proof consists of straightforward calculations in matrix algebra.

\begin{proposition}
\label{prop:snf_and_cokernels}
    Fix a matrix $A\in M_{m,n}(R)$, and let $SAT=D$ be a Smith-normal-form factorization of $A$. Then the following are equivalent.
    \begin{enumerate}
        \item The homomorphism represented by $A$ has a free cokernel.
        \item The diagonal entries of $D$ are units or $0$.
    \end{enumerate}
When these conditions hold, 
\begin{enumerate}
    \item The image of $A$ is the column space of  $(S^{-1})^+$. 
    \item The column space of $(S^{-1})^-$ is a complement of the image space of $A$.
\end{enumerate}
 
\end{proposition}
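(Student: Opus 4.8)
The plan is to transport the whole statement along the unimodular changes of basis $S$ and $T$ so that it becomes a transparent fact about the diagonal matrix $D$. Write $e_1,\dots,e_m$ for the standard basis of $R^m$, and let $\alpha_1,\dots,\alpha_r$ be the nonzero diagonal entries of $D$. Since $S$ and $T$ are unimodular, $A = S^{-1} D T^{-1}$ with $T^{-1}$ bijective, so $\im(A) = S^{-1}(\im D)$, and $\im D = \alpha_1 R e_1 + \dots + \alpha_r R e_r$. Because $S^{-1}$ is an isomorphism $R^m \to R^m$ carrying $\im D$ onto $\im A$, it descends to an isomorphism of cokernels
\[
\Coker(A) \;=\; R^m/\im A \;\cong\; R^m/\im D \;\cong\; \bigoplus_{i=1}^{r} R/\alpha_i R \;\oplus\; R^{m-r}.
\]

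For the equivalence of (1) and (2), I would read freeness off this decomposition. A direct sum is free if and only if each summand is; the $R^{m-r}$ part is always free, so $\Coker(A)$ is free iff each $R/\alpha_i R$ is free. If $\alpha_i$ is a unit, then $R/\alpha_i R = 0$. If $\alpha_i$ is a nonzero non-unit (the only remaining case, since the listed $\alpha_i$ are nonzero), then $1 + \alpha_i R$ is a nonzero element annihilated by $\alpha_i \ne 0$, so $R/\alpha_i R$ has torsion and hence is not free (free modules over a domain are torsion-free; cf.\ the remark preceding Lemma~\ref{lemma:complement_equivalence}). Therefore $\Coker(A)$ is free exactly when every $\alpha_i$ is a unit, which is precisely the assertion that every diagonal entry of $D$ is a unit or $0$.

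For the two supplementary claims, assume now that every $\alpha_i$ is a unit, so $\alpha_i R = R$ and $\im D = R e_1 + \dots + R e_r = \spann(e_1,\dots,e_r)$. Applying $S^{-1}$ gives $\im A = \spann(S^{-1}e_1,\dots,S^{-1}e_r)$, which is exactly the column space of $(S^{-1})^+$, proving the first claim. For the second, the columns $S^{-1}e_1,\dots,S^{-1}e_m$ of the unimodular matrix $S^{-1}$ form a basis of $R^m$; partitioning them into the first $r$ and the last $m-r$ yields
\[
R^m \;=\; \colspace((S^{-1})^+) \;\oplus\; \colspace((S^{-1})^-) \;=\; \im A \;\oplus\; \colspace((S^{-1})^-),
\]
so $\colspace((S^{-1})^-)$ is a complement of $\im A$.

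I do not anticipate a genuine obstacle here: the argument is bookkeeping with unimodular base change, using only that isomorphisms preserve images, cokernels, direct sums, and bases. The one point that needs a moment's care is the contrapositive half of $(1)\Rightarrow(2)$ — verifying that $R/\alpha R$ fails to be free whenever $\alpha$ is a nonzero non-unit — and this is exactly where the integral-domain (hence PID) hypothesis on $R$ enters, via the torsion obstruction.
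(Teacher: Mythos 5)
Your argument is correct, and it fills in precisely the ``straightforward calculations in matrix algebra'' the paper invokes without writing out (the paper gives no explicit proof of Proposition~\ref{prop:snf_and_cokernels}). Conjugating by the unimodular $S$, $T$ to reduce to the diagonal case, reading freeness of $\Coker(A)\cong\bigoplus_i R/\alpha_iR\oplus R^{m-r}$ off the elementary divisors, and identifying $\im A$ with $\colspace((S^{-1})^+)$ and its complement with $\colspace((S^{-1})^-)$ via the basis given by the columns of $S^{-1}$ is exactly the intended route.
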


The Smith normal form also has a useful relationship with nested kernels.  
Given a persistence module $f:\{0,\ldots,m\}\to R\textrm{-Mod}$, recall that $F_a$ is the matrix representation of the map $f(a \le a+1).$

\begin{proposition}
\label{prop:kernel_filtration_matrix}
	Given $a\in\{0,\ldots,m\}$, let $S_aF_aT_a$ be a Smith-normal-form factorization of $F_a$, and let $S_k$ and $T_k$ be inductively defined such that
	\begin{align}
	S_k \Big(F_k \cdots F_a T_a^+ \cdots T_{k-1}^+ \Big) T_k
	\label{eq:smothnormalrepeated}
	\end{align}
	is a Smith-normal-form factorization of the product matrix $F_k \cdots F_a T_a^+ \cdots T_{k-1}^+$ for all $k > a$.  If $X^k := T_a^+ \cdots T_{k-1}^+ T_k^-$, then the columns of $[X^{a+1} | \cdots | X^{k}]$ form a basis of the kernel of $f(a \le k)$ for all $k > a$.
\end{proposition}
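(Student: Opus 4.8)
The plan is to prove, by induction on $k$, a statement slightly stronger than the one requested, so that the inductive hypothesis also tracks a \emph{complement} of the kernel. Write $G_k$ for the matrix of $f(a\le k)$, so that $G_a = I$ and $G_k = F_{k-1}\cdots F_a$ for $k>a$, and let $Z_k$ denote the accumulated ``positive part'' $T_a^+T_{a+1}^+\cdots$ (with indexing aligned as in the statement). I would show inductively that $[X^{a+1}|\cdots|X^{k}|Z_k]$ is unimodular, that the columns of $[X^{a+1}|\cdots|X^{j}]$ span $\ker(f(a\le j))$ for every $j$ with $a<j\le k$, and that $G_kZ_k$ has linearly independent columns. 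The first assertion is (more than) the proposition; the last two are precisely the extra invariants that keep the induction running.

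The only facts needed are the two recorded earlier in this appendix — that $AT^-=0$ and $AT^+$ has linearly independent columns for any Smith-normal-form factorization $SAT=D$ — together with the elementary observation that, since $T=[T^+|T^-]$ is unimodular, $\ker A$ is \emph{exactly} the column span of $T^-$ (expand an arbitrary vector in the $T$-basis and use injectivity of $AT^+$ on its column space). The base case is then immediate from the factorization $S_aF_aT_a=D_a$ of $F_a$ itself: the columns of $T_a^-$ form a basis of $\ker(f(a\le a+1))=\ker F_a$, and $F_aT_a^+$ has independent columns, so one starts with first kernel block $T_a^-$ and $Z=T_a^+$.

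For the inductive step I would apply a Smith-normal-form factorization $S\,(G_{k+1}Z_k)\,T=D$ to the product $G_{k+1}Z_k=F_kG_kZ_k$, which is exactly the product matrix factored in \eqref{eq:smothnormalrepeated}, and set $X^{k+1}:=Z_kT^-$ and $Z_{k+1}:=Z_kT^+$. Then $G_{k+1}X^{k+1}=(G_{k+1}Z_k)T^-=0$, and since $\ker(f(a\le k))\subseteq\ker(f(a\le k+1))$ the inclusion $\colspace([X^{a+1}|\cdots|X^{k+1}])\subseteq\ker(f(a\le k+1))$ is clear. For the reverse inclusion, take $v\in\ker(f(a\le k+1))$, expand it in the unimodular basis $[X^{a+1}|\cdots|X^{k}|Z_k]$, note that its $X$-part already lies in $\ker(f(a\le k))$, deduce that the $Z_k$-component $w$ satisfies $(G_{k+1}Z_k)w=0$, hence $w\in\colspace(T^-)$, hence $Z_kw=X^{k+1}w'$ for some $w'$; thus $v\in\colspace([X^{a+1}|\cdots|X^{k+1}])$. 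Unimodularity of $[X^{a+1}|\cdots|X^{k+1}|Z_{k+1}]$ then follows because it is the old matrix with the $Z_k$-block right-multiplied by the column permutation $[T^-|T^+]$ of $T$, and $G_{k+1}Z_{k+1}=(G_{k+1}Z_k)T^+$ has independent columns by the second Smith-normal-form fact. The part requiring genuine care — and really the only obstacle — is the index bookkeeping: matching the abstract $Z_k$ and $X^{k+1}$ above with the explicit products $T_a^+\cdots T_{k-1}^+T_k^-$ of the statement and aligning the base step so that the very first kernel block is $\ker F_a$. Everything else is a direct unwinding of the two Smith-normal-form facts.
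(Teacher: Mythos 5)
Your proof is correct, and it takes a genuinely different route from the paper's. Both arguments track the accumulated ``positive'' block $Z_k = T_a^+ \cdots T_{k-1}^+$ as a complement to the kernel blocks, but the paper establishes the result in two separate passes: first it writes the full matrix $[X^m|\cdots|X^{a+1}]$ as a product $T_a \cdot \diag(T_{a+1},\id)\cdots\diag(T_m,\id)$ of unimodular matrices to show linear independence, and then it shows spanning indirectly by observing that the leftover block $R_k$ has linearly independent columns, counting ranks to get $\dim(L)=\dim(K)$, and finally invoking a torsion-freeness criterion (Lemma~\ref{lemma:complement_equivalence}) to upgrade rank equality to module equality. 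You instead carry the unimodularity of $[X^{a+1}|\cdots|X^k|Z_k]$ as an inductive invariant and prove spanning by a direct two-sided inclusion: given $v\in\ker(f(a\le k+1))$, you expand $v$ in the unimodular basis, observe $G_{k+1}$ kills the $X$-part outright, conclude the $Z_k$-coefficient lies in $\ker(G_{k+1}Z_k) = \colspace(T^-)$, and hence $v$ already lies in the span of $[X^{a+1}|\cdots|X^{k+1}]$. Your approach is more elementary --- it never needs the rank-counting step or the ``same-rank free submodule plus torsion-free quotient implies equality'' argument --- and it localizes each step to a single Smith-normal-form fact. Your explicit flag about index bookkeeping is warranted: the paper's own proof shows that $[X^{a+1}|\cdots|X^k]$ lies in $\ker(F_k\cdots F_a)=\ker(f(a\le k+1))$, so the block that spans $\ker F_a=\ker(f(a\le a+1))$ should really be $T_a^-$, exactly as your base case has it; with that alignment fixed, your induction goes through unchanged.
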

\begin{proof}
    For any $k$, we have that $F_k \cdots F_a \cdot [X^{k} | \cdots | X^{a+1}] = 0$. This is because, by construction, $F_r\cdots F_aX_r = F_r \cdots F_a T_a^+ \cdots T_{r-1}^+ T_r^-=0$ for all $r$ satisfying $a < r\leq k$. This implies that the columns of $[X^k|\cdots|X^{a+1}]$ are in the kernel of $F_k\cdots F_a$. 

    \begin{figure}
    \centering
    \includegraphics[width=0.8\columnwidth,]{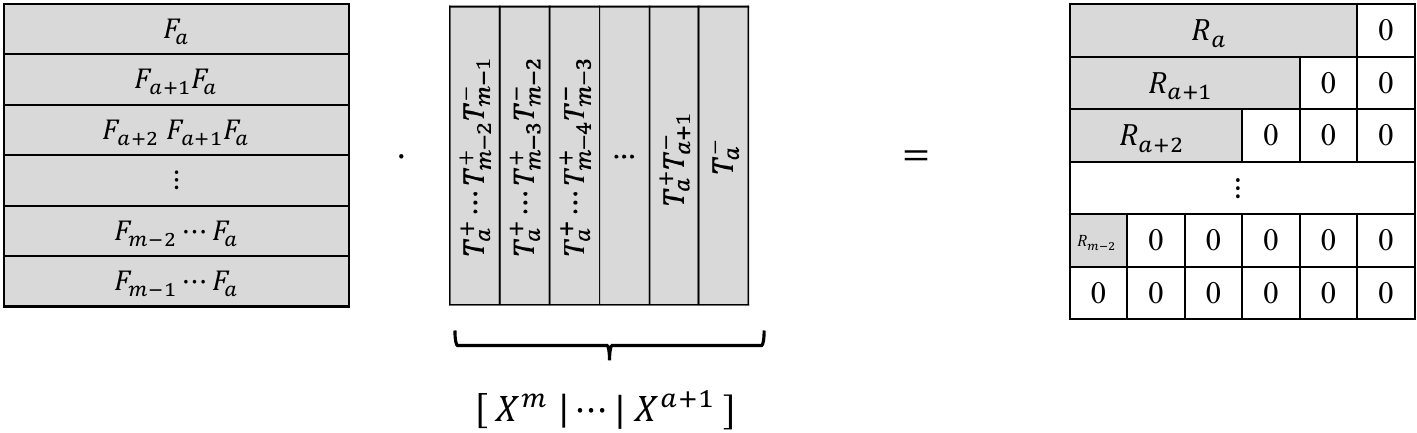}
    \caption{A product of matrices with block structure. The matrix $R_k$ has linearly independent columns and can be expressed in the form \eqref{eq:defineRk}. Observe that $R_{m-1}=0$ because $F_{m-1}=0$.}
    \label{fig:filtered_kernel_basis}
\end{figure}
    
    A straightforward calculation yields\footnote{Observe that $T_{m-1}^-=T_{m-1}$ because $F_{m-1}$ is the zero map.}
    \begin{align}
    [X^{m} | \cdots | X^{a+1}] = T_a \cdot \diag(T_{a+1}, \id) \cdots \diag(T_m, \id)\,,
    \label{eq:productofdiagonals}
    \end{align}
    where $\diag(T_{k}, \id)$ denotes the block-diagonal matrix with $T_k$ in the upper-left corner and an identity matrix of appropriate size in the lower-right corner to form a matrix of the same size as $T_a$.
    Therefore, $[X^{m} | \cdots | X^{a+1}]$ is unimodular because it is a product of unimodular matrices. 
    Consequently, for any $k$, the matrix $[X^{k} | \cdots | X^{a+1}]$ has linearly independent columns.

    It remains to show that the columns of $[X^{k} | \cdots | X^{a+1}]$ span the kernel of $F_k\cdots F_a$.
    Let 
    \[
    \tilde{F}=\begin{bmatrix}
        F_a \\ F_{a+1}F_a \\ \vdots \\ F_{m-2}\cdots F_a \\ F_{m-1}\cdots F_a
    \end{bmatrix}
    \]
    and consider the product $\tilde{F}[X^{m} | \cdots | X^{a+1}]$, which is of the form in Figure \ref{fig:filtered_kernel_basis}.  
    It remains to show that the columns of $[X^{k} | \cdots | X^{a+1}]$ span the kernel of $F_k\cdots F_a$.
	Let 
	\[
	\tilde{F}=\begin{bmatrix}
	F_a \\ F_{a+1}F_a \\ \vdots \\ F_{m-2}\cdots F_a \\ F_{m-1}\cdots F_a
	\end{bmatrix}
	\]
	and consider the product $\tilde{F}[X^{m} | \cdots | X^{a+1}]$, which is of the form in Figure \ref{fig:filtered_kernel_basis}.  
	
	For each $k$, we can write 
	\begin{align}
	R_k=F_k\cdots F_aT_a^+\cdots T_k^+[\; T_{k+1}^+\cdots T_{m-2}^+T_{m-1}^- \;| \;  T_{k+1}^+\cdots T_{m-3}^+ T_{m-2}^-  \;|\;  \cdots  \;|\;  T_{k+1}^- \;]\,.
	\label{eq:defineRk}
	\end{align}
	This implies that the columns of $[X^{k} | \cdots | X^{a+1}]$ span a submodule $L$ of the kernel (which we denote $K$) of $F_k\cdots F_a$.
	Because both matrices $F_k\cdots F_aT_a^+\cdots T_k^+$ and \newline $[\; T_{k+1}^+\cdots T_{m-2}^+T_{m-1}^- \;| \;  T_{k+1}^+\cdots T_{m-3}^+ T_{m-2}^-  \;|\;  \cdots  \;|\;  T_{k+1}^- \;]$ have linearly independent columns, the submatrix $R_k$ also has linearly independent columns.
	Therefore, by counting dimensions, $\dim(L) = \dim(K)$. 
	Therefore, $L = K$ if and only if $K/L$ is torsion-free.  
	Because $[X^{k} | \cdots | X^{a+1}]$ admits a complement --- namely, $[X^m | \cdots | X^{k-1}]$ --- we see that $c \cdot v \in L$ implies that $v \in L$ for every nonzero scalar $c \in R$. 
	Therefore, $K/L$ is torsion-free, so $K=L$. 
	Because the columns of $[X^{k} | \cdots | X^{a+1}]$ are linearly independent, they form a basis of $K$.
    \qed \end{proof}

\section{The standard algorithm: A fast solution for simplex-wise persistent homology}
\label{sec:standardalgorithm}

Obayashi and Yoshiwaki \cite{obayashi_field_2023} provided an algorithm to determine whether $f$ splits as a direct sum of interval modules and, if so, to construct an explicit decomposition. This result was not formally refereed; however their proof appears to be correct and we provide an independent argument in Theorem \ref{thm:StandardAlgorithmGeneralized}.

Here, we provide background information on the algorithm of \cite{obayashi_field_2023}, an extension to general filtered chain complexes, an extension to integer cohomology, and a brief discussion of limitations and failure cases. The algorithm consists of performing a standard algorithm to compute persistent homology for simplex-wise filtrations, with an additional break condition. After a brief comment on time complexity, our first step will be to review the procedure and rephrase the result (Section \ref{sec:standardalgorithmREVIEW}).

\subsection{Time complexity}

The algorithm of \cite{obayashi_field_2023} applies only to persistence modules realized via integer persistent homology of simplicial complexes with simplex-wise filtrations. It does not obviously generalize to arbitrary pointwise free and finitely-generated persistence module over a PID (see Section \ref{sec:obayashi_limitations}). However, in the cases where this result pertains,  the complexity bound of \cite{obayashi_field_2023} is much better than the one afforded by Algorithm \ref{alg:matrix}.
In particular, the algorithm of \cite{obayashi_field_2023} runs in the time it takes to perform Gaussian elimination on the boundary matrix of the filtered simplicial complex (with field coefficients). 
By contrast, to solve the same problem with Algorithm \ref{alg:matrix} one would have to compute the integer homology groups of each space in the filtration as a preprocessing step, then perform a variety of additional matrix operations. A single one of these homology computations may take time comparable to the complete algorithm of \cite{obayashi_field_2023}. 
See Theorems \ref{thm:obayashi_complexity} and \ref{thm:comparative_freeness_complexity} for specifics.

\subsection{The standard algorithm}
\label{sec:standardalgorithmREVIEW}

The standard algorithm (Algorithm \ref{alg:standard}) is a constrained form of Gauss-Jordan elimination commonly used to compute persistent homology \cite{zomorodian_computing_2005, edelsbrunner_topological_2002}. 
The input to this algorithm is an $m \times m$ matrix $D$, and the output is a pair of matrices $R$ and $V$ such that $V$ is upper unitriangular, $R=DV$, and $R$ is reduced in the following sense. 

\begin{definition}
    Let $R$ be an $m \times m$ matrix, and define $\Col_j(R)$ as the $j$th column of $R$, and
    $$
    \low_j(R)
    = 
    \begin{cases}
        \max \{ i : R_{ij} \neq 0 \} & \Col_j(R) \neq 0 \\
        0 & \textrm{otherwise}.
    \end{cases}
    $$
    We call $R$ \emph{reduced} \cite{cohen2006vines} if $\low_i(R) \neq \low_j(R)$ whenever $i$ and $j$ are distinct nonzero columns of $R$.
    If $\Col_j(R)$ is nonzero then we call $R_{\low_j(R), j}$ the \emph{trailing coefficient} of column $j$.
\end{definition}

\begin{algorithm}[H]
 \caption{ Standard algorithm for persistent homology. }
\label{alg:standard}
\begin{algorithmic}[1]
\REQUIRE An $m \times m$ matrix $D$ with field coefficients
\ENSURE A reduced matrix $R$ and an upper unitrianuglar matrix $V$ such that $R = DV$
\STATE $R \gets D$
\STATE $V \gets I$, the $m \times m$ identity matrix
\FOR{$j = 1,\ldots,m$}
\STATE $i \gets \low_R(j) $
\WHILE{there exists $i > 0$ and $j' < j$ such that $\low_R(j') = i = \low_R(j)$}
\STATE $\Col_j(R) \gets \Col_j(R) - \frac{R_{ij}}{R_{ij'}} \Col_{j'}(R)$
\STATE $\Col_j(V) \gets \Col_j(V) - \frac{R_{ij}}{R_{ij'}} \Col_{j'}(V)$
\ENDWHILE
\ENDFOR
\RETURN $R, V$
\end{algorithmic}
\end{algorithm}

Now let $D$ be the boundary matrix for a simplex-wise filtration $\mathcal{K}$ on a simplicial complex $K = \{\sigma_1, \ldots, \sigma_{m}\}$ such that $\mathcal{K}_p = \{ \sigma_1, \ldots, \sigma_p\}$ for all $0 \le p \le m$. Assume the rows and columns of $D$ are arranged in sorted order, according to simplex, i.e. $\partial \sigma_j = \sum_i D_{ij} \sigma_i$.  Regard $D$ as a matrix with rational coefficients, and let $R=DV$ be the decomposition returned by the standard algorithm.

\begin{theorem}[Obayashi and Yoshiwaki \cite{obayashi_field_2023}]
    \label{thm:yoshiwakiAlgorithmCorrect}
 If $\mathcal{K}$ is a simplex-wise filtration on a simplicial complex then the persistent homology module $H_*(\mathcal{K}; \Z) = \bigoplus_d H_d(\mathcal{K}; \Z)$ splits as a direct sum of interval modules of free abelian groups if the trailing coefficient of every nonzero column of $R$ is a unit. In this case, one can obtain a cycle representative for each summand from the columns of $V$ and $R$.
\end{theorem}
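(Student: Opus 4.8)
The plan is to run the standard algorithm over $\Q$ to obtain $R=DV$ with $V$ upper unitriangular, and then, for each index $p$ and each degree $k$, to exhibit a $\Z$-basis of $H_k(\mathcal{K}_p;\Z)$ built from the columns of $R$ and $V$ with respect to which every inclusion-induced map carries basis elements to basis elements or to zero. By the characterization of interval decompositions as choices of bases which ``simultaneously diagonalize'' all structure maps (Section~\ref{sec:persistence_modules}), this gives the decomposition of $H_k(\mathcal{K};\Z)$, and summing over $k$ gives the statement for $H_*(\mathcal{K};\Z)$. The first step is to promote $R$ and $V$ to integer matrices. Each column operation of the algorithm subtracts the multiple $R_{ij}/R_{ij'}$ of a column $\Col_{j'}(R)$ with $j'<j$ that has already been finalized, and $R_{ij'}$ is the trailing coefficient of that column. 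Because a column is never modified once finalized, $R_{ij'}$ equals the trailing coefficient of a nonzero column of the \emph{final} matrix $R$, hence a unit by hypothesis; since $D$ is integral, an induction over the algorithm shows that $R$ and $V$ stay integral, so $V\in GL_m(\Z)$ and $R=DV$ holds over $\Z$. One also records that the algorithm only ever combines columns indexed by simplices of the same dimension, so $V$ and $R$ respect the grading.

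Fix $p$ and $k$. The columns $v_i:=\Col_i(V)$ with $i\le p$ and $\dim\sigma_i=k$ form a $\Z$-basis of $C_k(\mathcal{K}_p;\Z)$, since the corresponding block of $V$ is upper unitriangular over $\Z$, and $\partial v_i=\Col_i(R)$. Using this and the fact that the nonzero columns of $R$ have pairwise distinct lows, the cycle group $Z_k(\mathcal{K}_p;\Z)$ is freely generated by the $v_i$ with $\Col_i(R)=0$ (the positive $k$-simplices of $\mathcal{K}_p$), and the boundary group $B_k(\mathcal{K}_p;\Z)$ is generated by the vectors $\Col_j(R)$ over the negative $(k+1)$-simplices $j\le p$. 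The hypothesis now enters a second time: for such a $j$, expanding $\Col_j(R)$ in the basis $\{v_i\}$ yields $\Col_j(R)=\pm v_{i_j}+(\text{a }\Z\text{-combination of the }v_{i'}\text{ with }i'<i_j)$, where $i_j=\low_j(R)$, and the $i_j$ are distinct. Hence replacing each $v_{i_j}$ by $\Col_j(R)$ is a unimodular change of basis of $Z_k(\mathcal{K}_p;\Z)$, and since these vectors generate $B_k(\mathcal{K}_p;\Z)$ we conclude that $H_k(\mathcal{K}_p;\Z)$ is free, with a basis indexed by the positive $k$-simplices of $\mathcal{K}_p$ that are not the low of any negative $(k+1)$-simplex of $\mathcal{K}_p$, i.e.\ by the degree-$k$ persistence bars alive at $p$. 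A final triangular change of basis replaces, for each finite bar $(i_j,j)$ alive at $p$, the class $[v_{i_j}]$ by $[\Col_j(R)]$, leaving the essential classes $[v_i]$ in place; call the resulting basis $\mathcal{B}_{p,k}$.

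It remains to check compatibility with the structure maps. The inclusion $\mathcal{K}_p\hookrightarrow\mathcal{K}_{p+1}$ sends a cycle to itself, so it sends $[\Col_i(V)]\mapsto[\Col_i(V)]$ for each essential $i$, and $[\Col_j(R)]\mapsto[\Col_j(R)]$ for each finite bar $(i_j,j)$ alive at $p$; the latter lies in $\mathcal{B}_{p+1,k}$ when $p+1<j$ and equals $0$ when $p+1=j$, since then $\Col_j(R)=\partial\,\Col_j(V)$ is a boundary in $\mathcal{K}_{p+1}$. Distinct surviving basis elements map to distinct basis elements, so each structure map $H_k(\mathcal{K}_p;\Z)\to H_k(\mathcal{K}_{p+1};\Z)$ carries $\mathcal{B}_{p,k}$ into $\mathcal{B}_{p+1,k}\cup\{0\}$, injectively away from $0$; composing maps, the same holds for every $\phi_a^b$. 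This is precisely what is needed for an interval decomposition into free summands, and the representative of the finite bar $(i_j,j)$ is $\Col_j(R)$ while that of an essential bar born at $i$ is $\Col_i(V)$, as claimed.

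I expect the main difficulty to be organizational rather than conceptual. One delicate point is deducing integrality of $V$ from a hypothesis phrased only on the final $R$, which relies on the observation that the pivot (trailing coefficient) of a column is never altered by subsequent reduction. A second is carrying out the two successive triangular changes of basis carefully enough that the representatives come out to be precisely the columns of $R$ and $V$; this is genuinely necessary, since already for the filtration of two vertices followed by the connecting edge the ``obvious'' basis $\{[\Col_i(V)]:\Col_i(R)=0\}$ fails to be compatible with the structure maps, and one must use $[\Col_j(R)]$ in place of $[\Col_{i_j}(V)]$ for a finite bar.
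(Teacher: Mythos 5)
Your proof is correct and follows essentially the same route as the paper's own independent argument (Theorem~\ref{thm:StandardAlgorithmGeneralized}): assemble from the columns of $R$ and $V$ an upper unitriangular integer change of basis $J$ (your two triangular basis changes reproduce exactly this matrix) that puts $D$ into a unit-coefficient matching form compatible with the filtration, and then read off the interval decomposition. The only material difference is that the paper imports the structural facts about $J$ and its integrality from the U-match decomposition of \cite{hang2021umatch} (and defers the theorem as stated to Obayashi--Yoshiwaki's preprint), whereas you re-derive them from scratch --- in particular your observation that a finalized pivot of $R$ is never subsequently modified, so integrality of $V$ and $R$ propagates from the hypothesis on the final $R$, is precisely the inductive step the paper alludes to in Lemma~\ref{lem:umatch}, assertion~4.
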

\begin{proof}
    The proof is provided in \cite[p. 25, Condition 3]{obayashi2020field}; note that this is a preprint version of \cite{obayashi_field_2023}.
    The authors construct a decomposition explicitly by obtaining a basis for the space of chains $C(\mathcal{K}_m)$, denoted $\tilde \sigma_1, \ldots, \tilde \sigma_m$, which satisfies a certain compatibility criterion \cite[p. 25, Condition 3]{obayashi2020field} with the filtration and with the boundary operator $D$. One can then extract an appropriate cycle basis (c.f. Section \ref{sec:universal_cycle_reps}) from this larger basis in a manner consistent with \cite[Theorem 2.6]{de2011dualities}. Essentially, thanks to the compatibility conditions, each $\tilde \sigma_i$ either creates or destroys a persistent homology class, and the interval over which that homology class remains nontrivial can be determined by the birth time of the corresponding basis vector(s).
\qed \end{proof}

\begin{theorem}
    \label{thm:standardalgorithmcomplexity}
     The standard algorithm has worst-case $O(m^3)$ complexity.
\end{theorem}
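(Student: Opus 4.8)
The plan is to bound the cost of Algorithm \ref{alg:standard} by a straightforward triple count: the number of outer-loop iterations, the number of inner (\textsc{while}-loop) iterations per outer iteration, and the arithmetic cost of a single inner iteration.

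First I would observe that the outer loop over $j = 1,\ldots,m$ executes exactly $m$ times, and that the preprocessing (copying $D$ into $R$ and initializing $V$ to the identity) costs $O(m^2)$ field operations, which is absorbed into the final bound.

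Next I would bound the inner \textsc{while} loop. Fix a column index $j$ and consider one pass through the loop body: we have $i = \low_j(R) = \low_{j'}(R)$ for some $j' < j$, and we replace $\Col_j(R)$ by $\Col_j(R) - \tfrac{R_{ij}}{R_{ij'}}\Col_{j'}(R)$. Because $i = \low_{j'}(R)$, every entry of $\Col_{j'}(R)$ in a row strictly below $i$ is zero; hence the update zeroes out the entry in row $i$ of column $j$ while creating no nonzero entry in any lower row. Consequently either $\Col_j(R)$ becomes zero (so $\low_j(R)$ drops to $0$) or $\low_j(R)$ strictly decreases. Since $\low_j(R)$ is a non-negative integer bounded above by $m$, the \textsc{while} loop runs at most $m$ times for each $j$. (Locating a column $j' < j$ with $\low_{j'}(R) = i$ can be done in $O(m)$ time by a linear scan, or in $O(1)$ amortized time by maintaining an auxiliary array indexed by row; either choice is within budget.)

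Finally, each inner iteration performs two column updates — one on $R$ and one on $V$ — each of which is a scalar-multiply-and-subtract on vectors of length $m$, hence $O(m)$ field operations; together with the search for $j'$ this is still $O(m)$. Multiplying the three counts gives $m \cdot m \cdot O(m) = O(m^3)$, which establishes the claimed worst-case bound. The only step requiring genuine (if mild) care is the strict-decrease argument for $\low_j(R)$ in the inner loop, which guarantees termination after at most $m$ column operations; everything else is bookkeeping.
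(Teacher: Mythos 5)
Your proof is correct and follows essentially the same route as the paper's: bound the number of \textsc{while}-loop iterations for a fixed $j$ by $O(m)$, observe that each iteration costs $O(m)$ field operations on columns of $R$ and $V$, and sum over the $m$ columns. The paper states the per-column bound as ``at most $j-1$ column additions'' while you justify it via the strict decrease of $\low_j(R)$; both are standard observations and lead to the same $O(m^3)$ bound.
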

\begin{proof}
    This result is well known (e.g., see \cite{obayashi_field_2023}) and simple to verify. If $z$ is the $j$th column of $D$ then we perform at most $j-1$ column additions to eliminate trailing entries in $z$, each addition consuming $O(m)$ algebraic operations. Summing over $j$ yields a total which is $O(m^3)$.
\qed \end{proof}

\subsection{General chain complexes}
\label{sec:generalChainComplexes}

We provide a new proof of Theorem \ref{thm:yoshiwakiAlgorithmCorrect}, and extend this result to a broader class of chain complexes. 

\begin{definition}
    A \emph{standard-form chain complex}\footnote{The term standard-form chain complex first appeared in \cite{hang2021umatch}.} is a chain complex $\Chains = \bigoplus_d \Chains_d$ such that $\Chains = \Z^{m \times 1}$ is the space of length-$m$ column vectors for some nonnegative integer $m$, and the following conditions hold.
    \begin{enumerate}
        \item For all $1 \le p \le m$, the $p$th standard unit vector, denoted  $\sigma_p = (0, \ldots, 1, \ldots, 0)$, is homogeneous. Concretely, this means that $\sigma_p \in \Chains_d$ for some $d$. This condition implies that each subgroup $\Chains_d$ is freely generated by a subset of the standard unit vectors.
        \item For each $0 \le p \le m$, the subspace $\mathcal{K}_p = \spann \{ \sigma_1, \ldots, \sigma_p\}$ is a sub-chain complex of $\Chains.$
    \end{enumerate}    
We call the filtration $\mathcal{K}=\{\mathcal{K}_p : 0 \le p \le m\}$ the \emph{canonical filtration} on $\Chains$.
\end{definition}

It is straightforward to transform any simplex-wise filtration on a simplicial complex into the canonical filtration on a standard-form chain complex, by representing the $p$th simplex in the filtration as the $p$th standard unit vector. Therefore this setting is slightly more general than the one addressed in Theorem \ref{thm:yoshiwakiAlgorithmCorrect}. 

Given a standard-form chain complex, we can regard the differential operator $D: \Chains \to \Chains$ as an $m \times m$ matrix with rational (in fact, integer) coefficients.
Let $R=DV$ be the decomposition produced by the standard algorithm, and let $J \in \mathbb{Q}^{m \times m}$ be the matrix such that 
    $$
    \Col_j(J) 
    =
    \begin{cases}
        \Col_r(R) \cdot \frac{1}{R_{\low_R(r),r}} & \low_r(R) = j \text{ for some (unique) positive index } r \\
        \Col_j(V) & \text{otherwise}\,.
    \end{cases}
    $$

\begin{lemma} {\; }
    \label{lem:umatch}
    \begin{enumerate}
        \item The matrix $J$ is upper unitriangular (upper triangular with diagonal entries $1$).
        \item The matrix  $M: = J^{-1} D J$ is a generalized matching matrix (i.e., each row and each column of $M$ contains at most one nonzero entry). 
        \item We have $M_{ij}=R_{ij}$ if $R_{ij}$ is the trailing coefficient of $\Col_j(R)$, and $M_{ij} =0$ otherwise.
        \item The matrices $V$, $R$, and $J$ have coefficients in $\Z$ if the trailing coefficient of every nonzero column of $R$ is a unit.
    \end{enumerate}
\end{lemma}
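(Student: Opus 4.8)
The plan is to analyze $J$ one column at a time, using only the identity $R = DV$, the fact that $V$ is upper unitriangular, that $R$ is reduced, and that $D$ is a differential (so $D^2 = 0$). Call an index $j$ a \emph{pivot column} if $\Col_j(R) \ne 0$ and a \emph{pivot row} if $j = \low_r(R)$ for some (necessarily pivot) column $r$; since $R$ is reduced, $r \mapsto \low_r(R)$ is injective on pivot columns, so each $j$ is a pivot row for at most one $r$ and the definition of $\Col_j(J)$ is unambiguous. For part (1): if $j$ is not a pivot row then $\Col_j(J) = \Col_j(V)$, which has a $1$ in row $j$ and zeros in every row below; if $j = \low_r(R)$ is a pivot row then $\Col_j(J) = \Col_r(R)/R_{jr}$ has $j$-th entry $R_{jr}/R_{jr} = 1$ and $i$-th entry $R_{ir}/R_{jr} = 0$ for all $i > j$ (since $\low_r(R) = j$). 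Hence $J$ is upper unitriangular; in particular it is invertible, so its columns are linearly independent.

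For parts (2) and (3) I would compute $\Col_j(DJ) = D\,\Col_j(J)$ in three cases and set $\Col_j(M) = J^{-1}\,\Col_j(DJ)$. If $j = \low_r(R)$ is a pivot row, then $D\,\Col_j(J) = R_{jr}^{-1} D\,\Col_r(R) = R_{jr}^{-1} D^2\,\Col_r(V) = 0$, so $\Col_j(M) = 0$. If $j$ is not a pivot row and $\Col_j(R) = 0$, then $D\,\Col_j(J) = D\,\Col_j(V) = \Col_j(R) = 0$, so again $\Col_j(M) = 0$. If $j$ is not a pivot row and $\Col_j(R) \ne 0$, put $i = \low_j(R)$; then $i$ is a pivot row (witnessed by $r = j$), so $\Col_i(J) = \Col_j(R)/R_{ij}$ by definition, and $D\,\Col_j(J) = \Col_j(R) = R_{ij}\,\Col_i(J)$, which makes $\Col_j(M)$ have its unique nonzero entry, equal to $R_{ij}$, in row $i$. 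So every column of $M$ has at most one nonzero entry; and if $M_{ij}$ and $M_{ij'}$ were both nonzero with $j \ne j'$, then $j$ and $j'$ would be distinct pivot columns with $\low_j(R) = i = \low_{j'}(R)$, contradicting that $R$ is reduced --- hence every row of $M$ also has at most one nonzero entry, which gives (2). The case analysis shows $M_{ij} = R_{ij}$ exactly when $j$ is \emph{not} a pivot row, $\Col_j(R) \ne 0$, and $i = \low_j(R)$, and $M_{ij} = 0$ otherwise; to match the statement of (3) it remains to show that no pivot column is a pivot row, equivalently $\Col_j(R) = 0$ whenever $j$ is a pivot row. I expect this to be the main obstacle, as it is the one place $D^2 = 0$ does essential work. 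The argument I have in mind: if $j = \low_r(R)$ with $\Col_r(R) = \sum_{l \le j} c_l \sigma_l$ and $c_j \ne 0$, then $0 = D\,\Col_r(R)$ forces $D\sigma_j \in D(\mathcal{K}_{j-1})$, hence, writing $\Col_j(V) = \sigma_j + \sum_{l < j} V_{lj}\sigma_l$, also $\Col_j(R) = D\,\Col_j(V) \in D(\mathcal{K}_{j-1})$; but $\{\Col_k(V) : k < j\}$ is a basis of $\mathcal{K}_{j-1}$, so $D(\mathcal{K}_{j-1}) = \spann\{\Col_k(R) : k < j\}$, and a nonzero $\Col_j(R)$ in this span would contradict the linear independence of the nonzero columns of a reduced matrix.

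For part (4) I would trace the standard algorithm, processing columns $1, \dots, m$ from left to right, and argue by induction on $j$ that columns $1, \dots, j$ of $R$ and of $V$ are integral. The key point is that while column $j$ is being reduced, every earlier column $j' < j$ that is used as a pivot is already finalized and is a nonzero column of the output $R$, so by hypothesis its trailing coefficient $R_{\low_{j'}(R),\, j'}$ is a unit of $\Z$, i.e.\ $\pm 1$; therefore the multiplier $R_{ij}/R_{ij'}$ appearing in the column operation equals $\pm R_{ij} \in \Z$. Since the algorithm starts from the integer matrix $D$ (with $V = I$) and performs only integer column operations, $R$ and $V$ remain integral, and $J$ is integral because each of its columns is either a column of $V$ or $\pm 1$ times a column of $R$. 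This completes the plan.
\qed
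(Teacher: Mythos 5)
Your proposal is correct, but it takes a genuinely different route from the paper. The paper outsources parts (1)--(3) entirely to Hang et al.\ \cite[Theorem 9, Assertion 2]{hang2021umatch}, which asserts that $(J,M,D,J)$ is a U-match decomposition with the stated matching matrix; the only thing the paper proves directly is the integrality claim (4), and there your induction on column index is essentially identical to the paper's. You instead verify (1)--(3) from scratch using only $R=DV$, the upper unitriangularity of $V$, the reducedness of $R$, and $D^2=0$. The payoff of your approach is self-containment: in particular you expose the one place where $D^2=0$ does real work, namely the auxiliary fact that no pivot row is a pivot column (i.e.\ $\Col_j(R)=0$ whenever $j=\low_r(R)$ for a nonzero column $r$), which you obtain by observing that $\Col_r(R)$ being a cycle forces $D\sigma_j\in D(\mathcal{K}_{j-1})$, hence $\Col_j(R)\in\spann\{\Col_k(R):k<j\}$, which would violate the linear independence of the nonzero columns of a reduced matrix. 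This fact is buried inside the cited U-match theorem and is precisely what reconciles your three-way case analysis of $\Col_j(M)$ with the statement of (3). The tradeoff is length: the paper's citation keeps the proof to a few lines, whereas your version requires reconstructing a fair portion of the U-match machinery --- but the argument you give is complete, elementary, and correct.
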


\begin{proof}
    Hang et al. \cite[Theorem 9, Assertion 2]{hang2021umatch}  show that the matrix $J$ fits into a U-match decomposition, given by $(J, M, D, J)$. That is, $J$ is upper unitriangular, $M$ is a generalized matching matrix, and $JM = DJ$ (hence, $M = J^{-1}DJ$). 
    They also show that $M_{ij} = R_{ij}$ if $R_{ij}$ is the trailing coefficient of $\Col_j(R)$, and $M_{ij} =0$ otherwise. 
    Now suppose that the trailing coefficient of every nonzero column of $R$ is a unit.  Then, by a simple induction,  each iteration of Lines 6 and 7 of Algorithm \ref{alg:standard} adds a vector of integers to a vector of integers.  Therefore $V$ and $R$ have integer coefficients. 
    Each column of $J$ is a column of either $V$ or $R$ (up to multiplication with a unit), so $J$ also has integer entries. 
    This completes the proof.
\qed \end{proof}

\begin{definition}
    Let $z$ be a cycle in $\Chains_k$, and let $p = \min\{ i : z \in \mathcal{K}_i\}$ be the minimum index such that $z$ is an element of $\mathcal{K}_i$. Then the \emph{principal submodule} of $H_k(\mathcal{K}; \Z)$ generated by $z$ is the smallest submodule $h \su H_k(\mathcal{K}; \Z)$ such that $[z] \in h_p$, where submodules are ordered with respect to inclusion.  
    We denote this submodule by 
    $$
    \Ps(z) = \bigcap \{ h \su H_k(\mathcal{K}; \Z): [z] \in h_p \}.
    $$
    Concretely, $\Ps(z)$ has form $0 \to \cdots \to 0 \to \langle z \rangle_p \to \cdots \to \langle z \rangle_m$, where  $ \langle z \rangle_q$ denotes the subgroup of $H_k(\mathcal{K}_q,\Z)$ generated by the homology class $[z]$.  Note that $\Ps(z)$ may be the zero module. Note, in addition, that $\Ps(z)$ is an interval module, provided that every cyclic group $\Ps(z)_q$ is either zero or free.
\end{definition}

\begin{theorem} 
\label{thm:StandardAlgorithmGeneralized}
The following are equivalent
\begin{enumerate}
    \item The trailing coefficient of every nonzero column of $R$ is a unit.
    \item The persistence module $H_k(\mathcal{K},\Z)$ splits as a direct sum of interval modules of free abelian groups, for all $k$. More concretely, 
    \begin{align}
    H_k(\mathcal{K},\Z) = \bigoplus_{ z \in E}  \Ps( z  ) 
    \label{eq:principalSubmoduleDirectSum}
    \end{align}
    where 
    $$
    E_ = \{ \Col_j(J) : 1 \le j \le m, \text{ and } \sigma_j \in \Chains_k, \text{ and } \Col_j(M)=0 \}\,.
    $$ 
\end{enumerate}
\end{theorem}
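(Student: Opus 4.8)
The plan is to prove the two directions of the equivalence separately, leveraging the U-match structure from Lemma \ref{lem:umatch}. First I would establish the implication (1) $\Rightarrow$ (2). Assuming the trailing coefficient of every nonzero column of $R$ is a unit, Lemma \ref{lem:umatch} tells us that $V$, $R$, and $J$ all have integer entries, that $J$ is upper unitriangular (hence $\Z$-invertible), and that $M = J^{-1}DJ$ is a generalized matching matrix with $M_{ij} = R_{ij}$ exactly when $R_{ij}$ is the trailing coefficient of column $j$ (and this trailing coefficient is a unit). The columns $\{\Col_j(J)\}_{j=1}^m$ therefore form an integer basis of $\Chains$ compatible with the canonical filtration (unitriangularity means $\spann\{\Col_1(J),\ldots,\Col_p(J)\} = \mathcal{K}_p$ for all $p$) and with the homogeneous grading (each $\Col_j(J)$ is homogeneous of the same degree as $\sigma_j$, since the column operations of the standard algorithm only combine columns of like degree). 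In this basis the differential is represented by the generalized matching matrix $M$. I would then run the standard ``pairing'' bookkeeping: columns $j$ with $\Col_j(M) = 0$ and $\Col_j(M^{T}) \ne 0$ give cycles $z = \Col_j(J)$ that bound (killed at the birth time of the matched partner); columns $j$ with $\Col_j(M)=0$ and $\Col_j(M^T)=0$ (i.e.\ $j$ never appears as a low and its own column is zero) give cycles that persist to the end; columns with $\Col_j(M)\ne 0$ are not cycles. The set $E$ in the statement is precisely the homogeneous-degree-$k$ cycles among the columns of $J$ whose own $M$-column vanishes. For each $z \in E$, because its matched partner's trailing coefficient is a unit, the homology class $[z]$ generates a \emph{free} cyclic (in fact rank-one or zero) subgroup of $H_k(\mathcal{K}_q;\Z)$ for every $q$, so $\Ps(z)$ is a genuine interval module of free abelian groups; and one checks directly that $H_k(\mathcal{K};\Z) = \bigoplus_{z\in E}\Ps(z)$ by verifying the decomposition pointwise at each $\mathcal{K}_q$ using that $\{\Col_j(J)\}$ is a filtered graded basis diagonalizing $D$ (this is essentially the argument behind Theorem \ref{thm:yoshiwakiAlgorithmCorrect}, now phrased via U-match decomposition as in \cite{de2011dualities,hang2021umatch}).

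Next I would prove (2) $\Rightarrow$ (1), which I expect to be the main obstacle. The cleanest route is the contrapositive: suppose some nonzero column of $R$ has trailing coefficient $\alpha$ which is not a unit, and I want to produce a $k$ for which $H_k(\mathcal{K};\Z)$ fails to split into free interval modules. Let column $j$ be such a column with $i = \low_j(R)$, so that (over $\Q$) the standard algorithm pairs $\sigma_i$ (degree $k$, say) with $\sigma_j$ (degree $k+1$). The key point is that, working over $\Z$, the U-match structure still holds with \emph{rational} $J$, and $M = J^{-1}DJ$ has the single nonzero entry $\alpha$ in column $j$. I would argue that the relative homology group, or equivalently the cokernel of the appropriate structure map, has $\alpha$-torsion: concretely, in $H_k(\mathcal{K}_{b};\mathcal{K}_{a};\Z)$ for suitable $a < i \le b$, or in $\Coker(\phi_a^b: H_k(\mathcal{K}_a;\Z)\to H_k(\mathcal{K}_b;\Z))$, the class represented by $\Col_i(J)$-related chain is annihilated by $\alpha$ but not by any proper divisor, because $\partial(\Col_j(J)) = \alpha \cdot (\text{the cycle}\ \Col_i(R)/\alpha\ \text{or}\ \Col_i(J))$ up to the matching. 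This produces a structure map with non-free cokernel, so by Theorem \ref{thm:problem_statement}, $H_k(\mathcal{K};\Z)$ does not split into interval modules — contradicting (2). The delicate part is handling the case analysis about \emph{which} cycle and which pair of indices $(a,b)$ to use, and ensuring that the torsion element is genuinely nonzero in the relevant quotient rather than already being killed by some other relation; this requires carefully tracking the filtration levels and using the ``pivot'' structure (the fact that $\low$ values are distinct) to guarantee that the offending cycle is not a boundary at filtration level just below $b$.

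As an alternative to the torsion-chasing in the second direction, I could instead go through Theorem \ref{thm:yoshiwakiAlgorithmCorrect} and its converse more directly if one is available in \cite{obayashi2020field}, but since the statement here is ``if and only if'' and phrased for general standard-form chain complexes, the self-contained argument via U-match decomposition and Theorem \ref{thm:problem_statement} seems most robust. A subtle bookkeeping point I would be careful about throughout: the indices $j$ with $\sigma_j \in \Chains_k$ need to be correctly split into ``positive'' (birth, $\Col_j(M) = 0$ and $j$ is not a low index) versus ``negative'' (death, $j = \low_r(R)$ for some $r$) versus ``non-cycle'' ($\Col_j(M) \ne 0$), and the definition of $E$ in the theorem statement (columns $j$ with $\sigma_j \in \Chains_k$ and $\Col_j(M) = 0$) correctly lumps together the birth columns with the death columns — but the death columns $z = \Col_j(J)$ still give a well-defined (possibly short) interval $\Ps(z)$, so the direct sum in \eqref{eq:principalSubmoduleDirectSum} is over all of them, which I would verify is exactly the right index set by a dimension/rank count at each filtration stage.
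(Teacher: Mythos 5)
Your proposal follows essentially the same route as the paper's proof: for (1)$\Rightarrow$(2) you invoke Lemma \ref{lem:umatch} to obtain an integer upper-unitriangular change-of-basis $J$ that conjugates $D$ to the generalized matching matrix $M$, then read off the pointwise direct-sum decomposition; and for (2)$\Rightarrow$(1) you pass to the contrapositive, exhibit torsion in a relative homology group (equivalently, a non-free structure-map cokernel), and appeal to Theorem \ref{thm:OurFieldIndependence}/Theorem \ref{thm:problem_statement}. The paper simply pins down the index choice you left open in the converse direction by fixing the pair $(\mathcal{K}_{\low_R(j)-1},\mathcal{K}_j)$ and computing that $H_k(\mathcal{K}_j,\mathcal{K}_{\low_R(j)-1};\Z)$ contains a cyclic subgroup of order $|R_{\low_R(j),j}|$, which is exactly the bookkeeping you flagged as delicate.
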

\begin{proof}   
Suppose that Condition 1 holds. Then $J$ is an upper unitriangular matrix with coefficients in $\Z$, by Lemma \ref{lem:umatch}. Because the diagonal entries of $J$ are units, it follows that $J^{-1} \in \Z^{m \times m}$. Thus $M=J^{-1} D J$ represents the boundary operator with respect to the basis of column vectors of $J$ over the integers.  This matrix is a generalized matching matrix, and its nonzero coefficients are the trailing entries of the nonzero columns of $R$,  by Lemma \ref{lem:umatch}. In particular, the nonzero coefficients of $M$ are units. By direct calculation, it follows that for all $k$ and $p$
    \begin{align*}
    H_k(\mathcal{K}_p,\Z) = \bigoplus \{ \langle z \rangle : z \in E_{k,p} \}
    \label{eq:pointwisebreakdown}
    \end{align*}
where $E_{k,p}$ is the set of all columns $z$ of $J$ such that $z$ is a $k$-cycle in $\mathcal{K}_p$. Note that many of these cycles may be zero, but this poses no difficulty, since $A = A \oplus 0$ for every abelian group $A$.
Equation \ref{eq:pointwisebreakdown} implies Equation \eqref{eq:principalSubmoduleDirectSum}, so we may conclude that Condition 1 implies Condition 2.

Now suppose that Condition 1 fails.  Then  $R_{\low_R(j),j} \notin \{-1,1\}$ for some $j$ such that $\Col_j(R) \neq 0$. If $ \sigma_j \in \Chains_{k+1}$, then it follows by direct calculation\footnote{{Consider the $j \times j$ submatrix in the upper-left-hand corner of $M$, which corresponds  to the sub-chain complex $\mathcal{K}_j$. Quotienting out the subcomplex $\mathcal{K}_{\low_R(j)-1}$ corresponds to deleting the first $\low_R(j)-1$ rows and columns of this matrix. The first row of the remaining matrix contains a single nonzero coefficient, namely $R_{\low_R(j),j}$. The cokernel of this matrix, regarded as a group homomorphism, will therefore contain a cyclic group of order $|R_{\low_R(j),j}|$.}} that $H_k(\mathcal{K}_j, \mathcal{K}_{\low_R(j)-1}; \Z)$ contains a cyclic subgroup of order $|R_{\low_R(j),j}|$. Thus $H_k(\mathcal{K}, \Z)$ does not split as a direct sum of interval modules, by Theorem \ref{thm:OurFieldIndependence}. Therefore Conditions 1 and 3 must fail, also. This completes the proof.
\qed \end{proof}

\subsection{Cohomology}
\label{sec:cohomology}

The crux of Theorem \ref{thm:StandardAlgorithmGeneralized} is obtaining an upper unitriangular matrix $J$ with integer coefficients such that $M = J^{-1}DJ$ is a generalized matching matrix.
As we saw in Section \ref{sec:generalChainComplexes}, we can obtain such a matrix by applying the standard algorithm (Algorithm \ref{alg:standard}) to $D$, regarded as a matrix with rational coefficients. 

This is attractive because the standard algorithm runs in time $O(m^3)$, which can be quite fast relative to the other methods we have considered in the present work. However, today's leading persistent homology solvers rarely apply the standard algorithm to $D$ directly. Rather, they apply it to the anti-transpose, $D^\perp$, i.e. the matrix obtained by transposing $D$ and reversing the order of rows and columns. At first blush, this appears to offer no obvious advantage: it is not clear how reducing $D^\perp$ might help one compute a persistence diagram, and the worst-case complexity of factoring $D$ versus $D^\perp$ is identical. 

However, the advantage of factoring $D^\perp$ instead of $D$ turns out to be substantial. A simple formula can be applied to extract the persistence diagram from \emph{either} factorization \cite{de2011dualities} and in the  majority of data sets where persistent homology is calculated, researchers find that factoring $D^\perp$ consumes orders of magnitude less time and memory than $D$ \cite{bauer2017phat}. This approach is commonly called \emph{computing cohomologically}, as it can be interpreted as a computation of persistent  cohomology \cite{de2011dualities}.

With this in mind, it is natural to ask if we can derive a result similar to Theorem \ref{thm:StandardAlgorithmGeneralized}, using a factorization of $D^\perp$ instead of $D$. This turns out to be the case. Let $C$ be a standard-form chain complex with integer coefficients, and let $D \in \Z^{m \times m}$ be the associated boundary operator. 

\begin{definition}
    Let $R$ be an $m\times m$ matrix. We define $\row_r(R)$ to be the $r$th row of $R$ and $\lead_A(r)$ to be the index of the leading entry in the $r$th row of a matrix $A$. Concretely, $\lead_A(r) = 0$ if $\row_r(A) = 0$ and $\lead_A(r) = \min \{ j : A_{rj} \neq 0\}$ otherwise.
\end{definition}

Apply the standard algorithm to the anti-transposed matrix $D^\perp$ to obtain a factorization $\Ralt = D^\perp \Valt$, and define a matrix $K \in \Q^{m \times m}$ as follows. 

        $$
        \row_i(K)
        =
        \begin{cases}
            \row_r(\Ralt^\perp) \cdot \frac{1}{ \Ralt_{r, \lead_{\Ralt^\perp}(r)} }& \lead_{\Ralt^\perp}(r) = i  \text{ for some (unique) positive index }r
            \\
            \row_i(\Valt^\perp) & \text{otherwise }
        \end{cases}
        $$

\begin{lemma} 
    \label{lem:umatchDUAL}
    \begin{enumerate}
        \item The matrix $K$ is upper unitriangular.
        \item The matrix  $M: = K D K^{-1}$ is a generalized matching matrix. 
        \item We have $M_{ij}=(\Ralt^\perp)_{ij}$ if $(\Ralt^\perp)_{ij}$ is the leading coefficient of $\row_i(\Ralt^\perp)$, and $M_{ij} =0$ otherwise.
        \item The matrices $\Valt$, $\Ralt$, and $K$ have coefficients in $\Z$ if the trailing coefficient of every nonzero column of $\Ralt$ is a unit.
    \end{enumerate}
\end{lemma}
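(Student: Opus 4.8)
The plan is to deduce Lemma \ref{lem:umatchDUAL} from Lemma \ref{lem:umatch} by passing to anti-transposes, where for an $m\times m$ matrix $A$ we write $A^\perp = PA^TP$ with $P$ the order-reversing permutation matrix, so $(A^\perp)_{ij} = A_{m+1-j,\,m+1-i}$. First I would record the elementary features of the anti-transpose: it is an involutive anti-automorphism of the matrix algebra, so $(AB)^\perp = B^\perp A^\perp$ and $(A^{-1})^\perp = (A^\perp)^{-1}$; it preserves the classes of upper unitriangular matrices and of generalized matching matrices; it sends a column to a (reversed) row, with $\Col_r(A)$ becoming $\row_{m+1-r}(A^\perp)$; and it converts ``reduced'' into its row-analogue, with $\lead_{A^\perp}(i) = m+1-\low_{m+1-i}(A)$, and the trailing coefficient $\Ralt_{\low_r(A),\,r}$ of a column turning into the leading coefficient of $\row_{m+1-r}(A^\perp)$. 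Since the standard algorithm always returns a reduced $\Ralt$, the indices $\lead_{\Ralt^\perp}(\cdot)$ are well defined and injective on nonzero rows, so the case split defining $K$ is unambiguous.

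Next I would apply Lemma \ref{lem:umatch} to the square integer matrix $D^\perp$ in place of $D$; its assertions rest only on the U-match decomposition of Hang et al.\ \cite{hang2021umatch} and on the integrality induction over Algorithm \ref{alg:standard}, both of which hold for an arbitrary square matrix. Writing $\widehat J$ for the matrix assembled from the resulting factorization $\Ralt = D^\perp\Valt$ by the same column formula that defines $J$ in Lemma \ref{lem:umatch}, this yields: $\widehat J$ is upper unitriangular; $M' := \widehat J^{-1} D^\perp \widehat J$ is a generalized matching matrix with $M'_{ij}$ equal to $\Ralt_{ij}$ when that entry is the trailing coefficient of $\Col_j(\Ralt)$ and to $0$ otherwise; and $\Valt, \Ralt, \widehat J$ all have integer entries whenever the trailing coefficient of every nonzero column of $\Ralt$ is a unit.

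Then I would set $K := \widehat J^{\perp}$ and verify that it coincides with the matrix $K$ in the statement — the bookkeeping heart of the argument. Unwinding the column formula for $\widehat J$ and anti-transposing, a ``special'' column $\Col_r(\Ralt)/\Ralt_{\low_r(\Ralt),\,r}$ becomes $\row_{m+1-r}(\Ralt^\perp)$ divided by the leading coefficient of that row, the index condition $\low_r(\Ralt)=j$ becomes $\lead_{\Ralt^\perp}(m+1-r)=m+1-j$, and a ``plain'' column $\Col_j(\Valt)$ becomes a row $\row_\cdot(\Valt^\perp)$ — precisely the case split in the definition of $K$. With $K=\widehat J^{\perp}$ in hand, the four assertions drop out formally: (1) $K$ is upper unitriangular because $\widehat J$ is; (2) $K D K^{-1} = \widehat J^{\perp} D\,(\widehat J^{-1})^{\perp} = (\widehat J^{-1} D^\perp \widehat J)^{\perp} = (M')^{\perp}$, a generalized matching matrix since $M'$ is; (3) $M_{ij} = (M')^{\perp}_{ij} = M'_{m+1-j,\,m+1-i}$, which equals $(\Ralt^\perp)_{ij}$ exactly when $\Ralt_{m+1-j,\,m+1-i}$ is the trailing coefficient of $\Col_{m+1-i}(\Ralt)$, i.e.\ exactly when $(\Ralt^\perp)_{ij}$ is the leading coefficient of $\row_i(\Ralt^\perp)$, and $M_{ij}=0$ otherwise; and (4) the hypothesis ``the trailing coefficient of every nonzero column of $\Ralt$ is a unit'' is exactly the integrality hypothesis of Lemma \ref{lem:umatch} applied to $D^\perp$, forcing $\Valt,\Ralt,\widehat J$ — and hence $K=\widehat J^{\perp}$ — to be integral.

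The main obstacle is the index arithmetic of the third step: matching $\widehat J^{\perp}$ entry-for-entry with the row formula defining $K$ while keeping straight how ``bottom-most nonzero entry of a column'' turns into ``left-most nonzero entry of a row'' and how the pairing index migrates from $r$ to $m+1-r$. This is routine but fiddly, and it is where any discrepancy in subscript conventions between the two formulas must be reconciled. Everything else is a formal consequence of $A\mapsto A^\perp$ being an anti-automorphism, combined with Lemma \ref{lem:umatch}.
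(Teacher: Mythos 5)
Your proof is correct, and it takes a genuinely different route from the paper's. The paper establishes Lemma~\ref{lem:umatchDUAL} by directly citing \cite[Theorem 9, Assertion 4]{hang2021umatch}, which packages the anti-transposed reduction as its own U-match decomposition $(K^{-1}, M, D, K^{-1})$, and then repeats the integrality induction over Algorithm~\ref{alg:standard} verbatim. You instead derive Lemma~\ref{lem:umatchDUAL} as a formal consequence of Lemma~\ref{lem:umatch}: apply the latter to $D^\perp$ (legitimate, since that lemma's content --- the U-match decomposition and the integrality induction --- is a statement about an arbitrary square matrix, not specifically a boundary operator), obtain $\widehat J$ and $M'=\widehat J^{-1}D^\perp\widehat J$, and then push everything through the anti-automorphism $A \mapsto A^\perp$ to read off the four assertions for $K=\widehat J^\perp$ and $M=(M')^\perp$. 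Your approach is more self-contained (it leans on a single assertion of Hang et al.\ rather than two, and does not repeat the integrality argument) and makes the homology/cohomology duality manifest; the trade-off is the fiddly index bookkeeping needed to verify $K=\widehat J^\perp$, which the paper's direct citation sidesteps. You correctly identify and discharge that bookkeeping ($\lead_{\Ralt^\perp}(r) = m+1-\low_{\Ralt}(m+1-r)$, and the trailing-to-leading coefficient translation).

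One point worth flagging that your reduction surfaces: as literally written, the paper's normalization factor $\Ralt_{r,\,\lead_{\Ralt^\perp}(r)}$ in the definition of $K$ does not equal the leading coefficient of $\row_r(\Ralt^\perp)$; the intended quantity is $(\Ralt^\perp)_{r,\,\lead_{\Ralt^\perp}(r)} = \Ralt_{m+1-\lead_{\Ralt^\perp}(r),\, m+1-r}$, which is the only normalization producing an upper \emph{uni}triangular $K$. Your anti-transpose computation lands on the correct quantity automatically, so your argument silently corrects what appears to be a typo in the source definition.
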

\begin{proof}

    Hang et al. \cite[Theorem 9, Assertion 4]{hang2021umatch} show that there exists a U-match decomposition $(K^{-1}, M,D,K^{-1})$. That is, $K^{-1}$ is upper unitriangular (and hence, so is $K$), $M$ is a generalized matching matrix, and $K^{-1}M=DK^{-1}$ (hence, $M=KDK^{-1}$). They also show that $M_{ij} = (\Ralt^\perp)_{ij}$ if $(\Ralt^\perp)_{ij}$ is the leading coefficient of $\row_i(R)$, and $M_{ij} =0$ otherwise. 
    Now suppose that the trailing coefficient of every nonzero column of $R$ is a unit.  Then, by a simple induction,  each iteration of Lines 6 and 7 of Algorithm \ref{alg:standard} adds a vector of integers to a vector of integers.  
    Therefore $\Valt$ and $\Ralt$ have integer coefficients. 
    By definition, each row of $K$ is either a row of either $\Valt$ or $\Ralt$ (up to multiplication by a unit), so $K$ has integer entries.
    This completes the proof.
\qed \end{proof}

\begin{theorem} 
\label{thm:StandardAlgorithmGeneralizedCohomology}
The following are equivalent
\begin{enumerate}
    \item The trailing coefficient of every nonzero column of $\Ralt$ is a unit.
    \item The persistence module $H_k(\mathcal{K},\Z)$ splits as a direct sum of interval modules of free abelian groups, for all $k$. More concretely,
    \begin{align}
    H_k(\mathcal{K},\Z) = \bigoplus_{ z \in E}  \Ps( z  ) 
    \end{align}
    where 
    $$
    E = \{ \Col_j(K^{-1}) : 1 \le j \le m, \text{ and } \sigma_j \in \Chains_k, \text{ and } \Col_j(M)=0 \}\,.
    $$    
\end{enumerate}
\end{theorem}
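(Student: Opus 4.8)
The plan is to mirror the proof of Theorem~\ref{thm:StandardAlgorithmGeneralized}, using Lemma~\ref{lem:umatchDUAL} in place of Lemma~\ref{lem:umatch}. The forward implication $(1)\Rightarrow(2)$ is the main content and is essentially a transcription of the matching-matrix argument already given; the reverse implication will require some extra bookkeeping to account for the anti-transpose.

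For $(1)\Rightarrow(2)$, assume the trailing coefficient of every nonzero column of $\Ralt$ is a unit. By Lemma~\ref{lem:umatchDUAL}, $K$ is upper unitriangular with coefficients in $\Z$, hence so is $K^{-1}$ (its diagonal entries are all $1$), and $K^{-1}$ is unimodular over $\Z$. Because the column operations of Algorithm~\ref{alg:standard} applied to $D^\perp$ combine only rows/columns of equal homological degree, $K$ and $K^{-1}$ are graded, so $M=KDK^{-1}$ is the matrix of the boundary operator $\partial$ with respect to the integral homogeneous basis given by the columns of $K^{-1}$; by Lemma~\ref{lem:umatchDUAL} it is a generalized matching matrix whose nonzero entries are units. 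Since $K^{-1}$ is upper unitriangular, $\Col_j(K^{-1})$ lies in $\K_j$ but not $\K_{j-1}$, so its birth index is exactly $j$. A direct calculation with $M$ then shows that, for every $p$ and $k$,
\[
H_k(\K_p;\Z)\;\cong\;\bigoplus_{z\in E}\langle z\rangle_p\,,
\]
where $\langle z\rangle_p$ denotes the subgroup of $H_k(\K_p;\Z)$ generated by the class of $z$ (understood to be $0$ when $z\notin\K_p$); each such subgroup is cyclic, and — because the relevant entry of $M$ is a unit — is in fact zero or free of rank one. Assembling these pointwise decompositions over $p=0,\dots,m$ gives $H_k(\K;\Z)=\bigoplus_{z\in E}\Ps(z)$, and each $\Ps(z)$ is an interval module of free abelian groups.

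For $\neg(1)\Rightarrow\neg(2)$, suppose the trailing coefficient $c:=\Ralt_{\low_{\Ralt}(j),j}$ of some nonzero column $j$ is not a unit. Observe that $D^\perp$ is the coboundary operator of $\Chains$ and that, after a suitable re-grading, $\Chains^\perp$ (with differential $D^\perp$) is again a standard-form chain complex whose canonical filtration has, as sub-quotients, exactly the relative cochain complexes $C^*(\K_b,\K_a)$ for $0\le a\le b\le m$; moreover the standard algorithm applied to $D^\perp$ with rows and columns in the natural order is precisely the standard algorithm applied to $\Chains^\perp$. Hence the reverse half of Theorem~\ref{thm:StandardAlgorithmGeneralized}, applied to $\Chains^\perp$, shows that some relative cohomology group $H^\ell(\K_b,\K_a;\Z)$ contains a cyclic subgroup of order $|c|>1$. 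By the universal coefficient theorem~\cite{hatcher}, $H^\ell(\K_b,\K_a;\Z)$ is torsion-free if and only if $H_{\ell-1}(\K_b,\K_a;\Z)$ is, so some relative homology group $H_k(\K_b,\K_a;\Z)$ of a pair of our filtration has torsion; Theorem~\ref{thm:OurFieldIndependence} (equivalently, the reverse half of Theorem~\ref{thm:StandardAlgorithmGeneralized}) then forces $H_k(\K;\Z)$ to fail to split into interval modules of free abelian groups for some $k$.

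I expect the reverse implication to be the main obstacle: the forward implication is a routine transcription of the matching-matrix computation, whereas the reverse implication requires correctly matching the anti-transpose and reversed-filtration picture with the relative (co)homology of the pairs $(\K_b,\K_a)$ and tracking the degree shift in the universal coefficient theorem. An alternative that packages this bookkeeping differently is to show directly that the trailing coefficients of $\Ralt$ are all units if and only if those of the reduction $R=DV$ of $D$ are all units — both being equivalent, via the relative-(co)homology characterization above together with the universal coefficient theorem, to the freeness of all relative homology groups $H_k(\K_b,\K_a;\Z)$ — and then to invoke Theorem~\ref{thm:StandardAlgorithmGeneralized}; the explicit decomposition $\bigoplus_{z\in E}\Ps(z)$ would still need to be checked by the computation in the forward implication. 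The homology/cohomology reduction duality of de Silva, Morozov, and Vejdemo-Johansson~\cite{de2011dualities} can be used to streamline the identification of the sub-quotients of $\Chains^\perp$.
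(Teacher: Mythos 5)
Your proof is correct, and your forward implication $(1)\Rightarrow(2)$ essentially replicates the paper's: substitute $K^{-1}$ for $J$ and Lemma~\ref{lem:umatchDUAL} for Lemma~\ref{lem:umatch}, obtain an integer unimodular change of basis making the boundary a generalized matching matrix with unit entries, and read off the pointwise decomposition. Where you diverge is the reverse implication. You anticipate it will be ``the main obstacle'' and therefore reroute through duality: you reinterpret $D^\perp$ as the differential of the (regraded) cochain complex $\Chains^\perp$, apply the reverse half of Theorem~\ref{thm:StandardAlgorithmGeneralized} to $\Chains^\perp$ to extract torsion in a relative \emph{co}homology group of a pair $(\K_b,\K_a)$, transfer that torsion to relative homology via the universal coefficient theorem, and then invoke Theorem~\ref{thm:OurFieldIndependence}. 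This is correct, but the paper's route is shorter and requires none of that bookkeeping: the very same substitution ($K^{-1}$ for $J$, $M = KDK^{-1}$) that drives the forward direction also drives the reverse direction, since Lemma~\ref{lem:umatchDUAL} identifies the nonzero entries of $M$ with the trailing coefficients of the nonzero columns of $\Ralt$, and the footnote calculation in the proof of Theorem~\ref{thm:StandardAlgorithmGeneralized} — which produces torsion in $H_k(\K_j,\K_{\ell};\Z)$ directly from a non-unit entry of $M$ — depends only on $M$ being a generalized matching matrix conjugated by an upper unitriangular matrix, and this structure is identical in the cohomological factorization. (Indeed $J^{-1}DJ$ and $KDK^{-1}$ are the same matrix $M$, by uniqueness of the matching factor in a U-match decomposition.) What your detour does buy is that it makes explicit the connection to the homology/cohomology duality of de~Silva, Morozov, and Vejdemo-Johansson, which the paper references only implicitly; but at the cost of a degree shift, a regrading argument, and an extra application of the universal coefficient theorem, all of which the paper avoids.
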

\begin{proof} 
The proof is almost the same as the proof of Theorem \ref{thm:StandardAlgorithmGeneralized}. In fact, if we substitute $K^{-1}$ for $J$ and Lemma \ref{lem:umatchDUAL} for Lemma \ref{lem:umatch} in the proof of Theorem \ref{thm:StandardAlgorithmGeneralized}, then the argument carries through unchanged. The idea in both cases is to find a change of basis matrix with integer coefficients (whether that be $J$ or $K^{-1}$), which transforms $D$ to a generalized matching matrix $M$. Once $D$ is in this form, it is easy to calculate cycle representatives and relative homology groups.
\qed \end{proof}

\begin{rmk}
    In Section \ref{sec:generalChainComplexes} we defined $M$ as $J^{-1}DJ$ and in Section \ref{sec:cohomology} we defined $M$ as $KDK^{-1}$. These matrices turn out to be identical, because $(J, M, D, J)$ and $(K^{-1}, M, D, K^{-1})$ are both U-match decompositions of $D$, and the generalized matching matrices of  U-match decompositions are unique \cite[Theorem 2]{hang2021umatch}.
\end{rmk}

\subsection{Limitations}
\label{sec:obayashi_limitations}

A routine exercise shows that if $f$ is a (finitely-indexed) pointwise free and finitely-generated persistence module over $\Z$, then $f$ is isomorphic to $H_k(\mathcal{K}; \Z)$ for some filtration $\mathcal{K}$ on a simplicial complex $K$.  However, $\mathcal{K}$ may not be a  \emph{simplex-wise}, and only a limited subset of persistence modules are compatible with simplex-wise filtrations. For example, if $\mathcal{K}$ is simplex-wise then the ranks of $f_i$ and $f_{i+1}$ can differ by at most one for all $i$, because adding a simplex can only increase or decrease a Betti number by one. A similar limitation holds for standard-form chain complexes with integer coefficients, because adding a basis vector increases or decreases each Betti number by at most one. Moreover, the methods described above demonstrably fail when one loosens the constraint that $\mathcal{K}$ be a simplex-wise filtration (see Example \ref{ex:simplexwisefailure}).

{
\renewcommand{\theexample}{\ref{ex:simplexwisefailure}}
\begin{example}[Restated from Section \ref{sec:intro}]
    Let $C$ be a chain complex such that $C_0 = \Z$, $C_1 = \Z^2$, and $C_i = 0$ for $i \notin \{0,1\}$. Suppose that the boundary matrix $\partial: C_1 \to C_0$ is represented by the matrix $\begin{bmatrix}
        2 & 3
    \end{bmatrix}$, and let $\mathcal{K} = \{\mathcal{K}_0, \mathcal{K}_1\}$ be the filtration of chain complexes given by $\mathcal{K}_0 = 0$ and $\mathcal{K}_1 = C$. 
    Then the matrix $R$ produced by the standard algorithm will have a nonzero column with trailing coefficient equal to $2$, but the persistence module $H_k(\mathcal{K}; \Z)$ splits as a direct sum of interval modules of free abelian groups for all $k$.  
\end{example}
\addtocounter{example}{-1}
}


Consequently, the methods described in this section do not provide a fully general procedure to decompose arbitrary pointwise free and finitely-generated persistence module over $\Z$ into interval submodules.

\begin{acknowledgements}
We thank Vidit Nanda and Amit Patel for helpful references, and Mason A. Porter and Benjamin Spitz for helpful comments and suggestions. 
We also thank our two anonymous referees for their  constructive feedback. 
\end{acknowledgements}

\bibliographystyle{spmpsci}      
\bibliography{references}   


\end{document}